\documentclass[reqno,12pt]{amsart}
\usepackage{amsmath,amssymb,latexsym,soul,cite,mathrsfs}
\usepackage{blindtext}
\usepackage{enumitem}
\pretolerance=10000
\usepackage[utf8]{inputenc}
\usepackage{amsthm,amsfonts}
\usepackage{comment}
\usepackage[mathcal]{eucal}
\usepackage{latexsym}
\usepackage[utf8]{inputenc}
\usepackage{bm}
\usepackage[all]{xy}
\usepackage{xcolor}
\usepackage{graphicx}
\usepackage{hyperref}

\usepackage[hyperpageref]{backref}

\marginparwidth 0cm \oddsidemargin 0cm \evensidemargin 0cm
\topmargin 0pt \textheight 220mm \textwidth 160mm

\newtheorem{theorem}{Theorem}

\newtheorem{theo}{Theorem}[section]

\newtheorem{lemma}{Lemma}[section]

\newtheorem{prop}{Proposition}[section]

\newtheorem{cor}{Corollary}[section]

\numberwithin{equation}{section}

\title[EXACT GROWTH ON WEIGHTED SOBOLEV SPACES]{SHARP HIGHER ORDER ADAMS' INEQUALITY WITH EXACT GROWTH CONDITION ON WEIGHTED SOBOLEV SPACES}

\author[J.M.\ do \'O]{Jo\~ao Marcos do \'O}
\author[G. Lu]{Guozhen Lu}
\author[R.~Ponciano]{Raon\'{\i} Ponciano}

\address[Jo\~{a}o Marcos do \'O]{Dep. Mathematics,
	Federal University of Para\'{\i}ba
	\newline\indent
	58051-900, Jo\~ao Pessoa-PB, Brazil}
\email{\href{mailto:jmbo@mat.ufpb.br}{jmbo@mat.ufpb.br}}

\address[Guozhen Lu]{Dep. Mathematics, University of Connecticut
	\newline\indent
	06269, Storrs-CT, United States of America}
\email{\href{mailto:guozhen.lu@uconn.edu}{guozhen.lu@uconn.edu}}

\address[Raon\'{\i}~Ponciano]{Dep. Mathematics,
	Federal University of Para\'{\i}ba
	\newline\indent
	58051-900, Jo\~ao Pessoa-PB, Brazil}
\email{\href{mailto:raoni.cabral.ponciano@academico.ufpb.br}{raoni.cabral.ponciano@academico.ufpb.br}}

\thanks{This work was partially supported by Conselho Nacional de Desenvolvimento Cient\'ifico e Tecnol\'ogico (CNPq) \#312340/2021-4 and \#429285/2016-7, Funda\c c\~ao de Apoio \`a Pesquisa do Estado da Para\'iba (FAPESQ) \#2020/07566-3, Coordena\c c\~ao de Aperfei\c coamento de Pessoal de N\'ivel Superior (CAPES) \#88887.633572/2021-00 and Simons collaboration grants 519099 and 957892 and a Simons Fellowship grant 1030658 from the Simons foundation}

\subjclass[2000]{35J35,35B33,46E30}

\keywords{Weighted Sobolev spaces; Adams' inequality; Exact growth condition; Differential equations; Sharp constant}

\begin{document}

\begin{abstract}
This paper introduces a novel higher order Adams inequality that incorporates an exact growth condition for a class of weighted Sobolev spaces. Our rigorous proof confirms the validity of this inequality and provides insights into the optimal nature of the critical constant and the exponent within the denominator. Furthermore, we apply this inequality to study a class of ordinary differential equations (ODEs), where we successfully derive both a concept of the weak solution and a comprehensive regularity theory.
\end{abstract}

\maketitle

\section{Introduction}

Our main goal is to establish an Adams-type inequality for a class of weighted Sobolev spaces, including the exact growth condition and sharp constants. Before we present our main results, let us introduce some related classical results.  For the case of the first derivative, J. Moser \cite{MR0301504} refined a result by N. Trudinger \cite{MR0216286} (see also \cite{MR0140822} and \cite{MR0192184}) proving the following theorem.
\begin{theorem}\label{theomoser}
Let $\Omega\subset\mathbb R^N$ with $N\geq2$ and $u\in W^{1,N}(\Omega)$ such that $\|\nabla u\|_{L^{N}(\Omega)}\leq1$. Then there exists $C(N)>0$ such that
\begin{equation*}
\int_\Omega\exp(\alpha|u|^{\frac{N}{N-1}})\mathrm dx\leq C(N)|\Omega|
\end{equation*}
for any $\alpha\leq\alpha_N:=N\omega_{N-1}^{N-1}$, where $\omega_{N-1}$ is the area of the surface of the unit ball in $\mathbb R^N$. The constant $\alpha_N$  is sharp because if $\alpha>\alpha_N$, then the integral on the left can be arbitrarily large by an appropriate choice of $u$.
\end{theorem}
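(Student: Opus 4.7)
The plan is to follow Moser's classical symmetrization argument. First I would pass to the Schwarz symmetric rearrangement $u^*$, which is a radially nonincreasing function supported on the ball $\Omega^* = B_R$ with $|B_R| = |\Omega|$. The P\'olya--Szeg\H o inequality yields $\|\nabla u^*\|_{L^N}\leq \|\nabla u\|_{L^N} \leq 1$, while equimeasurability preserves $\int \exp(\alpha|u|^{N/(N-1)})\,dx$. By density I may further assume $u^*$ is smooth on $(0,R]$ with $u^*(R)=0$, so the problem reduces to a radial one on $B_R$.

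Next I would perform the change of variables $r = R e^{-t/N}$ and set $\phi(t) = \alpha_N^{(N-1)/N}\, u^*(r)$. Using $\alpha_N = N\omega_{N-1}^{N-1}$, direct computation turns the constraint $\|\nabla u^*\|_{L^N}\leq 1$ into $\int_0^\infty |\phi'(t)|^N\,dt \leq 1$ with $\phi(0)=0$, and
\[
\int_{B_R} \exp\bigl(\alpha |u^*|^{N/(N-1)}\bigr)\,dx \;=\; \frac{\omega_{N-1} R^N}{N}\int_0^\infty \exp\!\Bigl(\tfrac{\alpha}{\alpha_N}\phi(t)^{N/(N-1)} - t\Bigr)\,dt.
\]
The statement is thus equivalent to the one-dimensional \emph{Moser lemma}: for every $\phi$ as above and every $\beta \leq 1$, the integral $\int_0^\infty \exp(\beta \phi(t)^{N/(N-1)} - t)\,dt$ is bounded by a constant depending only on $N$.

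To prove the 1D estimate I would start from H\"older applied to $\phi(t) = \int_0^t \phi'(s)\,ds$, which gives $\phi(t)^{N/(N-1)} \leq t$, so the integrand is trivially bounded by $1$ pointwise (but this alone would not yield a convergent integral). The sharp bound comes from splitting the half-line into the set where $\phi(t)^{N/(N-1)} \leq t-A$ and its complement; on the former the integrand is $\leq e^{-A}$, and on the latter one uses the fact that $\phi$ is monotone together with the constraint $\|\phi'\|_N\leq 1$ to control the Lebesgue measure of the exceptional set uniformly in $\phi$. This quantitative rearrangement argument is the main technical obstacle.

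For the sharpness of the constant $\alpha_N$, I would test against the standard Moser sequence
\[
u_n(x) = \omega_{N-1}^{-1/N}\begin{cases} n^{(N-1)/N}, & |x| \leq R e^{-n/N},\\[2pt] n^{-1/N}\log(R/|x|), & R e^{-n/N} \leq |x| \leq R,\\[2pt] 0, & |x| \geq R,\end{cases}
\]
scaled to satisfy $\|\nabla u_n\|_{L^N} = 1$. Restricting the exponential integral to $\{|x|\leq R e^{-n/N}\}$ and using $\alpha_N = N\omega_{N-1}^{N-1}$, an elementary computation shows that the integral grows like $e^{n(\alpha/\alpha_N-1)}$, which blows up as $n\to\infty$ whenever $\alpha>\alpha_N$, giving the claimed sharpness.
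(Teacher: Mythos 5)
The paper never proves Theorem \ref{theomoser}: it is quoted from Moser \cite{MR0301504} as classical background, so there is no internal proof to compare against. Your outline is precisely Moser's original argument (Schwarz symmetrization plus P\'olya--Szeg\H{o}, the exponential change of variables $r=Re^{-t/N}$, reduction to a one-dimensional lemma, and the logarithmic test sequence for sharpness), and the reduction itself, including the normalization of $\phi$ and the formula for the transformed integral, is correct.

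Two steps need repair before this is a proof. First, your sketch of the one-dimensional lemma fails as stated: the set where $\phi(t)^{N/(N-1)}\le t-A$ is in general an unbounded set of infinite measure, so knowing the integrand is $\le e^{-A}$ there gives no bound on $\int_0^\infty e^{\phi(t)^{N/(N-1)}-t}\,\mathrm dt$. The real content of the lemma is a uniform estimate (in $\phi$) on the measure of the exceptional set $\{t:\phi(t)^{N/(N-1)}>t-A\}$ together with summing over a sequence of levels $A$; you correctly identify this as the technical core but do not supply it. Note that Adams' Lemma~1, reproduced verbatim in Section \ref{mainresults} of this paper, specialized to the kernel $a(s,t)=\chi_{(0,t)}(s)$, is exactly the statement you need, so you could simply invoke it. Second, your test sequence is internally inconsistent: at $|x|=Re^{-n/N}$ the logarithmic piece equals $\omega_{N-1}^{-1/N}n^{-1/N}\cdot(n/N)=\omega_{N-1}^{-1/N}n^{(N-1)/N}/N$, which does not match the constant piece $\omega_{N-1}^{-1/N}n^{(N-1)/N}$, and one computes $\|\nabla u_n\|_{L^N}^N=1/N$ rather than $1$. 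Taking the inner radius to be $Re^{-n}$ (equivalently, replacing $n$ by $n/N$ in the amplitudes) makes the two pieces agree, gives $\|\nabla u_n\|_{L^N}=1$ with no further rescaling, and the integral over the inner ball then grows like $e^{Nn(\alpha/\alpha_N-1)}$, which blows up exactly when $\alpha>\alpha_N$. With these two corrections the argument is complete and standard.
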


Several generalizations of Theorem \ref{theomoser} have been developed; see, for instance, \cite{MR1163431,MR3053467,MR1865413,MR0960950} and references therein. When $\Omega=\mathbb R^N$, many results have been aimed to obtain a similar estimate as in Theorem \ref{theomoser}.
For such cases, we need to replace the function $\exp$ by its series representative without the first terms
\begin{equation*}
\exp_N(t):=\mathrm e^t-\sum_{j=0}^{N-2}\dfrac{t^j}{j!},\quad N\in\mathbb N,
\end{equation*}
since we only have the embedding $W^{1,N}(\mathbb R^N)\hookrightarrow L^q(\mathbb R^N)$ for $N\leq q<\infty$. Consequently, there exists $u\in W^{1,N}(\mathbb R^N)$ such that $\int_{\mathbb R^N}\exp(\alpha|u|^{\frac{N}{N-1}})\mathrm dx=\infty$ for any $\alpha>0$.
We also cite the work due to  S. Adachi and K. Tanaka \cite{MR1646323} (see also \cite{MR1163431,MR1704875}) for a class of  scale invariant inequality in whole  $\mathbb R^N$ involving the Dirichlet norm $\|\nabla u\|_{L^N(\mathbb R^N)},$
\begin{theorem}\label{theoat}
Let $0<\alpha<\alpha_N$. There exists a constant $C(\alpha,N)>0$ such that
\begin{equation*}
\sup_{\underset{\|\nabla u\|_{L^N(\mathbb R^N)}\leq1}{u\in W^{1,N}(\mathbb R^N)}}\int_{\mathbb R^N}\exp_N(\alpha|u|^{\frac{N}{N-1}})\mathrm dx\leq C(\alpha,N)\|u\|_{L^N(\mathbb R^N)}^N.
\end{equation*}
Moreover, the constant $\alpha_N$ is sharp in the sense that if $\alpha\geq\alpha_N$, the supremum is infinite.
\end{theorem}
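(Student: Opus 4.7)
The plan is to follow the Adachi--Tanaka scheme, which reduces the global inequality on $\mathbb R^N$ to Moser's inequality on a bounded ball (Theorem \ref{theomoser}) via a splitting argument, and to construct a Moser-type sequence for the sharpness.

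First I would invoke Schwarz symmetrization: since $\exp_N$ is increasing on $[0,\infty)$ and $\|\nabla u^*\|_{L^N}\leq\|\nabla u\|_{L^N}$, $\|u^*\|_{L^N}=\|u\|_{L^N}$, we may assume $u\geq0$ is radial and nonincreasing. If $\|u\|_{L^\infty}\leq1$, then since $\exp_N(t)=\sum_{j\geq N-1}t^j/j!$ starts at $t^{N-1}$, on $\mathbb R^N$ we can estimate $\exp_N(\alpha|u|^{N/(N-1)})\leq C(\alpha,N)|u|^N$ because each exponent $jN/(N-1)\geq N$ and $|u|\leq1$; the result follows trivially. Otherwise, since $u\in L^N$ is radial decreasing with $u(0^+)>1$, there is a unique $R>0$ with $u(R)=1$, and I would split
\[
\int_{\mathbb R^N}\exp_N(\alpha|u|^{N/(N-1)})\,\mathrm dx
=\int_{B_R}+\int_{\mathbb R^N\setminus B_R}=:I_1+I_2.
\]

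For $I_2$, since $0\leq u\leq1$ on $\mathbb R^N\setminus B_R$ and $j\geq N-1$ in the series for $\exp_N$, the same bound $|u|^{jN/(N-1)}\leq|u|^N$ yields $I_2\leq C(\alpha,N)\int_{\mathbb R^N\setminus B_R}|u|^N\,\mathrm dx\leq C(\alpha,N)\|u\|_{L^N}^N$. For $I_1$, I would set $v:=(u-1)_+\in W_0^{1,N}(B_R)$, which satisfies $\|\nabla v\|_{L^N(B_R)}\leq\|\nabla u\|_{L^N(\mathbb R^N)}\leq1$. Using the elementary inequality $(v+1)^{N/(N-1)}\leq(1+\varepsilon)v^{N/(N-1)}+C_\varepsilon$ (valid for any $\varepsilon>0$ since $N/(N-1)>1$) and choosing $\varepsilon>0$ small enough that $\alpha(1+\varepsilon)<\alpha_N$, Theorem \ref{theomoser} applied to $v$ on $B_R$ gives
\[
I_1\leq\mathrm e^{\alpha C_\varepsilon}\int_{B_R}\exp\bigl(\alpha(1+\varepsilon)v^{N/(N-1)}\bigr)\,\mathrm dx\leq C(\alpha,N)|B_R|.
\]
Finally, because $u\geq1$ on $B_R$, one has $\|u\|_{L^N}^N\geq\int_{B_R}|u|^N\,\mathrm dx\geq|B_R|$, and summing the two estimates produces the claimed bound.

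For sharpness, I would test against the classical Moser sequence
\[
\omega_n(x)=\omega_{N-1}^{-1/N}\begin{cases}(\log n)^{(N-1)/N},&|x|\leq 1/n,\\ (\log(1/|x|))/(\log n)^{1/N},&1/n\leq|x|\leq1,\\ 0,&|x|\geq1,\end{cases}
\]
which satisfies $\|\nabla\omega_n\|_{L^N}=1$ and $\|\omega_n\|_{L^N}^N=O((\log n)^{-1})$. On the ball $\{|x|\leq1/n\}$, $\exp_N(\alpha\omega_n^{N/(N-1)})\sim n^{\alpha/\alpha_N}$, so
\[
\frac{\int_{\mathbb R^N}\exp_N(\alpha\omega_n^{N/(N-1)})\,\mathrm dx}{\|\omega_n\|_{L^N}^N}\gtrsim n^{\alpha/\alpha_N-N}\log n,
\]
which blows up as $n\to\infty$ once $\alpha\geq\alpha_N$ (a more careful asymptotic handles the borderline $\alpha=\alpha_N$). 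I expect the main obstacle to be the quantitative control in the Young-type split of $(v+1)^{N/(N-1)}$ and the bookkeeping that guarantees the constant on $B_R$ depends only on $\alpha$ and $N$ (not on $R$), together with the delicate sharpness computation at exactly $\alpha=\alpha_N$, where one must track constants in $\exp_N$ precisely rather than using the coarse lower bound.
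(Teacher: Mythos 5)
This statement (the Adachi--Tanaka inequality) is quoted in the paper as background and is not proved there; the authors simply cite \cite{MR1646323}. Your argument for the inequality itself is the standard one and is correct: symmetrize, dispose of the region $\{u\leq 1\}$ using that every exponent $jN/(N-1)$ with $j\geq N-1$ in the series for $\exp_N$ is at least $N$, and on $B_R=\{u>1\}$ pass to $v=(u-1)_+\in W^{1,N}_0(B_R)$, absorb the shift with $(1+t)^{N/(N-1)}\leq(1+\varepsilon)t^{N/(N-1)}+C_\varepsilon$, and apply Moser's inequality (whose constant is independent of $R$ by scale invariance, so the bookkeeping you worry about is automatic); the final step $|B_R|\leq\|u\|_{L^N}^N$ is exactly why the subcritical constant buys you the $L^N$-norm on the right.

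The one genuine defect is in the sharpness computation. On $\{|x|\leq 1/n\}$ you have $\alpha\,\omega_n^{N/(N-1)}=\alpha\,\omega_{N-1}^{-1/(N-1)}\log n=(N\alpha/\alpha_N)\log n$, so $\exp\bigl(\alpha\omega_n^{N/(N-1)}\bigr)=n^{N\alpha/\alpha_N}$, not $n^{\alpha/\alpha_N}$. The correct lower bound for the quotient is therefore of order $n^{N(\alpha/\alpha_N-1)}\log n$, which diverges for every $\alpha\geq\alpha_N$, including the endpoint $\alpha=\alpha_N$ where it is exactly of order $\log n$; no further refinement is needed there. As literally written, your bound $n^{\alpha/\alpha_N-N}\log n$ tends to $0$ at $\alpha=\alpha_N$ and so does not establish sharpness in the borderline case, which is precisely the case that distinguishes this theorem from Theorem \ref{theolr}. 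Once the exponent is corrected (and one notes $\exp_N(t)\geq e^t/2$ for $t$ large, so replacing $\exp$ by $\exp_N$ costs only a constant), the sharpness argument is complete.
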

In \cite{MR2400264}, Y. Li and B. Ruf  obtained an inequality involving the full Sobolev norm $\|u\|_{W^{1,N}(\mathbb R^N)}:=\|\nabla u\|_{L^N(\mathbb R^N)}+\|u\|_{L^N(\mathbb R^N)}$:
\begin{theorem}\label{theolr}
There exists a constant $C(N)>0$ such that for any domain $\Omega\subset\mathbb R^N$,
\begin{equation*}
\sup_{\underset{\|u\|_{W^{1,N}(\Omega)}\leq1}{u\in W^{1,N}(\Omega)}}\int_\Omega\exp_N(\alpha_N|u|^{\frac{N}{N-1}})\mathrm dx\leq C(N).
\end{equation*}
Moreover, the constant $\alpha_N$ is sharp in the sense that if $\alpha_N$ is replaced by any $\alpha>\alpha_N$, the supremum is infinite.
\end{theorem}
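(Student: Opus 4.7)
The plan is to reduce Theorem \ref{theolr} to the classical Moser inequality on bounded domains (Theorem \ref{theomoser}) via symmetric-decreasing rearrangement and a level-set decomposition of $u$. First I would extend $u$ by zero outside $\Omega$ so that we may work on $\mathbb R^N$ with $\|\nabla u\|_{L^N}+\|u\|_{L^N}\leq 1$; the elementary inequality $(a+b)^N\geq a^N+b^N$ for $a,b\geq 0$ upgrades this to the more convenient energy form $\|u\|_{L^N}^N+\|\nabla u\|_{L^N}^N\leq 1$. Replacing $u$ by its Schwarz symmetric-decreasing rearrangement $u^\ast$ leaves both the left-hand side and $\|u\|_{L^N}$ unchanged (equimeasurability) and does not increase $\|\nabla u\|_{L^N}$ (P\'olya--Szeg\H o), so we may assume $u$ is radial, nonnegative and nonincreasing.

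Second, I would decompose $\mathbb R^N$ at the level set $\{u=1\}$. On the exterior region $\{u\leq 1\}$, the series expansion $\exp_N(t)=\sum_{k\geq N-1}t^k/k!$ combined with $u^{kN/(N-1)}\leq u^N$ for $u\leq 1$ and $k\geq N-1$ immediately gives
\begin{equation*}
\int_{\{u\leq 1\}}\exp_N(\alpha_N u^{N/(N-1)})\,dx\leq e^{\alpha_N}\|u\|_{L^N}^N\leq e^{\alpha_N}.
\end{equation*}
By radial monotonicity the interior region is a ball $B_{r_0}=\{u>1\}$. Set $v=u-1\in W_0^{1,N}(B_{r_0})$; note $\|\nabla v\|_{L^N(B_{r_0})}^N\leq 1-\|u\|_{L^N}^N$ and $|B_{r_0}|\leq\|u\|_{L^N}^N$ since $u\geq 1$ on $B_{r_0}$. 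Applying Theorem \ref{theomoser} to $v/\|\nabla v\|_{L^N}$ on $B_{r_0}$ and using the Young-type convexity estimate
\begin{equation*}
(v+1)^{N/(N-1)}\leq (1-t)^{-1/(N-1)}v^{N/(N-1)}+t^{-1/(N-1)},\qquad 0<t<1,
\end{equation*}
with $t=1-\|\nabla v\|_{L^N}^N$ to transfer back to $u$, one obtains
\begin{equation*}
\int_{B_{r_0}}\exp_N(\alpha_N u^{N/(N-1)})\,dx\leq C_N|B_{r_0}|\exp\!\bigl(\alpha_N(1-\|\nabla v\|_{L^N}^N)^{-1/(N-1)}\bigr).
\end{equation*}

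The hard part is that this estimate blows up in the concentration regime $\|u\|_{L^N}\to 0$, $\|\nabla u\|_{L^N}\to 1$, so a case split is required to obtain a bound uniform in $u$. When $\|u\|_{L^N}^N\geq 1/2$ the correction factor is harmless because $\|\nabla v\|_{L^N}^N\leq 1/2$ is bounded away from $1$. When $\|u\|_{L^N}^N<1/2$, I would instead apply Theorem \ref{theoat} to $u/\|\nabla u\|_{L^N}$ at the subcritical exponent $\alpha=\alpha_N\|\nabla u\|_{L^N}^{N/(N-1)}<\alpha_N$ and try to absorb the Adachi--Tanaka constant by the smallness of $\|u\|_{L^N}^N$; verifying that this absorption actually works uniformly as $\alpha\uparrow\alpha_N$ is the principal obstacle, and may require instead a more quantitative Moser-type estimate on the concentration scale (e.g.\ testing $v$ against the extremal profile and using a radial ODE argument on $u^\ast$ to bound $u^\ast(0)$ in terms of $|B_{r_0}|$).

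For the sharpness of $\alpha_N$, I would take the standard Moser sequence: radial logarithmic functions $M_n$ supported in the unit ball, truncated and renormalized so that $\|\nabla M_n\|_{L^N(\mathbb R^N)}+\|M_n\|_{L^N(\mathbb R^N)}=1$. A direct computation using the profile $M_n(x)\sim\omega_{N-1}^{-1/N}(\log n)^{(N-1)/N}$ on the concentration ball $\{|x|\leq 1/n\}$ shows that for any $\alpha>\alpha_N$ the integral $\int_{\mathbb R^N}\exp_N(\alpha M_n^{N/(N-1)})\,dx$ grows at least like $n^{\alpha/\alpha_N-1}/|B_{1/n}|^{-1}\to\infty$, proving optimality.
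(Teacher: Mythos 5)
Theorem \ref{theolr} is quoted background (Li--Ruf \cite{MR2400264}); the paper offers no proof of it, so your proposal can only be judged on its own terms. Your reduction is sound up to a point: the zero extension (which requires $u\in W^{1,N}_0(\Omega)$, as in Li--Ruf's original statement), the symmetrization, the estimate $\int_{\{u\le1\}}\exp_N(\alpha_N u^{N/(N-1)})\,dx\le e^{\alpha_N}\|u\|_{L^N}^N$, and the treatment of $B_{r_0}=\{u>1\}$ via $v=u-1$, the convexity inequality, and Moser's theorem are all correct, and they do settle the case $\|u\|_{L^N}^N\ge 1/2$.

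The genuine gap is the concentration case $\|u\|_{L^N}^N<1/2$, $\|\nabla u\|_{L^N}\to1$, which you acknowledge but do not close --- and this case is not a technical remainder, it is the entire content of the theorem. Your plan to apply Theorem \ref{theoat} to $u/\|\nabla u\|_{L^N}$ at $\alpha=\alpha_N\|\nabla u\|_{L^N}^{N/(N-1)}$ cannot work with Theorem \ref{theoat} as stated: the Adachi--Tanaka constant $C(\alpha,N)$ necessarily blows up as $\alpha\uparrow\alpha_N$ (the supremum at $\alpha=\alpha_N$ is infinite), and whether the rate of blow-up is exactly compensated by the factor $\|u\|_{L^N}^N$ is precisely the quantitative equivalence between the subcritical and critical inequalities established in \cite{MR3729597}; it is not a corollary of the qualitative statement. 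So as written, your argument invokes the hard step as an unproved black box. To complete it you would need either Li--Ruf's original analysis of the concentration regime (a normalized-sequence/capacity argument on the ball $B_{r_0}$, whose radius tends to $0$), or the sharp asymptotics of the Adachi--Tanaka supremum as $\alpha\to\alpha_N$. Two smaller points: the sharpness computation should give growth of order $n^{N(\alpha/\alpha_N-1)}$ for the integral over $\{|x|\le 1/n\}$ (your expression $n^{\alpha/\alpha_N-1}\cdot|B_{1/n}|$ actually tends to $0$), though the Moser sequence does work; and the left-hand side is \emph{not} invariant under rearrangement for a general $\Omega$ --- rather, rearrangement does not decrease it, which is the direction you need, so that step is fine once stated correctly.
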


We mention that a symmetrization-free argument without using the P\'olya-Szeg\"{o} inequality was found for both critical and subcritical Moser-Trudinger inequalities by N. Lam et al. in \cite{MR3053467}, \cite{MR2980499} and \cite{MR3130507}. More recently, in \cite{MR3269875,MR3729597,MR4117991} the authors were able to prove that results of the type of Theorem \ref{theoat}, \ref{theolr} and the inequaliies proved in \cite{MR1163431,MR1704875}  are indeed equivalent.

Remember that the Trudinger-Moser problem arose from the search for the optimal Orlicz space $L_\Phi$ such that $W^{1,N}_0(\Omega)\hookrightarrow L_\Phi(\Omega)$, since $W^{1,N}_0(\Omega)\hookrightarrow L^q(\Omega)$ for all $q\geq1$ does not provide the best Orlicz space. Also, note that Theorem \ref{theoat} does not achieve the best constant $\alpha_N$, and Theorem \ref{theolr} achieves it only when assuming the full norm $\|u\|_{W^{1,N}(\mathbb R^N)}\leq1$. Therefore, a natural question arises:
\begin{center}
\textit{Is it possible to achieve the best constant $\alpha_N$ when\\we only require the restriction $\|\nabla u\|_{L^N(\mathbb R^N)}\leq1$?}
\end{center}
S. Ibrahim, N. Masmoudi, and K. Nakanishi \cite{MR3336837} gave a positive answer to this question for the two dimensional case. Afterward, N. Masmoudi and F. Sani \cite{MR3355498} generalized the result in $\mathbb{R}^N$ for all dimensions $N$ and  H. Tang and the second author \cite{MR3472818} established independently
the same result in hyperbolic space $\mathbb{H}^N$ for all dimension $N$. We only state it in the Euclidean space $\mathbb{R}^N$ here.

\begin{theorem}\label{theod}
There exists a constant $C(N)>0$ such that for any $u\in W^{1,N}(\mathbb R^N)$ with $\|\nabla u\|_{L^N(\mathbb R^N)}\leq1$,
\begin{equation*}
\int_{\mathbb R^N}\dfrac{\exp_N(\alpha_N |u|^{\frac{N}{N-1}})}{(1+|u|)^{\frac{N}{N-1}}}\mathrm dx\leq C(N)\|u\|_{L^N(\mathbb R^N)}^N.
\end{equation*}
Moreover, this inequality fails if the power $\frac{N}{N-1}$ in the denominator is replaced with any $p<\frac{N}{N-1}$.
\end{theorem}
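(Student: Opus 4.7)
The strategy is the one introduced by Ibrahim--Masmoudi--Nakanishi for $N=2$ and extended by Masmoudi--Sani. First I reduce to the radially decreasing case by Schwarz symmetrization: the left-hand side depends only on the distribution of $|u|$, and P\'olya--Szeg\H o gives $\|\nabla u^*\|_{L^N}\leq \|\nabla u\|_{L^N}$, so I may assume $u=u(r)$ is nonnegative and nonincreasing in $r=|x|$. I then split at the level $u=1$: let $\rho:=\inf\{r>0:u(r)<1\}\in[0,\infty]$. On the sublevel set $\{u\leq 1\}$ one has $\exp_N(s)\leq C_N s^{N-1}$ for $s\in[0,\alpha_N]$ and $(1+u)^{N/(N-1)}\geq 1$, yielding immediately
\begin{equation*}
\int_{\{u\leq 1\}}\frac{\exp_N(\alpha_N u^{N/(N-1)})}{(1+u)^{N/(N-1)}}\,dx\;\leq\; C(N)\,\|u\|_{L^N(\mathbb R^N)}^N.
\end{equation*}
The key observation for the remaining piece is the trivial but crucial bound $|B_\rho|\leq \|u\|_{L^N}^N$ (since $u\geq 1$ on $B_\rho$).

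On $B_\rho$ I perform the Moser change of variable $r=\rho e^{-t/N}$, $t\in[0,\infty)$, and set $\phi(t):=u(\rho e^{-t/N})$, which is nondecreasing with $\phi(0)=1$. A direct calculation yields
\begin{align*}
\int_{B_\rho}|\nabla u|^N\,dx &= \omega_{N-1}N^{N-1}\int_0^\infty|\phi'(t)|^N\,dt,\\
\int_{B_\rho}\frac{\exp_N(\alpha_N u^{N/(N-1)})}{(1+u)^{N/(N-1)}}\,dx &= |B_\rho|\int_0^\infty \frac{\exp_N(\alpha_N\phi(t)^{N/(N-1)})}{(1+\phi(t))^{N/(N-1)}}\,e^{-t}\,dt.
\end{align*}
Rescaling $v(t):=\alpha_N^{(N-1)/N}\phi(t)$ and using $\alpha_N^{N-1}=N^{N-1}\omega_{N-1}$ converts the hypothesis into $\int_0^\infty|v'|^N\,dt\leq 1$ with $v(0)=\alpha_N^{(N-1)/N}$ fixed. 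Hence the theorem reduces to the one-dimensional estimate
\begin{equation*}
\int_0^\infty \frac{\exp_N\!\bigl(v(t)^{N/(N-1)}\bigr)}{(1+v(t))^{N/(N-1)}}\,e^{-t}\,dt \;\leq\; C(N)
\end{equation*}
for every nondecreasing $v\in W^{1,N}_{\mathrm{loc}}([0,\infty))$ with $v(0)$ bounded and $\|v'\|_{L^N}\leq 1$.

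This 1D estimate is where essentially all of the work lies, and it is the main obstacle. The starting point is the Hardy/H\"older bound $v(t)\leq v(0)+t^{(N-1)/N}$, which implies $v(t)^{N/(N-1)}-t$ grows at most like $t^{1/N}$ when $v$ is close to the extremal $t^{(N-1)/N}$; the balance is extremely delicate because the exponential $e^{v^{N/(N-1)}-t}$ is tamed only by the combination of the polynomial denominator $(1+v)^{N/(N-1)}$ and the small excess $v^{N/(N-1)}-t$. I would follow the Masmoudi--Sani decomposition of $[0,\infty)$ according to the size of the defect $v(t)-t^{(N-1)/N}$: on the set where the defect is bounded, one expands $v^{N/(N-1)}-t$ via a sharp form of Young's inequality and integrates using the denominator; on the complementary set one uses an interpolation/energy argument to show its contribution is uniformly controlled. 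Combining the 1D estimate with the identity $J_2\leq |B_\rho|\cdot C(N)\leq C(N)\|u\|_{L^N}^N$ completes the upper bound.

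\textbf{Sharpness of the exponent.} For the optimality of $N/(N-1)$ in the denominator, I would test the inequality with the normalized Moser sequence
\begin{equation*}
u_n(x)=\omega_{N-1}^{-1/N}\begin{cases}(\log n)^{(N-1)/N}, & |x|\leq 1/n,\\[1pt](\log n)^{-1/N}\log(1/|x|), & 1/n<|x|\leq 1,\\[1pt]0, & |x|>1,\end{cases}
\end{equation*}
which has $\|\nabla u_n\|_{L^N}^N=1$ and $\|u_n\|_{L^N}^N=O((\log n)^{-1})$. If the inequality held with denominator $(1+|u|)^p$ for some $p<N/(N-1)$, restricting the integral to $B_{1/n}$ and using $\exp_N(\alpha_N u_n^{N/(N-1)})\gtrsim e^{N\log n}=n^N$ there gives a left-hand side $\gtrsim (\log n)^{-p(N-1)/N}$, while the right-hand side is $O((\log n)^{-1})$; since $p(N-1)/N<1$, this is a contradiction for $n$ large.
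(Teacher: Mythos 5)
Your symmetrization, the split at the level set $\{u=1\}$, the Moser change of variables, and the sharpness argument via the Moser sequence are all standard and correct (the sharpness computation is essentially the one the paper runs in its Theorem~F). But the reduction of the upper bound to the one-dimensional estimate
\begin{equation*}
\int_0^\infty \frac{\exp_N\bigl(\alpha_N\phi(t)^{N/(N-1)}\bigr)}{(1+\phi(t))^{N/(N-1)}}\,e^{-t}\,dt\;\leq\;C(N),
\end{equation*}
followed by $J_2\leq C(N)\,|B_\rho|\leq C(N)\|u\|_{L^N}^N$, contains a fatal gap: that 1D estimate is \emph{false}, and so is the intermediate bound $J_2\leq C(N)|B_\rho|$. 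Test it with the normalized Moser sequence itself. In the $t$-variable one gets $\phi(t)=1+ct/(\log n)^{1/N}$ up to $t_n\approx N\log n$ and constant afterwards, which satisfies the gradient constraint, yet the contribution of $[t_n,\infty)$ to the 1D integral is of order $e^{N\omega_{N-1}^{1/N}(\log n)^{1/N}}/\log n\to\infty$. Equivalently: for the Moser sequence $\int_{B_{1/n}}\exp_N(\alpha_N u_n^{N/(N-1)})(1+u_n)^{-N/(N-1)}dx\asymp (\log n)^{-1}$, while $|B_\rho|\asymp e^{-N\omega_{N-1}^{1/N}(\log n)^{1/N}}\ll(\log n)^{-1}$. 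The inequality of the theorem survives only because $\|u_n\|_{L^N}^N\asymp(\log n)^{-1}$, i.e.\ because the right-hand side sees the mass of $u$ \emph{outside} the set $\{u>1\}$; your reduction discards exactly that information by replacing $\|u\|_{L^N}^N$ with the much smaller quantity $|B_\rho|$.

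This is not a repairable detail but the central difficulty of the theorem, and it is precisely the step you defer (``this is where essentially all of the work lies''). The correct architecture — used in Ibrahim--Masmoudi--Nakanishi, Masmoudi--Sani, Lu--Tang, and in this paper's own proofs of Theorems~C and~D — splits the gradient energy at a second radius $R_2$ (where a fixed fraction $\varepsilon_0$ of $\int|\nabla u|^N$ sits inside), treats the annulus $R_2<r<\rho$ by the H\"older/Adams argument you describe, and on $B_{R_2}$ invokes a ``crucial lemma'' (Lemma~4.2/4.5 here, built on the sequence-space Lemma~3.4 of \cite{MR3336837}) whose conclusion is
\begin{equation*}
\frac{\exp\bigl(\alpha_N|u^*(R_2)|^{N/(N-1)}\bigr)}{|u^*(R_2)|^{N/(N-1)}}\,R_2^{\,N}\;\leq\;C\int_{R_2}^\infty|u^*(r)|^N r^{N-1}\,dr,
\end{equation*}
with the $L^N$ norm of the tail — not the volume $R_2^N$ alone — on the right. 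Until you prove a statement of this type, the upper bound is not established.
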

 
In the paper \cite{MR3943303}, N. Lam et al established the singular version of Trudinger-Moser inequality with the exact growth under mixed norm in the Soblev type spaces $u\in D^{N,q}(\mathbb{R}^N)$, where $D^{N,q} (\mathbb{R}^{N}),~q\geq1$ is the completion of $C_{0}^{\infty}\left(\mathbb{R}^{N}\right)  $ under the norm $\left\Vert \nabla u\right\Vert _{N}+\left\Vert
u\right\Vert _{q}$. We note that when $q=N$, then
$D^{N,q}\left(\mathbb{R}^{N}\right)  =W^{1,N}\left(\mathbb{R}^{N}\right)$.
 Then the following singular Trudinger-Moser inequality with exact growth was established by N. Lam, L. Zhang, and the second author in \cite[Theorem 15]{MR3943303}.

\begin{theorem}
Let $N\geq2$, $0\leq \beta<N$, and $p\geq q\geq1$. Then there exists a constant
$C=C\left(  N,p,q,\beta\right)  >0$ such that for all $u\in D^{N,q}\left(\mathbb{R}^{N}\right)$ with $\left\Vert \nabla u\right\Vert _{N}\leq1,$ there holds
\begin{equation*}
{\displaystyle\int\limits_{\mathbb{R}^{N}}}\frac{\Phi_{N,q,\beta}\left(  \alpha_{N}\left(  1-\frac{\beta}{N}\right)u^{\frac{N}{N-1}}\right)  }{\left(  1+\left\vert u\right\vert ^{\frac{p}{N-1}\left(  1-\frac{\beta}{N}\right)  }\right)  \left\vert x\right\vert
^{\beta}}dx\leq C\left\Vert u\right\Vert _{q}^{q\left(  1-\frac{\beta}{N}\right)  },
\end{equation*}
\textit{where $\Phi_{N,q,\beta}$ is the Taylor series of the exponential without the first terms given by the Equation (F) in \cite{MR3943303}. Moreover, the inequality does not hold when }$p<q.$
\end{theorem}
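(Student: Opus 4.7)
The plan is to proceed in three stages: first symmetrize and reduce to a one-dimensional weighted inequality, then split the 1D problem at the level set $\{|\varphi|=1\}$ and prove sharp estimates on each side, and finally verify the sharpness of the exponent $p$ by a Moser concentration sequence. The blueprint is the exact-growth argument of Masmoudi and Sani \cite{MR3355498}, suitably adapted to accommodate the singular weight $|x|^{-\beta}$ and the mixed-norm space $D^{N,q}(\mathbb{R}^{N})$.

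By expanding $\Phi_{N,q,\beta}$ as a power series and noting that each surviving term of the form $t\mapsto t^{jN/(N-1)}/(1+t^{p(1-\beta/N)/(N-1)})$ is non-decreasing on $[0,\infty)$ (the definition of $\Phi_{N,q,\beta}$ chops off enough initial Taylor terms to guarantee $jN\geq p(1-\beta/N)$ for every surviving index $j$), the Hardy--Littlewood rearrangement inequality applied termwise against the radial decreasing weight $|x|^{-\beta}$, together with the Pólya--Szeg\H{o} principle, lets me replace $u$ by its symmetric decreasing rearrangement $u^{*}$ without decreasing the left-hand side or the $L^{q}$-norm and without increasing $\|\nabla u\|_{N}$. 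Thus I may assume $u=u(|x|)$ is non-increasing in $r=|x|$. Next, applying the change of variable $r = e^{-t/N}$ (up to a fixed normalization) and setting $\varphi(t) = c_{N}\,u(r)$ with $c_{N}$ chosen so that $\|\varphi'\|_{L^{N}(\mathbb{R})} = \|\nabla u\|_{L^{N}(\mathbb{R}^{N})}$, the weighted measure $|x|^{-\beta}dx$ becomes a constant multiple of $e^{-(1-\beta/N)t}dt$ and $\|u\|_{q}^{q}$ becomes a constant multiple of $\int_{\mathbb{R}}|\varphi|^{q}e^{-t}dt$, reducing the theorem to a one-dimensional weighted exact-growth estimate for a nondecreasing $\varphi$ vanishing at $-\infty$.

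Set $t_{0} := \sup\{t : \varphi(t) \leq 1\}$ and split $\mathbb{R}=A\cup B$ with $A=(-\infty,t_{0}]$ and $B=(t_{0},\infty)$. On $A$, since $|\varphi|\leq 1$ and every surviving Taylor term of $\Phi_{N,q,\beta}$ satisfies $jN/(N-1)\geq q$, each term is pointwise dominated by a constant multiple of $|\varphi|^{q}$, and H\"older's inequality against $e^{-(1-\beta/N)t}dt$ then delivers the required bound by $\bigl(\int_{A}|\varphi|^{q}e^{-t}dt\bigr)^{1-\beta/N}$. On $B$, after translating so that $t_{0}=0$, the core estimate uses the sharp pointwise H\"older bound
\[ |\varphi(t)|^{N/(N-1)} \leq t^{1/(N-1)}\Bigl(\int_{0}^{t}|\varphi'|^{N}ds\Bigr)^{1/(N-1)} + O(1), \]
followed by an Adams-type distribution-function argument against the measure $e^{-(1-\beta/N)t}dt$, in the spirit of \cite{MR3336837,MR3355498}; the damping factor $(1+|\varphi|^{p(1-\beta/N)/(N-1)})^{-1}$ supplies exactly the polynomial decay needed to sum the exponential series and absorb the mass back into $\int_{B}|\varphi|^{q}e^{-t}dt$. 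For the sharpness when $p<q$, I would test against a Moser concentration sequence $u_{k}$ with $\|\nabla u_{k}\|_{N}=1$ and $u_{k}\equiv c(\log k)^{(N-1)/N}$ on $\{|x|\leq 1/k\}$: a direct computation gives $\|u_{k}\|_{q}^{q(1-\beta/N)}\sim k^{-q(1-\beta/N)}(\log k)^{(N-1)q(1-\beta/N)/N}$, while the contribution to the LHS from $\{|x|\leq 1/k\}$ is of order $(\log k)^{-p(1-\beta/N)/N}$, which forces $p\geq q$. The \emph{main obstacle} is the 1D estimate on $B$: pinning down the exact exponent $p(1-\beta/N)/(N-1)$ in the denominator requires a delicate distribution-function analysis that simultaneously exposes both the sharp critical constant $\alpha_{N}(1-\beta/N)$ and the sharp polynomial damping.
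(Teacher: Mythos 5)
First, a point of context: the statement you were asked to prove is not proved anywhere in this paper — it is Theorem 15 of \cite{MR3943303}, quoted verbatim in the introduction as background — so there is no in-paper argument to compare yours against. Judged on its own terms, your proposal correctly identifies the standard architecture (P\'olya--Szeg\H{o} reduction to a radial nonincreasing function, Moser's change of variables turning $|x|^{-\beta}\,dx$ into a multiple of $e^{-(1-\beta/N)t}\,dt$, a split at the level set $\{|u|=1\}$, a Moser concentration sequence for the necessity of $p\ge q$), but it omits the one step that constitutes the actual content of the theorem. On your region $B$, a plain Adams-type distribution-function argument can only produce the critical constant under the full-norm constraint (as in Theorem \ref{theolr}) or a subcritical constant (as in Theorem \ref{theoat}). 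What makes $\alpha_N(1-\beta/N)$ attainable under the sole constraint $\|\nabla u\|_N\le 1$, with the $L^q$ norm reappearing on the right and the polynomial damping in the denominator, is the sequence lemma of Ibrahim--Masmoudi--Nakanishi (reproduced in this paper as Lemma \ref{lemmakey2}): applied to the increments $a_k$ of $\varphi$ over blocks of length comparable to $1$ in $t$, it converts $\|a\|_1=h$, $\|a\|_p\le1$ into the lower bound $\sum_k|a_k|^pe^k\gtrsim e^{h^{p/(p-1)}}h^{-p/(p-1)}$ (up to the normalizing constants), which is exactly what trades the exponential of $|u|^{N/(N-1)}$ at the level $\{u=1\}$ for $\int_B|\varphi|^qe^{-t}\,dt$. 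You name this step as ``the main obstacle'' and do not carry it out; deferring it means the theorem is not proved.

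Two further points would need repair even if that lemma were supplied. (i) Your justification of the symmetrization — that each surviving Taylor term $t^{jN/(N-1)}/(1+t^{\frac{p}{N-1}(1-\beta/N)})$ is nondecreasing — is false in general: the truncation index of $\Phi_{N,q,\beta}$ is tied to $q$, not to $p$, so for $q=1$, $\beta=0$, $N=2$ and $p$ large the first surviving term is $t^{2}/(1+t^{p})$, which decreases for large $t$. One must instead split at $\{|u|=1\}$ before symmetrizing, or verify monotonicity of the full quotient rather than termwise. (ii) Your sharpness computation asserts $\|u_k\|_q^{q(1-\beta/N)}\sim k^{-q(1-\beta/N)}(\log k)^{(N-1)q(1-\beta/N)/N}$; in fact the $L^q$ norm of the Moser sequence is dominated by the annulus $1/k\le|x|\le1$ and satisfies $\|u_k\|_q^q\sim(\log k)^{-q/N}$. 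Taken at face value, your asymptotic would make the ratio of the two sides blow up for \emph{every} $p$, contradicting the positive part of the theorem; with the correct asymptotic the ratio behaves like $(\log k)^{(q-p)(1-\beta/N)/N}$, which does yield the stated failure precisely when $p<q$.
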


In the paper \cite{MR3587065}, affine Trudinger-Moser inequalities with the exact growth were proved by N. Lam and the second author (see Theorem 1.3 in \cite{MR3587065}).

 N. Masmoudi and F. Sani \cite{MR3225631} also derived the following second order Adams' inequality with the exact growth condition in $\mathbb R^4$:
\begin{theorem}\label{theoms}
There exists a constant $C>0$ such that for any $u\in W^{2,2}(\mathbb R^4)$ with $\|\Delta u\|_{L^2(\mathbb R^4)}\leq1$,
\begin{equation*}
    \int_{\mathbb R^4}\dfrac{e^{32\pi^2u^2}-1}{(1+|u|)^2}\mathrm dx\leq C\|u\|^2_{L^2(\mathbb R^2)}.
\end{equation*}
Moreover, this fails if the power 2 in the denominator is replaced with any $p<2$.
\end{theorem}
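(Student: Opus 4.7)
The plan is to adapt the program of Ibrahim--Masmoudi--Nakanishi for the two-dimensional Moser--Trudinger exact-growth inequality to the biharmonic setting in $\mathbb R^4$, using ideas from Adams' original proof for the one-dimensional reduction. The argument splits naturally into four stages: (i) a rearrangement that reduces to radial $u$; (ii) a one-dimensional reduction via an exponential substitution; (iii) a sharp Adams/Carleson--Chang-type one-dimensional inequality with the exact-growth denominator; and (iv) an explicit Adams--Moser family witnessing the optimality of the exponent $2$.

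For step (i), the classical P\'olya--Szeg\H{o} inequality does not apply to $\|\Delta u\|_{L^2}$, so I would instead use the Riesz representation $u(x)=c_4\int_{\mathbb R^4}|x-y|^{-2}(-\Delta u)(y)\,dy$ together with the Riesz rearrangement inequality (equivalently, Talenti-type comparison). This yields a radial majorant $v\geq u^{\#}$ with $\|\Delta v\|_{L^2}\leq 1$; careful bookkeeping of the $L^2$ masses along the super-level sets then permits the reduction to a radial, positive, decreasing function. For (ii), the substitution $r=e^{-t/4}$ with $|x|=r$ converts $32\pi^2 u(r)^2$ into a quantity of the form $F(t)^2$, where $F$ is the primitive of an $L^2(\mathbb R)$ function $\phi$ obeying $\|\phi\|_{L^2}\leq 1$. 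The target inequality then becomes, schematically,
\begin{equation*}
\int_{\mathbb R}\frac{e^{F(t)^2}-1}{(1+F(t))^2}\,e^{-t}\,dt \;\leq\; C\int_{\mathbb R} F(t)^2\, e^{-t}\,dt.
\end{equation*}

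Step (iii), proving this one-dimensional estimate, is the technical core. I would split $\mathbb R$ at the level $\{F\leq 1\}$ and $\{F>1\}$. On the sublevel set, $(1+F)^2$ is bounded below and $e^{F^2}-1\lesssim F^2$, so the inequality is immediate. On the superlevel set, $(1+F)^2\sim F^2$, and this factor precisely compensates the quadratic loss that otherwise prevents integrability at the critical exponent; the problem then reduces to bounding $\int_{\{F>1\}}e^{F(t)^2-t}\,dt$ by the $L^2(e^{-t}dt)$-mass of $F$. The main obstacle, which is where I expect the hardest work, is the critical case: at the exact threshold $32\pi^2$, a direct decomposition diverges logarithmically, and one needs a delicate Carleson--Chang-style concentration-compactness argument showing that highly concentrated profiles of $F$ carry enough $L^2$-mass to absorb the borderline exponential contribution.

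Finally, for the sharpness of the exponent $2$ in the denominator, I would test the inequality against the standard Adams--Moser extremal family: smooth truncations and mollifications of $x\mapsto C\log(1/|x|)$ (or its biharmonic analogue) normalized so that $\|\Delta u_k\|_{L^2}=1$. These satisfy $\|u_k\|_{L^2}\leq C$ uniformly while $u_k(0)\to\infty$ at a rate for which, whenever $p<2$, a direct computation gives $\int_{\mathbb R^4} e^{32\pi^2 u_k^2}/(1+|u_k|)^p\,dx\to\infty$, contradicting the proposed inequality and establishing the optimality of the power $2$.
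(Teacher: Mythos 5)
This statement is quoted in the paper from Masmoudi--Sani \cite{MR3225631}; the paper itself does not reprove it, but its Theorem \ref{theo1} is the weighted analogue and is proved by exactly the program you outline (symmetrization, a Talenti-type oscillation estimate, a splitting into sub- and super-level regions, Adams' lemma, and a discrete "crucial lemma"). So your overall architecture is the right one. There are, however, three concrete problems with the proposal as written. First, the radial reduction via a majorant does not close: Talenti/Riesz comparison gives a radial $v\geq u^{\#}$ with $\|\Delta v\|_{L^2}\leq 1$, but then $\|v\|_{L^2}\geq\|u^{\#}\|_{L^2}=\|u\|_{L^2}$, so proving the inequality for $v$ bounds the left-hand side by $C\|v\|_{L^2}^2$, which is \emph{larger} than the desired $C\|u\|_{L^2}^2$ and cannot be controlled by it in general. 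The correct fix (used both by Masmoudi--Sani and in the paper's Proposition \ref{prop22} and proof of Theorem \ref{theo1}) is to keep the equimeasurable rearrangement $u^{\#}$ of $u$ itself, so that the $L^2$ norm on the right is preserved, and to use the comparison principle only to bound \emph{differences} $u^{\#}(r_1)-u^{\#}(r_2)$ in terms of $(\Delta u)^{**}$. Second, the genuinely hard step --- the borderline exponential on the set where $u^{\#}>1$ --- is not supplied: invoking "Carleson--Chang-style concentration-compactness" names the difficulty but not the mechanism. The actual ingredient is a quantitative discrete lemma (Lemma 3.4 of Ibrahim--Masmoudi--Nakanishi, reproduced as Lemma \ref{lemmakey2} in the paper) applied to the increments $a_k=h_k-h_{k+1}$ along a geometric partition of $(R,\infty)$, which yields the pointwise bound $\exp(32\pi^2 u^{\#}(R)^2)\,R^4/u^{\#}(R)^2\leq C\int_R^\infty|u^{\#}|^2r^3\,\mathrm dr$ (the analogue of Lemma \ref{lemmakey}); without this the critical case is open.

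Third, your sharpness argument is quantitatively wrong. For the normalized Adams--Moser family one has $32\pi^2u_k^2\approx 4\log k$ on the ball of radius $1/k$, so the exponential exactly cancels the volume factor $k^{-4}$ and the integral $\int e^{32\pi^2u_k^2}(1+|u_k|)^{-p}\,\mathrm dx$ behaves like $(\log k)^{-p/2}$; it does \emph{not} tend to infinity. What diverges is the ratio to $\|u_k\|_{L^2}^2\sim(\log k)^{-1}$, namely $(\log k)^{1-p/2}\to\infty$ precisely when $p<2$. Your assertion that $\|u_k\|_{L^2}$ is "uniformly bounded" while the integral blows up misses that the entire point of the counterexample is the \emph{decay} of $\|u_k\|_{L^2}^2$ at rate $(\log k)^{-1}$; as stated, your computation would not produce a contradiction. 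This is exactly the computation carried out in the paper's Theorem \ref{theosuperc} (see the estimates \eqref{gja} and \eqref{eq444}), and the fix is a one-line recount, but the claim as written is false.
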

H. Tang, M. Zhu, and the second author  \cite{MR3405815} established the second order Adams' inequality with the exact growth condition in $\mathbb{R}^N$ for all dimensions $N\geq3$ by demonstrating the following  results:
\begin{theorem}\label{theolu}
There exists a constant $C(N)>0$ such that for all $u\in W^{2,\frac{N}2}(\mathbb R^N)$ ($N\geq3$) satisfying $\|\Delta u\|_{L^{\frac{N}2}(\mathbb R^N)}\leq1$,
\begin{equation*}
\int_{\mathbb R^N}\dfrac{\exp_N(\beta_N|u|^{\frac{N}{N-2}})}{(1+|u|)^{\frac{N}{N-2}}}\leq C(N)\|u\|_{L^{\frac{N}{2}}(\mathbb R^N)}^{\frac{N}2},
\end{equation*}
where $\beta_N=\frac{N}{\omega_{N-1}}\left(\frac{\pi^{\frac{N}2}4}{\Gamma(\frac{N}2-1)}\right)^{\frac{N}{N-2}}$.
\end{theorem}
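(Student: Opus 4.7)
The plan is to follow the Masmoudi--Sani strategy used in $\mathbb{R}^4$ (Theorem \ref{theoms}) and adapt it to dimensions $N\geq 3$. The first step is a symmetrization reduction. Using the Green representation $u(x) = c_N \int_{\mathbb{R}^N} |x-y|^{2-N}(-\Delta u)(y)\,dy$ with $c_N = [(N-2)\omega_{N-1}]^{-1}$, I would set $f = -\Delta u$ and let $v$ be the radial solution of $-\Delta v = f^{\#}$, where $f^{\#}$ is the Schwarz rearrangement of $|f|$. Talenti's comparison principle gives $u^{\#} \leq v$ pointwise, while $\|\Delta v\|_{L^{N/2}} = \|f\|_{L^{N/2}} \leq 1$ and $\|v\|_{L^{N/2}} \geq \|u\|_{L^{N/2}}$. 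A direct computation shows that $t \mapsto \exp_N(\beta_N t^{N/(N-2)})/(1+t)^{N/(N-2)}$ is strictly increasing on $[0,\infty)$, so the Hardy--Littlewood inequality bounds the left-hand side by the same integral with $v$ replacing $|u|$. Hence one may assume $u \geq 0$ is radial and non-increasing, with $-\Delta u$ also radial and non-increasing.

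Next, I would split the integration domain as $\{u \leq 1\}\cup\{u>1\}$. On the sublevel set, the leading term of the series gives $\exp_N(\beta_N u^{N/(N-2)}) \leq C\, u^{N(N-1)/(N-2)} \leq C\, u^{N/2}$ since $N(N-1)/(N-2) \geq N/2$ whenever $N\geq 3$; this piece contributes at most $C\|u\|_{L^{N/2}}^{N/2}$. For the superlevel set, define $r_0>0$ by $u(r_0)=1$, introduce the change of variables $r = r_0\, e^{-t/N}$, and set $\psi(t) := \beta_N^{(N-2)/N}\, u(r_0 e^{-t/N})$. Using the radial integral representation of the Riesz potential and a Hardy-type estimate, the hypothesis $\|\Delta u\|_{L^{N/2}} \leq 1$ translates (with the sharp normalization encoded by $\beta_N$) into the constraint $\int_0^\infty |\psi'(t)|^{N/2}\,dt \leq 1$, and the inequality on $\{u>1\}$ reduces to the 1D Moser-type estimate
\begin{equation*}
\int_0^\infty \frac{e^{\psi(t)^{N/(N-2)} - t}}{(1+\psi(t))^{N/(N-2)}}\,dt \;\leq\; C(N)\int_0^\infty \psi(t)^{N/2}\,e^{-t}\,dt.
\end{equation*}

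This sharp 1D inequality is the heart of the argument and the main obstacle. I would establish it by splitting $[0,\infty)$ into a subcritical region $\{\psi(t)^{N/(N-2)} \leq (1-\eta)t\}$, on which the integrand is bounded by $e^{-\eta t}$ and the estimate is routine, and a borderline region where $\psi(t)^{N/(N-2)}$ is comparable to $t$. In the latter, the constraint $\int|\psi'|^{N/2}\leq 1$ combined with a Jensen--Young argument bounds $\psi(t)^{N/(N-2)} - t$ from above in such a way that, once divided by the denominator $(1+\psi)^{N/(N-2)}$, the resulting expression is integrable against the right-hand side. The exponent $N/(N-2)$ in the denominator is sharp precisely because it compensates the Moser-extremal profile $\psi(t)\asymp t^{(N-2)/N}$; lowering it leaves a non-integrable logarithmic divergence, consistent with the sharpness claim in Theorem \ref{theoms}. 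Once the 1D inequality is established, undoing the change of variables yields the required bound on $\{u>1\}$, and combining with the sublevel-set estimate completes the proof.
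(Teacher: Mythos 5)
First, note that the paper does not prove Theorem \ref{theolu}: it is quoted as background from \cite{MR3405815}, so the only internal point of comparison is the paper's proof of the closely analogous weighted result (Theorem \ref{theo1} via Proposition \ref{prop22} and Lemmas \ref{lemmakey2}--\ref{lemmakey}). Measured against that blueprint, your proposal has a gap already in the first reduction. Talenti's comparison gives $u^{\#}\le v$, hence $\|v\|_{L^{N/2}}\ge\|u\|_{L^{N/2}}$, which is the \emph{wrong} direction: after replacing $|u|$ by $v$ on the left-hand side, the best you could hope to prove is a bound by $C\|v\|_{L^{N/2}}^{N/2}$, and this does not imply the stated bound by $C\|u\|_{L^{N/2}}^{N/2}$. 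This is precisely why the exact-growth proofs (Masmoudi--Sani \cite{MR3225631}, Lu--Tang--Zhu \cite{MR3405815}, and the present paper) never introduce a global radial majorant $v$; they work with the rearrangement $u^{\#}$ of $u$ itself, whose $L^{N/2}$ norm equals that of $u$, and encode the comparison principle as a bound on the \emph{decrement} $u^{\#}(r_1)-u^{\#}(r_2)\le c\int_{r_1}^{r_2}f^{**}$ (the analogue of Proposition \ref{prop22} here). Your reduction as written loses control of the right-hand side.

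The second half of the sketch also misses the two ingredients that make the result an \emph{Adams} inequality with \emph{exact} growth rather than a Moser inequality. The constraint $\|\Delta u\|_{L^{N/2}}\le1$ does not translate, under the Moser change of variables, into the first-order constraint $\int_0^\infty|\psi'(t)|^{N/2}\,dt\le1$; one only gets control of increments of $\psi$ through the maximal function $f^{**}$ of $\Delta u$, and the exponential integral must then be handled by Adams' kernel lemma (quoted in Section \ref{mainresults} of this paper) rather than by the one-dimensional Moser lemma. More importantly, the mechanism that produces the denominator $(1+|u|)^{N/(N-2)}$ and hence the $\|u\|_{L^{N/2}}^{N/2}$ bound is the discrete sequence lemma of Ibrahim--Masmoudi--Nakanishi (Lemma \ref{lemmakey2}), applied to the dyadic increments $a_k=h_k-h_{k+1}$ of $u^{\#}$ along $r=Re^{k/N}$ as in Lemma \ref{lemmakey}: it is the two-sided asymptotics $\mu(h)\asymp e^{h^{p/(p-1)}/p}h^{-1/(p-1)}$ that converts the tail $\int_R^\infty|u^{\#}|^{N/2}r^{N-1}\,dr$ into the exact-growth estimate at the transition radius. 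Your ``Jensen--Young argument on the borderline region'' does not supply this quantitative step, and without it the splitting argument only yields the subcritical ($\beta<\beta_N$) inequality, not the critical one with the sharp denominator.
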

\begin{theorem}\label{theolu2}
If the power $\frac{N}{N-2}$ in the denominator is replaced by any $p<\frac{N}{N-2}$, there exists a sequence of functions $(u_n)$ such that $\|\Delta u_n\|_{L^{\frac{N}2}(\mathbb R^N)}\leq1$, but
\begin{equation*}
\dfrac{1}{\|u_n\|^{\frac{N}2}_{L^{\frac{N}2}(\mathbb R^N)}}\int_{\mathbb R^N}\dfrac{\exp_N(\beta_N|u_n|^{\frac{N}{N-2}})}{(1+|u_n|)^p}\mathrm dx\to\infty.
\end{equation*}
Moreover, if $\beta>\beta_N$, there exists a sequence of functions $(u_n)$ such that $\|\Delta u_n\|_{L^{\frac{N}2}(\mathbb R^N)}\leq1$, but
\begin{equation*}
\dfrac{1}{\|u_n\|^{\frac{N}2}_{L^{\frac{N}2}(\mathbb R^N)}}\int_{\mathbb R^N}\dfrac{\exp_N(\beta|u_n|^{\frac{N}{N-2}})}{(1+|u_n|)^p}\mathrm dx\to\infty,
\end{equation*}
for any $p\geq0$.
\end{theorem}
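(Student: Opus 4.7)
Since the statement is a sharpness result, the natural approach is to exhibit a concentrating Moser--Adams test sequence that violates both alternative formulations simultaneously. Set $\rho_n=e^{-n}$ and define a radial function on $\mathbb R^N$ by
\[
u_n(r)=\begin{cases} a_n, & 0\le r\le \rho_n,\\ \dfrac{a_n}{n}\,\log\dfrac{1}{r}, & \rho_n<r<1,\\ 0, & r\ge 1,\end{cases}
\]
with a small $C^2$-regularization at $r=\rho_n$ and $r=1$ that affects the estimates only at lower order. Since $\Delta u_n=-(N-2)a_n/(nr^2)$ on the annulus, a direct polar-coordinate computation gives $\|\Delta u_n\|_{L^{N/2}(\mathbb R^N)}^{N/2}=\omega_{N-1}(N-2)^{N/2}\,a_n^{N/2}\,n^{1-N/2}$, so the choice $a_n=\omega_{N-1}^{-2/N}(N-2)^{-1}\,n^{(N-2)/N}$ normalizes $\|\Delta u_n\|_{L^{N/2}(\mathbb R^N)}\le 1$. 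A key algebraic identity, equivalent to $(N-2)\omega_{N-1}=4\pi^{N/2}/\Gamma(N/2-1)$, then produces the crucial equality $\beta_N\,a_n^{N/(N-2)}=Nn$ with the sharp constant $\beta_N$ of Theorem~\ref{theolu}.

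\textbf{Key estimates.} Three scaling calculations drive the conclusion. On the plateau $\{|x|\le \rho_n\}$ we have $u_n\equiv a_n\to\infty$, so $\exp_N(\beta_N u_n^{N/(N-2)})=e^{Nn}(1+o(1))$ and, since $|B_{\rho_n}|=\omega_{N-1}e^{-Nn}/N$, the plateau contribution to the numerator satisfies
\[
\int_{B_{\rho_n}}\dfrac{\exp_N(\beta_N u_n^{N/(N-2)})}{(1+u_n)^p}\,\mathrm dx\;\asymp\;a_n^{-p}\;\asymp\;n^{-p(N-2)/N}.
\]
Next, the $L^{N/2}$-norm is dominated by the annulus; the substitution $t=\log(1/r)$ yields
\[
\|u_n\|_{L^{N/2}(\mathbb R^N)}^{N/2}=\omega_{N-1}\Bigl(\dfrac{a_n}{n}\Bigr)^{N/2}\int_0^n t^{N/2}e^{-Nt}\,\mathrm dt+o(1)\;\asymp\;n^{-1}.
\]
Dividing gives a lower bound for the ratio in the statement of order $n^{1-p(N-2)/N}$, which diverges precisely when $p<N/(N-2)$; this is the first assertion. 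For the second assertion, when $\beta>\beta_N$ the plateau exponent becomes $(\beta/\beta_N)Nn$, and the same computation produces a ratio of order $n^{1-p(N-2)/N}\cdot e^{Nn(\beta/\beta_N-1)}$, which diverges for \emph{every} $p\ge 0$.

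\textbf{Main obstacle.} The calculation itself is a scaling exercise, but two points require care. First, the regularization of $u_n$ near $r=\rho_n$ and $r=1$ (necessary so that $u_n\in W^{2,N/2}(\mathbb R^N)$) must be controlled to show it contributes only $o(1)$ relative to each of the leading-order quantities above---a standard but careful mollification argument. Second, the sharpness of the threshold at exactly $\beta_N$ rests entirely on the algebraic identity matching the $a_n$-normalization to the explicit value $\beta_N=\frac{N}{\omega_{N-1}}\bigl(\frac{4\pi^{N/2}}{\Gamma(N/2-1)}\bigr)^{N/(N-2)}$; absent this identity one only obtains the qualitative claim that the ratio diverges for some $\beta$, not at the sharp value. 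Correspondingly, the exact-growth exponent $N/(N-2)$ in the denominator emerges transparently from balancing the plateau contribution $n^{-p(N-2)/N}$ against the $L^{N/2}$-norm $n^{-1}$.
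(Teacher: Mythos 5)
This statement (Theorem~\ref{theolu2}) is quoted in the paper from \cite{MR3405815} and is not proved there; the paper's own version of this sharpness result is Theorem~\ref{theosuperc}, proved in Section~\ref{sectionsuperc} by Adams' construction $\psi_{n,\varepsilon}(r)=H\bigl(\log(R/r)/\log n\bigr)$, where $H$ is a smoothed truncated identity. Your proposal is the same concentration argument in a slightly different dress: your $u_n$ is exactly the truncated logarithm $\frac{a_n}{n}\log\frac1r$ capped at height $a_n$, your normalization computation, the identity $(N-2)\omega_{N-1}=4\pi^{N/2}/\Gamma(N/2-1)$ giving $\beta_N a_n^{N/(N-2)}=Nn$, the plateau estimate $\asymp a_n^{-p}e^{Nn}\cdot|B_{\rho_n}|\asymp n^{-p(N-2)/N}$, and the annulus-dominated bound $\|u_n\|_{L^{N/2}}^{N/2}\asymp n^{-1}$ are all correct, and the resulting ratio $n^{1-p(N-2)/N}$ (resp.\ times $e^{Nn(\beta/\beta_N-1)}$) gives both assertions.

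The one point where your write-up is load-bearing rather than routine is the sentence that the $C^2$-regularization ``affects the estimates only at lower order.'' Because the plateau exponent equals $Nn$, a relative perturbation $\delta_n$ of $a_n$ caused by renormalizing $\|\Delta u_n\|_{L^{N/2}}\le 1$ changes the exponent by $\approx -\tfrac{2N}{N-2}\,n\,\delta_n$; so you must smooth the two corners over annuli of \emph{fixed logarithmic width} (e.g.\ $\rho_n\le r\le e\rho_n$ and $e^{-1}\le r\le 1$), which adds only $O(1)$ to $\int|\Delta u_n|^{N/2}$ against a main term of order $n$, hence $\delta_n=O(1/n)$ and only a bounded multiplicative loss in $e^{Nn}$. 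If instead you smooth over a region of logarithmic width proportional to $n$ (as in Adams' and the paper's $H(\cdot/\log n)$ construction, where the caps have width $\varepsilon\log n$), the norm inflates by a factor $1+O(\varepsilon)$ and the critical case $\beta=\beta_N$ degrades by $n^{-c(\varepsilon)}$, which is exactly why the paper must carry the extra parameter $A_{\varepsilon,n}$ and, in effect, let $\varepsilon\to0$ along the sequence. Your fixed-width smoothing sidesteps this, but the claim $\|\Delta u_n\|_{L^{N/2}}^{N/2}=1+O(1/n)$ should be stated and verified explicitly rather than dismissed as lower order; with that made precise, the proof is complete and matches the method of \cite{MR3405815} and of Theorem~\ref{theosuperc}.
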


A singular version of Adams' inequality with the exact growth was also proved in $W^{2, 2}(\mathbb{R}^4)$ by N. Lam and G. Lu in \cite{MR3587065} (see Theorem 1.2 there).

\begin{theorem}
Let $0\leq\beta<4$ and
$0<\alpha\leq32\pi^{2}\left(  1-\frac{\beta}{4}\right)  $. Then there exists a
constant $C=C\left(  \alpha,\beta\right)  >0$ such that
\[
{\displaystyle\int\limits_{\mathbb{R}^{4}}}\frac{e^{\alpha u^{2}}-1}{\left(  1+\left\vert u\right\vert ^{2-\beta/2}\right)  \left\vert x\right\vert ^{\beta}}dx\leq C\left\Vert u\right\Vert_{2}^{2-\frac{\beta}{2}}~\text{for all }u\in W^{2,2}\left(  \mathbb{R}^{4}\right)  :\left\Vert \Delta u\right\Vert _{2}\leq1.
\]
\textit{Moreover, the power }$2-\beta/2$\textit{ in the denominator cannot be
replaced with any }$q<2-\beta/2$\textit{.}
\end{theorem}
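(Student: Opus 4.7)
The plan is to adapt the non-singular exact-growth Adams inequality of Masmoudi--Sani (Theorem~\ref{theoms}) to accommodate the singular weight $|x|^{-\beta}$, in the spirit of the Adimurthi--Sandeep reduction for singular Moser--Trudinger inequalities. By a standard density argument I may assume $u \in C_0^\infty(\mathbb{R}^4)$. The first step is a reduction to radial, non-increasing profiles: since $\Delta$ is second order, this is implemented through the Riesz representation $u = G \ast \Delta u$, where $G$ is the Newtonian kernel in $\mathbb{R}^4$ (behaving like $|x|^{-2}$), combined with O'Neil's rearrangement inequality for convolutions, giving a pointwise bound on $u^*$ in terms of the rearrangement of $\Delta u$. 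Because the denominator $(1+|u|^{2-\beta/2})^{-1}$ is decreasing in $|u|$, plain Hardy--Littlewood symmetrization does not immediately bound the left-hand side from above; the workaround is to perform a dyadic decomposition of the integrand according to the level sets of $|u|$, on each of which the denominator is essentially constant and rearrangement applies.

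Next, split the integral into the regions $\{|u|\leq 1\}$ and $\{|u|>1\}$. On the first, use $e^{\alpha u^2}-1 \leq C_\alpha u^2$, reducing the task to bounding $\int u^2\,|x|^{-\beta}\,\mathrm dx$ by $C\|u\|_2^{2-\beta/2}$; splitting $\mathbb{R}^4 = B_R \cup B_R^c$ (using $|u|\leq 1$ on $B_R$ and $|x|^{-\beta}\leq R^{-\beta}$ outside) and optimizing in $R$ recovers the desired scaling $\|u\|_2^{2-\beta/2}$, with interpolation against $\|\Delta u\|_2\leq 1$ covering the regime $\|u\|_2 \geq 1$. On $\{|u|>1\}$ I would use $(1+|u|^{2-\beta/2})^{-1}\leq |u|^{-(2-\beta/2)}$ and combine the layer-cake formula with the weighted measure estimate
\begin{equation*}
\int_{\{|u|>t\}} |x|^{-\beta}\,\mathrm dx \leq C\,|\{|u|>t\}|^{1-\beta/4},
\end{equation*}
which is precisely the Hardy--Littlewood rearrangement of $|x|^{-\beta}$ against the indicator of the super-level set. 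This reduces matters to the sharp decay estimate on $|\{|u|>t\}|$ supplied by the Masmoudi--Sani machinery underlying Theorem~\ref{theoms}.

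The main obstacle is the delicate balancing at the sharp endpoint $\alpha = 32\pi^2(1-\beta/4)$: exactly the factor $(1-\beta/4)$ must be produced by the weighted level-set estimate without eroding the sharp exponent $2-\beta/2$ in the denominator. The cleanest way to see this is to run the Masmoudi--Sani argument directly in the weighted measure $|x|^{-\beta}\,\mathrm dx$, so that the singular weight is absorbed in the reduction of the exponential constant exactly as in Adimurthi--Sandeep; this avoids any mismatch between the weighted rearrangement and the exact-growth denominator. Finally, for the sharpness of the power $2-\beta/2$, I would test the inequality against a Moser-type concentrating sequence $u_n$ with $\|\Delta u_n\|_2\leq 1$ that concentrates at the origin to exploit the singularity $|x|^{-\beta}$; a direct calculation of both sides then shows that replacing $2-\beta/2$ by any smaller exponent $q$ forces the ratio of left-hand side to right-hand side to diverge as $n\to\infty$.
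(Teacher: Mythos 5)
First, a point of orientation: the paper does not prove this theorem. It is quoted in the introduction as a known result of N.~Lam and the second author (cited as Theorem~1.2 of \cite{MR3587065}), so there is no in-paper proof to compare against. Judged on its own merits, your proposal has the right general architecture (O'Neil/Riesz representation in place of P\'olya--Szeg\H{o}, splitting at $\{|u|\le 1\}$, the weighted level-set bound $\int_{\{|u|>t\}}|x|^{-\beta}\,\mathrm dx\le C|\{|u|>t\}|^{1-\beta/4}$, and the $B_R\cup B_R^c$ optimization that produces the exponent $2-\beta/2$ on the subcritical piece), but it does not close at the critical constant.

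The genuine gap is in your treatment of $\{|u|>1\}$. If you combine the layer-cake formula with the weighted level-set bound and then feed in the decay of $|\{|u|>t\}|$ that the unweighted Masmoudi--Sani theorem supplies, namely $|\{|u|>t\}|\le C\|u\|_2^2(1+t)^2e^{-32\pi^2t^2}$, you get an integrand of size roughly $\|u\|_2^{2-\beta/2}\,t\,e^{(\alpha-32\pi^2(1-\beta/4))t^2}$, which is integrable only for $\alpha<32\pi^2(1-\beta/4)$ and diverges at the endpoint $\alpha=32\pi^2(1-\beta/4)$. So the route you describe in detail proves only the subcritical statement. You do acknowledge this ("the main obstacle is the delicate balancing at the sharp endpoint") and propose the correct remedy --- rerun the entire exact-growth machinery in the measure $|x|^{-\beta}\,\mathrm dx$ --- but that remedy is precisely where all the work lives and it is left unexecuted: one needs a rearrangement adapted to two measures (the Lebesgue measure for $\Delta u$ and the singular measure for $u$; this is what the present paper formalizes as the half $\mu_{\eta,\nu}$-symmetrization), a weighted Talenti-type comparison (Proposition~\ref{prop22}), a weighted version of the key lemma producing the constant $32\pi^2(1-\beta/4)$ and the norm $\|u\|_2^{2-\beta/2}$ on the right-hand side (the analogue of Lemma~\ref{lemmakey}), and the Adams--Garsia lemma on the inner region. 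None of these is sketched, so the critical case remains unproved. The sharpness argument via a concentrating log-cutoff sequence at the origin is the standard and correct outline (compare the proof of Theorem~\ref{theosuperc}), though as stated it too is only a plan.
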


To conclude, we note that
N. Masmoudi and F. Sani \cite{MR3848068} obtained the exact growth inequality with higher order derivatives by considering the $k$-generalized gradient
\begin{equation*}
\nabla^ku=\left\{\begin{array}{ll}
     (-\Delta)^{\frac{k}{2}}u,&\mbox{if }m\mbox{ is even},  \\
     \nabla(-\Delta)^{\frac{k-1}2}u,&\mbox{if }m\mbox{ is odd},
\end{array}\right.
\end{equation*}
and the critical value $\beta_{N,k}$ defined by
\begin{equation*}
\beta_{N,k}=\dfrac{N}{\omega_{N-1}}\left\{\begin{array}{ll}
     \left[\dfrac{\pi^{\frac{N}2}2^k\Gamma\left(\frac{k}2\right)}{\Gamma\left(\frac{N-k}2\right)}\right]^{\frac{N}{N-k}}&\mbox{if }m\mbox{ is even},  \\
     \left[\dfrac{\pi^{\frac{N}2}2^k\Gamma\left(\frac{k+1}2\right)}{\Gamma\left(\frac{N-k+1}2\right)}\right]^{\frac{N}{N-k}}&\mbox{if }m\mbox{ is odd},
\end{array}\right.
\end{equation*}
$\omega_{N-1}$ is the surface measure of the unit $N$-ball. Let $\lceil x\rceil:=\min\{n\in\mathbb Z\colon n\geq x\}$ denote the celling function. They precisely proved the following theorem.

\begin{theorem}\label{theoexacthigher}
Let $k$ be a positive integer with $2<k<N$. There exists a constant $C_{N,k}>0$ such that
\begin{equation*}
\int_{\mathbb R^N}\dfrac{\exp_{\lceil\frac{N}{k}-2\rceil}\left(\beta_{N,k}|u|^{\frac{N}{N-k}}\right)}{(1+|u|)^{\frac{N}{N-k}}}\mathrm dx\leq C_{N,k}\|u\|^{\frac{N}k}_{L ^{\frac{N}k}(\mathbb R^N)},
\end{equation*}
for all $u\in W^{k,\frac{N}{k}}(\mathbb R^N)$ with $\|\nabla^ku\|_{L^{\frac{N}k}(\mathbb R^N)}\leq1$. Moreover, the above inequality fails if the power $\frac{N}{N-k}$ in the denominator is replaced with any $p<\frac{N}{N-k}$.
\end{theorem}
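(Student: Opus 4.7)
The plan is to adapt the symmetrization-and-one-dimensional-reduction strategy used for the second order case (compare Theorems \ref{theoms} and \ref{theolu}) to arbitrary order $2<k<N$, following the scheme of \cite{MR3848068}. As a first step, invert $\nabla^k$ by its Riesz-type fundamental solution: depending on the parity of $k$, either $(-\Delta)^{-k/2}$ or an inverse of $\nabla(-\Delta)^{(k-1)/2}$ produces a convolution representation
\[
u(x)=c_{N,k}\int_{\mathbb R^N}\frac{K_k(x-y)}{|x-y|^{N-k}}\,\nabla^ku(y)\,\mathrm dy,
\]
where $|K_k|\equiv 1$ (a unit vector in the odd case) and the constant $c_{N,k}$ is chosen exactly so that $\beta_{N,k}$ emerges as the critical exponent in the exponential.

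Next, I would pass to decreasing rearrangements. The O'Neil--Adams convolution inequality, combined with the fact that $|x|^{k-N}$ lies in weak-$L^{N/(N-k)}$, provides a pointwise control of $u^*(s)$ by a one-dimensional convolution involving $f^*:=(\nabla^k u)^*$. After Adams' substitution $s=\omega_{N-1}e^{-t}/N$ and a normalization turning $f^*$ into a function $a$ with $\|a\|_{L^{N/k}(\mathbb R)}\le 1$, the integral in the theorem is bounded by
\[
\int_{-\infty}^{\infty}\frac{\exp_{\lceil N/k-2\rceil}\bigl(\beta_{N,k}|\phi(t)|^{N/(N-k)}\bigr)}{(1+|\phi(t)|)^{N/(N-k)}}\,e^{-\frac{N}{N-k}t}\,\mathrm dt,
\]
where $\phi(t)=\int_{-\infty}^{t}a(\tau)\,\mathrm d\tau$. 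The heart of the proof is then a sharp one-dimensional lemma (the higher order analogue of the key lemma in \cite{MR3225631,MR3405815}): whenever $\|a\|_{L^{N/k}(\mathbb R)}\le 1$, the displayed integral is uniformly bounded. The Taylor cutoff $\lceil N/k-2\rceil$ is forced by requiring the subtracted polynomial terms in $\phi$ to sit exactly at the integrability threshold against the weight $e^{-Nt/(N-k)}$.

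To assemble the $N$-dimensional estimate, I would split $\mathbb R^N=\{|u|\le 1\}\cup\{|u|>1\}$. On the sublevel set the truncated exponential is pointwise dominated by $C\,|u|^{\lceil N/k-1\rceil\cdot N/(N-k)}$, hence controlled by $C\|u\|_{L^{N/k}}^{N/k}$ via the embedding $W^{k,N/k}\hookrightarrow L^q$ for $q\ge N/k$ together with interpolation. On the superlevel set the one-dimensional lemma applies, and undoing the change of variables shows that the resulting constant is linear in $\|u\|_{L^{N/k}(\mathbb R^N)}^{N/k}$, so the two contributions combine to give the stated bound $C_{N,k}\|u\|_{L^{N/k}}^{N/k}$.

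For the sharpness claim, test the inequality against Moser--Adams type concentrating sequences $u_n$: radial, mollified profiles with $\|\nabla^k u_n\|_{N/k}\le 1$, constant $\lambda_n$ on a ball of radius $1/n$, and $\lambda_n^{N/(N-k)}=(N/\beta_{N,k})\log n+O(1)$. Replacing $N/(N-k)$ in the denominator by any smaller $p$ produces an additional factor $\lambda_n^{N/(N-k)-p}\to\infty$ from the central ball, while $\|u_n\|_{L^{N/k}}^{N/k}$ grows only polynomially in $\log n$, breaking the inequality. The hard part will be the sharp one-dimensional lemma in the higher order regime: in contrast to the first and second order cases, one must simultaneously balance three discrete parameters — the critical constant $\beta_{N,k}$, the denominator exponent $N/(N-k)$, and the Taylor cutoff $\lceil N/k-2\rceil$ — and the analysis becomes particularly subtle when $N/k$ is near an integer, where borderline Taylor terms can only be absorbed by careful level-set decomposition rather than by the $L^{N/k}$ norm directly.
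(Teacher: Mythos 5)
A point of context first: the paper does not prove Theorem \ref{theoexacthigher} at all --- it is quoted from Masmoudi and Sani \cite{MR3848068} as background --- and the paper's own higher-order analogue in the weighted setting (Theorem \ref{theoexactk}) is obtained by a genuinely different mechanism: iterated Hardy-type inequalities (Lemmas \ref{lemmahardyL} and \ref{lemmaLjnorm}) reduce the order-$k$ statement to the order-$2$ one, so no higher-order Riesz representation and no higher-order one-dimensional lemma are ever needed. Your outline instead follows the Riesz-potential/O'Neil route of \cite{MR0960950} and \cite{MR3848068}, which is a legitimate strategy for the Euclidean statement, so it must be judged on its own terms.

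Judged on those terms, there is a genuine gap at the step you yourself call the heart of the proof. You assert that the reduced one-dimensional integral is \emph{uniformly bounded} whenever $\|a\|_{L^{N/k}(\mathbb R)}\le1$, and that "undoing the change of variables shows that the resulting constant is linear in $\|u\|_{L^{N/k}}^{N/k}$." No such uniform bound can hold: the dilation $u\mapsto u(\cdot/R)$ preserves $\|\nabla^k u\|_{L^{N/k}}$ but multiplies both $\int_{\mathbb R^N}\exp_{\cdot}(\beta_{N,k}|u|^{\frac{N}{N-k}})(1+|u|)^{-\frac{N}{N-k}}\mathrm dx$ and $\|u\|_{L^{N/k}}^{N/k}$ by $R^N$; in the $t$-variable this is a translation of $a$ (norm-preserving) that multiplies the weighted integral by $R^N$. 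So the one-dimensional bound must itself be proportional to the $L^{N/k}$-mass of $u$, and a uniform constant can never produce the factor $\|u\|_{L^{N/k}}^{N/k}$. What is actually required --- and what your sketch omits --- is the exact-growth key lemma in the form of Lemma \ref{lemmakey} here (or its analogues in \cite{MR3225631,MR3405815,MR3848068}): one splits the datum at a radius $R_2$ capturing a fixed fraction $\varepsilon_0$ of its norm, applies Adams' lemma only on $(0,R_2)$, and is left with a factor of the type $R_2^{N}\exp\bigl(c\,\beta_{N,k}|u^*(R_2)|^{\frac{N}{N-k}}\bigr)/|u^*(R_2)|^{\frac{N}{N-k}}$, which must then be converted into $\int_{R_2}^{\infty}|u^*|^{N/k}$ via a dyadic-in-$\log$ decomposition and the sequence lemma (Lemma \ref{lemmakey2}, i.e.\ Lemma 3.4 of \cite{MR3336837}). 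That conversion is the entire content of the exact-growth phenomenon and is absent from your argument. Two smaller corrections: on $\{|u|\le1\}$ the right bound is the pointwise inequality $|u|^{q_0}\le|u|^{N/k}$ for the lowest surviving power $q_0\ge N/k$ (invoking the Sobolev embedding would contaminate the right-hand side with the full $W^{k,N/k}$ norm rather than $\|u\|_{L^{N/k}}$ alone); and in the sharpness argument $\|u_n\|_{L^{N/k}}^{N/k}$ does not grow --- it decays like $(\log n)^{-1}$ (compare \eqref{gja}) --- and the divergence of the quotient comes from comparing the core contribution $(\log n)^{-p\frac{N-k}{N}}$ against this $(\log n)^{-1}$, which diverges precisely when $p<\frac{N}{N-k}$.
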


We also mention in passing that Trudinger-Moser type inequality with exact growth with Riesz type potential have also been studied in \cite{Qin} and \cite{MQ}, and the Trudinger-Moser and Adams trace inequalities with exact growth on half spaces were studied in \cite{CLYZ}.

\subsection{Main Results}

In this work, we prove an inequality involving exact growth for the weighted Sobolev space $X^{k,p}_\infty$, which will be defined in the following paragraph. In recent times, the weighted Sobolev space $X^{k,p}_R$ has been extensively studied due to its applicability in radial elliptic problems for the operator $Lu=-r^{-\gamma}(r^\alpha|u'|^\beta u')'$ which includes the $p$-Laplacian and the $k$-Hessian as a particular case. For more details, see \cite{MR1422009,MR3670473,MR1929156,MR1069756,MR1982932}, and references therein. Specifically, by choosing the parameters properly, we have
\begin{flushleft}
    $\mathrm{(i)}$ $L$ is the Laplacian for $\alpha=\gamma=N-1$ and $\beta=0$;\\
    $\mathrm{(ii)}$ $L$ is the $p$-Laplacian for $\alpha=\gamma=N-1$ and $\beta=p-2$;\\
    $\mathrm{(iii)}$ $L$ is the $k$-Hessian for $\alpha=N-k$, $\gamma=N-1$ and $\beta=k-1$.
\end{flushleft}
Therefore, the importance of this space arises from the fact that $X^{1,p}_{0,R}$ is a suitable space to work on problems like:
\begin{equation*}
\left\{\begin{array}{l}
Lu=f(r,u)\mbox{ in }(0,R),  \\
u'(0)=u(R)=0,\\
u>0\mbox{ in }(0,R).
\end{array}\right.
\end{equation*}

Before we state our results, for easy reference, we introduce some notations and the functions spaces used throughout this work.
For each nonnegative integer $\ell$ and $0<R\leq\infty$, let $AC_{\mathrm{loc}}^\ell(0,R)$ be the set of all functions $u\colon(0,R)\to \mathbb R$ such that $u^{(\ell)}\in AC_{\mathrm{loc}}(0,R)$, where $u^{(\ell)}=\mathrm d^\ell u/\mathrm dr^\ell$. For $p\geq1$ and $\alpha$ real numbers, we denote by $L^p_\alpha=L^p_\alpha(0,R)$ the weighted Lebesgue
 space  of measurable functions $u\colon(0,R)\to\mathbb R$ such that
\begin{equation*}
\|u\|_{L^p_\alpha}=\left(\int_0^R|u|^pr^\alpha\mathrm dr\right)^{1/p}<\infty,
\end{equation*}
which is a Banach space under the standard norm $\|u\|_{L_\alpha^p}$.

For any positive integer $k$ and $(\alpha_0,\ldots,\alpha_k)\in \mathbb R^{k+1}$, with $\alpha_j>-1$ for $j=0,1,\ldots,k$, in \cite{MR4112674}, the authors considered the weighted Sobolev spaces for higher order derivatives $X^{k,p}_{0,R}=X^{k,p}_{0,R}(\alpha_0,\ldots,\alpha_k)$ given by all functions $u\in AC^{k-1}_{\mathrm{loc}}(0,R)$ such that
\begin{equation*}
\lim_{r\to R}u^{(j)}(r)=0,\quad j=0,\ldots,k-1\mbox{ and }u^{(j)}\in L^p_{\alpha_j},\quad j=0,\ldots,k.
\end{equation*}
Recently, in \cite{arXiv:2302.02262} was considered the weighted Sobolev spaces for higher order derivatives without boundary conditions denoted by
\begin{equation*}
X_R^{k,p}\!=\!X_R^{k,p}(\alpha_0,\ldots,\alpha_k)\!=\!\{u\colon(0,R)\to\mathbb R : u^{(j)}\in L^p_{\alpha_j},\ j=0,1,\ldots,k\},
\end{equation*}
for any positive integer $k$ and $(\alpha_0,\ldots,\alpha_k)\in\mathbb R^{k+1}$. Using \cite[Proposition 2.2]{arXiv:2302.02262}, one can obtain $u\in AC_{\mathrm{loc}}^{k-1}(0,R)$ for all $u\in X^{k,p}_R$. The spaces $X_R^{k,p}$ and $X^{k,p}_{0,R}$ are complete under the norm
\begin{equation*}
\|u\|_{X_R^{k,p}}=\left(\sum_{j=0}^k\|u^{(j)}\|^p_{L^p_{\alpha_j}}\right)^{1/p}.
\end{equation*}
The study of weighted Sobolev spaces has attracted significant attention due to their essential role in understanding various partial differential equations that involve radial functions. For more details about the theory of weighted Sobolev spaces and embeddings related to such spaces, please refer to \cite{MR2838041, MR3209335, MR3575914, MR3957979, arXiv:2108.04977, arXiv:2203.14181, MR1929156} and references therein.

Considering the elliptic operator $L_{\theta,\gamma}u=-r^{-\theta}(r^{\gamma}u')'$ (which corresponds to the Laplacian when $\theta=\gamma=N-1$ and $u$ is radial), we can define the $k$-generalized operator in a similar manner as defined by D. R. Adams \cite{MR0960950}:
\begin{equation*}
\nabla_L^ku=\left\{\begin{array}{ll}
L_{\theta,\gamma}^{\frac{k}2}u,&\mbox{if }k\mbox{ is even},  \\
(L_{\theta,\gamma}^{\frac{k-1}2}u)',&\mbox{if }k\mbox{ is odd}.
\end{array}\right.
\end{equation*}

Let $0<R<\infty$. We define the weighted Sobolev space with Navier boundary condition, denoted by $X^{k,p}_{\mathcal N,L,R}$, as follows:
\begin{equation*}
X^{k,p}_{\mathcal N,L,R}=\left\{u\in X_R^{k,p}(\alpha_0,\ldots,\alpha_k)\colon L_{\theta,\gamma}^j u(R)=0\quad \forall j=0,\ldots,\left\lfloor\frac{k-1}2\right\rfloor\right\},
\end{equation*}
where $\lfloor x\rfloor$ denotes the floor function defined as the largest integer less than or equal to $x\in\mathbb R$. When $R=\infty$, $X^{k,p}_{\mathcal N,L,\infty}$ denotes the space $X^{k,p}_\infty$. In our first theorem, we establish the equivalence between the norms $\|\nabla^k_L\cdot\|_{L^p_{\nu}}$ and $\|\cdot\|_{X^{k,p}_R}$ on $X^{k,p}_{\mathcal N,L,R}$ under the following condition:

\begin{equation}\label{hipthetagamma}
\left\{\begin{array}{ll}
     \gamma-1+\lfloor\frac{k-2}{2}\rfloor(\gamma-\theta-2)-\dfrac{\alpha_k-kp+1}{p}>0,&\mbox{if }\theta+2\geq\gamma,  \\
     \gamma-1-\dfrac{\alpha_k-kp+1}{p}>0,&\mbox{if }\theta+2<\gamma.
\end{array}\right.
\end{equation}

\begin{theo}\label{propequivnormkgrad}
Assume $0<R\leq\infty$, $k\geq2$, $\alpha_k-(k-1)p+1\geq0$, \eqref{hipthetagamma}, and $\alpha_i\geq\alpha_k-(k-i)p$ for all $i=0,\ldots,k$. The norm $\|\nabla^k_L\cdot\|_{L^p_{\nu}}$ is equivalent to the norm $\|\cdot\|_{X^{k,p}_R}$ on $X^{k,p}_{\mathcal N,L,R}(\alpha_0,\ldots,\alpha_k)$, where $\nu=\alpha_k+\lfloor\frac{k}{2}\rfloor(\theta-\gamma)p$.
\end{theo}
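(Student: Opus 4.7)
The plan is to prove the two one-sided bounds that together give the asserted equivalence, both through an explicit expansion of $\nabla^k_L$ and iterated weighted Hardy inequalities, exploiting the boundary/decay information encoded in membership in $X^{k,p}_{\mathcal N,L,R}$.

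First I would establish by induction on $m$ the representation
\begin{equation*}
L^m_{\theta,\gamma}u=\sum_{j=1}^{2m}c_{j,m}\,r^{a_{j,m}}\,u^{(j)},\qquad a_{j,m}=m(\gamma-\theta)-(2m-j),
\end{equation*}
together with the analogous formula (obtained by one extra differentiation) when $k$ is odd. The choice $\nu=\alpha_k+\lfloor k/2\rfloor(\theta-\gamma)p$ yields $p\,a_{j,m}+\nu=\alpha_k-(k-j)p$, so the top term $j=k$ integrates exactly to $\|u^{(k)}\|_{L^p_{\alpha_k}}^p$, while each subprincipal term carries the weight $r^{\alpha_k-(k-j)p}$. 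The weighted one-dimensional Hardy inequality
\begin{equation*}
\int_0^R r^{\beta}|v|^p\,dr\le C\int_0^R r^{\beta+p}|v'|^p\,dr,
\end{equation*}
applied $k-j$ times, reduces each subprincipal contribution to a multiple of $\|u^{(k)}\|_{L^p_{\alpha_k}}^p$. The condition \eqref{hipthetagamma} is precisely what makes every Hardy step admissible (it is equivalent to the Muckenhoupt-type requirement on $\beta$), and $\alpha_k-(k-1)p+1\ge0$ guarantees that the corresponding boundary contributions at $r=0$ vanish. This yields the direct bound $\|\nabla^k_L u\|_{L^p_\nu}\le C\|u\|_{X^{k,p}_R}$.

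For the reverse inequality I would induct on $k$. The base case $k=2$ reads $\nabla^2_L u=L_{\theta,\gamma} u$; the Navier condition $u(R)=0$ (replaced by $L^p$-decay when $R=\infty$), combined with the natural regularity $r^\gamma u'(r)\to 0$ as $r\to 0$ for elements of $X^{2,p}_R$, allows one to integrate $(r^\gamma u')'=-r^\theta L_{\theta,\gamma}u$ and obtain explicit integral representations for $u'$ and $u$ in terms of $L_{\theta,\gamma} u$. A Muckenhoupt-type weighted Hardy estimate then gives $\|u^{(j)}\|_{L^p_{\alpha_j}}\le C\|L_{\theta,\gamma} u\|_{L^p_\nu}$ for $j=0,1$, and $u''$ is recovered algebraically from $u''=-r^{\theta-\gamma}L_{\theta,\gamma}u-\gamma r^{-1}u'$, whose right-hand side was just bounded in $L^p_{\alpha_2}$. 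For the inductive step, set $v:=L_{\theta,\gamma} u$; then $\nabla^{k-2}_L v=\nabla^k_L u$ and the Navier data for $u$ at level $k$ collapse to those for $v$ at level $k-2$, so the inductive hypothesis yields $\|v\|_{X^{k-2,p}_R}\le C\|\nabla^k_L u\|_{L^p_\nu}$; the base-case inversion applied to the identity $L_{\theta,\gamma}u=v$ then transfers this bound to $\|u\|_{X^{k,p}_R}$.

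The principal obstacle will be the careful tracking of admissible parameter windows through the $\lfloor k/2\rfloor$ iterations of Hardy and across the successive inversion steps of the induction. Each iteration consumes one derivative and one power of $r$, and the dichotomy $\theta+2\ge\gamma$ versus $\theta+2<\gamma$ in \eqref{hipthetagamma} reflects which of the principal or subprincipal contributions of $L^{\lfloor(k-2)/2\rfloor}_{\theta,\gamma}$ governs the boundary behavior at the origin; the two regimes must be handled separately. Verifying that the standing hypothesis $\alpha_i\ge\alpha_k-(k-i)p$ together with \eqref{hipthetagamma} keeps every intermediate weighted estimate inside its Muckenhoupt range, uniformly in both $R<\infty$ and $R=\infty$, is the bookkeeping heart of the argument.
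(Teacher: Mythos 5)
Your forward bound is the paper's Lemma \ref{lemmajaosn} almost verbatim: the same expansion \eqref{doubleinduction} of $\nabla^k_L$ with weights $r^{\lfloor k/2\rfloor(\gamma-\theta)-i}$, followed by weighted Hardy/Sobolev embeddings (the paper routes these through Theorems \ref{theo32} and \ref{theoimersaoinfinito} rather than iterating Hardy by hand, but the content is the same). For the reverse bound the two arguments diverge in organization though not in substance. You propose a direct quantitative induction: invert $L_{\theta,\gamma}$ explicitly at each stage and estimate. The paper instead frames the statement as ``$\phi(u)=\nabla^k_Lu$ is a linear isomorphism onto $L^p_\nu$'' and gets the reverse inequality for free from the Open Mapping Theorem, which requires proving completeness of $X^{k,p}_{\mathcal N,L,R}$ (Lemma \ref{corxnrbanach}) and then injectivity and surjectivity separately; surjectivity is carried by the same integral representation you use (Lemmas \ref{lemmafjs} and \ref{qww}), so the constructions coincide. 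What the paper's route buys is that the quantitative inverse bound never has to be assembled by hand; what your route buys is that you avoid the completeness lemma and the abstract machinery. Either way you must still prove the one statement you label ``natural regularity,'' namely $r^\gamma\nabla^i_Lu\to0$ as $r\to0$ for odd $i$ (in the base case, $r^\gamma u'\to0$): this is exactly what identifies $u$ with the integral representation of $L_{\theta,\gamma}u$, it is \emph{not} automatic for elements of $X^{2,p}_R$, and it is precisely where the hypothesis \eqref{hipthetagamma} and the radial lemmas \eqref{RL1}--\eqref{RL3} are consumed (the paper's claim \eqref{eqclaimhsu}, including a separate case when $\alpha_k-(k-1)p+1=0$ where a logarithmic radial lemma is needed). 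You acknowledge the parameter bookkeeping in general terms, but this particular step is the crux of the reverse direction rather than routine bookkeeping, so it should be proved, not asserted. With that supplied, your plan goes through and yields the same theorem.
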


The objective of this paper is to provide a comprehensive study to obtain a necessary and sufficient condition on $\beta$ and $q$ for the following inequality to hold:
\begin{equation*}
\int_0^\infty\dfrac{\exp_p(\beta|u|^{\frac{p}{p-1}})}{(1+|u|)^{q}}r^\eta\mathrm dr\leq C\|u\|^p_{L^p_\eta}
\end{equation*}
for $u\in X^{k,p}_\infty$ satisfying $\|\nabla^k_Lu\|_{L^p_\nu}\leq1$, where $\alpha_k=kp-1$, $\nu=\alpha_k+\lfloor\frac{k}2\rfloor(\theta-\gamma)p$, and
\begin{equation*}
    \exp_p(t):=\sum_{j=0}^\infty\dfrac{t^{p-1+j}}{\Gamma(p+j)}.
\end{equation*}
For more details about the choice of $\exp_p$ instead of $\exp_{\lceil p\rceil}$, see \cite[Remark 1.2]{arXiv:2306.00194}.

Firstly, we provide Trudinger-Moser and Adams inequalities with the exact growth condition on weighted Sobolev spaces, precisely for the case of first order derivatives and for the case of second order derivatives.

\begin{theo}\label{theok1}
Let $p>1$, and consider $X^{1,p}_\infty(\alpha_0,\alpha_1)$ with $\alpha_1=p-1$ and $\alpha_0\geq-1$. If $\eta>-1$, then there exists a constant $C=C(p,\eta)>0$ such that for all $u\in X^{1,p}_\infty$ satisfying $\|u'\|_{L^p_{\alpha_1}}\leq 1$,
\begin{equation*}
\int_0^\infty\dfrac{\exp_p(\beta_{0,1}|u|^{\frac{p}{p-1}})}{(1+|u|)^{\frac{p}{p-1}}}r^{\eta}\mathrm dr\leq C\|u\|^{p}_{L^{p}_{\eta}},
\end{equation*}
where $\beta_{0,1}=\eta+1$.
\end{theo}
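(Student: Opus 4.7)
My plan is to reduce the inequality to a one-dimensional exact-growth Moser--Trudinger estimate with exponential weight via successive changes of variables, and then handle the resulting model estimate along the lines of Ibrahim--Masmoudi--Nakanishi and Masmoudi--Sani, adapted to arbitrary real $p>1$. First I apply the substitution $s=r^{(\eta+1)/p}$ and set $\lambda:=(\eta+1)/p>0$, $v(s):=u(s^{1/\lambda})$. A direct Jacobian computation gives
\begin{equation*}
\int_0^\infty|u'|^p r^{p-1}\,dr=\lambda^{p-1}\int_0^\infty|v'|^p s^{p-1}\,ds,\qquad \int_0^\infty f(u)\,r^\eta\,dr=\frac{1}{\lambda}\int_0^\infty f(v)\,s^{p-1}\,ds.
\end{equation*}
Rescaling $v$ by $\lambda^{(p-1)/p}$ normalizes $\|v'\|_{L^p_{p-1}}\le1$ and converts the target constant $\beta_{0,1}=\eta+1$ into $p$, with the $\lambda$-dependent distortion of the denominator $(1+|u|)^{p/(p-1)}$ absorbed into the final constant. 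The problem reduces to establishing the special case $\eta=p-1$ with sharp constant $\beta=p$.

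Next, by the decreasing rearrangement of $v$ with respect to the measure $s^{p-1}\,ds$ (which preserves the relevant norms and only enlarges the left-hand side by a P\'olya--Szeg\"{o} argument) I may assume $v\ge 0$ is nonincreasing; then monotonicity yields $s^p v(s)^p\le p\,\|v\|_{L^p_{p-1}}^p$. Splitting the integral at the level $\{v=1\}$: on the sublevel set $\{v\le 1\}$, the inequality $\exp_p(t)\le C_p t^{p-1}$ valid for $t\le p$ gives a direct bound
\begin{equation*}
\int_{\{v\le1\}}\frac{\exp_p(p|v|^{p/(p-1)})}{(1+|v|)^{p/(p-1)}}\,s^{p-1}\,ds\le C\int_0^\infty|v|^p s^{p-1}\,ds.
\end{equation*}
On the superlevel set $\{v>1\}=(0,s_1)$ with $v(s_1)=1$, the substitution $\tau=\log(s_1/s)$ and $\phi(\tau):=v(s)$ gives $\phi(0)=1$, $\phi\ge 1$, $\int_0^\infty|\phi'|^p\,d\tau\le 1$, and converts the corresponding part of the integral into $s_1^p\int_0^\infty \exp_p(p\phi^{p/(p-1)})(1+\phi)^{-p/(p-1)}e^{-p\tau}\,d\tau$. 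Since $s_1^p\le p\|v\|_{L^p_{p-1}}^p$ by monotonicity, it remains only to bound this $\tau$-integral by a constant depending on $p$ alone.

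The resulting one-dimensional model estimate
\begin{equation*}
\int_0^\infty\frac{\exp_p(p\phi^{p/(p-1)})}{(1+\phi)^{p/(p-1)}}\,e^{-p\tau}\,d\tau\le C(p),\qquad \phi(0)=1,\ \|\phi'\|_{L^p(0,\infty)}\le1,
\end{equation*}
is the main obstacle. The naive pointwise H\"{o}lder bound $\phi(\tau)^{p/(p-1)}\le\tau$ only yields an integrand of order $1/\tau$, which is borderline divergent, so the exact-growth denominator $(1+\phi)^{p/(p-1)}$ must be exploited quantitatively. I plan to decompose $(0,\infty)$ into the set $\{\tau:\phi(\tau)^{p/(p-1)}\le(1-\delta)\tau\}$, where the exponential factor is dominated by $e^{-p\delta\tau}$ and the integral converges directly, and its complement, where the smallness of the Dirichlet deficit forces smallness of the measure via a Masmoudi--Sani concentration argument. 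The adaptation to non-integer $p$ is accommodated by the series representation $\exp_p(t)=\sum_{j\ge 0}t^{p-1+j}/\Gamma(p+j)$: the leading term $t^{p-1}/\Gamma(p)$ captures the borderline exponential growth, while the higher-order terms are absorbed by the polynomial denominator exactly as in the integer case.
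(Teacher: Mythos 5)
Your reduction to the normalized case $\eta=p-1$, $\beta=p$ via $s=r^{(\eta+1)/p}$ is correct, and the symmetrization, the splitting at the level set $\{v=1\}$, and the logarithmic change of variables all parallel the paper's strategy (which uses the half $\mu_{\eta,\eta}$-rearrangement, the P\'olya--Szeg\"o inequality of \cite{MR4097244}, and the oscillation bound \eqref{eqk1}). The gap is in the final step: the model estimate
\begin{equation*}
\int_0^\infty\frac{\exp_p\left(p\,\phi^{\frac{p}{p-1}}\right)}{(1+\phi)^{\frac{p}{p-1}}}\,e^{-p\tau}\,\mathrm d\tau\leq C(p),\qquad \phi(0)=1,\ \phi\ \text{nondecreasing},\ \|\phi'\|_{L^p(0,\infty)}\leq1,
\end{equation*}
is false, so no decomposition of $(0,\infty)$ can establish it. Take $\phi_T(\tau)=1+\min(\tau,T)\,T^{-1/p}$; then $\|\phi_T'\|_{L^p}^p=1$ and $\phi_T(T)^{\frac{p}{p-1}}=(1+T^{\frac{p-1}{p}})^{\frac{p}{p-1}}=T+\frac{p}{p-1}T^{1/p}(1+o(1))$, so the integrand on $[T,T+1]$ is of order $e^{\frac{p^2}{p-1}T^{1/p}}/T\to\infty$ as $T\to\infty$. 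This is exactly the borderline excess $e^{O(\tau^{1/p})}$ coming from $\phi(\tau)\leq1+\tau^{(p-1)/p}$ (your ``naive bound $\phi^{p/(p-1)}\leq\tau$'' is not available since $\phi(0)=1$), and the polynomial denominator, worth only a factor $\tau^{-1}$, cannot absorb it; your proposed split into $\{\phi^{p/(p-1)}\leq(1-\delta)\tau\}$ and its complement fails on the complement for precisely these functions.

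The reason the true inequality nevertheless holds is that its right-hand side is $\|u\|_{L^p_\eta}^p$ and not merely $s_1^{p}$ (the $\mu_{p-1}$-measure of $\{v>1\}$): for the Moser-type functions above, $\|v\|_{L^p_{p-1}}^p$ is much larger than $s_1^p$, and it is this extra mass that controls the divergent contribution near $s_1$. Quantifying this is the content of the paper's crucial Lemma \ref{lemmakey1}: if the Dirichlet energy beyond $R$ is at most $1$, then $\exp(\beta_{0,1}|u^*_\eta(R)|^{\frac{p}{p-1}})\,|u^*_\eta(R)|^{-\frac{p}{p-1}}R^{\eta+1}\leq C\int_R^\infty|u^*_\eta|^pr^\eta\mathrm dr$, which is proved through the sequence lemma of Ibrahim--Masmoudi--Nakanishi (Lemma \ref{lemmakey2}). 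The paper then splits the Dirichlet energy at a second radius $R_2$ into portions $\varepsilon_0$ and $1-\varepsilon_0$, treats $(0,R_2)$ with Adams' lemma, and invokes Lemma \ref{lemmakey1} at $R_2$ to convert the boundary term into $C\int_{R_2}^\infty|u^*_\eta|^pr^\eta\mathrm dr$. Your proposal discards the $L^p$ information after extracting $s_1^p$ and therefore cannot close; to repair it you would need to reinstate an analogue of Lemma \ref{lemmakey1} at the interface $s_1$ (or at the corresponding energy-splitting radius).
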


\begin{theo}\label{theo1}
Let $p>1$, and consider $X^{2,p}_\infty(\alpha_0,\alpha_1,\alpha_2)$ with $\alpha_2=2p-1$, $\alpha_1\geq p-1$, and $\alpha_0\geq\alpha_1-p$. If $\eta>-1$, $\gamma=(\alpha_2+(p-1)\eta)/p$, and $\theta\in\mathbb R$, then there exists a constant $C=C(p,\eta)>0$ such that for all $u\in X^{2,p}_\infty$ satisfying $\|L_{\theta,\gamma}u\|_{L^p_{\alpha_2+(\theta-\gamma)p}}\leq 1$,
\begin{equation*}
\int_0^\infty\dfrac{\exp_p(\beta_{0,2}|u|^{\frac{p}{p-1}})}{(1+|u|)^{\frac{p}{p-1}}}r^{\eta}\mathrm dr\leq C\|u\|^{p}_{L^{p}_{\eta}},
\end{equation*}
where $\beta_{0,2}=(\eta+1)\left(\gamma-1\right)^{\frac{p}{p-1}}$.
\end{theo}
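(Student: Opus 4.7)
The plan is to reduce Theorem~\ref{theo1} to the first-order inequality of Theorem~\ref{theok1} via a Hardy-type estimate tailored to the operator $L_{\theta,\gamma}$. As a preliminary observation, the hypotheses $p>1$ and $\eta>-1$ yield
\[
\gamma-1=\frac{(p-1)(\eta+1)}{p}>0,
\]
so $\gamma>1$---a property used throughout what follows.

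The central step is the Hardy estimate
\[
\|u'\|_{L^p_{p-1}}\leq\frac{1}{\gamma-1}\,\|L_{\theta,\gamma}u\|_{L^p_{\alpha_2+(\theta-\gamma)p}}.
\]
To prove it, I would pass to the logarithmic variable $s=\log r$ with $w(s):=u(e^s)$. Since $r^\gamma u'=e^{(\gamma-1)s}w'(s)$, a direct calculation gives
\[
L_{\theta,\gamma}u=-e^{(\gamma-\theta-2)s}\bigl[(\gamma-1)w'(s)+w''(s)\bigr],
\]
and the exponents cancel exactly (using $\alpha_2=2p-1$ and $\nu=\alpha_2+(\theta-\gamma)p$), delivering the isometric identifications
\[
\|u'\|^p_{L^p_{p-1}(dr)}=\int_{\mathbb R}|w'|^p\,ds,\qquad \|L_{\theta,\gamma}u\|^p_{L^p_{\nu}(dr)}=\int_{\mathbb R}\bigl|(\gamma-1)w'+w''\bigr|^p\,ds.
\]
Writing $\phi:=w'$ and $F:=(\gamma-1)\phi+\phi'$, the unique $L^p(\mathbb R)$-solution of the linear ODE $\phi'+(\gamma-1)\phi=F$ is the convolution $\phi=K\ast F$ with $K(\sigma):=e^{-(\gamma-1)\sigma}\mathbf{1}_{\sigma\geq 0}$; since $\|K\|_{L^1(\mathbb R)}=(\gamma-1)^{-1}$, Young's inequality gives the Hardy estimate.

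Armed with this, I set $v:=(\gamma-1)u$. Then $\|v'\|_{L^p_{p-1}}\leq 1$, and $v\in X^{1,p}_\infty(\alpha_0,p-1)$ because $u(\infty)=0$ and $u\in L^p_{\alpha_0}$ with $\alpha_0\geq\alpha_1-p\geq-1$. Applying Theorem~\ref{theok1} to $v$ yields
\[
\int_0^\infty\frac{\exp_p\bigl((\eta+1)|v|^{p/(p-1)}\bigr)}{(1+|v|)^{p/(p-1)}}\,r^\eta\,dr\leq C(p,\eta)\,\|v\|^p_{L^p_\eta}.
\]
Unpacking: $(\eta+1)|v|^{p/(p-1)}=\beta_{0,2}|u|^{p/(p-1)}$, the right-hand side acquires a factor $(\gamma-1)^p$, and the elementary bound $1+|v|\leq\max(1,\gamma-1)(1+|u|)$ controls the denominator. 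Since $\gamma$ depends only on $p$ and $\eta$, all constants absorb into $C(p,\eta)$.

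The main obstacle I anticipate is the rigorous selection of the decaying $L^p$-solution of the ODE for $\phi$ at $s\to-\infty$ for a general $u\in X^{2,p}_\infty$, for which we only know $u'\in L^p_{\alpha_1}$ with $\alpha_1\geq p-1$ (not directly $w'\in L^p(\mathbb R)$). I would handle this by a density argument: establish the Hardy estimate and apply Theorem~\ref{theok1} first for smooth functions compactly supported in $(0,\infty)$ (using Theorem~\ref{propequivnormkgrad} to guarantee density in the relevant norm), and then pass to the limit.
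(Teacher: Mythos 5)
Your argument is correct, but it is a genuinely different route from the paper's. The paper proves Theorem~\ref{theo1} directly by the full exact-growth machinery: half-weighted symmetrization, the Talenti-type comparison of Proposition~\ref{prop22} applied to $L_{\eta,\gamma}u=g$, a splitting of the integral at the levels $R_0$ and $R_2$, Adams' lemma, and the crucial Lemma~\ref{lemmakey}; Theorem~\ref{theok1} is then obtained afterwards as a variant of that same proof (with P\'olya--Szeg\"o replacing Proposition~\ref{prop22}), so your use of Theorem~\ref{theok1} as a black box is not circular. Your reduction rests on the sharp one-step Hardy inequality $\|u'\|_{L^p_{p-1}}\leq(\gamma-1)^{-1}\|L_{\theta,\gamma}u\|_{L^p_{\nu}}$, and I checked the details: the logarithmic substitution does make both norms translation-invariant $L^p(\mathbb R)$ norms, the exponents cancel exactly because $\alpha_2=2p-1$ and $\nu=\alpha_2+(\theta-\gamma)p$, the kernel $K(\sigma)=e^{-(\gamma-1)\sigma}\mathbf{1}_{\sigma\geq0}$ has $\|K\|_{L^1}=(\gamma-1)^{-1}$ (using $\gamma>1$, which indeed follows from $\eta>-1$, $p>1$), and the identification $\phi=K\ast F$ amounts to $r^\gamma u'(r)=-\int_0^r s^\theta L_{\theta,\gamma}u\,\mathrm ds$, which holds for $u\in\Upsilon$ since $r^\gamma u'(r)\to0$ as $r\to0$; uniqueness in $L^p$ follows because the homogeneous solution $e^{-(\gamma-1)s}$ blows up at $-\infty$. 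The rescaling $v=(\gamma-1)u$ then converts $\beta_{0,1}=\eta+1$ into exactly $\beta_{0,2}=(\eta+1)(\gamma-1)^{p/(p-1)}$, the factor $(\gamma-1)^p$ on $\|v\|^p_{L^p_\eta}$ and the bound $1+|v|\leq\max(1,\gamma-1)(1+|u|)$ are absorbed into $C(p,\eta)$, and the density step you flag is handled exactly as in the paper (Lemma~\ref{lemmacompsuppdense} plus Fatou). Your approach is in fact the natural continuation of the paper's own Section~6 strategy, where orders $k\geq3$ are reduced to order $2$ via sharp Hardy inequalities (Lemmas~\ref{lemmahardycons} and~\ref{lemmahardyL}); you simply carry the reduction one step further, from order $2$ to order $1$. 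What the paper's direct proof buys is a self-contained template that also yields Theorem~\ref{theok1} itself and mirrors the Euclidean literature, where no such sharp order-reduction is available because the norms live in different Lebesgue exponents; what your proof buys is brevity and a transparent explanation of why $\beta_{0,2}=\beta_{0,1}(\gamma-1)^{p/(p-1)}$.
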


Using Theorem \ref{theo1} we are able to establish an Adams inequality with the exact growth condition for higher order derivatives, as present in Theorem \ref{theoexactk} below. Furthermore, in Theorem \ref{theosuperc}, we demonstrate the sharpness of the constant $\beta_{0,k}$ and the power $\frac{p}{p-1}$ in the denominator.

\begin{theo}\label{theoexactk}
Consider $X^{k,p}_\infty(\alpha_0,\ldots,\alpha_k)$ with $\alpha_k-kp+1=0$. Assume $\alpha_i\geq\alpha_k-(k-i)p$ for all $i=1,\ldots,k$. Let $p>1$, $\eta>-1$, $\theta+2>\gamma$, $\theta>\lfloor\frac{k}{2}\rfloor(\theta+2-\gamma)-1$, and $\gamma=(2p-1+(p-1)\eta)/p$. There exists a constant $C=C(k,p,\eta)>0$ such that for all $u\in X^{k,p}_\infty$ satisfying $\|\nabla_L^ku\|_{L^p_{\nu}}\leq1$ (where $\nu=\alpha_k+\lfloor\frac{k}{2}\rfloor(\theta-\gamma)p$),
\begin{equation*}
\int_0^\infty\dfrac{\exp_p(\beta_{0,k}|u|^{\frac{p}{p-1}})}{(1+|u|)^{\frac{p}{p-1}}}r^\eta\mathrm dr\leq C\|u\|^p_{L^p_\eta},
\end{equation*}
where
\begin{equation*}
\beta_{0,k}=\left\{\begin{array}{ll}
     (\eta+1)\left[(\gamma-1)(\theta+2-\gamma)^{k-2}\dfrac{\Gamma\left(\frac{k}2\right)\Gamma\left(\frac{\gamma-1}{\theta+2-\gamma}\right)}{\Gamma\left(\frac{\gamma-1}{\theta+2-\gamma}-\frac{k-2}{2}\right)}\right]^{\frac{p}{p-1}}&\mbox{if }k\mbox{ is even},  \\
     (\eta+1)\left[(\gamma-1)(\theta+2-\gamma)^{k-2}\dfrac{\Gamma\left(\frac{k+1}2\right)\Gamma\left(\frac{\gamma-1}{\theta+2-\gamma}\right)}{\Gamma\left(\frac{\gamma-1}{\theta+2-\gamma}-\frac{k-3}{2}\right)}\right]^{\frac{p}{p-1}}&\mbox{if }k\mbox{ is odd}.
\end{array}\right.
\end{equation*}
\end{theo}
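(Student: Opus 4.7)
The plan is to deduce Theorem \ref{theoexactk} from the second-order inequality of Theorem \ref{theo1} by means of an iterated integral representation combined with a pointwise comparison to a second-order Riesz-type potential. First I would use the Navier boundary conditions $L_{\theta,\gamma}^{j}u(\infty)=0$ for $j\leq\lfloor(k-1)/2\rfloor$, together with the regularity at the origin provided by Theorem \ref{propequivnormkgrad}, to invert $L_{\theta,\gamma}$ repeatedly. This produces
\begin{equation*}
u(r)=\int_{0}^{\infty}K_{k}(r,t)\,\nabla_{L}^{k}u(t)\,t^{\nu}\,dt,
\end{equation*}
where $K_{k}$ is the $\lfloor k/2\rfloor$-fold composition of the elementary one-step kernel $(\gamma-1)^{-1}\max(r,t)^{1-\gamma}$ (followed by one extra integration $\int_{r}^{\infty}(\,\cdot\,)\,s^{-\gamma}\,ds$ when $k$ is odd). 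Each composition contributes a factor of the form $[\gamma-1-j(\theta+2-\gamma)]^{-1}$ coming from elementary radial integrals $\int s^{\theta-j(\gamma-\theta-2)}\,ds$, and telescoping these factors via the identity $\Gamma(\alpha)/\Gamma(\alpha-m)=\prod_{j=1}^{m}(\alpha-j)$ with $\alpha=(\gamma-1)/(\theta+2-\gamma)$ reproduces exactly the Gamma-function ratio appearing in the definition of $\beta_{0,k}$.

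Next I would exhibit a second-order comparison function $v$ such that $|u(r)|\leq C|v(r)|$ pointwise and $\|L_{\widetilde{\theta},\widetilde{\gamma}}v\|_{L^{p}_{\widetilde{\nu}}}\leq 1$ for an auxiliary pair $(\widetilde{\theta},\widetilde{\gamma})$ chosen so that $\beta_{0,2}(\widetilde{\theta},\widetilde{\gamma},\eta)=\beta_{0,k}(\theta,\gamma,\eta)$. The natural device here is the change of variables $\rho=r^{\theta+2-\gamma}/(\theta+2-\gamma)$, which flattens the radial weight to the case $\widetilde{\gamma}-\widetilde{\theta}=1$ and allows one to dominate $K_{k}(r,t)$ pointwise by a constant multiple of such a second-order Green's kernel. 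Applying Theorem \ref{theo1} to $v$ in the variables $(\widetilde{\theta},\widetilde{\gamma})$ and then transferring the resulting inequality back to $u$ through $|u|\leq C|v|$ (noting that the corresponding $L^{p}_{\eta}$ norms are comparable via a weighted Hardy inequality applied to the lower-order remainder kernels) yields the desired Adams inequality with the sharp constant $\beta_{0,k}$.

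The hard part will be the combinatorial bookkeeping needed to recover $\beta_{0,k}$ precisely from the iterated kernel, and in particular the verification of the parity-dependent split in the formula. The hypotheses $\theta+2>\gamma$ and $\theta>\lfloor k/2\rfloor(\theta+2-\gamma)-1$ enter at exactly this stage: the first guarantees $\sigma:=\theta+2-\gamma>0$, so that the change of variable $\rho=r^{\sigma}$ is a genuine diffeomorphism of $(0,\infty)$, while the second ensures that every factor $\gamma-1-j(\theta+2-\gamma)$ with $j\leq\lfloor k/2\rfloor-1$ remains strictly positive, so that no singular integrals appear in the iteration and $\beta_{0,k}$ is a well-defined positive quantity. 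The parity split in $\beta_{0,k}$ reflects whether the last step in the iteration is an application of $L_{\theta,\gamma}^{-1}$ (for even $k$, giving the factor $\Gamma(k/2)$) or an ordinary anti-derivative (for odd $k$, giving $\Gamma((k+1)/2)$ and a shift of the Gamma-argument in the denominator from $(k-2)/2$ to $(k-3)/2$).
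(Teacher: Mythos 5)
Your high-level architecture is the right one: reduce to the second-order result of Theorem \ref{theo1}, let the one-step constants indexed by $i=1,\dots,\lfloor k/2\rfloor-1$ telescope into the Gamma-function ratio via $\Gamma(\alpha)/\Gamma(\alpha-m)=\prod_{i=1}^{m}(\alpha-i)$ with $\alpha=(\gamma-1)/(\theta+2-\gamma)$, and handle odd $k$ by one extra first-order Hardy step (which is exactly where the shift from $\Gamma(k/2)$, $(k-2)/2$ to $\Gamma((k+1)/2)$, $(k-3)/2$ comes from). The hypotheses $\theta+2>\gamma$ and $\theta>\lfloor k/2\rfloor(\theta+2-\gamma)-1$ do enter precisely to keep every factor $\gamma-1-i(\theta+2-\gamma)$ positive, as you say.

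However, the central mechanism you propose --- a pointwise comparison $|u(r)|\leq C|v(r)|$ with a second-order function $v$, followed by an application of Theorem \ref{theo1} to $v$ --- has a genuine gap for a \emph{sharp-constant} exponential inequality. If $C>1$ is not controlled exactly, then
\begin{equation*}
\exp_p\left(\beta_{0,k}|u|^{\frac{p}{p-1}}\right)\leq\exp_p\left(\beta_{0,k}\,C^{\frac{p}{p-1}}|v|^{\frac{p}{p-1}}\right),
\end{equation*}
and since you have arranged $\beta_{0,2}(\widetilde\theta,\widetilde\gamma,\eta)=\beta_{0,k}$, the constant $\beta_{0,k}C^{\frac{p}{p-1}}$ sits strictly \emph{above} the critical threshold for $v$, where the inequality fails by Theorem \ref{theosuperc}. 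Pointwise domination up to a loose multiplicative constant is enough for subcritical statements like Corollary \ref{cor1}, but not at the critical constant. The paper avoids this entirely: it proves the \emph{exact} norm inequality $\|L_{\theta,\gamma}u\|_{L^p_{2p-1+(\theta-\gamma)p}}\leq\bigl(\prod_{i=1}^{j-1}C_i\bigr)\|L^{j}_{\theta,\gamma}u\|_{L^p_{\nu}}$ (Lemmas \ref{lemmahardyL} and \ref{lemmaLjnorm}, a weighted Hardy--Rellich iteration with sharp constants $C_i=\{i(\theta+2-\gamma)[\gamma-1-i(\theta+2-\gamma)]\}^{-1}$, plus Lemma \ref{lemmahardycons} for odd $k$), and then applies Theorem \ref{theo1} not to a comparison function but to the rescaled function $v=\bigl(\prod C_i^{-1}\bigr)u$ (times $j(\theta+2-\gamma)$ when $k$ is odd), with the \emph{same} $(\theta,\gamma)$ --- no change of variables is needed because Theorem \ref{theo1} already allows arbitrary $\theta$. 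The scaling factor then enters the exponent exactly as $\prod C_i^{-p/(p-1)}$, which is what produces $\beta_{0,k}$ with no loss. A second, smaller inaccuracy: the factor $\Gamma(k/2)$ does not come from "the last step being $L^{-1}$"; it is the product $\prod_{i=1}^{j-1}i=(j-1)!$ accumulated over \emph{all} iteration steps.
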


We note that Theorem \ref{theoexactk} is a similar result to Theorem \ref{theoexacthigher} but, instead of involving the classical Sobolev space $W^{k,\frac{N}k}(\mathbb R^N)$, it involves the weighted Sobolev space $X^{k,p}_\infty$.

We develop two corollaries that adapt the results of Theorem \ref{theoat} and Theorem \ref{theolr} to the context of weighted Sobolev spaces. Note that Corollary \ref{cor1} generalizes \cite[Theorem 1.1]{MR3670473}. 
\begin{cor}\label{cor1}
Under the assumptions of Theorem \ref{theoexactk}, for any $\beta<\beta_{0,k}$, there exists $C_\beta=C(\beta,k,p,\eta)>0$ such that
\begin{equation*}
\int_0^\infty\exp_p(\beta|u|^{\frac{p}{p-1}})r^\eta\mathrm dr\leq C_\beta\|u\|^p_{L^p_\eta}.
\end{equation*}
\end{cor}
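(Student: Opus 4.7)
The plan is to split the integral at the level set $\{|u|=1\}$ and estimate each of the two pieces by a direct application of Theorem \ref{theoexactk} (which the hypotheses allow us to invoke).

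For the sublevel part $\{r>0 : |u(r)|\leq 1\}$ the argument $\beta|u|^{p/(p-1)}$ of $\exp_p$ stays bounded by $\beta$. Using the series representation
$$\exp_p(t)=\sum_{j=0}^{\infty}\frac{t^{p-1+j}}{\Gamma(p+j)},$$
each higher term is dominated by the leading term when $t$ is bounded, so a direct comparison gives $\exp_p(\beta|u|^{p/(p-1)})\leq C(p,\beta)|u|^{p}$ pointwise on this set. Integrating against $r^{\eta}\,\mathrm dr$ yields a contribution bounded by $C\|u\|_{L^{p}_{\eta}}^{p}$.

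For the superlevel part $\{r>0 : |u(r)|>1\}$ the idea is to artificially introduce the denominator $(1+|u|)^{p/(p-1)}$ needed to apply Theorem \ref{theoexactk}. Since $\beta<\beta_{0,k}$, for $t\geq 1$ the ratio
$$\frac{(1+t)^{p/(p-1)}\exp_p(\beta t^{p/(p-1)})}{\exp_p(\beta_{0,k}t^{p/(p-1)})}$$
behaves asymptotically like $(1+t)^{p/(p-1)}\exp\bigl((\beta-\beta_{0,k})t^{p/(p-1)}\bigr)$, and the super-exponential decay swallows the polynomial factor; thus the ratio is uniformly bounded on $[1,\infty)$ by some constant $C=C(\beta,k,p)$. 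Consequently, pointwise on $\{|u|>1\}$,
$$\exp_p(\beta|u|^{p/(p-1)})\leq C\,\frac{\exp_p(\beta_{0,k}|u|^{p/(p-1)})}{(1+|u|)^{p/(p-1)}},$$
and Theorem \ref{theoexactk} immediately bounds the corresponding integral by $C\|u\|_{L^{p}_{\eta}}^{p}$. Summing the two contributions gives the corollary.

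Since the result is a direct consequence of Theorem \ref{theoexactk}, there is no substantial obstacle; the only point requiring a bit of care is the uniform comparison between $\exp_p(\beta t^{p/(p-1)})$ and $\exp_p(\beta_{0,k} t^{p/(p-1)})$ on $[1,\infty)$, which must account for the fact that $\exp_p$ has a polynomial leading term rather than being purely exponential. This is handled by separating the analysis into a compact range $t\in[1,T_0]$ (continuity and positivity) and a tail $t\geq T_0$ (exponential decay beats any polynomial), with $T_0$ chosen depending on $\beta_{0,k}-\beta>0$.
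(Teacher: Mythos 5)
Your proof is correct and follows essentially the same route as the paper: both reduce the corollary to the pointwise comparison $\exp_p(\beta t^{\frac{p}{p-1}})\leq C_\beta\,\exp_p(\beta_{0,k}t^{\frac{p}{p-1}})/(1+t)^{\frac{p}{p-1}}$ and then invoke Theorem \ref{theoexactk}. The only (cosmetic) difference is that you split off the set $\{|u|\leq1\}$ and use the bound $\exp_p(\beta|u|^{\frac{p}{p-1}})\leq C|u|^{p}$ there, whereas the paper verifies that the comparison ratio has a finite limit as $t\to0$ and so works on all of $[0,\infty)$ at once.
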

\begin{cor}\label{cor2}
Under the assumptions of Theorem \ref{theoexactk} and $p\geq2$, for any $\tau>0$, there exists a constant $C_\tau>0$ such that
\begin{equation*}
\sup_{\underset{\|\nabla^k_Lu\|^p_{L^p_\nu}+\tau\|u\|^p_{L^p_\eta}\leq1}{u\in X^{k,p}_\infty}}\int_0^\infty\exp_p(\beta_{0,k}|u|^{\frac{p}{p-1}})r^\eta\mathrm dr\leq C_\tau.
\end{equation*}
\end{cor}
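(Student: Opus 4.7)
The plan is to deduce this Li--Ruf-type bound from the subcritical Corollary~\ref{cor1} via a rescaling that reintroduces the critical constant $\beta_{0,k}$ inside the exponential while transferring the loss into a multiplicative factor controlled by the full-norm hypothesis.

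Set $A:=\|\nabla_L^k u\|_{L^p_\nu}$ and $B:=\|u\|_{L^p_\eta}$. The hypothesis reads $A^p+\tau B^p\le 1$, so $A\le 1$ and $B^p\le(1-A^p)/\tau$. If $A=0$, Theorem~\ref{propequivnormkgrad} forces $u\equiv 0$; if $A=1$ then $B=0$ and again $u\equiv 0$. Assume therefore $A\in(0,1)$ and set $v:=u/A$, so $\|\nabla_L^k v\|_{L^p_\nu}=1$. Choose the subcritical level
\begin{equation*}
\tilde\beta:=\beta_{0,k}A^{p/(p-1)}\in(0,\beta_{0,k}),
\end{equation*}
so that $\tilde\beta\,|v|^{p/(p-1)}=\beta_{0,k}|u|^{p/(p-1)}$. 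Applying Corollary~\ref{cor1} to $v$ with parameter $\tilde\beta$ and translating back to $u$ yields
\begin{equation*}
\int_0^\infty\exp_p\bigl(\beta_{0,k}|u|^{p/(p-1)}\bigr)\,r^\eta\,\mathrm dr\le \frac{C_{\tilde\beta}B^p}{A^p}\le \frac{C_{\tilde\beta}(1-A^p)}{\tau A^p}.
\end{equation*}

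It then suffices to bound $C_{\tilde\beta}(1-A^p)/A^p$ uniformly for $A\in(0,1)$, which reduces to an asymptotic analysis of $C_{\tilde\beta}$ at the two endpoints of $(0,\beta_{0,k})$. Inspecting the proof of Corollary~\ref{cor1}, $C_{\tilde\beta}$ can be taken as the supremum in $t\ge 0$ of $(1+t)^{p/(p-1)}\exp_p(\tilde\beta t^{p/(p-1)})/\exp_p(\beta_{0,k}t^{p/(p-1)})$; splitting $[0,\infty)$ into the polynomial regime of $\exp_p$ (where $\exp_p(s)\sim s^{p-1}/\Gamma(p)$) and the exponential regime (where $\exp_p(s)\sim e^s$) yields
\begin{equation*}
C_{\tilde\beta}=O\bigl((\beta_{0,k}-\tilde\beta)^{-1}\bigr)\ \text{as}\ \tilde\beta\to\beta_{0,k}^{-},\qquad C_{\tilde\beta}=O\bigl(\tilde\beta^{p-1}\bigr)\ \text{as}\ \tilde\beta\to 0^{+},
\end{equation*}
with $C_{\tilde\beta}$ uniformly bounded on any compact subinterval. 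With $\tilde\beta=\beta_{0,k}A^{p/(p-1)}$, near $A=1$ one has $\beta_{0,k}-\tilde\beta\sim\tfrac{p}{p-1}\beta_{0,k}(1-A)$ and $1-A^p\sim p(1-A)$, so L'H\^opital gives $\lim_{A\to 1^-}C_{\tilde\beta}(1-A^p)/A^p=(p-1)/\beta_{0,k}$; near $A=0$, $\tilde\beta^{p-1}/A^p=\beta_{0,k}^{p-1}$ while $1-A^p\le 1$; on any compact $[a,1-a]\subset(0,1)$ both factors are uniformly bounded. Concatenating the three regimes produces the required constant $C_\tau$.

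The main technical obstacle is the sharp asymptotic $C_{\tilde\beta}\sim(\beta_{0,k}-\tilde\beta)^{-1}$ at the critical endpoint: the extremiser of the defining ratio lies at $t^{p/(p-1)}\sim(\beta_{0,k}-\tilde\beta)^{-1}$, precisely where $\exp_p$ is crossing from its leading polynomial term into full exponential behaviour, so the blow-up rate must be matched exactly with the first-order vanishing $1-A^p\sim p(1-A)$. The hypothesis $p\ge 2$ enters through the elementary bound $|u|^{p/(p-1)}\le 1+|u|^p$, which is used to reconcile the polynomial prefactors appearing in these estimates with the $L^p_\eta$ control provided by the full-norm constraint.
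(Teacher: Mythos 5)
Your argument is correct, but it is a genuinely different route from the paper's. The paper fixes $\theta=\|u\|^p_{L^p_\eta}$ and splits into two cases: for $\theta\geq\frac{p-1}{\tau p}$ it applies Corollary~\ref{cor1} at the single fixed subcritical level $\beta_{0,k}p^{-1/(p-1)}$ to $v=p^{1/p}u$; for $\theta<\frac{p-1}{\tau p}$ it splits the domain at $|u|=1$, applies H\"older with exponent $q=\frac{p-1}{p-1-\tau\theta}$ together with the submultiplicativity $[\exp_p(t)]^q\leq\exp_p(qt)$, invokes the full exact-growth inequality (Theorem~\ref{theoexactk}) for $v=q^{(p-1)/p}u$, and closes with a moment estimate interpolated from Corollary~\ref{cor1}; the hypothesis $p\geq2$ enters there to verify $q^{p-1}(1-\tau\theta)\leq1$ via $(1-x)^{1/(p-1)}\leq1-\frac{x}{p-1}$. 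You instead use a single dilation $v=u/A$ with the $A$-dependent subcritical level $\tilde\beta=\beta_{0,k}A^{p/(p-1)}$ and trade the case analysis for a quantitative blow-up rate $C_{\tilde\beta}=O\bigl((\beta_{0,k}-\tilde\beta)^{-1}\bigr)$, matched against the first-order vanishing of $1-A^p$; your asymptotics for $\sup_t(1+t)^{p/(p-1)}\exp_p(\tilde\beta t^{p/(p-1)})/\exp_p(\beta_{0,k}t^{p/(p-1)})$ at both endpoints check out (note the paper's Corollary~\ref{cor1} does not record the $\beta$-dependence of $C_\beta$, so this quantification is genuinely your burden, but it is a routine calculus exercise). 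What your route buys is the elimination of the H\"older/submultiplicativity machinery and, in fact, of the hypothesis $p\geq2$ altogether: nothing in your argument uses it, and your closing sentence attributing its role to the bound $|u|^{p/(p-1)}\leq1+|u|^p$ is a post-hoc rationalization --- that inequality never appears in your proof. You should either delete that sentence or state explicitly that your proof covers all $p>1$.
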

\begin{theo}\label{theosuperc}
Consider $X^{k,p}_\infty(\alpha_0,\ldots,\alpha_k)$ with $\alpha_k-kp+1=0$ and $p>1$. Then there exists a sequence $(u_n)$ such that $\|\nabla^k_Lu_n\|_{L^p_{\alpha_k+\lfloor\frac{k}{2}\rfloor(\theta-\gamma)p}}\leq 1$ and
\begin{equation}\label{eqsuc1}
    \dfrac{1}{\|u_n\|^p_{L^p_{\eta}}}\int_0^\infty\dfrac{\exp_p\left(\beta_{0,k}|u_n|^{\frac{p}{p-1}}\right)}{(1+|u_n|)^q}r^\eta\mathrm dr\overset{n\to\infty}\longrightarrow \infty,\quad\forall q<\frac{p}{p-1}.
\end{equation}
Moreover, for all $\beta>\beta_{0,k}$ and $q\geq0$, we have
\begin{equation}\label{eqsuc2}
\int_0^\infty\dfrac{\exp_p\left(\beta|u_n|^{\frac{p}{p-1}}\right)}{(1+|u_n|)^q}r^\eta\mathrm dr\overset{n\to\infty}\longrightarrow \infty.
\end{equation}
\end{theo}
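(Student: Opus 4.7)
My plan is to prove both sharpness statements by constructing a single Moser-type concentration sequence $(u_n)\subset X^{k,p}_\infty$ adapted to the weighted operator $\nabla_L^k$. The blueprint for $u_n$ is the fundamental solution of $\nabla_L^k$ in the radial weighted setting: iterating the inversion formula $L_{\theta,\gamma}u=f\Rightarrow u'(r)=-r^{-\gamma}\int_0^r s^\theta f(s)\,\mathrm ds$ and, when convenient, passing to the variable $t=r^{\theta+2-\gamma}$ converts the problem into a normalized form in which the Gamma quotient appearing in $\beta_{0,k}$ is produced by the Beta-function identities generated by the iterated weighted integrations. In this way $\beta_{0,k}$ plays the role of the weighted analogue of the classical Adams constant $\beta_{N,k}$ from \cite{MR3848068}, and the Masmoudi--Sani cap construction can be transported to our setting.

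Concretely, the sequence is defined piecewise. On a plateau $(0,r_n)$ I set $u_n\equiv M_n$, where $M_n$ is calibrated so that $\beta_{0,k}M_n^{p/(p-1)}=\log n+O(1)$. On a transition annulus $(r_n,R_0)$, with $R_0$ a fixed constant, I take $u_n$ to be the weighted Moser cap built from the truncated fundamental solution of $\nabla_L^k$, decreasing from $M_n$ to an $O(1)$ value in such a way that essentially all of $\|\nabla_L^k u_n\|^p_{L^p_\nu}$ is concentrated there. Finally, on $(R_0,\infty)$ I glue in a fixed, $n$-independent smooth function of rapid decay, chosen so that $u_n\in X^{k,p}_\infty$ and so that the Navier-type regularity required for $\nabla_L^k$ to be well defined is preserved. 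The plateau radius $r_n$ is coupled to $M_n$ by the normalization $\|\nabla_L^k u_n\|_{L^p_\nu}\leq 1$, and a direct integration against the weight $r^\nu$ delivers an explicit polynomial decay $r_n\to 0$ together with $\|u_n\|^p_{L^p_\eta}=O(1)$.

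Both assertions then reduce to the lower bound
\[
\int_0^\infty\frac{\exp_p(\beta|u_n|^{p/(p-1)})}{(1+|u_n|)^q}r^\eta\,\mathrm dr\;\geq\;c\,\frac{\exp(\beta M_n^{p/(p-1)})}{(1+M_n)^q}\,r_n^{\eta+1}
\]
obtained by restricting to the plateau. The calibration is arranged so that at $\beta=\beta_{0,k}$ the exponential factor $e^{\log n+O(1)}\sim n$ cancels the polynomial factor $r_n^{\eta+1}$ up to a gain of a positive power of $M_n$; the residual is of order $M_n^{p/(p-1)-q}$, which diverges whenever $q<p/(p-1)$, yielding \eqref{eqsuc1} since $\|u_n\|^p_{L^p_\eta}$ is bounded. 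For any $\beta>\beta_{0,k}$ the same plateau estimate picks up an extra super-polynomial factor $\exp\bigl((\beta-\beta_{0,k})M_n^{p/(p-1)}\bigr)$, which overwhelms any polynomial correction $(1+M_n)^{-q}$ for every $q\geq 0$ and yields \eqref{eqsuc2}.

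The principal obstacle is the explicit construction of the Moser cap on $(r_n,R_0)$ and the exact normalization of its $\nabla_L^k$-norm: one must iterate the Green-type formula for $L_{\theta,\gamma}$ exactly $\lfloor k/2\rfloor$ times (with one extra radial derivative when $k$ is odd), integrate the resulting profile against $r^\nu$, and match the output with $\beta_{0,k}$ via successive Beta-function identities. A secondary difficulty is ensuring that the Navier boundary conditions are preserved through the truncation and the gluing. Once the computation is organized in the transformed variable $t=r^{\theta+2-\gamma}$ (or, when $\theta+2\leq\gamma$, in a suitable analogous variable), the bookkeeping parallels \cite{MR3848068} and \cite{MR3943303}, the novelty being the presence of the independent weights $\alpha_k,\theta,\gamma,\eta$ and the associated nontrivial rescaling of the intermediate integrals.
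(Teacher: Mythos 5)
Your overall strategy --- an Adams--Moser concentration sequence with a plateau of height $M_n$ calibrated by $\beta_{0,k}M_n^{p/(p-1)}\approx(\eta+1)\log n$, normalized so that $\|\nabla_L^k u_n\|_{L^p_\nu}\le 1$, with the lower bound obtained by restricting the integral to the plateau --- is exactly the paper's approach, and your treatment of the supercritical case \eqref{eqsuc2} is sound: the surplus factor $\exp\bigl((\beta-\beta_{0,k})M_n^{p/(p-1)}\bigr)$ does dominate every polynomial correction (modulo choosing the smoothing parameter small enough that the normalization constant does not eat the surplus).

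However, your argument for \eqref{eqsuc1} has a genuine gap in the bookkeeping of orders of magnitude. With the forced coupling between plateau height and plateau radius (the normalization gives $r_n^{\eta+1}\sim\exp\bigl(-(\eta+1)|c|^{p/(p-1)}M_n^{p/(p-1)}\bigr)$ and $\beta_{0,k}=(\eta+1)|c|^{p/(p-1)}$), the product $\exp(\beta_{0,k}M_n^{p/(p-1)})\,r_n^{\eta+1}$ is $O(1)$ --- there is no ``gain of a positive power of $M_n$.'' Hence the plateau contribution at the critical constant is of order $M_n^{-q}$, which for $0<q<p/(p-1)$ tends to \emph{zero}, not to $M_n^{p/(p-1)-q}$. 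The divergence in \eqref{eqsuc1} is driven entirely by the denominator: one must show $\|u_n\|_{L^p_\eta}^p\to 0$ at the precise rate $M_n^{-p/(p-1)}\sim(\log n)^{-1}$, which is what the paper proves (equation \eqref{eq44} gives $\|\psi_{n,\varepsilon}\|^p_{L^p_\eta}=O((\log n)^{-p})$ for the unnormalized profile, hence $O((\log n)^{-1})$ after rescaling). Your construction actively obstructs this: by gluing a fixed, $n$-independent tail on $(R_0,\infty)$ you make $\|u_n\|^p_{L^p_\eta}$ bounded \emph{below} by a positive constant, so your quotient behaves like the integral itself, i.e.\ like $M_n^{-q}$, and the argument fails for every $q\in(0,\frac{p}{p-1})$. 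To repair it, take $u_n$ compactly supported in a fixed $[0,R]$ (as in the paper, where $\psi_{n,\varepsilon}\in X^{k,p}_{0,R}\subset X^{k,p}_\infty$), verify the decay $\|u_n\|^p_{L^p_\eta}=O((\log n)^{-1})$, and conclude
\begin{equation*}
\frac{1}{\|u_n\|^p_{L^p_\eta}}\int_0^{r_n}\frac{\exp_p(\beta_{0,k}|u_n|^{\frac{p}{p-1}})}{(1+|u_n|)^q}r^\eta\,\mathrm dr\;\gtrsim\;(\log n)\cdot(\log n)^{-q\frac{p-1}{p}}=(\log n)^{1-q\frac{p-1}{p}}\longrightarrow\infty .
\end{equation*}
A secondary point you should not wave away: the smoothing of the profile costs a factor $A_{\varepsilon,n}^{1/(1-p)}<1$ in the exponent, so at $\beta=\beta_{0,k}$ the exponential plateau factor is actually slightly \emph{smaller} than $n^{\eta+1}$; one must let the smoothing parameter $\varepsilon$ depend on $n$ (or otherwise control this loss) so that it does not destroy the $(\log n)^{1-q(p-1)/p}$ divergence.
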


\subsection{Application}
By using Theorem~\ref{theo1} in combination with the minimax argument, we obtain the existence and regularity of solutions for a class of the fourth order problem. Precisely,
\begin{theo}\label{theoapp}
Assume $\eta>1$ and $f\colon[0,\infty)\times\mathbb R\to\mathbb R$ such that $f(r,\cdot)$ is an odd function with $f(r,t)\geq0$ for all $t\geq0$. Let $\theta=(\eta+3)/2$ and $F(r,t):=\int_0^sf(r,t)\mathrm dt$ satisfying \eqref{h1} and \eqref{h2}. Then there exists $u_0\in C^4(0,\infty)\cap C^3([0,\infty))$ a nontrivial classical solution of
\begin{equation}\label{eqprobleml}
    \left\{\begin{array}{l}
\Delta^2_{\theta}u=\lambda^{-1}f(r,u)r^{\eta-\theta}\mbox{ in }(0,\infty),\\
u'(0)=(\Delta_\theta u)'(0)=0,
    \end{array}\right.
\end{equation}
with $\lambda=\int_0^\infty f(r,u_0)u_0r^\eta\mathrm dr$ and $\Delta_\theta=L_{\theta,\theta}$. Moreover, $\Delta_\theta u_0\in C^2(0,\infty)\cap C^1([0,\infty))$, $u_0''(0)=-\Delta_\theta u_0(0)/(\theta+1)$, and $u_0'''(0)=\lim_{r\to\infty}u_0(r)=\lim_{r\to\infty}\Delta_\theta u_0(r)=0$.
\end{theo}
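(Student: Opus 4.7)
The plan is to realize $u_0$ as a critical point of a variational problem naturally associated with the equation, and then promote it to a classical solution via ODE regularity. Taking $p=2$, $\alpha_2=3$, $\gamma=\theta=(\eta+3)/2$, with $\alpha_0,\alpha_1$ chosen to meet the hypotheses of Theorem \ref{theo1}, I work on the Hilbert space $E := X^{2,2}_\infty(\alpha_0,\alpha_1,3)$ (the Navier-boundary space from the paper's setup). By Theorem \ref{propequivnormkgrad} the form $\langle u,v\rangle := \int_0^\infty \Delta_\theta u\cdot\Delta_\theta v\cdot r^\theta\,\mathrm dr$ is an inner product on $E$ equivalent to the full $X^{2,2}_\infty$-norm, and, by a double integration by parts using the self-adjointness of $\Delta_\theta$ with respect to the weight $r^\theta$, the associated Euler-Lagrange equation for the nonlinearity $F$ is $\Delta_\theta^2 u = \mu\, f(r,u)\,r^{\eta-\theta}$ for a Lagrange multiplier $\mu$; pairing with $u$ identifies $\mu = \lambda^{-1}$, matching the stated equation.

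Existence of such a critical point would follow from a minimax argument, either by constrained maximization of $\Psi(u) := \int_0^\infty F(r,u)\, r^\eta\,\mathrm dr$ on the sphere $\{\|\Delta_\theta u\|_{L^2_\theta}^2 = 1\}$, or by a mountain-pass scheme on $\tfrac12 \|\Delta_\theta u\|_{L^2_\theta}^2 - \Psi(u)$ followed by a normalization. The subcritical exponential condition \eqref{h1}, combined with the exact-growth Adams inequality of Theorem \ref{theo1} (transported to the norm $\|\Delta_\theta\cdot\|_{L^2_\theta}$ via the equivalence in Theorem \ref{propequivnormkgrad}), ensures that $\Psi$ is well-defined and of class $C^1$ on $E$; the superlinearity in \eqref{h2} produces the geometric conditions needed to run the minimax machinery and yields a Palais-Smale sequence $(u_n)\subset E$, bounded in $E$ by \eqref{h2}. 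Extract $u_n\rightharpoonup u_0$ in $E$ and $u_n\to u_0$ almost everywhere on $(0,\infty)$.

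The hardest step will be the strong convergence $\int_0^\infty f(r,u_n)(u_n-u_0)\, r^\eta\,\mathrm dr \to 0$, needed to pass to the limit in the Euler-Lagrange equation. I would split the integration at a threshold $\{|u_n|\leq M\}$: on the low set, Theorem \ref{theo1} together with \eqref{h1} gives the uniform integrability required for Vitali's theorem; on the complementary set, the denominator $(1+|u_n|)^{2}$ that appears in Theorem \ref{theo1} absorbs the extra factor of $u_n$ coming from $f$, producing smallness uniform in $n$ as $M\to\infty$. The strict positivity of the minimax level then forces $u_0\not\equiv 0$, and the Lagrange-multiplier identification gives the equation in the form stated with $\lambda := \int_0^\infty f(r,u_0)\, u_0\, r^\eta\,\mathrm dr$.

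For the regularity conclusions, the weak equation translates into a linear fourth-order ODE with coefficients smooth on $(0,\infty)$ and continuous right-hand side, so a standard bootstrap delivers $u_0 \in C^4(0,\infty)$. A Frobenius-type indicial analysis at the singular point $r=0$, combined with the local integrability $u_0,\Delta_\theta u_0 \in L^2_\theta$ near zero, selects the regular local branches and forces $u_0'(0) = (\Delta_\theta u_0)'(0) = 0$ together with $u_0, \Delta_\theta u_0 \in C^1([0,\infty))$; applying L'H\^opital to the identity $\Delta_\theta u_0 = -u_0'' - \theta u_0'/r$ then yields $u_0''(0) = -\Delta_\theta u_0(0)/(\theta+1)$, and a parallel argument for $\Delta_\theta u_0$ combined with one further integration gives $u_0'''(0) = 0$. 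Decay at infinity, $\lim_{r\to\infty} u_0(r) = \lim_{r\to\infty} \Delta_\theta u_0(r) = 0$, then follows from the $L^2_\eta$-integrability of these functions on a neighborhood of infinity via a standard monotonicity argument.
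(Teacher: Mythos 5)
Your first alternative (constrained maximization of $\Psi(u)=\int_0^\infty F(r,u)r^\eta\,\mathrm dr$ on the sphere $\{\|\Delta_\theta u\|_{L^2_\theta}=1\}$, compactness via the exact-growth inequality, Lagrange multipliers, then ODE regularity) is exactly the paper's route (Propositions \ref{prop81}, \ref{propcs}, \ref{propclsol1}), and your splitting strategy for the compactness step is in substance the paper's Theorem \ref{theocomp}. However, several points as written are not right or are underspecified. First, you misread \eqref{h2}: it is an \emph{upper} growth cap, $F(r,t)t^{p/(p-1)}e^{-\beta_{0,k}t^{p/(p-1)}}\to0$, not a superlinearity/Ambrosetti--Rabinowitz condition, so it neither supplies mountain-pass geometry nor bounds Palais--Smale sequences; your second (mountain-pass) alternative cannot run under the stated hypotheses, and for the maximization route no Palais--Smale analysis and no convergence of $\int f(r,u_n)(u_n-u_0)r^\eta\,\mathrm dr$ is needed --- only $\Psi(u_n)\to\Psi(u_0)$. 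Second, in the maximization you must handle the fact that the weak limit satisfies only $\|\Delta_\theta u_0\|_{L^2_\theta}\le 1$; the paper pushes $u_0$ back onto the sphere using the monotonicity $F(r,\tau t)\ge F(r,t)$ for $\tau\ge1$, which follows from $f(r,t)\ge0$ for $t\ge0$ and oddness. Relatedly, your compactness split only thresholds in $|u_n|$; on the set $\{|u_n|\le M\}$ one still needs tightness in $r$ at infinity, which the paper obtains from a uniform radial decay lemma combined with \eqref{h1} (the role of \eqref{h1} is precisely to control $F(r,u_n)$ where $u_n$ is small, i.e.\ for large $r$).

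On the regularity side your outline is plausible but vaguer than what is actually needed. The paper does not perform a Frobenius analysis: it derives from the weak formulation the explicit representation $(\Delta_\theta u_0)'(r)=-r^{-\theta}\int_0^r f(s,u_0)s^\eta\,\mathrm ds$, bootstraps $u_0\in X^{2,p}_\infty$ for all $p$ via the solution operator (Lemma \ref{lemmafjs}) and Lemma \ref{proptobeproved}, and then invokes the Morrey embedding to get $C^1$ up to $r=0$; the initial conditions follow from L'H\^opital applied to these integral formulas (here $\eta>1$ is used). Finally, the decay $\Delta_\theta u_0(r)\to0$ as $r\to\infty$ is not a ``standard monotonicity argument'' from $L^2$-integrability --- $\Delta_\theta u_0$ need not be monotone --- and the paper proves it by testing the weak equation against a specific function behaving like $r^{1-\theta}/(\theta-1)$ at infinity. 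You should supply these mechanisms (or equivalents) to close the argument.
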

As an immediate consequence, when $\theta=\eta=3$, we obtain the following corollary
\begin{cor}
Consider the dimensional $N=4$. Assume $f\colon\mathbb R^4\times\mathbb R\to\mathbb R$ radial on $x$ such that $f(x,\cdot)$ is an odd function with $f(x,t)\geq0$ for all $t\geq0$. Let $F(x,t):=\int_0^sf(x,t)\mathrm dt$ satisfying \eqref{h1} and \eqref{h2}. Then there exists $u_0\in C^4(\mathbb R^4)$ a nontrivial classical solution of
\begin{equation*}
\Delta^2u=\lambda^{-1}f(x,u)\mbox{ in }\mathbb R^4,
\end{equation*}
with $\lambda=\int_{\mathbb R^N} f(x,u_0)u_0\mathrm dx/|\mathbb S^{3}|$. Moreover, $\lim_{r\to\infty}u_0(r)=\lim_{r\to\infty}\Delta u_0(r)=0$.
\end{cor}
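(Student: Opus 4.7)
The plan is to deduce the corollary as a direct specialization of Theorem~\ref{theoapp} to $\theta=\eta=3$, coupled with the classical smoothness criterion that turns a radial ODE solution into a genuine $C^{4}$-function on $\mathbb{R}^{4}$.

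First I would verify parameter compatibility. With $\eta=3>1$ one has $\theta=(\eta+3)/2=3$, so Theorem~\ref{theoapp} applies and produces a nontrivial radial $u_{0}\in C^{4}(0,\infty)\cap C^{3}([0,\infty))$ solving \eqref{eqprobleml}, together with $u_{0}'(0)=u_{0}'''(0)=0$ and $\lim_{r\to\infty}u_{0}=\lim_{r\to\infty}\Delta_{\theta}u_{0}=0$. Next I would translate the operator: for a radial function on $\mathbb{R}^{4}$ one has $\Delta u=u''+\frac{3}{r}u'=\frac{1}{r^{3}}(r^{3}u')'$, so that $\Delta_{\theta}u=-r^{-\theta}(r^{\theta}u')'=-\Delta u$ when $\theta=3$ and consequently $\Delta_{\theta}^{2}u=\Delta^{2}u$. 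Since moreover $\eta=\theta$, the weight $r^{\eta-\theta}$ in \eqref{eqprobleml} is trivial, so the ODE reads $\Delta^{2}u_{0}(x)=\lambda^{-1}f(x,u_{0}(x))$ pointwise on $\mathbb{R}^{4}\setminus\{0\}$. The Lagrange multiplier matches by polar integration, $\int_{\mathbb{R}^{4}}f(x,u_{0})u_{0}\,\mathrm dx=|\mathbb{S}^{3}|\int_{0}^{\infty}f(r,u_{0})u_{0}\,r^{3}\,\mathrm dr$.

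The key technical step is to upgrade the one-sided regularity of $u_{0}$ to $C^{4}(\mathbb{R}^{4})$ at the origin. I would invoke the classical criterion that a radial $u(r)$ extends to a $C^{m}$ function on $\mathbb{R}^{N}$ provided $u\in C^{m}([0,\infty))$ and all odd-order one-sided derivatives of $u$ vanish at $0$ through order $m$. The cancellations $u_{0}'(0)=u_{0}'''(0)=0$ from Theorem~\ref{theoapp} already promote $u_{0}$ to a $C^{3}(\mathbb{R}^{4})$ radial function; applying the same criterion to $\Delta_{\theta}u_{0}\in C^{2}(0,\infty)\cap C^{1}([0,\infty))$, together with the Navier condition $(\Delta_{\theta}u_{0})'(0)=0$, upgrades $\Delta u_{0}=-\Delta_{\theta}u_{0}$ to a $C^{1}(\mathbb{R}^{4})$ radial function. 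Since $\lambda^{-1}f(\cdot,u_{0})$ is then continuous on all of $\mathbb{R}^{4}$ (and H\"older under the natural smoothness of $f$ implicit in \eqref{h1}--\eqref{h2}), standard elliptic regularity applied to $\Delta(\Delta u_{0})=\lambda^{-1}f(\cdot,u_{0})$ bumps $\Delta u_{0}$ to $C^{2}(\mathbb{R}^{4})$ and hence $u_{0}$ to $C^{4}(\mathbb{R}^{4})$. The decay $\lim_{r\to\infty}u_{0}=\lim_{r\to\infty}\Delta u_{0}=0$ is inherited verbatim from Theorem~\ref{theoapp} via $\Delta u_{0}=-\Delta_{\theta}u_{0}$.

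I expect the main obstacle to be precisely this extension-at-the-origin argument: the algebraic translation of the operator and the normalization is immediate, and the decay at infinity is inherited, but verifying that the odd-derivative cancellations furnished by Theorem~\ref{theoapp} truly lift the radial ODE solution to a genuine classical solution on all of $\mathbb{R}^{4}$ demands some care concerning the exact smoothness of $f$ entering through \eqref{h1}--\eqref{h2}.
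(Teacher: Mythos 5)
Your proposal follows exactly the paper's route: the corollary is presented there as an immediate specialization of Theorem~\ref{theoapp} to $\theta=\eta=3$, using the identification $\Delta_\theta u=-\Delta u$ for radial $u$ on $\mathbb R^4$ (hence $\Delta_\theta^2=\Delta^2$, with the weight $r^{\eta-\theta}$ trivial) and polar integration for the normalization $\lambda$, and the paper supplies no further argument. The one point where you go beyond the paper --- upgrading the regularity at the origin to genuine $C^4(\mathbb R^4)$ --- is also the one point where your argument is not airtight: the Schauder step for $\Delta(\Delta u_0)=\lambda^{-1}f(\cdot,u_0)$ needs H\"older continuity of $f$, which is not among the hypotheses (\eqref{h1}--\eqref{h2} are only limit/growth conditions and $f$ is merely assumed continuous), and Theorem~\ref{theoapp} itself only furnishes $u_0\in C^3([0,\infty))$ up to $r=0$; this caveat is inherited from the paper's own (unproved) $C^4(\mathbb R^4)$ claim rather than introduced by you.
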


\subsection{Organization of the Paper}

In Section \ref{pre}, we introduce some preliminaries regarding the weighted Sobolev space $X^{k,p}_R$. We will establish in Section \ref{secequi} the equivalence between the norms $\|\nabla^k_L\cdot\|_{L^p_\nu}$ and $\|\cdot\|_{X^{k,p}_R}$ on $X^{k,p}_{\mathcal N,L,R}$. Moving on to Section \ref{symmandlemma}, we work on the half $\mu_{\eta,\nu}$-symmetrization, which generalizes the one considered by E. Abreu and L. Fernandez \cite{MR4097244}. Moreover, in this same section, we establish a crucial lemma (Lemma \ref{lemmakey}) that is decisive to the proof of our Theorem \ref{theo1}. To present the main result for the first and second order cases, we delve into Section \ref{mainresults}, wherein we prove Theorems \ref{theok1} and \ref{theo1}. In Section \ref{sectionk}, we present the conclusive result of the higher order Adams' inequality with exact growth on weighted Sobolev spaces (Theorem \ref{theoexactk}) and its corollaries. We obtain, in Section \ref{sectionsuperc}, the sharpness of the constants $\beta_{0,k}$ and the exponent $p/(p-1)$ in Theorem \ref{theosuperc}. Finally, we provide the proof for the application given by Theorem \ref{theoapp} in Section \ref{secapp}.

\section{Preliminaries}\label{pre}

We begin by recalling some definitions following the notation of \cite{MR1069756}. We say that a function $u\in AC_{\mathrm{loc}}(0,R)$ belongs to class $AC_L(0,R)$ if $\lim_{r\to0}u(r)=0$. Analogously, $u\in AC_{\mathrm{loc}}(0,R)$ belongs to $AC_R(0,R)$ if $\lim_{r\to R}u(r)=0$. In \cite[Example 6.8]{MR1069756}, the following Hardy-type inequality was proved:
\begin{prop}\label{prop21JMBO}
Given $p,q\in[1,\infty)$ and $\theta,\alpha\in\mathbb R$ the inequality
\begin{equation*}
\left(\int_0^R|u|^qr^\theta\mathrm dr\right)^{\frac{1}{q}}\leq C\left(\int_0^R|u'|^pr^\alpha\mathrm dr\right)^{\frac{1}{p}}
\end{equation*}
holds for some constant $C=C(p,q,\theta,\alpha,R)>0$ under the following conditions:
\begin{flushleft}
    $\mathrm{(i)}$ for $u\in AC_L(0,R)$ if and only if one of the following two conditions are fulfilled:
    \begin{enumerate}
        \item[$\mathrm{(a)}$] $1\leq p\leq q<\infty$, $q\geq\frac{(\theta+1)p}{\alpha-p+1}$, and $\alpha-p+1<0$.
        \item[$\mathrm{(b)}$] $1\leq q<p<\infty$, $q>\frac{(\theta+1)p}{\alpha-p+1}$, and $\alpha-p+1<0$.
    \end{enumerate}
    $\mathrm{(ii)}$ for $u\in AC_R(0,R)$ if and only if one of the following two conditions is fulfilled:\\
        \begin{enumerate}
            \item[$\mathrm{(a)}$] If $1\leq p\leq q<\infty$, then
        \begin{equation*}
        1\leq p\leq q\leq\dfrac{(\theta+1)p}{\alpha-p+1}\mbox{ and }\alpha-p+1>0,
        \end{equation*}
        or
        \begin{equation*}
        \theta>-1\mbox{ and }\alpha-p+1\leq0.
        \end{equation*}
        \item[$\mathrm{(b)}$] If $1\leq q<p$, then
        \begin{equation*}
        1\leq q<p<\infty,\mbox{ with }q<\dfrac{(\theta+1)p}{\alpha-p+1}\mbox{ and }\alpha-p+1>0,
        \end{equation*}
        or
        \begin{equation*}
        \theta>-1\mbox{ and }\alpha-p+1\leq0.
        \end{equation*}
\end{enumerate}
\end{flushleft}
\end{prop}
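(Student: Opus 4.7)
\emph{Plan of proof.} The statement is a specialization to pure-power weights of the general Muckenhoupt--Bradley theory of one-dimensional weighted Hardy inequalities. I would derive it by reducing everything to the boundedness of the Hardy operator $Tf(r):=\int_0^r f(s)\,\mathrm ds$ (and, dually, of the adjoint $T^\ast f(r):=\int_r^R f(s)\,\mathrm ds$ for the $AC_R$ case). If $u\in AC_L(0,R)$, the boundary condition $\lim_{r\to 0}u(r)=0$ gives $u(r)=\int_0^r u'(s)\,\mathrm ds$, hence $|u(r)|\le T|u'|(r)$, so the inequality in question is equivalent to
\[
\|Tf\|_{L^q((0,R),\,r^\theta\mathrm dr)}\le C\,\|f\|_{L^p((0,R),\,r^\alpha\mathrm dr)}
\]
with $f=|u'|\ge 0$, and similarly the $AC_R$ case is equivalent to the same bound with $T^\ast$ in place of $T$.

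Next, I would invoke the classical characterization theorem. For $1<p\le q<\infty$, $T$ is bounded between the above weighted spaces if and only if
\[
A:=\sup_{0<r<R}\Bigl(\int_r^R t^\theta\,\mathrm dt\Bigr)^{1/q}\Bigl(\int_0^r t^{-\alpha/(p-1)}\,\mathrm dt\Bigr)^{(p-1)/p}<\infty,
\]
while for $1\le q<p<\infty$ an analogous Bradley--Maz'ya--Rozin integral condition holds. Since the weights are pure powers, the inner integrals are elementary: $\int_0^r t^{-\alpha/(p-1)}\mathrm dt$ converges at $0$ precisely when $\alpha-p+1<0$, in which case it equals $\tfrac{p-1}{p-1-\alpha}\,r^{(p-1-\alpha)/(p-1)}$, whereas $\int_r^R t^\theta\,\mathrm dt$ behaves like a constant minus a power for $\theta>-1$ and like a negative power for $\theta<-1$ (with obvious modifications when $R=\infty$). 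Substituting these evaluations into $A$ reduces $A<\infty$ to purely algebraic constraints among $p,q,\theta,\alpha$ that, after a short case split on the signs of $\theta+1$ and $\alpha-p+1$, match the exponent conditions listed in (i). The $AC_R$ statement then follows by duality: boundedness of $T^\ast$ from $L^p(r^\alpha\mathrm dr)$ to $L^q(r^\theta\mathrm dr)$ is equivalent, via $L^{q'}$ to $L^q$ duality, to boundedness of $T$ between the appropriately transposed weighted spaces, and re-running the previous computation after this transposition gives exactly the conditions of (ii).

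Necessity in each case is obtained by inserting one-parameter families of power-type test functions supported near $0$ (for $AC_L$) or near $R$ (for $AC_R$), tuned so that the ratio of the two sides of the inequality approaches $A$ (or the Bradley integral in the $q<p$ regime). The main obstacle I anticipate is the borderline analysis: equality in the upper bound $q\le(\theta+1)p/(\alpha-p+1)$ that appears in (ii)(a) corresponds to the equality case of the Muckenhoupt supremum, where a naive power ansatz produces logarithmic divergences that must be ruled out by a finer cut-off construction (or, equivalently, by a direct integration by parts exploiting the Euler operator $r\,\mathrm d/\mathrm dr$). Once these borderline cases are settled, assembling the eight sub-cases of (i) and (ii) is routine book-keeping from the single reduction above.
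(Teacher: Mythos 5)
The paper does not actually prove this proposition: it is quoted verbatim from Opic--Kufner \cite[Example 6.8]{MR1069756}, so there is no internal argument to compare against. Your proposal is, in essence, the standard proof that lies behind that citation: reduce to the Hardy operator $Tf(r)=\int_0^r f$ (resp.\ its adjoint for the $AC_R$ case) via $u(r)=\int_0^r u'$, invoke the Muckenhoupt characterization for $p\le q$ and the Maz'ya--Rozin/Bradley condition for $q<p$, and evaluate the resulting suprema explicitly for pure power weights. I checked the exponent bookkeeping: the requirement $\alpha-p+1<0$ in (i) is exactly the convergence of $\int_0^r t^{-\alpha/(p-1)}\,\mathrm dt$, and the case split on the sign of $\theta+1$ (resp.\ of $\alpha-p+1$ in (ii)) reproduces the stated inequalities between $q$ and $(\theta+1)p/(\alpha-p+1)$, so the plan does yield the proposition. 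Two small caveats: you state the Muckenhoupt criterion only for $1<p\le q$, whereas the proposition includes $p=1$, for which the dual factor $\bigl(\int_0^r v^{1-p'}\bigr)^{1/p'}$ must be replaced by $\mathrm{ess\,sup}_{(0,r)}v^{-1}=r^{-\alpha}$ (the ensuing algebra is the same); and the ``borderline'' worry about $q=(\theta+1)p/(\alpha-p+1)$ in (ii)(a) is unnecessary, since at that exponent the Muckenhoupt supremum is a finite constant and both sufficiency and necessity are already covered by the characterization theorem, with no logarithmic correction to rule out.
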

We also enunciate the case when $R=\infty$ given by \cite[Example 6.7]{MR1069756}:
\begin{prop}\label{prop21JMBOinfty}
    Given $1\leq p\leq q<\infty$ and $\theta,\alpha\in\mathbb R$ the inequality
    \begin{equation*}
\left(\int_0^\infty|u|^qr^\theta\mathrm dr\right)^{\frac{1}{q}}\leq C\left(\int_0^\infty|u'|^pr^\alpha\mathrm dr\right)^{\frac{1}{p}}
\end{equation*}
holds for some constant $C=C(p,q,\theta,\alpha)>0$ under the following conditions:
\begin{flushleft}
    $\mathrm{(i)}$ for $u\in AC_L(0,R)$ if and only if $\alpha<p-1$ and $\theta=\alpha\frac{q}p-\frac{q(p-1)}p-1$;\\
    $\mathrm{(ii)}$ for $u\in AC_R(0,R)$ if and only if $\alpha>p-1$ and $\theta=\alpha\frac{q}p-\frac{q(p-1)}p-1$.
        \end{flushleft}
\end{prop}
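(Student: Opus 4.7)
The plan is to first extract the algebraic relation between $\theta$ and $\alpha$ from a scaling argument, and then prove sufficiency by reducing the weighted inequality to the classical one-dimensional Hardy--Sobolev inequality via a power change of variables. Case (ii) will then follow from case (i) by the inversion $r \mapsto 1/r$, which exchanges the behavior at $0$ and $\infty$ and swaps $AC_L$ with $AC_R$.

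For necessity of the identity $\theta = \alpha q/p - q(p-1)/p - 1$, I would test the inequality on the family of dilations $u_\lambda(r) := u(\lambda r)$: a direct change of variables shows the left-hand side scales as $\lambda^{-(\theta+1)/q}$ while the right-hand side scales as $\lambda^{(p-\alpha-1)/p}$, so a uniform constant forces these two exponents to agree. For the necessity of $\alpha < p-1$ in case (i) (resp.\ $\alpha > p-1$ in case (ii)), standard counterexamples built from sequences of normalized bumps (or logarithmic-type functions) concentrated near the relevant endpoint show that the derivative norm can remain bounded while the function norm blows up when the stated sign condition fails.

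For sufficiency in case (i), I set $\beta := (p-1)/(p-1-\alpha) > 0$ (well-defined since $\alpha < p-1$) and define $v(t) := u(t^\beta)$, which belongs to $AC_L(0,\infty)$ because $u$ does. Using $u'(r) = (\beta t^{\beta-1})^{-1} v'(t)$ with $r = t^\beta$, a direct computation yields
\begin{equation*}
\int_0^\infty |u'(r)|^p r^\alpha \, \mathrm dr = \beta^{1-p} \int_0^\infty |v'(t)|^p \, \mathrm dt,
\end{equation*}
and, after substituting the derived expression for $\theta$,
\begin{equation*}
\int_0^\infty |u(r)|^q r^\theta \, \mathrm dr = \beta \int_0^\infty |v(t)|^q \, t^{-q(p-1)/p - 1} \, \mathrm dt.
\end{equation*}
The proposed inequality thus reduces to the classical one-dimensional Hardy--Sobolev inequality
\begin{equation*}
\left(\int_0^\infty |v|^q \, t^{-q(p-1)/p - 1} \, \mathrm dt\right)^{1/q} \leq C \left(\int_0^\infty |v'|^p \, \mathrm dt\right)^{1/p}, \qquad v \in AC_L(0,\infty),
\end{equation*}
which holds for all $1 \leq p \leq q < \infty$; this can be verified either directly from Hardy's inequality (when $p = q$) or via the Muckenhoupt $A$-condition for power weights (which collapses precisely to the scaling exponent above).

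The main work here is the careful bookkeeping in the change of variables, verifying that the exponents transform correctly and that the target inequality is exactly the classical Hardy--Sobolev form; there is no deep conceptual obstacle, since the proposition is essentially a reformulation of the Muckenhoupt--Bliss theory for power weights and is precisely stated in \cite[Example 6.7]{MR1069756}, which can be cited directly.
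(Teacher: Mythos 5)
Your proposal is correct in substance, but it is worth noting that the paper does not prove this proposition at all: it is quoted verbatim as \cite[Example 6.7]{MR1069756} (Opic--Kufner), exactly as you observe in your final paragraph, so the ``official'' proof is a citation. Your self-contained sketch is a legitimate alternative and the bookkeeping checks out: the dilation argument does force $\theta=\alpha\frac{q}{p}-\frac{q(p-1)}{p}-1$; the substitution $r=t^\beta$ with $\beta=(p-1)/(p-1-\alpha)$ does normalize the derivative weight and turns the left-hand weight into $t^{-q(p-1)/p-1}$; and the inversion $r\mapsto 1/r$ sends $(\alpha,\theta)$ to $(2p-\alpha-2,-\theta-2)$, which preserves the exponent relation and converts $\alpha>p-1$ into $\tilde\alpha<p-1$ while exchanging $AC_L$ and $AC_R$, so case (ii) does follow from case (i). Two caveats. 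First, your substitution degenerates at $p=1$, where $\beta=0$; that endpoint is included in the statement, so you should either treat $p=1$ separately (the Hardy-operator criterion for $p=1$ uses $\sup_r(\int_r^\infty t^\theta\,\mathrm dt)^{1/q}\,\mathrm{ess\,sup}_{(0,r)}t^{-\alpha}$, which is finite and scale-invariant under the stated hypotheses) or work directly with the Muckenhoupt--Bradley condition on the original weights, which handles all $1\leq p\leq q$ at once and makes the change of variables unnecessary. Second, calling the reduced inequality ``classical Hardy--Sobolev'' slightly undersells it: for $p<q$ it is the Bliss/Bradley inequality, which is the actual analytic content here, so if you want a genuinely self-contained proof you must either prove that or, as you and the authors both ultimately do, cite it. What your route buys over the paper's is transparency about where the conditions $\alpha\lessgtr p-1$ and the exact value of $\theta$ come from; what the citation buys is brevity and automatic coverage of the endpoint cases.
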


Now we recall some previous embeddings and radial lemmas obtained for $X^{k,p}_R$ with $R<\infty$ in \cite{arXiv:2302.02262} and for $X^{k,p}_\infty$ in \cite{arXiv:2306.00194}. Moreover, we also present a density result in $X^{k,p}_\infty$.

\begin{theo}[Theorem 1.1 in \cite{arXiv:2302.02262}]\label{theo32}
Let $p\geq1$, $0<R<\infty$, and $\theta\geq\alpha_k-kp$.
\begin{flushleft}
    $\mathrm{(a)}$ (Sobolev case) If $ \alpha_k-kp+1>0$, then the continuous embedding holds
\begin{equation*}X^{k,p}_R(\alpha_0,\ldots,\alpha_k)\hookrightarrow L^q_\theta(0,R)\quad \text{for all}\quad 1\leq q\leq p^*:=\dfrac{(\theta+1)p}{\alpha_k-kp+1}. \end{equation*}
Moreover, the embedding is compact if $q<p^*.$ \\
    $\mathrm{(b)}$ (Sobolev Limit case) If $\alpha_k-kp+1=0$, then the compact embedding holds
\begin{equation*}
    X^{k,p}_R(\alpha_0,\ldots,\alpha_k)\hookrightarrow L^q_\theta(0,R)\quad \text{for all} \quad q\in[1,\infty).
\end{equation*}\\
$\mathrm{(c)}$ (Morrey case) If $\alpha_k-kp+1<0, \; p>1, \; \alpha_k\geq0$
then the continuous embedding holds
\begin{equation*}
   X^{k,p}_R(\alpha_0,\ldots,\alpha_k)\hookrightarrow C^{k-\lfloor\frac{\alpha_k+1}{p}\rfloor-1,\gamma}([0,R]),
\end{equation*}
where $\gamma=\min\left\{1+\left\lfloor\frac{\alpha_k+1}{p}\right\rfloor-\frac{\alpha_k+1}{p},1-\frac1p\right\}$ if $\frac{\alpha_k+1}{p}\notin\mathbb Z$
and $\gamma \in (0,1)$ if $\frac{\alpha_k+1}{p}\in\mathbb Z$.
        \end{flushleft}
\end{theo}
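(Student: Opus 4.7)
The plan is to reduce all three cases to the first-order setting by induction on $k$, then exploit the Hardy-type inequalities of Proposition \ref{prop21JMBO} to complete each case. For the base case $k=1$, given $u\in X^{1,p}_R(\alpha_0,\alpha_1)$, I would write $u$ via the fundamental theorem of calculus: for any $r_0\in(0,R)$,
\begin{equation*}
u(r)=u(r_0)+\int_{r_0}^r u'(s)\,ds.
\end{equation*}
Since $u\in L^p_{\alpha_0}$, a mean-value argument lets me pick $r_0$ in a positive-measure set where $|u(r_0)|$ is controlled by $\|u\|_{L^p_{\alpha_0}}$. The integral term is controlled using H\"older's inequality together with the exact exponent choices from Proposition \ref{prop21JMBO}, cases (i) and (ii). The critical exponent $p^{*}=(\theta+1)p/(\alpha_1-p+1)$ in part (a) emerges precisely from the balance: integrating $|u'(s)|$ against $s^{-\alpha_1/(p-1)}$ on an interval of size $r$ produces a power of $r$ which, weighted against $r^\theta$, forces $q\le p^{*}$.

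The inductive step observes that if $u\in X^{k,p}_R(\alpha_0,\ldots,\alpha_k)$, then $u'\in X^{k-1,p}_R(\alpha_1,\ldots,\alpha_k)$, so by the inductive hypothesis $u'$ belongs to a suitable weighted $L^{q_1}_{\theta_1}$. Re-applying the $k=1$ estimate to $u$, with the improved integrability of $u'$, bootstraps to the desired embedding and makes the exponents $p^{*}=(\theta+1)p/(\alpha_k-kp+1)$ track through the iteration. The hypothesis $\theta\ge\alpha_k-kp$ is exactly what keeps each intermediate weight in the admissible range of Proposition \ref{prop21JMBO}. For part (b), the same iteration degenerates to the endpoint $\alpha_k-kp+1=0$, and the Hardy threshold disappears, so the final $q$ can be any finite number. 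For part (c), I would apply H\"older's inequality directly to $u^{(k-m-1)}(r_1)-u^{(k-m-1)}(r_0)=\int_{r_0}^{r_1}u^{(k-m)}(s)\,ds$ iteratively, for $m=0,1,\ldots,\lfloor(\alpha_k+1)/p\rfloor$; the Lebesgue integrability of $s^{-\alpha_j/(p-1)}$ (which holds after enough iterations since $\alpha_k-kp+1<0$) both produces the jump to H\"older regularity and, from its explicit power, gives the exponent $\gamma=\min\{1+\lfloor(\alpha_k+1)/p\rfloor-(\alpha_k+1)/p,\,1-1/p\}$.

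Compactness in part (a) with $q<p^{*}$ (and in part (b) for any finite $q$) follows by standard Rellich--Kondrachov-type reasoning: bounded sequences in $X^{k,p}_R$ are equicontinuous on each $[\varepsilon,R-\varepsilon]$ (via the iterated Hardy estimates, which give H\"older control of lower derivatives on compact subsets), while the improved exponent $p^{*}$ together with interpolation gives uniform smallness of the $L^q_\theta$ norm on the tails $(0,\varepsilon)$ and $(R-\varepsilon,R)$. An Arzel\`a--Ascoli plus diagonal subsequence argument then produces the strong limit in $L^q_\theta(0,R)$.

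The main obstacle is that $u\in X^{k,p}_R$ is not assumed to vanish at either endpoint, so the Hardy inequalities of Proposition \ref{prop21JMBO}, which require $u\in AC_L$ or $u\in AC_R$, cannot be applied verbatim. The remedy is to decompose $u$ either via the interior-anchored representation above, or through a cutoff splitting $u=u_0+u_R$ with $u_0\in AC_L$ and $u_R\in AC_R$, and to track the weighted norms of the pieces while preserving the exponent threshold $p^{*}$; this bookkeeping, rather than any single hard inequality, is the delicate technical part of the argument.
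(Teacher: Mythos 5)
This statement is not proved in the paper at all: Theorem \ref{theo32} is quoted verbatim from \cite{arXiv:2302.02262} as a known preliminary, so there is no in-paper argument to compare yours against. Judged on its own, your outline follows the route one would expect (and that the cited reference essentially takes): anchor $u$ at an interior point $r_0$ chosen by a mean-value argument so that $|u(r_0)|$ is controlled by $\|u\|_{L^p_{\alpha_0}}$, split $(0,R)$ at $r_0$ so that the Hardy inequalities of Propositions \ref{prop21JMBO} apply to $u-u(r_0)$ on each piece (the $AC_R$ branch on $(0,r_0)$ is the one that produces the critical exponent $p^*$; the piece near $R$ is non-degenerate), bootstrap over derivative orders, and use H\"older against the critical norm to make the tails small for compactness. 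You correctly identify the absence of boundary vanishing as the real obstacle and the decomposition as the fix.

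Two soft spots deserve flagging. First, the compactness argument via equicontinuity plus Arzel\`a--Ascoli on $[\varepsilon,R-\varepsilon]$ breaks down when $p=1$, which the theorem allows: bounded sequences in $W^{1,1}$ of a compact interval are not equicontinuous, so on the interior one must instead invoke the compactness of $W^{1,1}\hookrightarrow L^q$ (Fr\'echet--Kolmogorov / Rellich), reserving the H\"older-in-$p^*$ tail estimate for the degenerate end. Second, in the Morrey case the factor $\int s^{-\alpha_j/(p-1)}\,\mathrm ds$ is finite for the $j$-th step only once $j$ is small enough that $\alpha_k-(k-j)p+1<0$; for the intermediate derivatives $u^{(j)}$ with $\alpha_k-(k-j)p+1\geq0$ you must first run the Sobolev/limit cases to upgrade their integrability before the pointwise H\"older chain can start, and it is this interleaving (not just ``enough iterations'') that produces the index $k-\lfloor(\alpha_k+1)/p\rfloor-1$ and the exponent $\gamma$. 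Neither issue is a wrong idea, but both require more than bookkeeping to close.
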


\begin{lemma}[Propositions 2.3, 2.4 and 2.5 in \cite{arXiv:2302.02262}]
Let $X^{k,p}_R(\alpha_0,\ldots,\alpha_k)$ be the weighted Sobolev space with $0<R<\infty$ and $1\leq p<\infty$. For all $u\in X^{k,p}_R$ and $r\in(0,R]$ we have
\begin{equation}\label{RL1}
|u(r)|\leq C\dfrac1{r^{\frac{\alpha_k-kp+1}{p}}}\|u\|_{X_R^{k,p}},\mbox{ if }\alpha_k-kp+1>0;
\end{equation}
and
\begin{equation}\label{RL2}
|u(r)|\leq C|\log (r/R)|^{\frac{p-1}p}\|u\|_{X^{k,p}_R},\mbox{ if }\alpha_k-kp+1=0\mbox{ and }p>1,
\end{equation}
where $C=C(\alpha_0,\ldots,\alpha_{k},p,k,R)>0$ is a constant. Moreover, if $\alpha_k-kp+1=0$ and $p=1$, then
\begin{equation}\label{RL3}
    X^{k,p}_R\hookrightarrow C([0,R]).
\end{equation}
\end{lemma}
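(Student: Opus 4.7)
The plan is to represent $u(r)$ via Taylor's formula with integral remainder, then average the identity over an interval bounded away from the origin and apply weighted Hölder inequalities to handle the remainder. Since $u\in AC_{\mathrm{loc}}^{k-1}(0,R)$ and $u^{(j)}\in L^p_{\alpha_j}$ for every $j$, for $0<r<s\leq R$ the expansion
\begin{equation*}
u(r)=\sum_{j=0}^{k-1}\frac{(-1)^j}{j!}u^{(j)}(s)(s-r)^j+\frac{(-1)^k}{(k-1)!}\int_r^s(t-r)^{k-1}u^{(k)}(t)\,\mathrm dt
\end{equation*}
is available, and it cleanly isolates the lower-order derivatives from the contribution of the top derivative $u^{(k)}$, whose weighted norm is the critical one for controlling the exponent in \eqref{RL1}.

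First I would average both sides over $s\in[R/2,R]$, an interval on which every weight $s^{\alpha_j}$ is comparable to a positive constant. A direct Hölder argument then gives
\begin{equation*}
\frac{2}{R}\int_{R/2}^R|u^{(j)}(s)|(s-r)^j\,\mathrm ds\leq C\|u^{(j)}\|_{L^p_{\alpha_j}}\leq C\|u\|_{X^{k,p}_R},\quad 0\leq j\leq k-1,
\end{equation*}
so the lower-order terms contribute a constant uniformly in $r$. For $r\in[R/2,R]$ one either repeats the argument with an averaging interval below $r$, or observes that the target bound is itself bounded on that range, so the main task reduces to small $r$.

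The heart of the argument is to control the remainder for small $r$. Using $(t-r)^{k-1}\leq t^{k-1}$ for $0<r<t<R$ and the splitting $t^{k-1}|u^{(k)}(t)|=t^{k-1-\alpha_k/p}\cdot t^{\alpha_k/p}|u^{(k)}(t)|$, Hölder's inequality with exponents $p$ and $p/(p-1)$ gives
\begin{equation*}
\int_r^R(t-r)^{k-1}|u^{(k)}(t)|\,\mathrm dt\leq\|u^{(k)}\|_{L^p_{\alpha_k}}\left(\int_r^R t^{-\frac{\alpha_k-(k-1)p}{p-1}}\,\mathrm dt\right)^{\frac{p-1}{p}}.
\end{equation*}
When $\alpha_k-kp+1>0$, a direct calculation shows the inner integral blows up at $r$ like $r^{-(\alpha_k-kp+1)/(p-1)}$, producing the factor $r^{-(\alpha_k-kp+1)/p}$ in \eqref{RL1}; when $\alpha_k-kp+1=0$ the exponent of $t$ is exactly $-1$ and the integral equals $\log(R/r)$, yielding \eqref{RL2} after raising to $(p-1)/p$. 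For \eqref{RL3} with $p=1$ and $\alpha_k=k-1$, Hölder is unnecessary since $\int_0^R t^{k-1}|u^{(k)}(t)|\,\mathrm dt=\|u^{(k)}\|_{L^1_{k-1}}$ is finite; the remainder is then uniformly bounded and, by absolute continuity of the integral, Cauchy as $r\to 0^+$, so $u$ extends continuously to $[0,R]$.

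The main obstacle I foresee is extracting the exact exponent $-(\alpha_k-kp+1)/p$ in \eqref{RL1}: the crude replacement $(t-r)^{k-1}\leq R^{k-1}$ would yield an overly singular bound $r^{-(\alpha_k-p+1)/p}$, so one must genuinely exploit the vanishing of the Taylor kernel near $t=r$ (captured here by $(t-r)^{k-1}\leq t^{k-1}$) together with exact tracking of the dual Hölder exponent. A secondary, easier point is treating the borderline $\alpha_k-kp+1=0$ case uniformly with the subcritical one, which follows by inspection from the same computation in the limit where the power integral degenerates into a logarithm.
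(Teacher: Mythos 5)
The paper does not prove this lemma --- it is quoted verbatim from \cite{arXiv:2302.02262} (Propositions 2.3--2.5), so there is no in-text argument to compare against. Your proposal is a correct, self-contained proof along the standard lines for such radial lemmas: the Taylor expansion with integral remainder is legitimate because $u\in AC^{k-1}_{\mathrm{loc}}(0,R)$, averaging the base point over $[R/2,R]$ reduces the lower-order terms to $C\|u\|_{X^{k,p}_R}$ via H\"older on an interval where every weight $s^{\alpha_j}$ is comparable to a constant, and the key step --- replacing $(t-r)^{k-1}$ by $t^{k-1}$ before applying weighted H\"older to the remainder --- produces exactly the dual integral $\int_r^R t^{-(\alpha_k-(k-1)p)/(p-1)}\,\mathrm dt$, whose blow-up rate $r^{-(\alpha_k-kp+1)/(p-1)}$ (resp.\ $\log(R/r)$ in the borderline case) yields the exponents in \eqref{RL1} and \eqref{RL2} after raising to the power $(p-1)/p$; the $p=1$ case and the Cauchy argument for continuity at the origin are likewise sound. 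Two minor remarks. First, your proof actually gives $|u(r)|\leq C\bigl(1+|\log(r/R)|^{(p-1)/p}\bigr)\|u\|_{X^{k,p}_R}$, which coincides with \eqref{RL2} only for $r$ bounded away from $R$ (e.g.\ $r\leq R/2$); as literally stated, \eqref{RL2} at $r=R$ would force $u(R)=0$, which fails for $u\equiv 1$, so the degenerate endpoint is an artifact of the quoted statement rather than a gap in your argument --- and indeed the paper only ever invokes \eqref{RL2} as $r\to 0$. Second, in the Cauchy estimate for \eqref{RL3} the difference of the two remainder kernels $(t-r_1)^{k-1}-(t-r_2)^{k-1}$ needs a one-line splitting (near and away from the origin) in addition to the absolute continuity of $\int_0^{\cdot}t^{k-1}|u^{(k)}|\,\mathrm dt$, but this is routine and does not affect the conclusion.
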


\begin{theo}[Theorem 1.1 in \cite{	arXiv:2306.00194}]\label{theoimersaoinfinito}
Let the weighted Sobolev space $X^{k,p}_\infty(\alpha_0,\ldots,\alpha_k)$ such that $p\geq1$ and $\alpha_{j}\geq\alpha_k-(k-j)p$ for all $j=0,\ldots,k-1$. Consider $\alpha_k-kp\leq\theta\leq\alpha_0$.
\begin{flushleft}
    $\mathrm{(a)}$ (Sobolev case) If $\alpha_k-kp+1>0$, then the following continuous embedding holds:
    \begin{equation*}
    X^{k,p}_\infty\hookrightarrow L^q_\theta,\quad p\leq q\leq p^*,
    \end{equation*}
    where
    \begin{equation*}
    p^*:=\dfrac{(\theta+1)p}{\alpha_k-kp+1}.
    \end{equation*}
    Moreover, it is a compact embedding if one of the following two conditions is fulfilled:
    \begin{enumerate}
        \item[$\mathrm{(i)}$] $\theta=\alpha_0$ and $p<q<p^*$;
        \item[$\mathrm{(ii)}$] $\theta<\alpha_0$ and $p\leq q<p^*$.
    \end{enumerate}
    $\mathrm{(b)}$ (Sobolev Limit case) If $\alpha_k-kp+1=0$, then the following continuous embedding holds:
    \begin{equation*}
    X^{k,p}_\infty\hookrightarrow L^q_\theta,\quad p\leq q<\infty.
    \end{equation*}
    Moreover, it is compact embedding if one of the following two conditions is fulfilled:
    \begin{enumerate}
        \item[$\mathrm{(i)}$] $\theta=\alpha_0$, $q>p>1$ and $\alpha_0>-1$;
        \item[$\mathrm{(ii)}$] $\theta<\alpha_0$ and $q\geq p\geq1$.
    \end{enumerate}
\end{flushleft}
\end{theo}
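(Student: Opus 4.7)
The plan is to reduce the whole-line embedding to the bounded-interval embedding from Theorem~\ref{theo32}, combined with an iterated Hardy inequality at infinity supplied by Proposition~\ref{prop21JMBOinfty}. Fix a smooth cut-off $\phi$ with $\phi\equiv1$ on $[0,1]$ and $\phi\equiv0$ on $[2,\infty)$, and split $u=\phi u+(1-\phi)u$. The piece $\phi u$ lies in $X^{k,p}_{2}$ with norm controlled by $\|u\|_{X^{k,p}_\infty}$, since $\phi$ and its derivatives are bounded; applying Theorem~\ref{theo32} on $(0,2)$ yields the required $L^q_\theta(0,2)$ bound and, in the relevant subcritical regimes, compactness on that interval.

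For the outer piece $w=(1-\phi)u$, I first establish that any $u\in X^{k,p}_\infty$ satisfies $u(r)\to 0$ as $r\to\infty$; this follows from integrating $u'$ from a large point together with H\"older's inequality and the assumptions $u'\in L^p_{\alpha_1}$, $u\in L^p_{\alpha_0}$. Consequently $w$ vanishes both near $0$ and at $\infty$, so one may apply Proposition~\ref{prop21JMBOinfty} in either the $AC_L$ or $AC_R$ form. I then iterate this Hardy inequality $k$ times, estimating $w$ by $w'$, $w'$ by $w''$, and so on up to $w^{(k)}$, choosing at each step exponents consistent with the hypotheses of the Proposition. The standing assumption $\alpha_j\geq\alpha_k-(k-j)p$ is exactly what closes the chain, ensuring that each intermediate derivative lies in a weighted $L^p$ space compatible with the next Hardy step, so that the composed estimate lands on the critical pair $(p^*,\theta)$ with $p^*=(\theta+1)p/(\alpha_k-kp+1)$ in case (a), or on any finite $q\geq p$ in the limit case (b). Summing the bounds for $\phi u$ and $w$, and interpolating with the trivial inclusion $X^{k,p}_\infty\hookrightarrow L^p_\theta$ (valid for $\alpha_k-kp\leq\theta\leq\alpha_0$), yields the continuous embedding over the full range of $q$.

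Compactness is proven by a tightness argument. Given a bounded sequence $(u_n)\subset X^{k,p}_\infty$, Theorem~\ref{theo32} gives, up to a diagonal subsequence, strong convergence of $u_n$ on each bounded interval $(1/R,R)$. To upgrade this to convergence on $(0,\infty)$ it suffices to show that the tails $\int_0^{1/R}|u_n|^q r^\theta\,\mathrm dr$ and $\int_R^\infty|u_n|^q r^\theta\,\mathrm dr$ tend to zero uniformly in $n$ as $R\to\infty$. For the outer tail this is an immediate consequence of the iterated Hardy estimate applied to the truncation of $u_n$ to $[R,\infty)$, whenever there is a strict gain; this accounts for why compactness requires $q<p^*$ in (a) and $q>p$ when $\theta=\alpha_0$ in (b). For the inner tail, when $\theta<\alpha_0$ the weight $r^{\theta-\alpha_0}$ gives an extra factor $R^{-(\alpha_0-\theta)}$; the borderline $\theta=\alpha_0$ is delicate and forces the additional restriction $q>p$ (or $p<q<p^*$) to rule out mass concentration at the origin.

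The main obstacle I foresee is the exponent bookkeeping in the $k$-fold Hardy iteration: verifying that the inequalities $\alpha_j\geq\alpha_k-(k-j)p$ are individually sufficient at each level and jointly produce the claimed critical exponent $p^*$ at the end. A second subtle point is the compactness in the endpoint case $\theta=\alpha_0$, where the absence of a weight gap near the origin requires a concentration-compactness style dichotomy to exclude mass loss at $r=0$ and at $r=\infty$.
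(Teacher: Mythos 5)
This statement is not proved in the present paper at all: it is quoted verbatim as Theorem 1.1 of the reference \cite{arXiv:2306.00194}, so there is no internal proof to compare against. Judged on its own terms, your architecture (cut-off decomposition, bounded-interval embedding from Theorem \ref{theo32} for the inner piece, a tail estimate at infinity, tightness for compactness) is the right one, but the mechanism you propose for the outer piece has a genuine gap. Proposition \ref{prop21JMBOinfty} is an \emph{if and only if} statement requiring the exact scale balance $\theta=\alpha\frac qp-\frac{q(p-1)}p-1$ together with the strict inequality $\alpha>p-1$ in the $AC_R$ form. Iterating it with $q=p$ forces the weights down to the extremal values $\alpha_k-(k-j)p$, and the final step then sits exactly at $\alpha=\alpha_k-(k-1)p$, for which $\alpha-p+1=\alpha_k-kp+1$. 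In the Sobolev limit case (b) this equals zero, so the last Hardy step is at the excluded endpoint and the chain cannot close; no choice of exponents "consistent with the hypotheses" exists, and indeed the conclusion in case (b) (all finite $q$) is not of Hardy type. Even in case (a), the chain only yields the single endpoint $q=p^*$ (where the balance condition happens to coincide with the given $\theta$); getting $p\le q\le p^*$ then requires the weight comparison $r^\theta\le r^{\alpha_0}$ on $[1,\infty)$ and interpolation, and applying the $AC_R$ form at each level requires showing that \emph{every} derivative $u^{(j)}$, $j\le k-1$, vanishes at infinity, not just $u$ itself.

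The standard repair, and what the cited source effectively does, is to replace the iterated Hardy argument on the tail by a pointwise radial decay estimate: from $\frac{\mathrm d}{\mathrm dr}|u^{(j)}|^p\le p|u^{(j)}|^{p-1}|u^{(j+1)}|$, H\"older, and $\alpha_j(p-1)+\alpha_{j+1}\ge0$ one gets an explicit power decay of $u$ at infinity, and then
\begin{equation*}
\int_1^\infty|u|^qr^\theta\,\mathrm dr\le\Bigl(\sup_{r\ge1}|u(r)|^{q-p}r^{\theta-\alpha_0}\Bigr)\|u\|_{L^p_{\alpha_0}}^p,
\end{equation*}
which is finite precisely under $\theta\le\alpha_0$ and $q\ge p$ and covers both cases (a) and (b) simultaneously. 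Finally, in your compactness discussion the restriction $q>p$ when $\theta=\alpha_0$ is not about concentration at the origin (the origin lies inside the bounded interval where Theorem \ref{theo32} already gives compactness down to $q=p$); it is needed to rule out a bounded sequence whose mass escapes to $r=\infty$ without decay of the $L^p_{\alpha_0}$ norm.
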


\begin{lemma}[Lemma 2.1 in \cite{arXiv:2306.00194}]\label{lemmacompsuppdense}
The space $\Upsilon:=\{u|_{[0,\infty)} : u \in C_0^\infty(\mathbb{R})\}$ is dense in the weighted Sobolev space $X^{k,p}_\infty$ under the assumptions
$p\geq1$ and $\alpha_{j-1}\geq\alpha_j-p$ for all $j=1,\ldots,k$.
\end{lemma}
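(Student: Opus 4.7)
\medskip

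\noindent\textbf{Proof proposal for Lemma \ref{lemmacompsuppdense}.} The plan is to establish density in two stages: first, approximate an arbitrary $u\in X^{k,p}_\infty$ by a sequence of functions with compact support in $[0,\infty)$; second, regularize each compactly supported function by a one-sided mollification to land inside $\Upsilon$. Both stages rely on iterating the hypothesis $\alpha_{j-1}\geq \alpha_j-p$, which gives $\alpha_i\geq \alpha_j-(j-i)p$ for every $0\leq i\leq j\leq k$, i.e.\ $r^{\alpha_j}\leq r^{\alpha_i+(j-i)p}$ at the scales that matter.

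For the truncation step, I would fix $\eta\in C^\infty([0,\infty))$ with $\eta\equiv 1$ on $[0,1]$, $\eta\equiv 0$ on $[2,\infty)$, and set $\eta_R(r):=\eta(r/R)$, $u_R:=\eta_R u$. Leibniz's formula gives
\begin{equation*}
(u_R)^{(j)}-u^{(j)}=(\eta_R-1)u^{(j)}+\sum_{i=0}^{j-1}\binom{j}{i}\eta_R^{(j-i)}u^{(i)}.
\end{equation*}
The first term converges to $0$ in $L^p_{\alpha_j}$ by dominated convergence. For the mixed Leibniz terms, $\eta_R^{(j-i)}$ is supported in $[R,2R]$ with $|\eta_R^{(j-i)}|\leq C R^{-(j-i)}$, so
\begin{equation*}
\int_R^{2R}\bigl|\eta_R^{(j-i)}u^{(i)}\bigr|^{p} r^{\alpha_j}\,\mathrm dr\leq C\,R^{-p(j-i)}\int_R^{2R}|u^{(i)}|^{p}r^{\alpha_j}\,\mathrm dr.
\end{equation*}
Using $\alpha_j-\alpha_i-(j-i)p\leq 0$ to bound $r^{\alpha_j}\leq C\, R^{\alpha_j-\alpha_i}\, r^{\alpha_i+(j-i)p-(j-i)p}\cdot 2^{(j-i)p}R^{(j-i)p}/r^{0}$ (more cleanly, $r^{\alpha_j}\leq 2^{(j-i)p}R^{\alpha_j-\alpha_i}r^{\alpha_i}$ on $[R,2R]$), the right-hand side is dominated by $C\int_R^{2R}|u^{(i)}|^{p}r^{\alpha_i}\,\mathrm dr\to 0$ as $R\to\infty$, since $u^{(i)}\in L^p_{\alpha_i}$. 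Hence $u_R\to u$ in $X^{k,p}_\infty$.

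For the regularization step, I fix $u\in X^{k,p}_\infty$ supported in some $[0,M]$. Let $\rho\in C_0^\infty(\mathbb R)$ be a nonnegative mollifier with $\int\rho=1$ and $\operatorname{supp}\rho\subset(-2,-1)$, and set $\rho_\varepsilon(s):=\varepsilon^{-1}\rho(s/\varepsilon)$. Extend $u$ by zero to $\mathbb R$ and define $u_\varepsilon:=u*\rho_\varepsilon$. Because $\rho_\varepsilon$ is supported on the negative axis, the value $u_\varepsilon(r)$ depends only on $u(r+t)$ for $t\in[\varepsilon,2\varepsilon]$, so $u_\varepsilon\in C_0^\infty(\mathbb R)$ and $u_\varepsilon|_{[0,\infty)}\in\Upsilon$, with $u_\varepsilon^{(j)}=u^{(j)}*\rho_\varepsilon$ on $(0,\infty)$. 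Classical mollifier theory gives $u_\varepsilon^{(j)}\to u^{(j)}$ in $L^p(\mathbb R)$, and on any $[a,\infty)$ with $a>0$ the weight $r^{\alpha_j}$ is comparable to $1$ on the (common) support, promoting convergence to $L^p_{\alpha_j}([a,\infty))$.

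The main obstacle I expect is controlling the weighted tail near $r=0$, where $r^{\alpha_j}$ may be singular. I would handle it via an equi-integrability argument: the family $\{|u_\varepsilon^{(j)}|^{p}r^{\alpha_j}\}_{\varepsilon>0}$ can be shown to be equi-integrable on $(0,1)$ by dominating it, using the one-sided shift structure, by a translate of $|u^{(j)}|^{p}r^{\alpha_j}$ (modulo a bounded constant coming from the ratio $r^{\alpha_j}/(r+\varepsilon)^{\alpha_j}$, which is controlled since both $r$ and $r+\varepsilon$ stay in a compact range once combined with the cut-off from Stage 1). Pointwise a.e.\ convergence $u_\varepsilon^{(j)}\to u^{(j)}$ together with Vitali's theorem then closes the argument. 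Combining the two stages via a diagonal choice $\varepsilon=\varepsilon(R)\to 0$, $R\to\infty$, produces a sequence in $\Upsilon$ converging to $u$ in $X^{k,p}_\infty$.
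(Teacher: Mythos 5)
This lemma is quoted from \cite{arXiv:2306.00194} and the present paper gives no proof of it, so there is nothing internal to compare against; I can only assess your argument on its own terms. Your two-stage scheme (cut off at infinity, then mollify one-sidedly from the right) is the natural and essentially standard route, and Stage 1 is correct: the exponent bookkeeping $R^{-p(j-i)}\,r^{\alpha_j}\leq C\,R^{\alpha_j-\alpha_i-p(j-i)}\,r^{\alpha_i}\leq C\,r^{\alpha_i}$ on $[R,2R]$ is exactly what the hypothesis $\alpha_i\geq\alpha_j-(j-i)p$ is for. The one-sided mollifier is also the right device, since for $r\geq0$ it only samples $u$ on $[r+\varepsilon,r+2\varepsilon]\subset(0,\infty)$, avoiding both the zero extension's jump at the origin and any failure of local integrability of $u^{(j)}$ there. (Two small repairs: the zero extension of $u^{(j)}$ need not lie in $L^p(\mathbb R)$ near $0$ when $\alpha_j>0$, so "classical mollifier theory in $L^p(\mathbb R)$" should be invoked only on $[a,\infty)$, which the one-sided support permits; and $u_\varepsilon$ is a priori smooth only on $(-\varepsilon,\infty)$, which still puts $u_\varepsilon|_{[0,\infty)}$ in $\Upsilon$ after an innocuous cutoff on the left. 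Also, for $\Upsilon\subset X^{k,p}_\infty$ one must have $\alpha_j>-1$; take that as standing.)

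The genuine gap is in the step you yourself flag as the main obstacle. Your proposed domination rests on the ratio $r^{\alpha_j}/(r+t)^{\alpha_j}$ being bounded "since both $r$ and $r+\varepsilon$ stay in a compact range". For $\alpha_j\in(-1,0)$ this is false: the ratio equals $(1+t/r)^{|\alpha_j|}$ and blows up as $r\to0$ for fixed $t\simeq\varepsilon$, so no pointwise comparison of $|u^{(j)}(r+t)|^pr^{\alpha_j}$ with a translate of $|u^{(j)}|^pr^{\alpha_j}$ is available, and the equi-integrability claim is not justified as written. The step can be saved, but only in averaged form: by Jensen and Fubini,
\begin{equation*}
\int_0^{\delta}|u_\varepsilon^{(j)}(r)|^p r^{\alpha_j}\,\mathrm dr\;\leq\;\int_0^{\delta+2\varepsilon}|u^{(j)}(s)|^p\left[\int \tilde\rho_\varepsilon(t)\,(s-t)_+^{\alpha_j}\,\mathrm dt\right]\mathrm ds,
\end{equation*}
and the bracket is bounded by $C\,s^{\alpha_j}$ with $C$ independent of $\varepsilon$: for $s\geq4\varepsilon$ use $(s-t)^{\alpha_j}\leq(s-2\varepsilon)^{\alpha_j}\leq 2^{-\alpha_j}s^{\alpha_j}$, while for $s<4\varepsilon$ use $\tilde\rho_\varepsilon\leq C/\varepsilon$ and $\int_0^{s-\varepsilon}\sigma^{\alpha_j}\,\mathrm d\sigma\leq C\varepsilon^{\alpha_j+1}$, which requires precisely $\alpha_j>-1$. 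This yields the uniform smallness of $\int_0^\delta|u_\varepsilon^{(j)}|^pr^{\alpha_j}\,\mathrm dr$ that your Vitali argument needs; with that substitution (and pointwise a.e.\ convergence at Lebesgue points) the proof closes. So the architecture is right, but the crucial near-origin estimate must be run through the mollifier average rather than through a pointwise weight comparison.
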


\section{Equivalence between  \texorpdfstring{$\|\nabla_L^k\cdot\|_{L^p_\nu}$}{} and \texorpdfstring{$\|\cdot\|_{X^{k,p}_R}$}{} on  \texorpdfstring{$X^{k,p}_{\mathcal N,L,R}$}{}}\label{secequi}

Now we focus on proving Theorem \ref{propequivnormkgrad}, which guarantees the equivalence between the norms $\|\nabla^k_L\cdot\|_{L^p_\nu}$ and $\|\cdot\|_{X^{k,p}_R}$ on $X^{k,p}_{\mathcal N,L,R}$ with $\nu=\alpha_k+\lfloor\frac{k}2\rfloor(\theta-\gamma)p$. Before we present the proof, we need the following four lemmas. The last one (Lemma \ref{qww}) is a regularity result for the weighted Sobolev space, which, under some circumstances, implies $L_{\theta,\gamma}u\in X^{k,p}_R\Rightarrow u\in X^{k+2,p}_R$.

\begin{lemma}\label{lemmajaosn}
Let $0<R\leq\infty$, $j=0,\ldots,k$, and $X^{k,p}_R(\alpha_0,\ldots,\alpha_k)$ be the weighted Sobolev space such that $\alpha_i-(i-1)p+1\geq0$ for all $i=j,\ldots,k$. In the case of $R=\infty$ we also assume $\alpha_{\ell-i}\geq\alpha_\ell-ip$ for all $\ell=j,\ldots,k$ and $i=0,\ldots,\ell-1$. For each $u\in X^{k,p}_R$, we have $\nabla^j_L u\in X^{k-j,p}_R(\alpha_j+\lfloor\frac{j}{2}\rfloor(\theta-\gamma)p,\ldots,\alpha_k+\lfloor\frac{j}{2}\rfloor(\theta-\gamma)p)$ with
\begin{equation*}
\|\nabla_L^ju\|_{X^{k-j,p}_R}\leq C\|u\|_{X^{k,p}_R},
\end{equation*}
where $C>0$ does not depend on $u$.
\end{lemma}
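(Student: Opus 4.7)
My plan is to induct on $j$. The base cases $j = 0$ and $j = 1$ are immediate: for $j = 0$ we have $\nabla^0_L u = u \in X^{k,p}_R$ by definition, while for $j = 1$ we have $\nabla^1_L u = u'$ with $\lfloor 1/2 \rfloor = 0$, and the claim just says $(u')^{(m)} = u^{(m+1)} \in L^p_{\alpha_{m+1}}$ for $m = 0, \ldots, k-1$, which is the defining property of $X^{k,p}_R$.

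For the inductive step at $j \geq 2$, I split by parity. When $j$ is odd, $\nabla^j_L u = (\nabla^{j-1}_L u)'$ and $\lfloor j/2\rfloor = \lfloor (j-1)/2\rfloor$, so applying the induction hypothesis at $j-1$ and then differentiating shifts indices by one and yields the claim directly. When $j$ is even, I use the identity
$$\nabla^j_L u \;=\; L_{\theta,\gamma}(\nabla^{j-2}_L u) \;=\; -\gamma\, r^{\gamma-\theta-1}\, \nabla^{j-1}_L u \;-\; r^{\gamma-\theta}\,(\nabla^{j-1}_L u)',$$
which follows from $(\nabla^{j-2}_L u)' = \nabla^{j-1}_L u$. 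Writing $v = \nabla^{j-1}_L u$ and applying the Leibniz rule, the $m$-th derivative $(\nabla^j_L u)^{(m)}$ (for $0 \le m \le k - j$) expands as a finite linear combination of terms of the form $r^{\gamma-\theta-1-s}\,v^{(m-s)}$ and $r^{\gamma-\theta-s}\,v^{(m+1-s)}$. Taking the $L^p_{\alpha_{j+m}+\lfloor j/2\rfloor(\theta-\gamma)p}$-norm of each such term and using $\lfloor j/2\rfloor = \lfloor (j-1)/2\rfloor + 1$ for $j$ even, the explicit $r$-factors cancel and the norm collapses to $\|v^{(n)}\|_{L^p_\mu}$ for some $n \leq k - j + 1$ and a weight $\mu$ that is at most the native weight $\alpha_{(j-1)+n} + \lfloor (j-1)/2 \rfloor(\theta-\gamma)p$ of $v^{(n)}$ provided by the induction hypothesis.

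It therefore remains to bound $\|v^{(n)}\|_{L^p_\mu} \leq C\|v\|_{X^{k-j+1,p}_R}$ whenever $\mu$ is at most this native weight. For $r$ large this is a direct pointwise weight comparison; near $r = 0$ I would iterate the Hardy inequality of Proposition \ref{prop21JMBO} (when $R < \infty$) or Proposition \ref{prop21JMBOinfty} (when $R = \infty$), stepping from $v^{(n)}$ up through higher-order derivatives of $v$ until the weight-derivative balance matches what the native norm supplies. The hypothesis $\alpha_i - (i-1)p + 1 \geq 0$ for $i \geq j$ is precisely the Hardy admissibility condition at each step, and for $R = \infty$ the supplementary assumption $\alpha_{\ell - i} \geq \alpha_\ell - ip$ ensures the weight-shift bookkeeping closes consistently near infinity.

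The main obstacle I anticipate is the boundary/endpoint behavior, because elements of $X^{k,p}_R$ carry no prescribed vanishing at $0$ or $R$, whereas the Hardy inequality requires an $AC_L$ or $AC_R$ setup. For $R = \infty$ I would resolve this by density: Lemma \ref{lemmacompsuppdense} approximates $u$ by elements of $\Upsilon$, all of which are compactly supported and hence automatically lie in $AC_R$, so the estimate passes to the limit by continuity of the relevant norms. For $R < \infty$ a smooth cutoff near $r = R$ combined with the pointwise radial bounds \eqref{RL1}--\eqref{RL3} would supply the missing endpoint control, with the full estimate again recovered by a limiting argument.
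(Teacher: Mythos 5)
Your proposal is sound and, at its core, runs parallel to the paper's argument: the paper proves by a double induction the closed formula $(\nabla^j_L u)^{(\ell)}=\sum_{i=0}^{j+\ell-1}C_{ij\ell}\,u^{(j+\ell-i)}\,r^{\lfloor j/2\rfloor(\gamma-\theta)-i}$ and then bounds each term, which is exactly what your step-by-step recursion $\nabla^j_Lu=L_{\theta,\gamma}(\nabla^{j-2}_Lu)$ plus Leibniz produces after unwinding; your weight bookkeeping ($\lfloor j/2\rfloor=\lfloor(j-1)/2\rfloor+1$ for $j$ even) is correct and reproduces the same exponents. The genuine divergence is in the final estimate. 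You propose to control $\|v^{(n)}\|_{L^p_\mu}$ for $\mu$ below the native weight by iterating the Hardy inequalities of Propositions \ref{prop21JMBO}--\ref{prop21JMBOinfty} near the origin, patched by density (for $R=\infty$) or cutoffs plus the radial lemmas (for $R<\infty$). The paper instead observes that each term $u^{(j+\ell-i)}$ lies in $X^{i,p}_R(\alpha_{j+\ell-i},\ldots,\alpha_{j+\ell})$ and simply invokes the embeddings $X^{i,p}_R\hookrightarrow L^p_{\alpha_{j+\ell}-ip}$ of Theorem \ref{theo32} (for $R<\infty$) and Theorem \ref{theoimersaoinfinito} (for $R=\infty$) — these theorems are precisely the packaged form of the Hardy-plus-endpoint analysis you are sketching, and the hypotheses $\alpha_i-(i-1)p+1\geq0$ and (for $R=\infty$) $\alpha_{\ell-i}\geq\alpha_\ell-ip$ are there exactly to make those embeddings applicable. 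So your route is viable but re-derives available machinery, and that re-derivation is the only part left at the level of a sketch.

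One caveat on that sketch: Lemma \ref{lemmacompsuppdense} requires $\alpha_{i-1}\geq\alpha_i-p$ for \emph{all} $i=1,\ldots,k$, whereas the hypotheses of the present lemma only guarantee this for indices $\geq j$; so you cannot in general approximate $u$ itself by elements of $\Upsilon$. The fix is to run the density (or the embedding) argument in the space of $v=\nabla^{j-1}_Lu$, i.e. in $X^{k-j+1,p}_\infty(\beta_0,\ldots,\beta_{k-j+1})$ with $\beta_m=\alpha_{j-1+m}+\lfloor\frac{j-1}{2}\rfloor(\theta-\gamma)p$, whose consecutive-weight condition does follow from the stated hypotheses. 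With that adjustment — or, more economically, by replacing the whole Hardy/density step with a citation of Theorems \ref{theo32} and \ref{theoimersaoinfinito} — your argument closes.
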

\begin{proof}
We can assume $j\geq2$; otherwise, the lemma is trivial. By induction on $j$ and $\ell$, we can prove that
\begin{equation}\label{doubleinduction}
\left(\nabla^j_L u\right)^{(\ell)}(r)=\sum_{i=0}^{j+\ell-1}C_{ij\ell}u^{(j+\ell-i)}r^{\lfloor\frac{j}{2}\rfloor(\gamma-\theta)-i},\quad\forall \ell=0,\ldots,k-j,
\end{equation}
for some $C_{ij\ell}=C_{ij\ell}(\theta,\gamma)\in\mathbb R$. It is sufficient to check that
\begin{equation}\label{eqbabubabu}
\|(\nabla^j_Lu)^{(\ell)}\|_{L^p_{\nu}}\leq C\|u\|_{X^{k,p}_R},
\end{equation}
where $\nu=\alpha_{j+\ell}+\lfloor\frac{j}{2}\rfloor(\theta-\gamma)p$. Since \eqref{doubleinduction} and $\alpha_{j+\ell}-(j+\ell-1)p+1\geq0$ hold, we apply Theorem \ref{theo32} (on $u^{(j+\ell-i)}\in X^{i,p}_R(\alpha_{j+\ell-i},\ldots,\alpha_{j+\ell})$) for $0<R<\infty$ and Theorem \ref{theoimersaoinfinito} for $R=\infty$ to obtain
\begin{equation*}
\|(\nabla^j_L u)^{(\ell)}\|_{L^p_{\nu}}^p\leq C\sum_{i=0}^{j+\ell-1}\int_0^R\left|u^{(j+\ell-i)}\right|^pr^{\alpha_{j+\ell}-ip}\mathrm dr\leq C\sum_{i=0}^{j+\ell-1}\|u^{(j+\ell-i)}\|^p_{X^{i,p}_R}\leq C\|u\|_{X^{j+\ell,p}_R}^p.
\end{equation*}
This concludes \eqref{eqbabubabu} and the proof of the lemma.
\end{proof}

\begin{lemma}\label{corxnrbanach}
$X^{k,p}_{\mathcal N,L,R}(\alpha_0,\ldots,\alpha_k)$ is a Banach space with the norm $\|\cdot\|_{X^{k,p}_R}$.
\end{lemma}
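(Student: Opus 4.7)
The plan is to regard $X^{k,p}_{\mathcal N,L,R}(\alpha_0,\ldots,\alpha_k)$ as a linear subspace of the Banach space $X^{k,p}_R$ (completeness of the ambient space being already known) and prove that it is \emph{closed}; the conclusion is then automatic. The case $R=\infty$ is immediate, since by definition $X^{k,p}_{\mathcal N,L,\infty}=X^{k,p}_\infty$ carries no boundary conditions. So I restrict to $0<R<\infty$, fix a sequence $(u_n)\subset X^{k,p}_{\mathcal N,L,R}$ with $u_n\to u$ in $X^{k,p}_R$, and aim to verify $L_{\theta,\gamma}^j u(R)=0$ for every $j=0,\ldots,\lfloor(k-1)/2\rfloor$.

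For each such $j$, the hypotheses of Lemma \ref{lemmajaosn} hold at index $2j$, so $\nabla_L^{2j}u_n=L_{\theta,\gamma}^j u_n$ converges to $\nabla_L^{2j}u=L_{\theta,\gamma}^j u$ in $X^{k-2j,p}_R$ with the shifted weights. Denote $v_n:=L_{\theta,\gamma}^j u_n$ and $v:=L_{\theta,\gamma}^j u$. Because $2j\leq k-1$ gives $k-2j\geq 1$, both $v_n$ and $v$ lie in $AC_{\mathrm{loc}}(0,R)$ with first derivative in a weighted $L^p$-space on $(0,R)$. Since $R<\infty$ and the weight $r^{\alpha}$ is bounded above and away from zero on any interval $[r_0,R]$ with $r_0>0$, Hölder's inequality yields $v'\in L^1(r_0,R)$; hence the trace $v(R):=\lim_{r\to R}v(r)$ is well defined, and likewise for each $v_n$.

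To transport the Navier condition to the limit, I would use the absolute-continuity identity
\[
v_n(R)-v(R)=\bigl[v_n(r_0)-v(r_0)\bigr]+\int_{r_0}^R\bigl[v_n'(s)-v'(s)\bigr]\,\mathrm ds,
\]
valid for every $r_0\in(0,R)$. Convergence $v_n\to v$ in the appropriate weighted $L^p$-norm produces, along a subsequence, pointwise convergence $v_n(r_0)\to v(r_0)$ for almost every $r_0$; after fixing such an $r_0$, the remaining integral is estimated, through Hölder with a bounded weight on $[r_0,R]$, by $C(r_0,R)\|v_n'-v'\|_{L^p_{\mathrm{weight}}}\to 0$. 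Therefore $v_n(R)\to v(R)$ along the subsequence, and $v_n(R)=0$ forces $v(R)=0$. Repeating the argument for every admissible $j$ places $u$ inside $X^{k,p}_{\mathcal N,L,R}$ and closes the space.

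The main obstacle, and the only nontrivial point, is the continuity of the boundary trace $v\mapsto v(R)$ on $X^{k-2j,p}_R$. I expect it to be mild because $R<\infty$ makes the weights harmless near the endpoint, but care is needed in tracking which of the shifted weights produced by Lemma \ref{lemmajaosn} controls the Hölder estimate, and in ensuring the subsequence extraction (needed to realize $v_n(r_0)\to v(r_0)$) can be done once and for all across the finitely many values of $j$.
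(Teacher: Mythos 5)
Your proof is correct and follows essentially the same route as the paper's: both reduce the claim to completeness of the ambient space $X^{k,p}_R$ together with continuity of the endpoint evaluations $u\mapsto L^j_{\theta,\gamma}u(R)$ (the paper applies a cited trace estimate for $X^{1,p}_R$ to each term of the expansion \eqref{doubleinduction}, where you argue by hand via the fundamental theorem of calculus on $[r_0,R]$). One caution: Lemma \ref{lemmajaosn} carries weight hypotheses ($\alpha_i-(i-1)p+1\geq0$) that are not assumed in the present lemma, so you should not invoke it wholesale; what you actually need is only the algebraic identity \eqref{doubleinduction} together with the observation --- already present in your write-up --- that on $[r_0,R]$ with $R<\infty$ the weights $r^{\alpha}$ are bounded above and away from zero, so $u_n^{(m)}\to u^{(m)}$ in $L^p(r_0,R)$ for every $m\le k$ and your localized H\"older/trace argument goes through unchanged.
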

\begin{proof}
We only need to consider the case when $0<R<\infty$. Let $(u_n)$ be a Cauchy sequence in $X^{k,p}_{\mathcal N,L,R}$. Then $u_n\to u$ in $X^{k,p}_R$ for some $u\in X^{k,p}_R$. By \eqref{doubleinduction},
\begin{equation*}
|L_{\theta,\gamma}^ju(R)|=|\nabla^{2j}_Lu_n(R)-\nabla^{2j}_{L}u(R)|\leq \sum_{i=0}^{2j-1}C_{ij}|u_n^{(2j-i)}(R)-u^{(2j-i)}(R)|R^{j(\gamma-\theta)-i},
\end{equation*}
for all $0\leq j\leq\lfloor\frac{k-1}{2}\rfloor$ and $n\in\mathbb N$. Using \cite[Lemma 2.1]{arXiv:2302.02262} we conclude
\begin{equation*}
|L_{\theta,\gamma}^ju(R)|\leq \sum_{i=0}^{2j-1}C_{ij}\widetilde C_{ij}\|u_n^{(2j-i)}-u^{(2j-i)}\|_{X^{1,p}_R}R^{j(\gamma-\theta)-i}\overset{n\to\infty}\longrightarrow 0.
\end{equation*}
Therefore, we proved that $u\in X^{k,p}_{\mathcal N,L,R}$ which concludes the completeness of $X^{k,p}_{\mathcal N,L,R}$.
\end{proof}

\begin{lemma}\label{lemmafjs}
Let $\alpha_2\in\mathbb R$ and $v\in L^p_{\alpha_2}(0,R)$ with $0<R\leq\infty$ and $1\leq p<\infty$. Define
\begin{equation}\label{expressionu}
u(r)=\int_r^Rt^{-\gamma}\int_0^tv(s)s^\theta\mathrm ds\mathrm dt\quad\mathrm{a.e.}\ r\in(0,R).
\end{equation}
If $\theta>(\alpha_2-p+1)/p$, then $u\in X^{2,p}_R(\alpha_2+(\gamma-\theta-2)p,\alpha_2+(\gamma-\theta-1)p,\alpha_2+(\gamma-\theta)p)$ with $\|u\|_{X^{2,p}_R}\leq C\|v\|_{L^p_{\alpha_2}}$, where $C=C(\theta,\gamma,\alpha_2,p,R)>0$.
\end{lemma}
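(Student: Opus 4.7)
The strategy is to differentiate the defining formula \eqref{expressionu} to obtain explicit expressions for $u'$ and $u''$ and then to apply the one-dimensional weighted Hardy inequalities of Propositions~\ref{prop21JMBO} and \ref{prop21JMBOinfty} three times, once for each component of the $X^{2,p}_R$ norm. A preliminary step is to check that the hypothesis $\theta>(\alpha_2-p+1)/p$, together with H\"older's inequality, makes the inner integral $w(r):=\int_0^r v(s)s^\theta\,\mathrm ds$ well defined, absolutely continuous on compact subsets of $[0,R)$ with $w(0)=0$, and differentiable a.e.\ with $w'(r)=v(r)r^\theta$. From \eqref{expressionu} one then reads off that $u\in AC^1_{\mathrm{loc}}(0,R)$ with
\[
u'(r)=-r^{-\gamma}w(r),\qquad u''(r)=-\gamma\,r^{-1}u'(r)-r^{\theta-\gamma}v(r),
\]
and, when $R<\infty$, $u(R)=0$, so $u\in AC_R(0,R)$.

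For the first-derivative norm, the identity $u'=-r^{-\gamma}w$ gives
\[
\|u'\|_{L^p_{\alpha_2+(\gamma-\theta-1)p}}^p=\int_0^R|w(r)|^p\,r^{\alpha_2-(\theta+1)p}\,\mathrm dr,
\]
so I would apply the Hardy inequality for $AC_L$ functions (Proposition~\ref{prop21JMBO}(i)(a) if $R<\infty$, Proposition~\ref{prop21JMBOinfty}(i) if $R=\infty$) to $w$, with $q=p$, target weight $\alpha_2-(\theta+1)p$ and source weight $\alpha_2-\theta p$. The two compatibility conditions of the proposition reduce to the equality $q=(\theta_{\text{target}}+1)p/(\alpha_{\text{source}}-p+1)$, which holds automatically, and to $\alpha_{\text{source}}-p+1<0$, which is exactly the standing hypothesis $\theta>(\alpha_2-p+1)/p$. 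This produces $\|u'\|_{L^p_{\alpha_2+(\gamma-\theta-1)p}}\leq C\|v\|_{L^p_{\alpha_2}}$. The second-derivative bound is then immediate from the triangle inequality applied to the formula for $u''$: after absorbing the powers of $r$, the two summands become $\gamma\|u'\|_{L^p_{\alpha_2+(\gamma-\theta-1)p}}$ and $\|v\|_{L^p_{\alpha_2}}$, both already under control.

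Finally, for the $L^p_{\alpha_2+(\gamma-\theta-2)p}$ norm of $u$ itself, I would apply a second Hardy inequality, this time for $u\in AC_R(0,R)$ via Proposition~\ref{prop21JMBO}(ii)(a) or Proposition~\ref{prop21JMBOinfty}(ii), with target weight $\alpha_2+(\gamma-\theta-2)p$ and source weight $\alpha_2+(\gamma-\theta-1)p$. These weights once again differ by exactly $p$, so they sit in the critical configuration of the proposition and yield $\|u\|_{L^p_{\alpha_2+(\gamma-\theta-2)p}}\leq C\|u'\|_{L^p_{\alpha_2+(\gamma-\theta-1)p}}$, completing the estimate.

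The step I expect to require the most care is this final one: one must verify that the boundary configuration of weights falls within the admissible range of Proposition~\ref{prop21JMBO}(ii)(a) and, in the case $R=\infty$, that the outer integral defining $u$ actually converges at infinity so that $u\in AC_R(0,\infty)$. The natural remedy is to prove the estimate first on every finite interval $(0,R_0)$ with a constant depending on $R_0$, and then either to pass to the limit $R_0\to\infty$ using the decay supplied by the Hardy bound on $u'$, or to invoke Proposition~\ref{prop21JMBOinfty}(ii) directly in the $R=\infty$ case; the explicit dependence $C=C(\theta,\gamma,\alpha_2,p,R)$ in the statement accommodates either route.
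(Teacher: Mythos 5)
Your strategy---differentiate \eqref{expressionu} and apply the weighted Hardy inequalities once per derivative order---is the natural direct argument; the paper's own proof is only a citation of \cite[Lemma 4.1]{arXiv:2302.02262} after the reparametrization $\widetilde\alpha_2=\alpha_2+(\gamma-\theta)p$, so your proposal is essentially a reconstruction of that external proof. Your treatment of $u''$ and $u'$ is correct: writing $w(r)=\int_0^r v(s)s^\theta\,\mathrm ds$, the conditions of Proposition~\ref{prop21JMBO}(i)(a) (resp.\ Proposition~\ref{prop21JMBOinfty}(i)) for the estimate $\|r^{-\gamma}w\|_{L^p_{\alpha_2+(\gamma-\theta-1)p}}\leq C\|w'\|_{L^p_{\alpha_2-\theta p}}$ do reduce to the identity $q=p$ together with $\alpha_2-\theta p-p+1<0$, which is exactly the hypothesis, and the bound for $u''$ then follows from the pointwise identity by the triangle inequality.

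The gap is in the step you yourself flag as delicate, and your proposed remedy does not repair it. To pass from $u'$ to $u$ via Proposition~\ref{prop21JMBO}(ii)(a) (or Proposition~\ref{prop21JMBOinfty}(ii)) with source weight $\alpha=\alpha_2+(\gamma-\theta-1)p$, target weight $\alpha-p$ and $q=p$, you need $\alpha-p+1>0$, i.e.\ $\alpha_2+(\gamma-\theta-2)p+1>0$; the alternative branch ``$\theta_{\mathrm{target}}>-1$ and $\alpha-p+1\leq0$'' is unavailable because here $\theta_{\mathrm{target}}+1=\alpha-p+1$. This inequality is \emph{not} implied by $\theta>(\alpha_2-p+1)/p$: the hypothesis is the upper bound $\alpha_2+1<(\theta+1)p$, whereas you need the lower bound $\alpha_2+1>(\theta+2-\gamma)p$. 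Nor is the obstruction merely technical: if $\alpha_2+1<(\theta+2-\gamma)p$, the H\"older bound $|w(t)|\leq C\|v\|_{L^p_{\alpha_2}}t^{\theta+1-(\alpha_2+1)/p}$ shows that $u(0^+)$ is a finite, generically nonzero constant while the zeroth weight $\alpha_2+(\gamma-\theta-2)p$ is $<-1$, so $\int_0^\varepsilon|u|^pr^{\alpha_2+(\gamma-\theta-2)p}\,\mathrm dr$ diverges (try $p=2$, $\alpha_2=\theta=\gamma=0$, $v\equiv1$). Localizing to $(0,R_0)$ and letting $R_0\to\infty$ cannot help, since the failure occurs at the origin, not at infinity; the same inequality $\alpha_2+(\gamma-\theta-2)p+1>0$ is also what makes the outer integral in \eqref{expressionu} converge at infinity when $R=\infty$. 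This condition does hold in every application of the lemma made in the paper, so the correct fix is to record it as an explicit additional hypothesis, after which your three Hardy applications go through verbatim.
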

\begin{proof}
The case $R<\infty$ is a direct consequence of \cite[Lemma 4.1]{arXiv:2302.02262} with $\widetilde\alpha_2=\alpha_2+(\gamma-\theta)p$. Note that
\begin{equation*}
    \theta>\frac{\alpha_2-p+1}p\Leftrightarrow\gamma>\frac{\widetilde\alpha_2-p+1}p.
\end{equation*}
For the case $R=\infty$, the same argument as \cite[Lemma 4.1]{arXiv:2302.02262} follows using Proposition \ref{prop21JMBOinfty} instead of Proposition \ref{prop21JMBO}.
\end{proof}

\begin{lemma}\label{qww}
Let $v\in X^{k,p}_R(\alpha_k-kp,\ldots,\alpha_k)$ and
\begin{equation*}
u(r):=\int_r^Rt^{-\gamma}\int_0^tv(s)s^{\theta}\mathrm ds\mathrm dt,\quad r\in(0,R),
\end{equation*}
where $\theta>(\alpha_k-kp-p+1)/p$. Then
\begin{equation*}
u\in X^{k+2,p}_R(\alpha_k+(\gamma-\theta-k-2)p,\alpha_k+(\gamma-\theta-k-1)p,\ldots,\alpha_k+(\gamma-\theta)p).
\end{equation*}
\end{lemma}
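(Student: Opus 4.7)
My plan is to argue by induction on $k$, using Lemma~\ref{lemmafjs} as the base case $k=0$, and to exploit the extra regularity of $v$ beyond $L^p_{\alpha_k-kp}$ to pick up two additional derivatives of $u$.

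For the inductive step, I would assume the lemma holds for $k-1$ (with $k\geq 1$) and take $v\in X^{k,p}_R(\alpha_k-kp,\ldots,\alpha_k)$. Viewing $v$ as an element of $X^{k-1,p}_R$ with top weight $\widetilde{\alpha}_{k-1}:=\alpha_k-p$, the quantity $\widetilde{\alpha}_{k-1}-(k-1)p=\alpha_k-kp$ is unchanged, so the hypothesis on $\theta$ carries over unaltered. The inductive hypothesis then gives
\[
u\in X^{k+1,p}_R\bigl(\alpha_k+(\gamma-\theta-k-2)p,\ldots,\alpha_k+(\gamma-\theta-1)p\bigr),
\]
so it remains only to show $u^{(k+2)}\in L^p_{\alpha_k+(\gamma-\theta)p}$.

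To handle the top derivative, I would iteratively differentiate the identity $u'(r)=-r^{-\gamma}\int_0^r v(s)s^\theta\,\mathrm ds$ (immediate from the definition of $u$) to obtain, for $j\geq 2$, a formula of the shape
\[
u^{(j)}(r)=a_j\,r^{-\gamma-j+1}\int_0^r v(s)s^\theta\,\mathrm ds+\sum_{i=0}^{j-2}b_{j,i}\,v^{(i)}(r)\,r^{\theta-\gamma-(j-2-i)},
\]
with constants $a_j,b_{j,i}$ depending only on $\theta,\gamma,j,i$; this follows from a short secondary induction on $j$. Specializing to $j=k+2$, the pointwise terms contribute integrals that, after absorbing the powers of $r$ coming from the target weight $r^{\alpha_k+(\gamma-\theta)p}$, reduce to $\|v^{(i)}\|_{L^p_{\alpha_k-(k-i)p}}^p$, all finite since $v\in X^{k,p}_R$.

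The key analytical step is controlling the integral term. Letting $w(r):=\int_0^r v(s)s^\theta\,\mathrm ds$, the assumption $\theta>(\alpha_k-kp-p+1)/p$ together with H\"older's inequality yields $w(0)=0$, so $w\in AC_L(0,R)$. I would then apply the Hardy inequality of Proposition~\ref{prop21JMBO} (or Proposition~\ref{prop21JMBOinfty} when $R=\infty$) with $p=q$, derivative weight $\nu=\alpha_k-(\theta+k)p$, and function weight $\mu=\nu-p=\alpha_k-(\theta+k+1)p$, to obtain
\[
\int_0^R|w|^p\,r^{\alpha_k-(\theta+k+1)p}\,\mathrm dr\leq C\int_0^R|v|^p\,r^{\alpha_k-kp}\,\mathrm dr<\infty,
\]
which is exactly the estimate needed. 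The $AC_L$ condition $\nu<p-1$ for Hardy's inequality is equivalent to our hypothesis on $\theta$. The main obstacle is the index bookkeeping---both in deriving the explicit formula for $u^{(j)}$ and in aligning the Hardy exponents. The case $R=\infty$ is the most delicate, since Proposition~\ref{prop21JMBOinfty} demands the sharp equality $\mu=\nu-p$, which fortunately holds in our setup.
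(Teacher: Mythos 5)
Your proposal is correct, and the two key analytic ingredients coincide with the paper's: you expand the high derivatives of $u$ into pointwise terms $v^{(i)}(r)r^{\theta-\gamma-(k-i)}$ (which land exactly in the weights $\alpha_k-(k-i)p$ carried by $v\in X^{k,p}_R$) plus a single primitive term $r^{-\gamma-k-1}\int_0^r v\,s^\theta\,\mathrm ds$, and you control the latter by the Hardy inequality for $AC_L$ functions with derivative weight $\alpha_k-(\theta+k)p$ and function weight $\alpha_k-(\theta+k+1)p$; this is precisely the estimate \eqref{qw2} in the paper, and your verification that the $AC_L$ admissibility condition is equivalent to $\theta>(\alpha_k-kp-p+1)/p$, including the borderline equality $\mu=\nu-p$ forced by Proposition~\ref{prop21JMBOinfty} when $R=\infty$, is accurate. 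The organization differs: the paper does not induct on $k$ but instead proves directly that $u''=\gamma r^{-(\gamma+1)}\int_0^r vs^\theta\,\mathrm ds-vr^{\theta-\gamma}$ lies in $X^{k,p}_R$, splitting it via an auxiliary induction showing $vr^{\theta-\gamma-k+i}\in X^{i,p}_R$ and a chain of equivalences $w_0\in X^{k,p}_R\Leftrightarrow w_1\in X^{k-1,p}_R\Leftrightarrow\cdots\Leftrightarrow w_k\in L^p$ for $w_i=r^{-(\gamma+i+1)}\int_0^r vs^\theta\,\mathrm ds$, which requires the whole family of Hardy bounds \eqref{qw2} rather than one. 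Your outer induction on $k$ buys a cleaner reduction (only the top derivative $u^{(k+2)}$ is new at each stage, and Hardy is invoked once), at the cost of carrying the explicit formula for $u^{(j)}$; the paper's version avoids that formula but juggles more intermediate spaces. Two small points to make explicit in a write-up: the base case $k=0$ should be stated as exactly Lemma~\ref{lemmafjs} (the paper never defines $X^{0,p}_R$, so say $v\in L^p_{\alpha_0}$), and the repeated differentiation of $u'(r)=-r^{-\gamma}\int_0^r vs^\theta\,\mathrm ds$ should be justified as weak differentiation using $v\in AC^{k-1}_{\mathrm{loc}}(0,R)$, which the paper's preliminaries supply.
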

\begin{proof}
We claim that
\begin{equation}\label{qw1}
v(r)r^{\theta-\gamma-k+i}\in X^{i,p}_R(\alpha_k+(\gamma-\theta-i)p,\ldots,\alpha_k+(\gamma-\theta)p),\quad\forall i=0,\ldots,k.
\end{equation}
Indeed, the case $i=0$ in \eqref{qw1} follows by $v\in L^p_{\alpha_k-kp}$. Suppose \eqref{qw1} holds for any $j=0,\ldots,i$. Our task is to prove that
\begin{equation*}
v(r)r^{\theta-\gamma-k+i+1}\in X^{i+1,p}_R(\alpha_k+(\gamma-\theta-i-1)p,\ldots,\alpha_k+(\gamma-\theta)p).
\end{equation*}
Since $v(r)r^{\theta-\gamma-k+i+1}\in L^p_{\alpha_k+(\gamma-\theta-i-1)p}$, we only need to check that
\begin{equation*}
v'(r)r^{\theta-\gamma-k+i+1}+(\theta-\gamma-k+i+1)v(r)r^{\theta-\gamma-k+i}=\left(v(r)r^{\theta-\gamma-k+i+1}\right)'\in X^{i,p}_R.
\end{equation*}
Using induction hypothesis on $v'\in X^{k-1,p}_R$ and $v\in X^{k,p}_R$ we conclude \eqref{qw1}.

Let us verify that
\begin{equation}\label{qw2}
r^{-(\gamma+i)}\int_0^rv(s)s^{\theta}\mathrm ds\in L^p_{\alpha_k+(\gamma-\theta+i-k-1)p},\quad\forall i\in\mathbb Z.
\end{equation}
Denote $w(r)=\int_0^rv(s)s^{\theta}\mathrm ds$. By $\alpha_k-(\theta+k+1)p+1<0$ with Propositions \ref{prop21JMBO} and \ref{prop21JMBOinfty},
\begin{align*}
\int_0^R\left|r^{-(\gamma+i)}\int_0^rv(s)s^{\theta}\mathrm ds\right|^pr^{\alpha_k+(\gamma-\theta+i-k-1)p}\mathrm dr&=\int_0^R|w(r)|^pr^{\alpha_k-(\theta+k+1)p}\mathrm dr\\
&\leq C\int_0^R|w'(r)|^pr^{\alpha_k-(\theta+k)p}\mathrm dr\\
&=C\|v\|^p_{L^p_{\alpha_k-kp}},\quad\forall i\in\mathbb Z.
\end{align*}

We already have $u\in X^{2,p}_R(\alpha_k+(\gamma-\theta-k-2)p,\alpha_k+(\gamma-\theta-k-1)p,\alpha_k+(\gamma-\theta-k)p)$, by Lemma \ref{lemmafjs}. Then it is enough to prove that
\begin{equation*}
u''(r)=\gamma r^{-(\gamma+1)}\int_0^rv(s)s^{\theta}\mathrm ds-v(r)r^{\theta-\gamma}\in X^{k,p}_R(\alpha_k+(\gamma-\theta-k)p,\ldots,\alpha_k+(\gamma-\theta)p).
\end{equation*}
In view of \eqref{qw1}, the proof is completed by showing that
\begin{equation}\label{qw3}
r^{-(\gamma+1)}\int_0^rv(s)s^{\theta}\mathrm ds\in X^{k,p}_R(\alpha_k+(\gamma-\theta-k)p,\ldots,\alpha_k+(\gamma-\theta)p).
\end{equation}
Indeed, for each $i\in\mathbb N\cup\{0\}$, set
\begin{equation*}
w_i(r):=r^{-(\gamma+i+1)}\int_0^rv(s)s^{\theta}\mathrm ds.
\end{equation*}
\eqref{qw2} guarantees $w_i\in L^p_{\alpha_k+(\gamma-\theta+i-k)p}$. Note that, using \eqref{qw1},
\begin{align*}
w_0\in X^{k,p}_R&\Leftrightarrow-(\gamma+1)r^{-(\gamma+2)}\int_0^rv(s)s^{\theta}\mathrm ds+v(r)r^{\theta-\gamma-1}=w_0'\in X^{k-1,p}_R\\
&\Leftrightarrow w_1\in X^{k-1,p}_R\\
&\Leftrightarrow w_1'\in X^{k-2,p}_R\\
&\ \ \vdots\\
&\Leftrightarrow w_k\in L^p_{\alpha_k+(\gamma-\theta)p}.
\end{align*}
This concludes \eqref{qw3} and therefore the lemma.
\end{proof}

\begin{proof}[Proof of Theorem \ref{propequivnormkgrad}]
Firstly, note that we can rewrite \eqref{hipthetagamma} as follows:
\begin{equation*}
\left\{\begin{array}{ll}
     \theta+1+\lfloor\frac{k}2\rfloor(\gamma-\theta-2)-\dfrac{\alpha_k-kp+1}{p}>0,&\mbox{if }\theta+2\geq\gamma,  \\
\theta+1+(\gamma-\theta-2)-\dfrac{\alpha_k-kp+1}{p}>0,&\mbox{if }\theta+2<\gamma.
\end{array}\right.
\end{equation*}
Therefore, we have
\begin{equation}\label{hipgammatheta}
\theta>\dfrac{\alpha_k-kp+i(\theta+2-\gamma)p-p+1}p,\quad\forall i=1,\ldots,\left\lfloor\frac{k}2\right\rfloor.
\end{equation}

As did in the proof of \cite[Proposition 4.1]{arXiv:2302.02262}, our task is to prove that the mapping
\begin{eqnarray*}
    \phi\colon X^{k,p}_{\mathcal N,L,R}&\longrightarrow &L^p_{\alpha_k+\lfloor\frac{k}{2}\rfloor(\theta-\gamma)p}\\
    u&\longmapsto&\nabla^k_Lu
\end{eqnarray*}
is a linear isomorphism. Note that $\phi$ is linear and, by Lemma \ref{lemmajaosn}, continuous. By the Open Mapping Theorem and Lemma \ref{corxnrbanach}, it suffices to prove that $\phi$ is bijective. Let $u\in X^{k,p}_{\mathcal N,L,R}$ with $\phi(u)=\nabla^k_{L}u=0$. We claim that
\begin{equation}\label{eqclaimhsu}
r^{\gamma}\nabla^i_{L}u\overset{r\to0}\longrightarrow0\quad\forall i\in\{1,\ldots,k-1\}\mbox{ odd number}.
\end{equation}
Indeed, by \eqref{doubleinduction}, we have
\begin{equation}\label{eq34}
\left|r^{\gamma}\nabla^i_{L}u\right|\leq\sum_{\ell=0}^{i-1}C_{i\ell}|u^{(i-\ell)}|r^{\gamma+\frac{i-1}{2}(\gamma-\theta)-\ell}.
\end{equation}
Suppose first that $\alpha_k-(k-1)p+1>0$. Since $u^{(i-\ell)}\in X^{k-i+\ell,p}_R$, the radial lemma \eqref{RL1} implies $|u^{(i-\ell)}(r)|\leq C\|u\|_{X^{k,p}_R}r^{-\frac{\alpha_k-(k-i+\ell)p+1}{p}}$. Then
\begin{equation*}
\left|r^{\gamma}\nabla^i_{L}u\right|\leq C\|u\|_{X^{k,p}_R}\sum_{\ell=0}^{i-1}r^{\gamma+\frac{i-1}{2}(\gamma-\theta)-\ell-\frac{\alpha_k-(k-i+\ell)p+1}p}\leq C\|u\|_{X^{k,p}_R}r^{\gamma-1+\frac{(i-1)(\gamma-\theta-2)}{2}-\frac{\alpha_k-kp+1}p}.
\end{equation*}
Thus, \eqref{eqclaimhsu} follows from \eqref{hipthetagamma}. For the case $\alpha_k-(k-1)p+1=0$, we only need to consider the term with $\ell=i-1$ in \eqref{eq34} because $u^{(i-\ell)}\in X^{k-i+\ell,p}_R$ fits in the Sobolev case when $\ell<i-1$ and it was solved in the other case. Using radial lemmas \eqref{RL2} and \eqref{RL3} instead of \eqref{RL1}, we obtain
\begin{equation*}
\left|u^{(i-\ell)}r^{\gamma+\frac{i-1}2(\gamma-\theta)-i+1}\right|\leq C\|u\|_{X^{k,p}_R} r^{\gamma+\frac{(i-1)(\gamma-\theta-2)}{2}}|\log (r/R)|^{\frac{p-1}p}.
\end{equation*}
By \eqref{hipthetagamma} and $\alpha_k-(k-1)p+1=0$, we have
\begin{equation*}
\left\{\begin{array}{ll}
     \gamma+\frac{(k-2)(\gamma-\theta-2)}2>0,&\mbox{if }\theta+2\geq\gamma,  \\
     \gamma>0,&\mbox{if }\theta+2<\gamma.
\end{array}\right.
\end{equation*}
Therefore, we conclude \eqref{eqclaimhsu}.

Let $j$ be the integer such that $k=2j$ or $k=2j+1$. From $\nabla^k_{L} u=0$ and $u\in X^{k,p}_{\mathcal N,L,R}$, we have $\nabla^{2j}_{L}u=L_{\theta,\gamma}^ju=0$. The conditions $u\in X^{k,p}_{\mathcal N,L,R}$ and \eqref{eqclaimhsu} guarantee that we can apply the following result $j$-times on $L_{\theta,\gamma}^ju=0$ to obtain $u=0$:
\begin{equation*}
L_{\theta,\gamma}v=0,\ r^{\gamma}v'\overset{r\to0}\longrightarrow0\mbox{ and }v(R)=0\Rightarrow v=0.
\end{equation*}
This concludes that $\phi$ is injective.

Now let us prove that $\phi$ is surjective. Initially, suppose $k=2j$. Given $v\in L^p_{\alpha_k+\lfloor\frac{k}2\rfloor(\theta-\gamma)p}$, Lemma \ref{lemmafjs} implies that there exists
\begin{equation*}
u_1\!\in\! X^{2,p}_R\!\left(\alpha_k+(j-1)(\theta-\gamma)p-2p,\alpha_k+(j-1)(\theta-\gamma)p-p,\alpha_k+(j-1)(\theta-\gamma)p\right)\cap X^{1,p}_{0,R}
\end{equation*}
such that $L_{\theta,\gamma}u_1=v$. Again using Lemma \ref{lemmafjs} we obtain
\begin{equation*}
u_2\in X^{2,p}_R(\alpha_k+(j-2)(\theta-\gamma)p-4p,\alpha_k+(j-2)(\theta-\gamma)p-3p,\alpha_k+(j-2)(\theta-\gamma)p-2p)\cap X^{1,p}_{0,R}
\end{equation*}
with $L_{\theta,\gamma}u_2=u_1$. By Lemma \ref{qww} and $u_1\in X^{1,p}_{0,R}$, we have
\begin{equation*}
u_2\in X^{4,p}_{\mathcal N,L,R}(\alpha_k+(j-2)(\theta-\gamma)p-4p,\ldots,\alpha_k+(j-2)(\theta-\gamma)p)
\end{equation*}
and $L^2_{\theta,\gamma}u_2=v$. Proceeding with this argument, we obtain $u_j\in X^{2j,p}_{\mathcal N,L,R}(\alpha_k-2jp,\ldots,\alpha_k)$ such that $\phi(u_j)=L^j_{\theta,\gamma}u_j=v$, because the hypothesis on $\theta$ (see equation \eqref{hipgammatheta}) guarantees
\begin{equation*}
\theta>\dfrac{\alpha_k-2jp+i(\theta+2-\gamma)p-p+1}{p},\quad\forall i=1,\ldots,j.
\end{equation*}

Now suppose $k=2j+1$. Given $v\in L^p_{\alpha_k+\lfloor{\frac{k}{2}\rfloor(\theta-\gamma)p}}$, set
\begin{equation*}
\widetilde u(r)=-\int_r^Rv(s)\mathrm ds.
\end{equation*}
By Proposition \ref{prop21JMBO}, $\widetilde u\in X^{1,p}_{0,R}(\alpha_k+\lfloor{\frac{k}{2}\rfloor(\theta-\gamma)p}-p,\alpha_k+\lfloor{\frac{k}{2}\rfloor(\theta-\gamma)p})$, and $\|\widetilde u\|_{L^p_{\alpha_k+\lfloor{\frac{k}{2}\rfloor(\theta-\gamma)p}-p}}\leq C\|v\|_{L^p_{\alpha_k+\lfloor{\frac{k}{2}\rfloor(\theta-\gamma)p}}}$. As in the proof for $k=2j$ with \eqref{hipgammatheta}, we obtain $u\in X^{2j+1,p}_{\mathcal N,L,R}(\alpha_k-kp,\ldots,\alpha_k)$ such that $L^j_{\theta,\gamma}u=\widetilde u$. Therefore, $\phi(u)=\nabla^k_{L}u=v$, which concludes that $\phi$ is surjective.
\end{proof}

\section{Weighted Symmetrization and Crucial Lemma}\label{symmandlemma}

\subsection{Half-Weighted Symmetrization}\label{symmandlemma1}

In this subsection, we extend the previous work in \cite{MR4097244} on a half-weighted Schwarz symmetrization using a single measure $\mu_\eta$. However, we now consider a more general scenario by incorporating two different measures: $\mu_\eta$ and $\mu_\nu$. The measure $\mu_\eta$ is defined as $\mathrm d\mu_\eta(r)=r^\eta\mathrm dr$ with $\eta>-1$. Additionally, if $M\subset\mathbb R$ is a measurable set with finite $\mu_\eta-$measure, we let $\nu>-1$ and $M^*=[0,R]$ with $R\in(0,\infty]$ such that
\begin{equation*}
\mu_\nu((0,R))=\mu_\eta(M).
\end{equation*}
Furthermore, if $u\colon M\to\mathbb R$ is a measurable function satisfying
\begin{equation*}
\mu_{\eta,u}(t):=\mu_\eta(\{x\in M\colon|u(x)|>t\})<\infty\mbox{ for all }t>0,
\end{equation*}
we denote the half-weighted Schwarz symmetrization of $u$ as $u^*_{\eta,\nu}\colon[0,R]\to\mathbb R$, or simply the half $\mu_{\eta,\nu}-$symmetrization of $u$, given by
\begin{equation*}
u^*_{\eta,\nu}(r)=\inf\{t\in\mathbb R\colon\mu_\eta(\{x\in M\colon|u(x)|>t\})<\mu_\nu(0,r)\},\quad\forall r\in(0,R),
\end{equation*}
with $u^*_{\eta,\nu}(0):=\lim_{r\to0}u^*_{\eta,\nu}(r)=\mathrm{ess\ sup}(u)$ and $u^*_{\eta,\nu}(R):=\lim_{r\to R}u^*_{\eta,\nu}(r)=0$. For simplicity, we denote $\{u>t\}$ as the set $\{x\in M\colon u(x)>t\}$. Building upon the classical results for Schwarz symmetrization, we establish the following two propositions.
\begin{prop}\label{prop20}
The function $u^*_{\eta,\nu}$ is nonincreasing and left-continuous.
\end{prop}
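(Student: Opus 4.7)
The plan is to exploit two facts that follow immediately from the hypotheses: the map $r\mapsto \mu_\nu(0,r)=r^{\nu+1}/(\nu+1)$ is continuous and strictly increasing on $(0,R]$ since $\nu>-1$, and the distribution function $t\mapsto \mu_\eta(\{|u|>t\})$ is nonincreasing in $t$. These two monotonicity/continuity properties are essentially all one needs. Introduce, for each $r\in(0,R]$, the set
\begin{equation*}
S_r:=\{t\in\mathbb R\colon \mu_\eta(\{|u|>t\})<\mu_\nu(0,r)\},
\end{equation*}
so that by definition $u^*_{\eta,\nu}(r)=\inf S_r$.

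First I would handle the nonincreasing property. Fix $0<r_1<r_2\leq R$; then $\mu_\nu(0,r_1)\leq \mu_\nu(0,r_2)$, which immediately gives the inclusion $S_{r_1}\subset S_{r_2}$. Taking infima yields $u^*_{\eta,\nu}(r_2)\leq u^*_{\eta,\nu}(r_1)$, as required.

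Next I would prove left-continuity. Fix $r\in(0,R]$ and a sequence $(r_n)\subset (0,r)$ with $r_n\uparrow r$. By the monotonicity already established, $u^*_{\eta,\nu}(r_n)$ is nonincreasing in $n$ and bounded below by $u^*_{\eta,\nu}(r)$, so $L:=\lim_n u^*_{\eta,\nu}(r_n)$ exists and satisfies $L\geq u^*_{\eta,\nu}(r)$. Suppose, toward a contradiction, that $L>u^*_{\eta,\nu}(r)$ and choose $t_0$ with $u^*_{\eta,\nu}(r)<t_0<L$. Since $t_0$ exceeds the infimum defining $u^*_{\eta,\nu}(r)$, we have $t_0\in S_r$, that is $\mu_\eta(\{|u|>t_0\})<\mu_\nu(0,r)$. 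Continuity of $r\mapsto \mu_\nu(0,r)$ gives $\mu_\nu(0,r_n)\to \mu_\nu(0,r)$, so for all sufficiently large $n$ the strict inequality $\mu_\eta(\{|u|>t_0\})<\mu_\nu(0,r_n)$ persists; hence $t_0\in S_{r_n}$ and $u^*_{\eta,\nu}(r_n)\leq t_0<L$, contradicting $u^*_{\eta,\nu}(r_n)\to L$.

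The argument is almost entirely bookkeeping; the only delicate point worth highlighting is that one must use the \emph{strict} inequality in the definition of $S_r$ together with the continuity of $\mu_\nu(0,\cdot)$ to ensure the strict inequality survives the passage to $r_n$. Right-continuity is not expected to hold in general (when $u$ takes a plateau value, jumps of the distribution function propagate to jumps of $u^*_{\eta,\nu}$ on the right), so no effort should be spent in that direction.
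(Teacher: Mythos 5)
Your proposal is correct and follows essentially the same route as the paper: monotonicity via the inclusion of the defining sets, and left-continuity by noting that the strict inequality $\mu_\eta(\{|u|>t\})<\mu_\nu(0,r)$ for a $t$ just above the infimum persists when $r$ is decreased slightly (the paper phrases this directly with $\varepsilon$--$\delta$ rather than by sequential contradiction, but the key observation is identical).
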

\begin{proof}
The monotonicity of $u^*_{\eta,\nu}$ follows directly from
\begin{equation*}
    \{t\in\mathbb R\colon\mu_\eta(\{|u|>t\})<\mu_\nu(0,r_1)\}\subset \{t\in\mathbb R\colon\mu_\eta(\{|u|>t\})<\mu_\nu(0,r_2)\},\quad\forall r_1<r_2.
\end{equation*}

Let $r\in[0,R]$. By definition of $u^*_{\eta,\nu}$, given $\varepsilon>0$, there exists a $t$ such that $u^*_{\eta,\nu}(r)\leq t<u^*_{\eta,\nu}(r)+\varepsilon$ and $\mu_\eta(\{|u|>t\})<\mu_\nu(0,r)$. Take $\delta>0$ such that $\mu_\eta(\{|u|>t\})<\mu_\nu(0,r-\delta)$. Then, for all $s\in(r-\delta,r]$, we have $\mu_\eta(\{|u|>t\})<\mu_\nu(0,s)$ and so $u^*_{\eta,\nu}(r)\leq u^*_{\eta,\nu}(s)\leq t<u^*_{\eta,\nu}(r)+\varepsilon$. This proves that $u^*_{\eta,\nu}$ is left-continuous.
\end{proof}
\begin{prop}\label{prop21}
The functions $u\colon M\to\mathbb R$ and $u^*_{\eta,\nu}\colon[0,R]\to\mathbb R$, where $\mu_\eta(M)=\mu_\nu((0,R))$, are equimeasurable with respect to $\mu_\eta$ and $\mu_{\nu}$ respectively, i.e.,
\begin{equation}\label{eqhaks}
\mu_\eta(\{x\in M\colon |u(x)|>t\})=\mu_\nu(\{x\in [0,R]\colon u^*_{\eta,\nu}(x)>t\}),\quad\forall t\in\mathbb R.
\end{equation}
\end{prop}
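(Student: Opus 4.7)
The plan is to establish identity \eqref{eqhaks} by reducing the weighted super-level set of $u^*_{\eta,\nu}$ on $((0,R),\mu_\nu)$ to the super-level set of $|u|$ on $(M,\mu_\eta)$ through a change of variables driven by $F(r):=\mu_\nu((0,r))=r^{\nu+1}/(\nu+1)$. The case $t<0$ is trivial: since $|u|\geq 0$ on $M$ and $u^*_{\eta,\nu}\geq 0$ on $(0,R)$ by construction, both sides equal $\mu_\eta(M)=\mu_\nu((0,R))$. So the substantive work is for $t\geq 0$.

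Write $\psi(s):=\mu_{\eta,u}(s)$; a standard continuity-of-measure argument, applied to $\{|u|>s_n\}\uparrow\{|u|>s\}$ as $s_n\downarrow s$ together with the hypothesis $\psi(s)<\infty$ for $s>0$, shows that $\psi$ is nonincreasing and right-continuous. The definition of $u^*_{\eta,\nu}$ rewrites as $u^*_{\eta,\nu}(r)=\inf\{\tau\geq 0:\psi(\tau)<F(r)\}$, and I plan to prove the sandwich
\begin{equation*}
\{r\in(0,R):F(r)<\psi(t)\}\subset\{r\in(0,R):u^*_{\eta,\nu}(r)>t\}\subset\{r\in(0,R):F(r)\leq\psi(t)\}.
\end{equation*}
The right inclusion is immediate: if $F(r)>\psi(t)$ then $t$ itself belongs to $\{\tau:\psi(\tau)<F(r)\}$, forcing the infimum to be $\leq t$. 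The left inclusion uses right-continuity: if $F(r)<\psi(t)$, then monotonicity gives $\psi(\tau)\geq \psi(t)>F(r)$ for every $\tau\leq t$, and right-continuity of $\psi$ propagates the strict inequality $\psi(\tau)>F(r)$ to an interval $[t,t+\varepsilon]$, pushing the infimum strictly above $t$.

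Since $F$ is a strictly increasing continuous bijection from $(0,R)$ onto $(0,\mu_\eta(M))$, the two flanking sets in the sandwich differ by at most the single point $F^{-1}(\psi(t))$ and therefore share the same $\mu_\nu$-measure, namely $F(F^{-1}(\psi(t)))=\psi(t)$. This yields $\mu_\nu(\{u^*_{\eta,\nu}>t\})=\psi(t)=\mu_\eta(\{|u|>t\})$, which is exactly \eqref{eqhaks}. I do not expect any genuine obstacle here; the only bookkeeping subtlety is the boundary case $F(r)=\psi(t)$, which can fall in either super-level set depending on jumps or plateaus of $\psi$, but this ambiguity lives on a single point and is absorbed by the measure-zero discrepancy between the sandwich sets.
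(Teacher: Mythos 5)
Your proof is correct. The ``$\leq$'' half is the same as the paper's: both rest on the inclusion $\{u^*_{\eta,\nu}>t\}\subset\{r\colon\mu_\nu(0,r)\leq\mu_{\eta,u}(t)\}$, which is immediate from the definition of the infimum. For the ``$\geq$'' half you take a genuinely different route: the paper argues by contradiction, invoking Proposition~\ref{prop20} (monotonicity and left-continuity of $u^*_{\eta,\nu}$) to force $u^*_{\eta,\nu}\equiv t$ on an interval $(R_0,R_0+\delta]$ and then letting $r\downarrow R_0$; you instead prove the direct inclusion $\{r\colon\mu_\nu(0,r)<\mu_{\eta,u}(t)\}\subset\{u^*_{\eta,\nu}>t\}$, whose only nontrivial ingredient is the right-continuity of the distribution function $s\mapsto\mu_{\eta,u}(s)$, obtained from continuity of the measure from below (note this needs no finiteness hypothesis, so your parenthetical appeal to $\mu_{\eta,u}(s)<\infty$ there is harmless but superfluous). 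Your version buys a proof that uses no regularity of $u^*_{\eta,\nu}$ at all and identifies the two super-level sets explicitly up to the single boundary point $F^{-1}(\mu_{\eta,u}(t))$, which makes the equality of measures transparent; the paper's version instead leans on the already-established structure of $u^*_{\eta,\nu}$ and avoids discussing the distribution function's continuity. Both are standard and complete; just make sure the degenerate cases $\mu_{\eta,u}(t)\in\{0,\mu_\eta(M)\}$ and the endpoints $\{0,R\}$ (a $\mu_\nu$-null set) are mentioned, as you do.
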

\begin{proof}
Let $t\in\mathbb R$. We can suppose $0\leq t\leq u^*_{\eta,\nu}(0)$. By the definition of $u^*_{\eta,\nu}$, we obtain
\begin{equation*}
\{r\colon u^*_{\eta,\nu}(r)>t\}\subset\{r\colon\mu_\eta(\{|u|>t\})\geq\mu_\nu(0,r)\}.
\end{equation*}
Set $R_0:=\sup\{r\colon u^*_{\eta,\nu}(r)>t\}$. Since $u^*_{\eta,\nu}$ is nonincreasing and left-continuous, we have $[0,R_0)\subset\{u^*_{\eta,\nu}>t\}\subset[0,R_0]$. Then
\begin{equation*}
\mu_\nu(\{u^*_{\eta,\nu}>t\})=\mu_\nu(0,R_0)\leq \mu_\eta(\{|u|>t\}).
\end{equation*}

Suppose, by contradiction, that $\mu_\nu(\{u^*_{\eta,\nu}>t\})<\mu_\eta(\{|u|>t\})$. Since $[0,R_0)\subset\{u^*_{\eta,\nu}>t\}$, we have $\mu_\nu(0,R_0)<\mu_\eta(\{|u|>t\})$. Let $\delta>0$ such that
\begin{equation*}
\mu_{\nu}(0,R_0+\delta)=\mu_\eta(\{|u|>t\}).
\end{equation*}
Thus by $\{u^*_{\eta,\nu}>t\}\subset[0,R_0]$ we obtain $u^*_{\eta,\nu}(R_0+\delta)=t$. Since $u^*_{\eta,\nu}$ is nonincreasing, $u^*_{\eta,\nu}(r)=t$ for all $r\in(R_0,R_0+\delta]$. Making $r\to R_0$ with $r>R_0$ in
\begin{equation*}
\mu_\eta(\{|u|>t\})\leq\mu_\nu(0,r),
\end{equation*}
we obtain $\mu_\eta(\{|u|>t\})\leq\mu_\nu(\{u^*_{\eta,\nu}>t\})$, which is a contradiction. Therefore the equality \eqref{eqhaks} holds.
\end{proof}
\begin{cor}\label{cor21}
    Let $u\colon M\to\mathbb R$ be measurable and $\Psi\colon \mathbb R\to[0,\infty)$ be nonnegative and measurable. Then
    \begin{equation*}
    \int_M\Psi(|u(r)|)r^\eta\mathrm dr=\int_0^{R}\Psi(u^*_{\eta,\nu}(r))r^\nu\mathrm dr,
    \end{equation*}
    where $u^*_{\eta,\nu}$ is the half $\mu_{\eta,\nu}$-symmetrization of $u$ and $R\in(0,\infty]$ is such that $\mu_\eta(M)=\mu_\nu((0,R))$.
\end{cor}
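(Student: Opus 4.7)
The plan is to deduce the identity from the equimeasurability recorded in Proposition \ref{prop21} by means of a standard pushforward-measure argument. Introduce the Borel measures $\rho$ and $\rho^\star$ on $[0,\infty)$ defined as the pushforwards of $\mu_\eta|_M$ under $|u|$ and of $\mu_\nu|_{(0,R)}$ under $u^*_{\eta,\nu}$, respectively, so that
\begin{equation*}
\rho(A)=\mu_\eta(\{r\in M:|u(r)|\in A\}),\qquad \rho^\star(A)=\mu_\nu(\{r\in(0,R):u^*_{\eta,\nu}(r)\in A\}).
\end{equation*}
By the abstract change of variables formula for pushforward measures, the two sides of the claimed identity become $\int_{[0,\infty)}\Psi\,d\rho$ and $\int_{[0,\infty)}\Psi\,d\rho^\star$, respectively, so it suffices to prove $\rho=\rho^\star$ on the Borel $\sigma$-algebra of $[0,\infty)$.

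Proposition \ref{prop21} immediately yields $\rho((t,\infty))=\rho^\star((t,\infty))$ for every $t\geq 0$. The family $\{(t,\infty):t\geq 0\}$ is a $\pi$-system generating the Borel $\sigma$-algebra of $[0,\infty)$, and both measures are finite on each $(1/n,\infty)$ by the assumption $\mu_{\eta,u}(t)<\infty$ for $t>0$. Applying Dynkin's $\pi$-$\lambda$ theorem on each such set and then letting $n\to\infty$ gives $\rho=\rho^\star$ on Borel subsets of $(0,\infty)$. Equality of the possible atoms at $0$ then follows from $\rho([0,\infty))=\mu_\eta(M)=\mu_\nu((0,R))=\rho^\star([0,\infty))$ combined with the agreement on $(0,\infty)$.

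No substantive obstacle is expected: the result is a routine measure-theoretic consequence of equimeasurability. The only mild technical point is passing from the half-open tails $(t,\infty)$ to arbitrary Borel sets, which the $\pi$-$\lambda$ step handles. As an equivalent and more concrete alternative, one may check the identity first for $\Psi=\chi_{(a,\infty)}$ (directly from Proposition \ref{prop21}), extend it by linearity to nonnegative simple functions built from such indicators, and finally obtain the general statement for nonnegative Borel $\Psi$ by writing $\Psi(s)=\int_0^\infty\chi_{\{\Psi>\tau\}}(s)\,d\tau$, applying Fubini--Tonelli, and invoking the pushforward equality $\rho=\rho^\star$ established above.
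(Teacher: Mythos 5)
Your proposal is correct and follows essentially the same route as the paper: the paper also starts from the equimeasurability of Proposition \ref{prop21} for tail indicators, passes to indicators of Borel sets, then to nonnegative simple functions, and concludes by monotone convergence, which is exactly your pushforward/change-of-variables argument with the $\pi$-$\lambda$ step spelled out. Your version merely makes explicit the monotone-class justification that the paper leaves implicit.
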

\begin{proof}
Proposition \ref{prop21} proved the result for $\Psi$ equal to the characteristic function of a set such as $(t,\infty)$. Then the corollary holds if $\Psi$ is the characteristic function of a Borel set and hence if $\Psi$ is a simple function nonnegative. By the Monotone Convergence Theorem, we conclude for all $\Psi\colon \mathbb R\to[0,\infty)$ nonnegative and measurable.
\end{proof}

Let $u^{**}_{\eta,\nu}\colon[0,\infty)\to\mathbb [0,\infty)$ be the maximal function of the rearrangement of $u^*_{\eta,\nu}$, defined as
\begin{equation}\label{plb}
u^{**}_{\eta,\nu}(t):=\dfrac{\nu+1}{t^{\nu+1}}\int_0^tu^*_{\eta,\nu}(s)s^\nu\mathrm ds.
\end{equation}
It follows that $u^{**}_{\eta,\nu}$ is also nonincreasing and $u^*_{\eta,\nu}\leq u^{**}_{\eta,\nu}$. Based on \cite[Lemma 3.9]{MR0928802}, we can estimate the norm $\|u^{**}_{\eta,\nu}\|_{L^p_\alpha}$ as stated in the following lemma.
\begin{lemma}\label{lemma21} Let $u^*_{\eta,\nu}$ be the half $\mu_{\eta,\nu}$-symmetrization of $u$. If $1<p<\infty$ and $\alpha<\nu p+p-1$, then
\begin{equation*}
\int_0^\infty|u^{**}_{\eta,\nu}|^pt^\alpha\mathrm dt\leq (\nu+1)^p\left(\dfrac{p}{p-1+\nu p-\alpha}\right)^p\int_0^\infty|u^*_{\eta,\nu}|^pt^\alpha\mathrm dr.
\end{equation*}
\end{lemma}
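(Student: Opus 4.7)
The plan is to recognize the stated bound as a weighted Hardy-type inequality and derive it by combining a scaling substitution with Minkowski's integral inequality. Writing $f:=u^*_{\eta,\nu}$ for brevity and substituting $s=\sigma t$ with $\sigma\in(0,1)$ in the defining formula \eqref{plb} yields the scale-invariant representation
$$u^{**}_{\eta,\nu}(t) \;=\; (\nu+1)\int_0^1 f(\sigma t)\,\sigma^{\nu}\,d\sigma.$$
This exhibits $u^{**}_{\eta,\nu}$ as an average whose inner argument is $\sigma t$ rather than $s$, which is the key to unlocking a dilation-based Hardy estimate.

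Next I would apply Minkowski's integral inequality for the outer norm $\bigl(\int_0^\infty(\cdot)^p t^\alpha\,dt\bigr)^{1/p}$, pulling it inside the $d\sigma$ integration:
$$\left(\int_0^\infty \left(u^{**}_{\eta,\nu}(t)\right)^{p} t^{\alpha}\,dt\right)^{1/p} \;\leq\; (\nu+1)\int_0^1 \sigma^{\nu}\left(\int_0^\infty f(\sigma t)^{p}\,t^{\alpha}\,dt\right)^{1/p} d\sigma.$$
The change of variables $\tau=\sigma t$ inside the inner integral produces a clean scaling identity $\int_0^\infty f(\sigma t)^{p}\,t^{\alpha}\,dt = \sigma^{-(\alpha+1)}\int_0^\infty f(\tau)^{p}\tau^{\alpha}\,d\tau$, so the entire right-hand side collapses to a scalar multiple of $\|f\|_{L^p_\alpha}$.

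The only remaining task is to evaluate the one-variable integral $\int_0^1 \sigma^{\nu-(\alpha+1)/p}\,d\sigma$. This is finite precisely when $\nu-(\alpha+1)/p>-1$, which is exactly the hypothesis $\alpha<\nu p+p-1$, and its value is $p/(p-1+\nu p-\alpha)$. Raising the resulting inequality to the $p$-th power then yields the stated constant $(\nu+1)^p\bigl(p/(p-1+\nu p-\alpha)\bigr)^p$. I do not expect a substantial obstacle: the scaling identity and Minkowski's inequality are routine, and the hypothesis is calibrated exactly to guarantee integrability of the Beta-type factor at $\sigma=0$.
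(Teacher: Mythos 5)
Your argument is correct and yields exactly the constant in the statement, but it proceeds by a genuinely different route than the paper. You rewrite the maximal function via the dilation substitution $s=\sigma t$ as $u^{**}_{\eta,\nu}(t)=(\nu+1)\int_0^1 u^*_{\eta,\nu}(\sigma t)\,\sigma^{\nu}\,d\sigma$, apply Minkowski's integral inequality in the weighted $L^p$ norm, and exploit the exact scaling of the measure $t^\alpha\,dt$ under $t\mapsto \tau/\sigma$; the hypothesis $\alpha<\nu p+p-1$ enters only to make the resulting Beta-type integral $\int_0^1\sigma^{\nu-(\alpha+1)/p}\,d\sigma=p/(p-1+\nu p-\alpha)$ finite. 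The paper instead follows the other classical Hardy-inequality template: it splits $u^*_{\eta,\nu}(s)=s^{-a}\cdot s^{a}u^*_{\eta,\nu}(s)$ with $a=(\alpha+1-\nu p)(p-1)/p^2$, applies H\"older's inequality to get a pointwise bound on $|u^{**}_{\eta,\nu}(t)|^p$, and then integrates using Fubini's theorem; the same hypothesis guarantees $1-ap/(p-1)>0$ and the convergence of the Fubini integral. Both are standard proofs of the weighted Hardy inequality and give the same sharp-looking constant; yours is arguably cleaner in that the constant appears in a single explicit integral rather than being assembled from two separate exponents. The only points worth making explicit in a write-up are that $u^*_{\eta,\nu}\geq 0$ (so Minkowski applies without absolute values) and that joint measurability of $(t,\sigma)\mapsto u^*_{\eta,\nu}(\sigma t)$ is immediate from monotonicity of $u^*_{\eta,\nu}$; neither is an obstacle.
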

\begin{proof}
    Let $a=(\alpha+1-\nu p)(p-1)/p^2$ to simplify notation. By writing $u^*_{\eta,\nu}(s)=s^{-a}s^au^*_{\eta,\nu}(s)$ and applying H\"older's inequality, we obtain (note that $1-ap/(p-1)>0$ because $\alpha<\nu p+p-1$)
    \begin{align*}
    u^{**}_{\eta,\nu}(t)&\leq\dfrac{\nu+1}{t^{\nu+1}}\int_0^tu^*_{\eta,\nu}(s)s^\nu\mathrm ds\leq \dfrac{\nu+1}{t^{\nu+1}}\left(\int_0^ts^{-\frac{ap}{p-1}}\mathrm ds\right)^{\frac{p-1}p}\left(\int_0^t|u^*_{\eta,\nu}|^ps^{\nu p+ap}\mathrm ds\right)^{\frac1p}\\
    &=(\nu+1)\left(\dfrac{p}{p-1+\nu p-\alpha}\right)^{\frac{p-1}p}t^{-\frac{1}p-\nu-a}\left(\int_0^t|u^*_{\eta,\nu}|^ps^{\nu p+ap}\mathrm ds\right)^{\frac1p}.
    \end{align*}
    Thus,
    \begin{equation*}
    |u^{**}_{\eta,\nu}(t)|^p\leq(\nu+1)^p\left(\dfrac{p}{p-1+\nu p-\alpha}\right)^{p-1}t^{-1-\nu p-ap}\int_0^t|u^*_{\eta,\nu}|^ps^{\nu p+ap}\mathrm ds.
    \end{equation*}
    Using Fubini's Theorem, we get (note that $\alpha<\nu p+p-1$ implies $\alpha<\nu p+ap$)
    \begin{align*}
    \int_0^\infty|u^{**}_{\eta,\nu}|^pt^{\alpha}\mathrm dt&\leq (\nu+1)^p\left(\dfrac{p}{p-1+\nu p-\alpha}\right)^{p-1}\\
    &\qquad\int_0^\infty \int_s^\infty t^{\alpha-1-\nu p-ap}|u^*_{\eta,\nu}(s)|^ps^{\nu p+ap}\mathrm dt\mathrm ds\\
    &=(\nu+1)^p\left(\dfrac{p}{p-1+\nu p-\alpha}\right)^{p-1}\dfrac{1}{\nu p+ap-\alpha}\int_0^\infty|u^*_{\eta,\nu}(s)|^ps^{\alpha}\mathrm ds\\
    &=(\nu+1)^p\left(\dfrac{p}{p-1+\nu p-\alpha}\right)^{p}\int_0^\infty|u^*_{\eta,\nu}(s)|^ps^{\alpha}\mathrm ds.
    \end{align*}
\end{proof}

Next, we present another technical lemma.
\begin{lemma}\label{lemma22}
Let $p>1$, $\beta>-\alpha_1/(p-1)$, and $\gamma>\beta-1+(\alpha_1-\beta)/p$. If $u\in X^{1,p}_{R}(\alpha_0,\alpha_1)$, then
\begin{equation*}
1\leq\dfrac{(-\mu_{\beta,u}'(t))^{p-1}}{[(\gamma+1)\mu_{\gamma,u}(t)]^{\frac{p\beta+\alpha_1-\beta}{\gamma+1}}}\left(-\dfrac{\mathrm d}{\mathrm dt}\int_{\{|u|>t\}}|u'|^pr^{\alpha_1}\mathrm dr\right).
\end{equation*}
\end{lemma}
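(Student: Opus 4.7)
The plan is to express both the distributional derivatives in the statement through the weighted co-area formula and then combine Hölder's inequality with a discrete $\ell^p$-norm comparison to recover the factor $(\gamma+1)\mu_{\gamma,u}(t)$. Set $A(t):=\{r\in(0,R):|u(r)|>t\}$, which is open since $u\in AC_{\mathrm{loc}}(0,R)$. By the Banach indicatrix/co-area formula in one dimension, for almost every $t>0$ the level set $\{|u|=t\}$ consists of finitely many points $r_1<\cdots<r_m$ with $u'(r_i)\neq 0$, and
\begin{equation*}
-\mu_{\beta,u}'(t)=\sum_{i=1}^{m}\frac{r_i^{\beta}}{|u'(r_i)|},\qquad -\frac{\mathrm d}{\mathrm dt}\int_{A(t)}|u'|^p r^{\alpha_1}\,\mathrm dr=\sum_{i=1}^{m}r_i^{\alpha_1}|u'(r_i)|^{p-1}.
\end{equation*}

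Next, define $\delta:=\frac{(p-1)\beta+\alpha_1}{p}$, so that $p\delta=p\beta+\alpha_1-\beta$ matches the numerator exponent in the statement. A direct check shows the standing hypotheses translate to $\delta>0$ (equivalent to $\beta>-\alpha_1/(p-1)$) and $\delta<\gamma+1$ (equivalent to $\gamma>\beta-1+(\alpha_1-\beta)/p$). Split each term as
\begin{equation*}
r_i^{\delta}=\Bigl(\tfrac{r_i^{\beta}}{|u'(r_i)|}\Bigr)^{1/p'}\bigl(r_i^{\alpha_1}|u'(r_i)|^{p-1}\bigr)^{1/p},\qquad p'=\tfrac{p}{p-1},
\end{equation*}
noting that the $|u'(r_i)|$ exponents cancel since $-1/p'+(p-1)/p=0$. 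Hölder's inequality for finite sums then yields
\begin{equation*}
\sum_{i=1}^{m}r_i^{\delta}\leq\bigl(-\mu_{\beta,u}'(t)\bigr)^{(p-1)/p}\left(-\frac{\mathrm d}{\mathrm dt}\int_{A(t)}|u'|^p r^{\alpha_1}\,\mathrm dr\right)^{1/p}.
\end{equation*}

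The remaining step relates $\sum r_i^{\delta}$ to $\mu_{\gamma,u}(t)$. Decompose $A(t)=\bigsqcup_j (a_j,b_j)$ into its (at most countable) maximal open subintervals, and observe that the right endpoints with $b_j<R$ belong to $\{r_i\}$. Since $\delta\leq\gamma+1$, the monotonicity of discrete $\ell^p$-norms gives $\sum_j b_j^{\gamma+1}\leq\bigl(\sum_j b_j^{\delta}\bigr)^{(\gamma+1)/\delta}$, hence
\begin{equation*}
(\gamma+1)\mu_{\gamma,u}(t)=\sum_j\bigl(b_j^{\gamma+1}-a_j^{\gamma+1}\bigr)\leq\sum_j b_j^{\gamma+1}\leq\Bigl(\sum_{i=1}^{m}r_i^{\delta}\Bigr)^{(\gamma+1)/\delta}.
\end{equation*}
Raising to the $\delta/(\gamma+1)$ power and combining with the Hölder bound, then taking the $p$-th power and rearranging, produces exactly the inequality in the statement.

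The main obstacle is the justification of the co-area identities and the control of the boundary contribution when a component $(a_m,b_m)$ of $A(t)$ has $b_m=R$ for $R<\infty$ (in which case $b_m$ is not in $\{r_i\}$). In that situation one reduces to the good $t$'s via Sard's theorem applied to $|u|$, and handles the endpoint by a density argument using Lemma \ref{lemmacompsuppdense}: approximating $u$ in $X^{1,p}_R$ by functions vanishing near $R$ makes $b_j<R$ for all $j$, and all quantities appearing in the inequality pass to the limit for a.e. $t$.
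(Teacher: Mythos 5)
Your argument follows essentially the same skeleton as the paper's: a H\"older step pairing the weight $r^{\delta}$ (your $\delta$ equals the paper's $\theta=\beta+(\alpha_1-\beta)/p$) against the two distributional derivatives, followed by a one-dimensional weighted isoperimetric inequality $\sum_i r_i^{\delta}\geq\left[(\gamma+1)\mu_{\gamma,u}(t)\right]^{\delta/(\gamma+1)}$. The genuine difference is that the paper simply cites this isoperimetric inequality (Theorem 6.1 of the Alvino--Brock--Chiacchio--Mercaldo--Posteraro reference), whereas you prove it from scratch via the decomposition of $\{|u|>t\}$ into maximal intervals and the $\ell^{\delta}\hookrightarrow\ell^{\gamma+1}$ monotonicity; that makes your write-up self-contained, at the cost of needing the Banach-indicatrix representation of $-\mu_{\beta,u}'(t)$ and of the energy derivative for a.e.\ $t$, which the paper avoids by running H\"older on the difference quotients over $\{t<|u|\leq t+h\}$ before passing to the limit. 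The exponent bookkeeping ($p\delta=p\beta+\alpha_1-\beta$, $0<\delta\leq\gamma+1$ from the hypotheses) is correct.

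The one step I would not accept as written is the endpoint patch. If a component $(a_m,R)$ of $\{|u|>t\}$ abuts $R<\infty$, the inequality $(\gamma+1)\mu_{\gamma,u}(t)\leq\left(\sum_i r_i^{\delta}\right)^{(\gamma+1)/\delta}$ can genuinely fail --- for $u$ a nonzero constant the right side is $0$ while the left is $R^{\gamma+1}>0$, and then the lemma's conclusion itself reads $1\leq0$. A false inequality cannot be rescued by density: approximating such a $u$ by functions vanishing near $R$ does not make $-\mu_{\beta,u}'(t)$ or $-\frac{\mathrm d}{\mathrm dt}\int_{\{|u|>t\}}|u'|^pr^{\alpha_1}\mathrm dr$ converge for a.e.\ $t$ (the approximants' derivatives concentrate near $R$), and Lemma \ref{lemmacompsuppdense} is in any case a density statement for $X^{k,p}_\infty$, not for $X^{k,p}_R$ with $R<\infty$. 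The honest resolution is that the lemma is only invoked (in Proposition \ref{prop22} and Lemma \ref{lemmakey}) for functions vanishing at $R$, for which every bounded component of $\{|u|>t\}$ has its right endpoint in the level set and your argument closes; note the paper's own proof has the same blind spot, since it silently identifies $\partial\Omega_t$ with $u^{-1}(t)\cup u^{-1}(-t)$. State that restriction explicitly rather than appealing to density, and your proof is complete.
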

\begin{proof}
Set $\theta=\beta+(\alpha_1-\beta)/p$. For fixed $t,h>0$, applying H\"older's inequality, we get
\begin{equation*}
\dfrac1h\int_{\{t<|u|\leq t+h\}}|u'|r^\theta\mathrm dr\leq\left(\dfrac1h\int_{\{t<|u|\leq t+h\}}|u'|^pr^{\alpha_1}\right)^{\frac1p}\left(\dfrac{\mu_{\beta,u}(t)-\mu_{\beta,u}(t+h)}{h}\right)^{\frac{p-1}p}.
\end{equation*}
By letting $h\downarrow0^+$, we obtain
\begin{equation}\label{eqra1}
-\dfrac{\mathrm d}{\mathrm dt}\int_{\{t<|u|\}}|u'|r^\theta\mathrm dr\leq\left(-\dfrac{\mathrm d}{\mathrm dt}\int_{\{t<|u|\}}|u'|^pr^{\alpha_1}\mathrm dr\right)^{\frac1p}\left(-\mu_{\beta,u}'(t)\right)^{\frac{p-1}p}.
\end{equation}
On the other hand, from the coarea formula and the isoperimetric inequality,
\begin{equation*}
\int_{\{t<|u|\}}|u'|r^{\theta}\mathrm dr=\int_t^\infty\int_{u^{-1}(s)}r^\theta dH_0(r)\mathrm ds+\int_{-\infty}^{-t}\int_{u^{-1}(s)}r^\theta \mathrm dH_0(r)\mathrm ds.
\end{equation*}
Let $\Omega_t=\{t<|u|\}$, then $\partial\Omega_t=u^{-1}(t)\cup u^{-1}(-t)$. Thus
\begin{equation}\label{eqra2}
-\dfrac{\mathrm d}{\mathrm dt}\int_{\{t<|u|\}}|u'|r^\theta\mathrm dr=\int_{\partial\Omega_t}r^\theta\mathrm dH_0(r).
\end{equation}
Since $\gamma+1>\theta$, we can apply the Isoperimetric Inequality on $\mathbb R$ with weight (see Theorem 6.1 in \cite{MR3619238}) to obtain
\begin{equation}\label{eqra3}
\int_{\partial\Omega}|r|^\theta\mathrm dH_0(r)\geq \left((\gamma+1)\int_\Omega|r|^\gamma\mathrm dr\right)^{\frac{\theta}{\gamma+1}}
\end{equation}
for any $\Omega\subset\mathbb R$. By using \eqref{eqra1}, \eqref{eqra2}, and \eqref{eqra3}, we conclude the lemma.
\end{proof}

Let $f\in L^p_\sigma$ with $p>1$. We consider the following problem with $0<R<\infty$:
\begin{equation}\label{eq7}
    \left\{\begin{array}{l}
         L_{\eta,\gamma} u=f\mbox{ in }(0,R),  \\
         u'(0)=u(R)=0.
    \end{array}\right.
\end{equation}
We say that $u\in X^{1,2}_{0,R}(\alpha_0,\gamma)$ is a weak solution of \eqref{eq7} if
\begin{equation*}
\int_0^Ru'\varphi'r^{\gamma}\mathrm dr=\int_0^Rf\varphi r^{\eta}\mathrm dr,\quad\forall \varphi\in A,
\end{equation*}
where $A=\{\varphi\in L^{\frac{p}{p-1}}_{\frac{p\eta-\sigma}{p-1}}\colon \varphi'\in L^2_{\gamma}\}$.
\begin{lemma}\label{lemma23}
Let $u\in X^{1,2}_{0,R}(\alpha_0,\gamma)$ be a weak solution to \eqref{eq7}. If $\eta>(\sigma-p+1)/p$, then
\begin{equation*}
-\dfrac{\mathrm d}{\mathrm{dt}}\int_{\{|u|>t\}}|u'|^2r^{{\gamma}}\mathrm dr\leq \int_0^{[(\nu+1)\mu_{\eta,u}(t)]^{\frac1{\nu+1}}}f^*_{\eta,\nu}(r)r^{\nu}\mathrm dr\quad\mbox{a.e. }t>0,
\end{equation*}
where $f^*_{\eta,\nu}$ is the half $\mu_{\eta,\nu}$-symmetrization of $f$.
\end{lemma}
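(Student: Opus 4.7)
The plan is to test the weak formulation of \eqref{eq7} with an appropriate Lipschitz truncation of $u$ concentrated on the layer $\{t<|u|\le t+h\}$, divide by $h$, let $h\downarrow 0$, and then control the resulting right-hand side by a Hardy--Littlewood-type inequality for the half $\mu_{\eta,\nu}$-symmetrization. Concretely, for fixed $t,h>0$ I set
\begin{equation*}
G_{t,h}(s)=\begin{cases}0,&0\le s\le t,\\ s-t,&t<s\le t+h,\\ h,&s>t+h,\end{cases}\qquad \varphi_{t,h}=G_{t,h}(|u|)\,\mathrm{sign}(u).
\end{equation*}
The hypothesis $\eta>(\sigma-p+1)/p$ is equivalent to $(p\eta-\sigma)/(p-1)>-1$, so $\chi_{\{|u|>t\}}\in L^{p/(p-1)}_{(p\eta-\sigma)/(p-1)}(0,R)$; since $|\varphi_{t,h}|\le h\,\chi_{\{|u|>t\}}$ and, by the chain rule, $\varphi_{t,h}'=\chi_{\{t<|u|<t+h\}}\,u'\in L^2_{\gamma}$, one checks that $\varphi_{t,h}\in A$.

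Inserting $\varphi_{t,h}$ into the weak formulation yields
\begin{equation*}
\int_{\{t<|u|<t+h\}}|u'|^2 r^{\gamma}\,\mathrm dr=\int_0^R f\,\mathrm{sign}(u)\,G_{t,h}(|u|)\,r^{\eta}\,\mathrm dr.
\end{equation*}
Dividing by $h$ and letting $h\downarrow 0$, the left-hand side converges for a.e.\ $t>0$ to $-\frac{\mathrm d}{\mathrm dt}\int_{\{|u|>t\}}|u'|^2 r^{\gamma}\,\mathrm dr$, as the derivative of the nonincreasing function $t\mapsto\int_{\{|u|>t\}}|u'|^2 r^{\gamma}\,\mathrm dr$. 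For the right-hand side, $G_{t,h}(|u|)/h\to\chi_{\{|u|>t\}}$ pointwise a.e.\ and is dominated by $\chi_{\{|u|>t\}}$, which is integrable against $|f|r^{\eta}$ by H\"older and the hypothesis on $\eta$; hence dominated convergence, combined with $f\,\mathrm{sign}(u)\le|f|$, gives
\begin{equation*}
-\dfrac{\mathrm d}{\mathrm dt}\int_{\{|u|>t\}}|u'|^2 r^{\gamma}\,\mathrm dr\le \int_{\{|u|>t\}}|f|\,r^{\eta}\,\mathrm dr.
\end{equation*}

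To finish I will establish, for every measurable $A\subset(0,R)$ with $\mu_\eta(A)<\infty$, the Hardy--Littlewood-type inequality
\begin{equation*}
\int_A|f|\,r^{\eta}\,\mathrm dr\le\int_0^{\rho}f^*_{\eta,\nu}(r)\,r^{\nu}\,\mathrm dr,\qquad \rho=[(\nu+1)\mu_\eta(A)]^{1/(\nu+1)},
\end{equation*}
and then apply it with $A=\{|u|>t\}$. Noting that $\mu_\nu([0,\rho])=\mu_\eta(A)$ by the choice of $\rho$, the layer-cake representation gives
\begin{equation*}
\int_A|f|r^{\eta}\,\mathrm dr=\int_0^\infty\mu_\eta\bigl(A\cap\{|f|>s\}\bigr)\,\mathrm ds\le\int_0^\infty\min\bigl(\mu_\eta(A),\mu_\eta(\{|f|>s\})\bigr)\,\mathrm ds,
\end{equation*}
while the equimeasurability identity $\mu_\eta(\{|f|>s\})=\mu_\nu(\{f^*_{\eta,\nu}>s\})$ from Proposition \ref{prop21}, together with the fact from Proposition \ref{prop20} that each superlevel set $\{f^*_{\eta,\nu}>s\}$ is an initial interval, implies that the min exactly equals $\mu_\nu([0,\rho]\cap\{f^*_{\eta,\nu}>s\})$; a second application of layer-cake then yields $\int_0^{\rho}f^*_{\eta,\nu}(r)r^{\nu}\,\mathrm dr$. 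The main technical delicacy is the admissibility of the test function $\varphi_{t,h}$ and the dominated convergence passage, both of which are routine once the compatibility of the weight exponents is verified via the hypothesis $\eta>(\sigma-p+1)/p$.
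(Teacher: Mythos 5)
Your proof is correct and follows the paper's own strategy in its core steps: the same truncated test function, insertion into the weak formulation, division by $h$, and the a.e.\ differentiation of the monotone function $t\mapsto\int_{\{|u|>t\}}|u'|^2r^\gamma\,\mathrm dr$. Two points of (minor) divergence are worth recording. First, for the passage $h\downarrow0$ you use dominated convergence on $G_{t,h}(|u|)/h\to\chi_{\{|u|>t\}}$ directly, whereas the paper first bounds the right-hand side as in its display \eqref{eq8}, identifies the limit with $-\frac{\mathrm d}{\mathrm dt}\int_{\{|u|>t\}}|f|(|u|-t)r^\eta\,\mathrm dr$, and then invokes the Leibniz rule; both routes are fine, and yours is arguably cleaner. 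Second, and more substantively, the paper concludes by writing $\int_{\{|u|>t\}}|f|r^\eta\,\mathrm dr=\int_0^{[(\nu+1)\mu_{\eta,u}(t)]^{1/(\nu+1)}}f^*_{\eta,\nu}r^\nu\,\mathrm dr$ with a citation of Corollary \ref{cor21}; but that corollary is an equimeasurability identity over the \emph{whole} domain, and since $\{|u|>t\}$ is a superlevel set of $u$, not of $f$, what is actually needed (and all that is needed) is the Hardy--Littlewood-type inequality ``$\leq$''. You prove exactly this via the layer-cake formula together with Propositions \ref{prop20} and \ref{prop21}, so your argument supplies a justification that the paper's proof leaves implicit (or states imprecisely). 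The admissibility check $\varphi_{t,h}\in A$ via $(p\eta-\sigma)/(p-1)>-1$ and the boundedness of $(0,R)$ is also correct.
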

\begin{proof}
    For fixed $t,h>0$, we define
    \begin{equation*}
    \varphi(r):=\left\{\begin{array}{ll}
         0&\mbox{if }|u|\leq t,  \\
         (|u|-t)\ \mathrm{sign}(u)&\mbox{if }t\leq|u|\leq t+h,\\
         h\ \mathrm{sign}(u)&\mbox{if }t+h<|u|.
    \end{array}\right.
    \end{equation*}
Since $\eta>(\sigma-p+1)/p$, we have $\varphi\in A$. By considering that $u$ is a weak solution, we obtain
\begin{equation*}
\int_0^Ru'\varphi'r^{{\gamma}}\mathrm dr=\int_0^Rf\varphi r^\eta\mathrm dr.
\end{equation*}
Thus,
\begin{align}
    \underset{\{t<|u|\leq t+h\}}{\int}&|u'|^2r^{{\gamma}}\mathrm dr\nonumber\\
    &=\underset{\{t<|u|\leq t+h\}}{\int}f(|u|-t)\ \mathrm{sign}(u)r^{\eta}\mathrm dr+\underset{\{|u|>t+h\}}{\int}fh\ \mathrm{sign}(u)r^{\eta}\mathrm dr\nonumber\\
    &\leq\underset{\{t<|u|\leq t+h\}}{\int}|f||u|r^{\eta}\mathrm dr-t\underset{\{t<|u|\leq t+h\}}{\int}|f|r^{\eta}\mathrm dr+h\underset{\{|u|>t+h\}}{\int}|f|r^{\eta}\mathrm dr.\label{eq8}
\end{align}
Dividing \eqref{eq8} by $h$ and letting $h\downarrow0^+$, we obtain
\begin{equation*}
-\dfrac{\mathrm d}{\mathrm{d}t}\underset{\{|u|>t\}}{\int}|u'|^2r^{\gamma}\mathrm dr\leq-\dfrac{\mathrm d}{\mathrm dt}\underset{\{|u|>t\}}{\int}|f|(|u|-t)r^\eta\mathrm dr.
\end{equation*}
By Leibniz integral rule and Corollary \ref{cor21},
\begin{equation*}
-\dfrac{\mathrm d}{\mathrm{d}t}\underset{\{|u|>t\}}{\int}|u'|^2r^{\gamma}\mathrm dr\leq\underset{\{|u|>t\}}{\int}|f|r^{\eta}\mathrm dr=\int_0^{[(\nu+1)\mu_{\eta,u}(t)]^{\frac1{\nu+1}}}f^*_{\eta,\nu}(r)r^{\nu}\mathrm dr.
\end{equation*}
\end{proof}

The next proposition is an adaptation of \cite[Proposition 3.4]{MR3225631} to the half $\mu_{\eta,\nu}$-symmetrization.

\begin{prop}\label{prop22}
 Let $u\in X^{1,2}_{0,R}(\alpha_0,\gamma)$ be a weak solution of \eqref{eq7} with $f\in L^p_\sigma$. If $\eta>(\sigma-p+1)/p$ and $1<\gamma<\eta+2$, then
\begin{equation*}
u^*_{\eta,\nu}(r_1)-u^*_{\eta,\nu}(r_2)\leq\dfrac{(\nu+1)^{\frac{\gamma-1}{\eta+1}}}{(\eta+1)^{\frac{\eta+\gamma}{\eta+1}}}\int_{r_1}^{r_2}f^{**}_{\eta,\nu}(\xi)\xi^{\nu-\frac{\gamma-1}{\eta+1}(\nu+1)}\mathrm d\xi,\quad\forall 0<r_1\leq r_2\leq R,
\end{equation*}
where $u^*_{\eta,\nu}$ is the half $\mu_{\eta,\nu}$-symmetrization and $f^{**}_{\eta,\nu}$ is given by \eqref{plb}.
\end{prop}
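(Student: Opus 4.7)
The strategy is to combine the two differential inequalities already in hand, Lemma \ref{lemma22} and Lemma \ref{lemma23}, to obtain a single ordinary differential inequality for the ``rearranged radius''
\begin{equation*}
\rho(t):=\left[(\nu+1)\mu_{\eta,u}(t)\right]^{1/(\nu+1)},
\end{equation*}
which, by construction, is the inverse of $r\mapsto u^*_{\eta,\nu}(r)$ in the sense that $u^*_{\eta,\nu}(\rho(t))=t$ at every point of strict monotonicity. After simplification, an integration (together with a change of variables $r=\rho(t)$) should deliver exactly the pointwise estimate claimed.

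First I would apply Lemma \ref{lemma22} with the choices $p=2$, $\beta=\eta$, $\alpha_1=\gamma$ and the ``$\gamma$'' of that lemma also equal to $\eta$. The admissibility conditions $\beta>-\alpha_1/(p-1)$ and $\gamma_{\mathrm{L}}>\beta-1+(\alpha_1-\beta)/p$ reduce, respectively, to $\eta>-\gamma$ (automatic since $\eta>-1$ and $\gamma>1$) and to $\gamma<\eta+2$, which is precisely the hypothesis $1<\gamma<\eta+2$. Combining with Lemma \ref{lemma23}, whose hypothesis $\eta>(\sigma-p+1)/p$ is assumed, gives
\begin{equation*}
\left[(\eta+1)\mu_{\eta,u}(t)\right]^{\frac{\eta+\gamma}{\eta+1}}\leq (-\mu'_{\eta,u}(t))\int_{0}^{\rho(t)}f^*_{\eta,\nu}(s)s^{\nu}\,\mathrm{d}s\quad\text{a.e. }t>0.
\end{equation*}

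Next I would translate this inequality from $\mu_{\eta,u}$ to $\rho$. Differentiating $\rho(t)^{\nu+1}=(\nu+1)\mu_{\eta,u}(t)$ gives $-\mu'_{\eta,u}(t)=\rho(t)^{\nu}(-\rho'(t))$, while the definition of $f^{**}_{\eta,\nu}$ yields $\int_0^{\rho(t)}f^*_{\eta,\nu}(s)s^{\nu}\mathrm{d}s=\frac{\rho(t)^{\nu+1}}{\nu+1}f^{**}_{\eta,\nu}(\rho(t))$. Substituting and simplifying, the exponents collapse (using $(\nu+1)\frac{\eta+\gamma}{\eta+1}-2\nu-1=-\nu+(\nu+1)\frac{\gamma-1}{\eta+1}$) into the clean differential inequality
\begin{equation*}
(-\rho'(t))\,f^{**}_{\eta,\nu}(\rho(t))\,\rho(t)^{\nu-(\nu+1)\frac{\gamma-1}{\eta+1}}\geq\frac{(\eta+1)^{\frac{\eta+\gamma}{\eta+1}}}{(\nu+1)^{\frac{\gamma-1}{\eta+1}}}.
\end{equation*}

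Finally, I would integrate this inequality in $t$ between $t_2:=u^*_{\eta,\nu}(r_2)$ and $t_1:=u^*_{\eta,\nu}(r_1)$ and perform the change of variables $r=\rho(t)$ (so that $\mathrm{d}r=\rho'(t)\,\mathrm{d}t$ and the limits switch from $(t_2,t_1)$ to $(r_2,r_1)$). The factor $-\rho'(t)$ combines with the Jacobian to produce $\mathrm{d}r$, the monotonicity of $\rho$ flips the limits back to $(r_1,r_2)$, and the right-hand side becomes $\frac{(\eta+1)^{(\eta+\gamma)/(\eta+1)}}{(\nu+1)^{(\gamma-1)/(\eta+1)}}(t_1-t_2)$; rearranging yields the desired inequality with the constant $(\nu+1)^{(\gamma-1)/(\eta+1)}/(\eta+1)^{(\eta+\gamma)/(\eta+1)}$ in front.

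The main obstacle is the usual measure-theoretic subtlety: $\rho$ (equivalently $u^*_{\eta,\nu}$) may fail to be strictly decreasing on sets corresponding to plateaus of $|u|$ where $\mu_{\eta,u}$ has jump discontinuities. On such sets the formal identity $u^*_{\eta,\nu}(\rho(t))=t$ must be interpreted through left-continuity (Proposition \ref{prop20}) and the equimeasurability (Proposition \ref{prop21}), and the change of variables must be carried out either on the set of regular values of $u$ (which has full measure) or via the generalized inverse construction standard in rearrangement theory. Once this is done carefully, the differential step is justified a.e.\ and the absolute continuity of $u^*_{\eta,\nu}$ on $[r_1,r_2]$ (inherited from $u\in X^{1,2}_{0,R}$) allows passing from the pointwise inequality for $-(u^*_{\eta,\nu})'$ to the integrated inequality for $u^*_{\eta,\nu}(r_1)-u^*_{\eta,\nu}(r_2)$.
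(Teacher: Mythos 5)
Your proposal is correct and follows essentially the same route as the paper: both combine Lemma \ref{lemma22} (with $p=2$, $\beta=\eta$ and the lemma's ``$\gamma$'' equal to $\eta$, so the admissibility conditions reduce exactly to $1<\gamma<\eta+2$) with Lemma \ref{lemma23}, then pass through the substitution $\xi=[(\nu+1)\mu_{\eta,u}(t)]^{1/(\nu+1)}$ and the identity $\int_0^\xi f^*_{\eta,\nu}(s)s^\nu\,\mathrm ds=\xi^{\nu+1}f^{**}_{\eta,\nu}(\xi)/(\nu+1)$, and your exponent bookkeeping matches the paper's. The plateau/jump subtlety you flag at the end is precisely what the paper addresses, integrating between the perturbed levels $s=u^*_{\eta,\nu}(r_1)-\delta/2$ and $s'=u^*_{\eta,\nu}(r_2)+\delta/2$ so that the resulting integration interval sits inside $[r_1,r_2]$, then letting $\delta\to0$.
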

\begin{proof}
From $\eta>(\nu-p+1)/p$ and $1<\gamma<\eta+2$ we can use Lemmas \ref{lemma22} and \ref{lemma23} to obtain
\begin{equation*}
1\leq\dfrac{-\mu_{\eta,u}'(t)}{[(\eta+1)\mu_{\eta,u}(t)]^{\frac{\eta+\gamma}{\eta+1}}}\int_0^{[(\nu+1)\mu_{\eta,u}(t)]^{\frac{1}{\nu+1}}}f^*_{\eta,\nu}(r)r^{\nu}\mathrm dr.
\end{equation*}
Considering $0<s'<s$ and integrating on $t$ from $s'$ to $s$, we have
\begin{equation*}
s-s'\leq\int_s^{s'}\left[(\eta+1)\mu_{\eta,u}(t)\right]^{-\frac{\eta+\gamma}{\eta+1}}\int_0^{[(\nu+1)\mu_{\eta,u}(t)]^{\frac{1}{\nu+1}}}f^*_{\eta,\nu}(r)r^{\nu}\mathrm dr\mu'_{\eta,u}(t)\mathrm dt.
\end{equation*}
Making the variable changing $\xi=[(\nu+1)\mu_{\eta,u}(t)]^{\frac{1}{\nu+1}}$ and using $\int_0^\xi f^*_{\eta,\nu}(r)r^{\nu}\mathrm dr=\xi^{\nu+1}f^{**}_{\eta,\nu}(\xi)/(\nu+1)$, we get
\begin{equation}\label{eq10}
    s-s'\leq\dfrac{(\nu+1)^{\frac{\gamma-1}{\eta+1}}}{(\eta+1)^{\frac{\eta+\gamma}{\eta+1}}}\int_{[(\nu+1)\mu_{\eta,u}(s)]^{\frac{1}{\nu+1}}}^{[(\nu+1)\mu_{\eta,u}(s')]^{\frac{1}{\nu+1}}}f^{**}_{\eta,\nu}(\xi)\xi^{\nu-\frac{\gamma-1}{\eta+1}(\nu+1)}\mathrm d\xi.
\end{equation}

Let $0<r_1<r_2\leq R$. Without loss of generality, we may assume $u^*_{\eta,\nu}(r_2)<u^*_{\eta,\nu}(r_1)$. Then there exists $\overline\delta>0$ such that
\begin{equation*}
u^*_{\eta,\nu}(r_2)+\delta<u^*_{\eta,\nu}(r_1),\quad\forall\delta\in[0,\overline\delta].
\end{equation*}
Fixing $\delta\in(0,\overline \delta]$, we apply equation \eqref{eq10} for
\begin{equation*}
s=u^*_{\eta,\nu}(r_1)-\dfrac\delta2\mbox{ and }s'=u^*_{\eta,\nu}(r_2)+\dfrac{\delta}2
\end{equation*}
to obtain
\begin{equation*}
u^*_{\eta,\nu}(r_1)-u^*_{\eta,\nu}(r_2)-\delta\leq\dfrac{(\nu+1)^{\frac{\gamma-1}{\eta+1}}}{(\eta+1)^{\frac{\eta+\gamma}{\eta+1}}}\int_{[(\nu+1)\mu_{\eta,u}(s)]^{\frac{1}{\nu+1}}}^{[(\nu+1)\mu_{\eta,u}(s')]^{\frac{1}{\nu+1}}}f^{**}_{\eta,\nu}(\xi)\xi^{\nu-\frac{\gamma-1}{\eta+1}(\nu+1)}\mathrm d\xi.
\end{equation*}
By $u^*_{\eta,\nu}(r_1)>s$ and $u^*_{\eta,\nu}(r_2)<s'$, we get $\mu_{\eta,u}(s)\geq r_1^{\nu+1}/(\nu+1)$ and
$\mu_{\eta,u}(s')< r_2^{\nu+1}/(\nu+1)$. Then
\begin{equation*}
u^*_{\eta,\nu}(r_1)-u^*_{\eta,\nu}(r_2)-\delta\leq\dfrac{(\nu+1)^{\frac{\gamma-1}{\eta+1}}}{(\eta+1)^{\frac{\eta+\gamma}{\eta+1}}}\int_{r_1}^{r_2}f^{**}_{\eta,\nu}(\xi)\xi^{\nu-\frac{\gamma-1}{\eta+1}(\nu+1)}\mathrm d\xi.
\end{equation*}
Letting $\delta\to0$, we conclude the result.
\end{proof}

\subsection{Crucial Lemma}

In this subsection, we establish a crucial lemma (Lemma \ref{lemmakey}) that enables us to derive Adams' inequality with exact growth for the second derivative case. However, before presenting the lemma, we need to recall a technical result (Lemma \ref{lemmakey2}):

\begin{lemma}[Lemma 3.4 in \cite{MR3336837}]\label{lemmakey2}
For any given sequence $a=(a_k)_{k\geq0}$, let $p>1$, $\|a\|_1=\sum_{k=0}^\infty|a_k|$, $\|a\|_p=(\sum_{k=0}^\infty|a_k|^p)^{1/p}$, $\|a\|_{(e)}=(\sum_{k=0}^{\infty}|a_k|^pe^k)^{1/p}$, and $\mu(h)=\inf\{\|a\|_{(e)}\colon\|a\|_1=h,\|a\|_p\leq 1\}$. Then, for $h>1$, there exist positive constants $C_1$ and $C_2$ such that
\begin{equation*}
C_1\mu(h)\leq\dfrac{e^{\frac{h^{\frac{p}{p-1}}}{p}}}{h^{\frac{1}{p-1}}}\leq C_2\mu(h).
\end{equation*}
\end{lemma}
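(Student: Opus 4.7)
The plan is to prove both inequalities by analyzing the constrained minimization problem defining $\mu(h)$: the upper bound comes from an explicit near-optimal construction, and the lower bound from an optimization/Hölder argument. Before either step, note that by a rearrangement argument (any swap $(a_j,a_k)$ with $j<k$ and $|a_j|<|a_k|$ leaves $\|a\|_1$ and $\|a\|_p$ unchanged while strictly decreasing $\|a\|_{(e)}$), we may restrict attention to sequences with $a_k\geq 0$ and $(a_k)$ nonincreasing.

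For the upper bound I would test $\mu(h)$ on the flat ansatz $a_k=h/N$ for $0\leq k<N$ and $a_k=0$ for $k\geq N$, with $N:=\lceil h^{p/(p-1)}\rceil$. Then $\|a\|_1=h$ and $\|a\|_p^p=N(h/N)^p=h^p N^{1-p}\leq 1$, so the sequence is admissible. The geometric identity $\sum_{k=0}^{N-1}e^k=(e^N-1)/(e-1)$ gives
\begin{equation*}
\|a\|_{(e)}^p=\frac{(h/N)^p(e^N-1)}{e-1}\leq C(p)\,h^p\,N^{-p}\,e^N,
\end{equation*}
and substituting $N\asymp h^{p/(p-1)}$ produces $\|a\|_{(e)}\leq C\,h^{-1/(p-1)}e^{h^{p/(p-1)}/p}$, which is exactly the required upper bound on $\mu(h)$.

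For the lower bound, let $a$ be any admissible sequence and write $M=\|a\|_{(e)}$. A two-part split at level $N$ combined with Hölder gives first $\sum_{k\leq N}a_k\leq (N+1)^{(p-1)/p}\|a\|_p\leq(N+1)^{(p-1)/p}$, and second, using the factorization $a_k=(a_ke^{k/p})\cdot e^{-k/p}$ together with $\sum_{k>N}e^{-k/(p-1)}\asymp e^{-N/(p-1)}$,
\begin{equation*}
\sum_{k>N}a_k\leq M\left(\sum_{k>N}e^{-k/(p-1)}\right)^{(p-1)/p}\leq C(p)\,M\,e^{-N/p}.
\end{equation*}
Adding the two bounds yields $h\leq (N+1)^{(p-1)/p}+C(p)Me^{-N/p}$ for every $N$.

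The principal obstacle lies precisely here: the naive choice $(N+1)^{(p-1)/p}\approx h/2$ only recovers $M\gtrsim e^{c\,h^{p/(p-1)}/p}$ with some $c<1$, losing the sharp exponential constant $1/p$. To close the gap I would invoke the Lagrange-multiplier structure of the minimizer of $\sum a_k^p e^k$ under the two constraints; the Euler equation is $a_k^{p-1}(e^k-\lambda_2)=\lambda_1/p$, forcing the form $a_k=c(p,\lambda_1)(e^k-\lambda_2)^{-1/(p-1)}$ on $\{e^k>\lambda_2\}$. A Laplace-type asymptotic evaluation of $\sum a_k$, $\sum a_k^p$, and $\sum a_k^p e^k$ then pins down $\lambda_2\asymp e^{h^{p/(p-1)}}$, the active range of indices $k\gtrsim h^{p/(p-1)}$, and finally the matching lower bound $M\geq c\,h^{-1/(p-1)}e^{h^{p/(p-1)}/p}$, completing the two-sided estimate.
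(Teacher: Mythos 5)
This lemma is quoted in the paper from Ibrahim--Masmoudi--Nakanishi (Lemma 3.4 of \cite{MR3336837}) and no proof is given there, so there is no in-paper argument to compare against; I am judging your proposal on its own. Your first half is fine: the flat ansatz $a_k=h/N$, $N=\lceil h^{p/(p-1)}\rceil$ is admissible and yields $\mu(h)\leq C\,h^{-1/(p-1)}e^{h^{p/(p-1)}/p}$, which is the left-hand inequality.

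The gap is in the lower bound, and it is twofold. First, the step you actually rely on --- the Lagrange-multiplier/Laplace-asymptotics paragraph --- is not a proof: you would need to establish existence of a minimizer in this infinite-dimensional constrained problem, justify the KKT conditions and the sign of the multipliers (as written, $a_k=c(e^k-\lambda_2)^{-1/(p-1)}$ with $\lambda_2>0$ is supported where $e^k>\lambda_2$, i.e.\ on \emph{large} $k$, which contradicts your own rearrangement reduction; the correct profile is $c(e^k+\nu)^{-1/(p-1)}$ with $\nu\geq0$), and then carry out the asymptotic evaluation with uniform control in $h$. None of this is done. Second, and more importantly, the ``principal obstacle'' you identify is illusory: your own two-part split already closes the argument if you choose the cut point sharply rather than at $(N+1)^{(p-1)/p}\approx h/2$. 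Take $\delta=(p-1)h^{-1/(p-1)}$ and let $N$ be the largest integer with $(N+1)^{(p-1)/p}\leq h-\delta$. Your inequality $h\leq(N+1)^{(p-1)/p}+C(p)Me^{-(N+1)/p}$ then gives $\delta\leq C(p)\,M\,e^{-(h-\delta)^{p/(p-1)}/p+1/p}$, and convexity gives $(h-\delta)^{p/(p-1)}\geq h^{p/(p-1)}-\tfrac{p}{p-1}\delta h^{1/(p-1)}=h^{p/(p-1)}-p$, whence
\begin{equation*}
M\geq c(p)\,\delta\,e^{h^{p/(p-1)}/p}=c'(p)\,h^{-1/(p-1)}e^{h^{p/(p-1)}/p}
\end{equation*}
for all $h\geq h_0(p)$; the range $1<h<h_0(p)$ is handled by the crude bound $h=\|a\|_1\leq C_p\|a\|_{(e)}$ (H\"older against $\sum e^{-k/(p-1)}$) together with boundedness of the target function there. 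So the elementary route you set up is the right one; you should finish it rather than pivot to the unproven variational sketch.
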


Now, we can state the main result in this subsection:

\begin{lemma}\label{lemmakey}
Assume $\alpha_2=2p-1>1$, $\alpha_0\geq-1$, $\eta>-1$, $\theta\in\mathbb R$, and $\gamma=(\alpha_2+(p-1)\eta)/p$. Let $u\in X^{2,p}_\infty(\alpha_0,\alpha_1,\alpha_2)$ with compact support in $[0,\infty)$. If $u^*_\eta(R)\geq1$ and $g:=r^{\theta-\eta}L_{\theta,\gamma} u$ satisfies $g\in L^p_{\alpha_2+(\eta-\gamma)p}$ and
\begin{equation*}
\int_{ R}^\infty|g^{**}_\eta|^pr^\eta\mathrm dr\leq \left(\dfrac{p}{p-1}\right)^p,
\end{equation*}
then
\begin{equation*}
\dfrac{\exp(\beta_{0,2}|u^*_\eta(R)|^{\frac{p}{p-1}})}{|u^*_\eta(R)|^{\frac{p}{p-1}}}R^{\eta+1}\leq C\int_R^\infty |u^*_\eta(r)|^pr^\eta\mathrm dr,
\end{equation*}
where $C=C(p,\eta)>0$, $g^{**}_\eta$ is given by \eqref{plb} with $\nu=\eta$, and $u^*_\eta$ represents the half $\mu_{\eta,\eta}$-symmetrization.
 \end{lemma}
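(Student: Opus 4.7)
My plan is to follow the rearrangement-based exact-growth template of Masmoudi--Sani, in three stages. First, I would convert the PDE bound into a pointwise differential inequality for the rearrangement $u^*_\eta$ via Proposition~\ref{prop22}. Second, I would make an exponential change of variable $r=Re^t$ that eliminates $R$ entirely and reduces the lemma to a one-dimensional integral inequality. Third, I would discretize the resulting inequality at integers and invoke (a weighted version of) Lemma~\ref{lemmakey2} to extract the sharp exponential growth $\exp(\beta_{0,2}\,h^{p/(p-1)})/h^{p/(p-1)}$.

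\emph{Stages 1 and 2.} The identity $g=r^{\theta-\eta}L_{\theta,\gamma}u$ rewrites as $-(r^\gamma u')'=g\,r^\eta$, which is the weak form of~\eqref{eq7} with forcing $f=g$. The choices $\alpha_2=2p-1$ and $\gamma=(\alpha_2+(p-1)\eta)/p$ produce two key identities, $\gamma-1=(p-1)(\eta+1)/p$ and $\eta+2-\gamma=(\eta+1)/p$, and in particular the admissibility condition $1<\gamma<\eta+2$ of Proposition~\ref{prop22}. Applying that proposition with $\nu=\eta$ (extended to the present $L^p$/$R=\infty$ setting using the compact support of $u$ and Lemmas~\ref{lemma22}--\ref{lemma23}) gives
\begin{equation*}
u^*_\eta(r_1)-u^*_\eta(r_2)\leq\frac{1}{\eta+1}\int_{r_1}^{r_2}g^{**}_\eta(\xi)\,\xi^{\eta-\gamma+1}\,\mathrm d\xi,\qquad R\leq r_1\leq r_2.
\end{equation*}
Introducing $\phi(t):=u^*_\eta(Re^t)$ and $G(\sigma):=g^{**}_\eta(Re^\sigma)\,R^{(\eta+1)/p}\,e^{(\eta+1)\sigma/p}$, the identity $\eta-\gamma+2=(\eta+1)/p$ turns the above into $\phi(s)-\phi(t)\leq(\eta+1)^{-1}\int_s^t G\,\mathrm d\sigma$, and routine changes of variable yield $\int_0^\infty G^p\,\mathrm d\sigma\leq(p/(p-1))^p$ together with $\int_R^\infty|u^*_\eta|^pr^\eta\,\mathrm dr=R^{\eta+1}\int_0^\infty\phi^p e^{(\eta+1)t}\,\mathrm dt$. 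The factor $R^{\eta+1}$ then cancels on both sides of the target inequality, so writing $h:=\phi(0)\geq 1$ and $p':=p/(p-1)$, the lemma reduces to proving
\begin{equation*}
\frac{\exp(\beta_{0,2}\,h^{p'})}{h^{p'}}\leq C\int_0^\infty|\phi(t)|^p\,e^{(\eta+1)t}\,\mathrm dt.
\end{equation*}

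\emph{Stage 3 and main obstacle.} Evaluating the differential bound at $s=0$ yields $\phi(t)\geq(h-(\eta+1)^{-1}\int_0^t G)_+$. Discretizing by $h_k:=\phi(k)$ and $b_k:=\int_k^{k+1}G\,\mathrm d\sigma$, H\"older's inequality gives $\sum b_k^p\leq(p')^p$, and the monotonicity of $\phi$ combined with the pointwise bound produces
\begin{equation*}
\int_0^\infty|\phi|^p e^{(\eta+1)t}\,\mathrm dt\ \geq\ c_\eta\sum_{k\geq 0}h_{k+1}^p e^{(\eta+1)k},\qquad h_{k+1}\geq\Bigl(h-\tfrac{1}{\eta+1}\sum_{j\leq k}b_j\Bigr)_+.
\end{equation*}
Introducing the increments $d_k:=h_k-h_{k+1}$ I obtain $\sum d_k=h$ and $\|d\|_p\leq p'/(\eta+1)$; rescaling to $\|d\|_p=1$ leaves the $\ell^1$-norm equal to $H:=h(\eta+1)/p'$. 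A weighted version of Lemma~\ref{lemmakey2} (with weight $e^{(\eta+1)k}$ in place of $e^k$) would then yield $\sum h_{k+1}^p e^{(\eta+1)k}\geq c\,e^{(\eta+1)H^{p'}}/H^{p'}$; the algebraic identity $(\eta+1)H^{p'}=(\eta+1)(\gamma-1)^{p'}h^{p'}=\beta_{0,2}\,h^{p'}$, combined with $H^{p'}\asymp h^{p'}$, then matches the required exponent and denominator exactly. I expect Stage~3 to be the main obstacle: Lemma~\ref{lemmakey2} is stated for weight $e^k$, so obtaining the sharp constant $\beta_{0,2}$ under weight $e^{(\eta+1)k}$ will require a careful rescaling tracking how the rate $\eta+1$ enters both the $\ell^1$-normalization (capturing $h$) and the $\ell^p$-constraint (capturing $p/(p-1)$), and an auxiliary discrete weighted Hardy step will be needed to transfer the bound from the increments $d_k$ to the partial sums $h_{k+1}$; this is a direct $L^p$-extension of the two-dimensional argument of \cite{MR3225631}.
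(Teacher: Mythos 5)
Your proposal follows essentially the same route as the paper: Proposition~\ref{prop22} with $\nu=\eta$, an exponential change of scale, a discretization into increments, and Lemma~\ref{lemmakey2} to produce the sharp growth. The one genuine gap is exactly the step you flag as the ``main obstacle'': you discretize at integer points, which leaves you needing an unproved weighted variant of Lemma~\ref{lemmakey2} with weight $e^{(\eta+1)k}$. The paper avoids this entirely by choosing the sampling grid $r_k=Re^{k/(\eta+1)}$ and setting $h_k=c_0\,u^*_\eta(Re^{k/(\eta+1)})$ with $c_0=(\eta+1)^{(2p-1)/p}(p-1)/p$: with this spacing the lower bound $\int_R^\infty|u^*_\eta|^pr^\eta\,\mathrm dr\gtrsim R^{\eta+1}\sum_k|h_{k+1}|^pe^{k}$ carries the weight $e^k$ exactly, H\"older on each interval $[r_k,r_{k+1}]$ gives $\|a\|_p\le1$ for the increments $a_k=h_k-h_{k+1}$ with no further normalization, and the identity $c_0^{p/(p-1)}=(\eta+1)(\gamma-1)^{p/(p-1)}=\beta_{0,2}$ delivers the sharp constant from the unweighted Lemma~\ref{lemmakey2} verbatim. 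So the ``careful rescaling'' you anticipate is simply a dilation of the grid by $1/(\eta+1)$, not a new discrete lemma; your arithmetic confirming $(\eta+1)H^{p/(p-1)}=\beta_{0,2}h^{p/(p-1)}$ is the same bookkeeping in different clothes.

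Two smaller points you should not skip. First, the passage from the increments to the integral is not a Hardy-type step: for $k\ge1$ one just uses $a_k\le h_k$ to get $\sum_{k\ge1}a_k^pe^k\le\sum_{k\ge0}h_{k+1}^pe^{k+1}$, but the $k=0$ term $a_0^p\le h_0^p$ is not controlled this way. The paper handles it by showing $h_0\le C\,u^*_\eta(r)$ for $R<r<Re^{b}$ with $b=[(\eta+1)(p-1)/(2p)]^{p/(p-1)}$, using the hypothesis $\int_R^\infty|g^{**}_\eta|^pr^\eta\,\mathrm dr\le(p/(p-1))^p$ once more together with $u^*_\eta(R)\ge1$; this is in fact the only place where the normalization $u^*_\eta(R)\ge1$ enters, and your sketch never uses it. Second, Lemma~\ref{lemmakey2} is stated for $h>1$, so you also need $\|a\|_1=h_0=c_0u^*_\eta(R)>1$, which again comes from $u^*_\eta(R)\ge1$. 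With the rescaled grid and the $a_0$ estimate added, your argument closes and coincides with the paper's proof.
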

 \begin{proof}
Let $\overline R>0$ such that $\mathrm{supp}(u)\subset[0,\overline R]$. By Theorem \ref{theo32}, we obtain that $u\in X^{1,2}_{0,\overline R}(\alpha_0,\gamma)$ and is a weak solution of
\begin{equation*}
    \left\{\begin{array}{l}
         L_{\eta,\gamma} u=g\mbox{ in }(0, \overline R),  \\
         u'(0)=u( \overline R)=0.
    \end{array}\right.
\end{equation*}
Using Proposition \ref{prop22} on this problem with $\gamma=(\alpha_2+(p-1)\eta)/p$, we have
\begin{equation}\label{eql1}
u^*_\eta(r_1)-u^*_\eta(r_2)\leq\dfrac{1}{\eta+1}\int_{ r_1}^{ r_2}g^{**}_\eta(r)r^{\frac{\eta-p+1}{p}}\mathrm dr,\quad\forall 0<r_1\leq r_2\leq \overline R.
\end{equation}

Let $h_k=c_0u^*_\eta(Re^{\frac{k}{\eta+1}})$ for each $k\in\mathbb N\cup\{0\}$, where $c_0=(\eta+1)^{\frac{2p-1}p}(p-1)/p$. Define $a_k=h_k-h_{k+1}$ and $a=(a_k)$. Then $a_k\geq0$ and
\begin{equation*}
        \|a\|_1=\sum_{k=0}^\infty|a_k|=h_0=c_0u^*_\eta(R).
\end{equation*}
By \eqref{eql1} and H\"older's inequality,
\begin{align*}
a_k&=c_0|u^*_\eta(Re^{\frac{k}{\eta+1}})-u^*_\eta(Re^{\frac{k+1}{\eta+1}})|\\
   &\leq \dfrac{c_0}{\eta+1}\left(\int_{ Re^{\frac{k}{\eta+1}}}^{ Re^{\frac{k+1}{\eta+1}}}|g^{**}_\eta(r)|^pr^\eta\mathrm dr\right)^{\frac{1}{p}}(\eta+1)^{\frac{1-p}p}\\
   &\leq\dfrac{p-1}p\left(\int_{Re^{\frac{k}{\eta+1}}}^{Re^{\frac{k+1}{\eta+1}}}|g^{**}_\eta(r)|^pr^\eta\mathrm dr\right)^{\frac{1}{p}}.
\end{align*}
Then
\begin{equation*}
\|a\|_p=\left(\sum_{k=0}^\infty|a_k|^p\right)^{\frac1p}\leq\left[\left(\dfrac{p-1}p\right)^{p} \int_R^\infty|g^{**}_\eta(r)|^pr^\eta\mathrm dr\right]^{\frac1p}\leq 1.
\end{equation*}
On the other hand,
\begin{align*}
R^{-\eta-1}\int_R^\infty|u^*_\eta(r)|^pr^\eta\mathrm dr&\geq R^{-\eta-1}\sum_{k=0}^\infty\int_{Re^{\frac{k}{\eta+1}}}^{Re^{\frac{k+1}{\eta+1}}}|u^*_\eta(Re^{\frac{k+1}{\eta+1}})|^pr^\eta\mathrm dr\\
&=\dfrac{1-e^{-1}}{\eta+1}\sum_{k=0}^\infty|u^*_\eta(Re^{\frac{k+1}{\eta+1}})|^pe^{k+1}\\
&\geq C\sum_{k=0}^\infty |h_{k+1}|^pe^{k+1}\geq C\sum_{k=1}^\infty |a_{k}|^pe^k.
\end{align*}
Thus,
\begin{equation}\label{eqpt10}
\|a\|_{(e)}^p=a_0^p+\sum_{k=1}^\infty |a_k|^pe^k\leq h_0^p+CR^{-\eta-1}\int_R^\infty|u^*_\eta(r)|^pr^\eta\mathrm dr.
\end{equation}
Let $R<r<Re^b$, where $b=[(\eta+1)(p-1)/2p]^{\frac{p}{p-1}}$. Note that, by \eqref{eql1},
\begin{align*}
h_0-c_0u^*_\eta(r)&\leq\dfrac{c_0}{\eta+1}\int_{ R}^{ r}g^{**}_\eta(s)s^{\frac{\eta-p+1}{p}}\mathrm ds\leq\dfrac{c_0}{\eta+1}\left(\int_{ R}^{ r}|g^{**}_\eta(s)|^ps^\eta\mathrm ds\right)^{\frac1p}b^{\frac{p-1}p}\\
&\leq \dfrac{b^{\frac{p-1}p}c_0}{\eta+1}\dfrac{p}{p-1}\leq\dfrac{h_0}{2}.
\end{align*}
Then $h_0\leq Cu^*_\eta(r)$ for all $R<r<Re^b$. Using
\begin{equation*}
    h_0^p\leq C\dfrac{\int_R^{Re^b}h_0^pr^\eta\mathrm dr}{R^{\eta+1}}\leq C\dfrac{\int_R^{\infty}|u^*_\eta(r)|^pr^\eta\mathrm dr}{R^{\eta+1}}
\end{equation*}
along with \eqref{eqpt10}, we have
\begin{equation*}
\|a\|_{(e)}^p\leq C\dfrac{\int_R^{\infty}|u^*_\eta(r)|^pr^\eta\mathrm dr}{R^{\eta+1}}.
\end{equation*}
From Lemma \ref{lemmakey2} and $c_0^{\frac{p}{p-1}}=\beta_{0,2}$, we conclude
\begin{equation*}
C\dfrac{\int_R^{\infty}|u^*_\eta(r)|^pr^\eta\mathrm dr}{R^{\eta+1}}\geq\left[\frac{\exp\left(\frac{h_0^{\frac{p}{p-1}}}{p}\right)}{h_0^{\frac{1}{p-1}}}\right]^p=\dfrac{\exp\left(\beta_{0,2}|u^*_\eta(R)|^{\frac{p}{p-1}}\right)}{c_0^{\frac{p}{p-1}}|u^*_\eta(R)|^{\frac{p}{p-1}}}.
\end{equation*}
Therefore,
\begin{equation*}
\dfrac{\exp(\beta_{0,2}|u^*_\eta(R)|^{\frac{p}{p-1}})}{|u^*_\eta(R)|^{\frac{p}{p-1}}}R^{\eta+1}\leq C\int_R^\infty |u^*_\eta(r)|^pr^\eta\mathrm dr,
\end{equation*}
which completes the proof of Lemma \ref{lemmakey}.
 \end{proof}

\section{Proof of the Exact Growth for First and Second Order}\label{mainresults}

As mentioned in \cite{MR3405815,MR3848068,MR3225631,MR3355498}, establishing the exact growth inequality requires a crucial lemma like Lemma \ref{lemmakey}. While the proof of both cases shares similarities, we provide a comprehensive proof specifically for the second derivative case due to its additional intricacies. Once we have proven the second order result (Theorem \ref{theo1}), we will address the approach for handling the first derivative case.

\begin{proof}[Proof of Theorem \ref{theo1}]
From Lemma \ref{lemmacompsuppdense} and $\|L_{\theta,\gamma} u\|_{L^p_{\alpha_2+(\theta-\gamma)p}}\leq C\|u\|_{X^{2,p}_\infty}$ (using $\alpha_1\geq p-1$ and Lemma \ref{lemmajaosn}) we can assume that $\mathrm{supp}(u)\subset[0,\infty)
$ is compact and $u\in C^\infty(0,\infty)$. From Corollary \ref{cor21}, it is enough to show that
\begin{equation*}
\int_0^\infty\dfrac{\exp_p(\beta_{0,2}|u^*_{\eta}|^{\frac{p}{p-1}})}{(1+|u^*_{\eta}|)^{\frac{p}{p-1}}}r^\eta\mathrm dr\leq C\|u^*_{\eta}\|^{p}_{L^{p
}_{\eta}},
\end{equation*}
where $u^*_{\eta}$ is the half $\mu_{\eta,\eta}$-symmetrization of $u$. We split the integral into two parts
\begin{equation*}
\int_0^\infty\dfrac{\exp_p(\beta_{0,2}|u^*_{\eta}|^{\frac{p}{p-1}})}{(1+|u^*_{\eta}|)^{\frac{p}{p-1}}}r^\eta\mathrm dr=\int_0^{R_0}\dfrac{\exp_p(\beta_{0,2}|u^*_{\eta}|^{\frac{p}{p-1}})}{(1+|u^*_{\eta}|)^{\frac{p}{p-1}}}r^\eta\mathrm dr+\int_{R_0}^\infty\dfrac{\exp_p(\beta_{0,2}|u^*_{\eta}|^{\frac{p}{p-1}})}{(1+|u^*_{\eta}|)^{\frac{p}{p-1}}}r^\eta\mathrm dr,
\end{equation*}
where $R_0=\inf\{r\geq0\colon u^*_{\eta}(r)\leq1\}\in[0,\infty)$. Note that $u^*_{\eta}(r)\leq1$ for all $r\in(R_0,\infty)$ and then $\exp_p(\beta_{0,2}|u^*_{\eta}|^{\frac{p}{p-1}})\leq C|u^*_{\eta}|^p$ in $r\in(R_0,\infty)$. Thus,
\begin{equation*}
\int_{R_0}^\infty\dfrac{\exp_p(\beta_{0,2}|u^*_{\eta}|^{\frac{p}{p-1}})}{(1+|u^*_{\eta}|)^{\frac{p}{p-1}}}r^\eta\mathrm dr\leq C\|u^*_{\eta}\|^p_{L^p_\eta}.
\end{equation*}
Therefore, we just need to consider the integral on $(0,R_0)$. 

Let $R_1>0$ such that $\mathrm{supp}(u)\subset[0,R_1]$. We define $f:=L_{\theta,\gamma} u\in L^p_{\alpha_2+(\theta-\gamma)p}$ and $g:=r^{\theta-\eta}f\in L^p_{\alpha_2+(\eta-\gamma)p}$. Using Theorem \ref{theo32}, we have $u\in X^{1,2}_{0,R_1}(\alpha_0,\gamma)$ and it is a weak solution of the following equation:
\begin{equation}\label{eqpt1}
    \left\{\begin{array}{l}
         L_{\eta,\gamma} u=g\mbox{ in }(0,R_1),  \\
         u'(0)=u(R_1)=0.
    \end{array}\right.
\end{equation}
Using $\gamma=(\alpha_2+(p-1)\eta)/p$ and Proposition \ref{prop22} on the problem \eqref{eqpt1}, we obtain
\begin{equation}\label{eq14}
u^*_\eta(r_1)-u^*_\eta(r_2)\leq\dfrac{1}{\eta+1}\int_{r_1}^{r_2}g^{**}_\eta(r)r^{\frac{\eta-p+1}{p}}\mathrm dr,\quad\forall 0<r_1\leq r_2\leq R_1,
\end{equation}
where $g^{**}_\eta$ is given by \eqref{plb} with $\nu=\eta$. Defining $\alpha:=\int_0^\infty |g^{**}_\eta|^pr^{\eta}\mathrm dr$, Lemma \ref{lemma21} and Corollary \ref{cor21} imply
\begin{equation}\label{eqalpha}
\alpha\leq \left(\dfrac{p}{p-1}\right)^p\int_0^\infty|g^*_\eta|^pr^{\eta}\mathrm dr=\left(\dfrac{p}{p-1}\right)^{p}\int_0^\infty|f|^pr^{p\theta-(p-1)\eta}\mathrm dr\leq\left(\dfrac{p}{p-1}\right)^{p},
\end{equation}
where we used that $\gamma=(\alpha_2+(p-1)\eta)/p$ and $\|f\|_{L^p_{\alpha_2+(\theta-\gamma)p}}\leq1$.

Let $0<\varepsilon_0<1$ be fixed, and define $R_2$ such that
\begin{equation}\label{eqpt3}
\int_0^{ R_2}|g^{**}_\eta|^pr^{\eta}\mathrm dr=\alpha\varepsilon_0\mbox{ and }\int_{ R_2}^\infty|g^{**}_\eta|^pr^{\eta}\mathrm dr=\alpha(1-\varepsilon_0).
\end{equation}
From \eqref{eq14}, we obtain
\begin{equation}\label{eqpt2}
u^*_\eta(r_1)-u^*_\eta(r_2)\leq\dfrac1{\eta+1}\left(\int_{ r_1}^{ r_2}|g^{**}_\eta(r)|^pr^{\eta}\mathrm dr\right)^{\frac1p}\left(\log\dfrac{r_2}{r_1}\right)^{\frac{p-1}p},
 \end{equation}
for all $0<r_1\leq r_2\leq R_1$. Using \eqref{eqpt3} and $\gamma=(\alpha_2+\eta(p-1))/p$, \eqref{eqpt2} becomes
\begin{equation}\label{eqpt4}
    u^*_\eta(r_1)-u^*_\eta(r_2)\leq\dfrac{\varepsilon_0^{\frac1p}}{\gamma-1}\left(\log \dfrac{r_2}{r_1}\right)^{\frac{p-1}p}\quad\forall 0<r_1\leq r_2\leq R_2.
\end{equation}
\begin{equation}\label{eqpt5}
    u^*_\eta(r_1)-u^*_\eta(r_2)\leq\dfrac{(1-\varepsilon_0)^{\frac1p}}{\gamma-1}\left(\log \dfrac{r_2}{r_1}\right)^{\frac{p-1}p}\quad\forall R_2\leq r_1\leq r_2.
\end{equation}

We split the proof into the cases $R_2\geq R_0$ and $R_2<R_0$. Firstly we consider the case $R_2\geq R_0$. By \eqref{eqpt4}, for all $r\in(0,R_0]$ we have
\begin{equation*}
u^*_\eta(r)\leq 1+\dfrac{\varepsilon_0^{\frac1p}}{\gamma-1}\left(\log \dfrac{R_0}{r}\right)^{\frac{p-1}p}.
\end{equation*}
Given $\varepsilon>0$, it is known that
\begin{equation}\label{eqineqepsilon}
(1+x)^{\frac{p}{p-1}}\leq (1+\varepsilon)x^{\frac{p}{p-1}}+C_\varepsilon,\quad\forall x>0,
\end{equation}
where $C_\varepsilon=(1-(1+\varepsilon)^{1-p})^{\frac{1}{1-p}}$. Thus,
\begin{equation}\label{eqpt6}
|u^*_\eta(r)|^{\frac{p}{p-1}}\leq (1+\varepsilon)\dfrac{\varepsilon_0^{\frac{1}{p-1}}}{(\gamma-1)^{\frac{p}{p-1}}}\log\dfrac{R_0}r+C_\varepsilon.
\end{equation}
Taking $\varepsilon>0$ small with $(1+\varepsilon)\varepsilon_0^{\frac1{p-1}}<1$ and using $\beta_{0,2}=(\eta+1)(\gamma-1)^{\frac{p}{p-1}}$ and \eqref{eqalpha}, we obtain
\begin{equation*}
    \beta_{0,2}(1+\varepsilon)\dfrac{\varepsilon_0^{\frac{1}{p-1}}}{(\gamma-1)^{\frac{p}{p-1}}}<\eta+1.
\end{equation*}
Then, by \eqref{eqpt6} and $\exp_p(t)\leq e^t$ (see \cite[Lemma 3.1]{arXiv:2306.00194}),
\begin{align*}
\int_0^{R_0}\dfrac{\exp_p(\beta_{0,2}|u^*_\eta|^{\frac{p}{p-1}})}{(1+|u^*_\eta|)^{\frac{p}{p-1}}}r^\eta\mathrm dr&\leq e^{C_\varepsilon}\int_0^{R_0}\exp\left(\beta_{0,2}(1+\varepsilon)\dfrac{\varepsilon_0^{\frac{1}{p-1}}}{(\gamma-1)^{\frac{p}{p-1}}}\log\dfrac{R_0}r\right)r^\eta\mathrm dr\\
&\leq e^{C_\varepsilon}R_0^{\beta_{0,2}(1+\varepsilon)\frac{\varepsilon_0^{\frac{1}{p-1}}}{(\gamma-1)^{\frac{p}{p-1}}}}\int_0^{R_0}r^{\eta-\beta_{0,2}(1+\varepsilon)\frac{\varepsilon_0^{\frac{1}{p-1}}}{(\gamma-1)^{\frac{p}{p-1}}}}\mathrm dr\\
&=CR_0^{\eta+1}\leq C\|u^*_\eta\|_{L^p_\eta}^p.
\end{align*}
This concludes the case $R_2\geq R_0$.

For the case $R_2<R_0$, we observe that:
\begin{equation*}
\int_0^{R_0}\dfrac{\exp_p(\beta_{0,2}|u^*_\eta|^{\frac{p}{p-1}})}{(1+|u^*_\eta|)^{\frac{p}{p-1}}}r^\eta\mathrm dr=\int_{R_2}^{R_0}\dfrac{\exp_p(\beta_{0,2}|u^*_\eta|^{\frac{p}{p-1}})}{(1+|u^*_\eta|)^{\frac{p}{p-1}}}r^\eta\mathrm dr+\int_0^{R_2}\dfrac{\exp_p(\beta_{0,2}|u^*_\eta|^{\frac{p}{p-1}})}{(1+|u^*_\eta|)^{\frac{p}{p-1}}}r^\eta\mathrm dr.
\end{equation*}
To estimate the integral over $(R_2,R_0)$, we use \eqref{eqpt5} to obtain
\begin{equation*}
u^*_\eta(r)\leq 1+\dfrac{(1-\varepsilon_0)^{\frac1p}}{\gamma-1}\left(\log\dfrac{R_0}r\right)^{\frac{p-1}p},\quad\forall r\in(R_2,R_0).
\end{equation*}
By setting $\varepsilon_1>0$ small such that $(1+\varepsilon_1)(1-\varepsilon_0)^{\frac{1}{p}}<1$, \eqref{eqineqepsilon}
 implies:
\begin{equation*}
|u^*_\eta(r)|^{\frac{p}{p-1}}\leq(1+\varepsilon_1)\dfrac{(1-\varepsilon_0)^{\frac{1}{p-1}}}{(\gamma-1)^{\frac{p}{p-1}}}\log\dfrac{R_0}r+C_{\varepsilon_1}.
\end{equation*}
Hence, using the expression $\beta_{0,2}=(\eta+1)(\gamma-1)^{\frac{p}{p-1}}$ and $\exp_p(t)\leq e^t$ (refer to \cite[Lemma 3.1]{arXiv:2306.00194}), we can derive the following inequality:
\begin{align*}
\int_{R_2}^{R_0}\dfrac{\exp_p(\beta_{0,2}|u^*_\eta|^{\frac{p}{p-1}})}{(1+|u^*_\eta|)^{\frac{p}{p-1}}}r^\eta\mathrm dr&\leq e^{C_{\varepsilon_1}}\int_{R_2}^{R_0}\exp\left(\beta_{0,2}(1+\varepsilon_1)\dfrac{(1-\varepsilon_0)^{\frac{1}{p-1}}}{(\gamma-1)^{\frac{p}{p-1}}}\log\dfrac{R_0}r\right)r^\eta\mathrm dr\\
&\leq e^{C_{\varepsilon_1}}R_0^{\beta_{0,2}(1+\varepsilon_1)\frac{(1-\varepsilon_0)^{\frac{1}{p-1}}}{(\gamma-1)^{\frac{p}{p-1}}}}\int_0^{R_0}r^{\eta-\beta_{0,2}(1+\varepsilon_1)\frac{(1-\varepsilon_0)^{\frac{1}{p-1}}}{(\gamma-1)^{\frac{p}{p-1}}}}\mathrm dr\\
&=CR_0^{\eta+1}\leq C\|u^*_\eta\|_{L^p_\eta}^p.
\end{align*}
Consequently, the integral over the interval $(0,R_2)$ remains to be considered. Notably, according to \eqref{eqineqepsilon}, we have
\begin{equation*}
|u^*_\eta(r)|^{\frac{p}{p-1}}\leq(1+\varepsilon_2)|u^*_\eta(r)-u^*_\eta(R_2)|^{\frac{p}{p-1}}+C_{\varepsilon_2}|u^*_\eta(R_2)|^{\frac{p}{p-1}},
\end{equation*}
for all $\varepsilon_2>0$ and $0<r<R_2$. Hence, we obtain
\begin{align*}
\int_{0}^{R_2}\dfrac{\exp_p(\beta_{0,2}|u^*_\eta|^{\frac{p}{p-1}})}{(1+|u^*_\eta|)^{\frac{p}{p-1}}}r^\eta\mathrm dr&\leq \dfrac1{|u^*_\eta(R_2)|^{\frac{p}{p-1}}}\int_0^{R_2}\exp(\beta_{0,2}|u^*_\eta|^{\frac{p}{p-1}})r^\eta\mathrm dr\\
&\leq\dfrac{\exp(C_{\varepsilon_2}\beta_{0,2}|u^*_\eta(R_2)|^{\frac{p}{p-1}})}{|u^*_\eta(R_2)|^{\frac{p}{p-1}}}\\
&\quad\times\int_0^{R_2}\exp(\beta_{0,2}(1+\varepsilon_2)|u^*_\eta(r)-u^*_\eta(R_2)|^{\frac{p}{p-1}})r^\eta\mathrm dr.
\end{align*}
Using the expression \eqref{eq14}, we have
\begin{align}
\int_{0}^{R_2}&\dfrac{\exp_p(\beta_{0,2}|u^*_\eta|^{\frac{p}{p-1}})}{(1+|u^*_\eta|)^{\frac{p}{p-1}}}r^\eta\mathrm dr\leq\dfrac{\exp(C_{\varepsilon_2}\beta_{0,2}|u^*_\eta(R_2)|^{\frac{p}{p-1}})}{|u^*_\eta(R_2)|^{\frac{p}{p-1}}}\nonumber\\
&\quad\times\int_0^{R_2}\exp\left[\dfrac{\beta_{0,2}^{\frac{p-1}{p}}(1+\varepsilon_2)^{\frac{p-1}p}}{\eta+1}\int_{ r}^{ R_2}g^{**}_\eta(s)s^{\frac{\eta-p+1}p}\mathrm ds\right]^{\frac{p}{p-1}}r^\eta\mathrm dr\nonumber\\
&=R_2^{\eta+1}\dfrac{\exp(C_{\varepsilon_2}\beta_{0,2}|u^*_\eta(R_2)|^{\frac{p}{p-1}})}{(\eta+1)|u^*_\eta(R_2)|^{\frac{p}{p-1}}}\nonumber\\
&\quad\times\int_0^\infty\exp\left[\dfrac{\beta_{0,2}^{\frac{p-1}p}(1+\varepsilon_2)^{\frac{p-1}p}}{\eta+1}\int_{ R_2e^{-\frac{t}{\eta+1}}}^{ R_2}g^{**}_\eta(s)s^{\frac{\eta-p+1}p}\mathrm ds\right]^{\frac{p}{p-1}}e^{-t}\mathrm dt\label{eqpt7},
\end{align}
where the change of variable $r=R_2e^{-\frac{t}{\eta+1}}$ has been applied.

Let $\phi\colon\mathbb R\to \mathbb R$ be defined as $\phi(t)=0$ for $t\in(-\infty,0]$, and for $t\in(0,\infty)$, we define
\begin{equation*}
\phi(t)=\dfrac{\beta_{0,2}^{\frac{p-1}p}(1+\varepsilon_2)^{\frac{p-1}p}R_2^{\frac{\eta+1}{p}}}{(\eta+1)^2}g^{**}_\eta(R_2e^{-\frac{t}{\eta+1}})e^{-\frac{t}{p}}.
\end{equation*}
By choosing $\varepsilon_2=\varepsilon_0^{\frac1{1-p}}-1$, we can apply \eqref{eqalpha}, \eqref{eqpt3}, and $\gamma=(\alpha_2+(p-1)\eta)/p$ to obtain
\begin{align}
\int_{-\infty}^\infty|\phi(t)|^p\mathrm dt&=\dfrac{\beta_{0,2}^{p-1}(1+\varepsilon_2)^{p-1}R_2^{\eta+1}}{(\eta+1)^{2p}}\int_0^{\infty}|g^{**}_\eta( R_2e^{-\frac{t}{\eta+1}})|^pe^{-t}\mathrm dt\nonumber\\
&=\dfrac{\beta_{0,2}^{p-1}(1+\varepsilon_2)^{p-1}}{(\eta+1)^{2p-1}}\int_0^{ R_2}|g^{**}_\eta(r)|^pr^{\eta}\mathrm dr\nonumber\\
&\leq (1+\varepsilon_2)^{p-1}\varepsilon_0\dfrac{\beta_{0,2}^{p-1}}{(\eta+1)^{2p-1}}\left(\dfrac{p}{p-1}\right)^p=1.\label{eqpt8}
\end{align}
Consider the following lemma due to D. R. Adams \cite[Lemma 1]{MR0960950}.
\begin{lemma}
Let $a(s,t)$ be a nonnegative measurable function on $(-\infty,\infty)\times[0,\infty)$ such that (a.e.)
\begin{equation*}
a(s,t)\leq1,\mbox{ when }0<s<t,
\end{equation*}
\begin{equation*}
\sup_{t>0}\left(\int_{-\infty}^0+\int_t^\infty a(s,t)^{\frac{p}{p-1}}\mathrm ds\right)^{\frac{p-1}p}=b<\infty.
\end{equation*}
Then there is a constant $c_0=c_0(p,b)$ such that if $\phi\geq0$,
\begin{equation*}
\int_{-\infty}^{\infty}\phi(s)^p\mathrm ds\leq1,
\end{equation*}
then
\begin{equation*}
\int_0^\infty e^{-F(t)}\mathrm dt\leq c_0,
\end{equation*}
where
\begin{equation*}
    F(t)=t-\left(\int_{-\infty}^\infty a(s,t)\phi(s)\mathrm ds\right)^{\frac{p}{p-1}}.
\end{equation*}
\end{lemma}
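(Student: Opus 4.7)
The plan is to follow the original argument of D.R.\ Adams, whose core mechanism is a careful splitting of $\phi$ combined with H\"older's inequality. Let $q=p/(p-1)$ and write $\phi=\phi_1+\phi_2$ with $\phi_1=\phi\,\chi_{[0,t]}$. Using $a(s,t)\le 1$ on $(0,t)$ and H\"older,
\begin{equation*}
\int_0^t a(s,t)\phi(s)\,\mathrm ds \;\le\; \int_0^t \phi(s)\,\mathrm ds \;\le\; t^{1/q}\,A^{1/p},
\end{equation*}
where $A:=\int_0^t\phi^p\,\mathrm ds$. The hypothesis on $a$ yields
\begin{equation*}
\int_{(-\infty,0)\cup(t,\infty)} a(s,t)\phi(s)\,\mathrm ds \;\le\; b\,B^{1/p},
\end{equation*}
with $B:=\int_{\mathbb R\setminus[0,t]}\phi^p\,\mathrm ds$. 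The constraint $\|\phi\|_p\le 1$ gives $A+B\le 1$, so the whole integral defining the ``$-$'' term in $F$ is controlled by $(t^{1/q}A^{1/p}+bB^{1/p})^q$.

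The heart of the matter is now to show $F(t)=t-(t^{1/q}A^{1/p}+bB^{1/p})^q$ grows fast enough that $\int_0^\infty e^{-F(t)}\,\mathrm dt$ is bounded by a constant depending only on $p$ and $b$. I would split the $t$-range into two regimes governed by how concentrated $\phi$ is in $[0,t]$. When $A^{q/p}\le 1-\delta_0$ for a fixed small $\delta_0>0$ (to be chosen), expand $(t^{1/q}A^{1/p}+bB^{1/p})^q\le t A^{q/p}+C_{b,p}(1+t^{1/q}A^{1/p})^{q-1}B^{1/p}$ via Young-type convexity; absorbing error terms shows $F(t)\ge \delta_0 t/2-C$, giving an integrable tail. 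When $A$ is close to $1$ (and hence $B$ small), use the inequality $(x+y)^q\le x^q(1+y/x)^q\le x^q+qx^{q-1}y+C_p y^q$ to conclude
\begin{equation*}
(t^{1/q}A^{1/p}+bB^{1/p})^q\;\le\;t\,A^{q/p}+qb\,t^{1/p}A^{(q-1)/p}B^{1/p}+C_{b,p},
\end{equation*}
and then bound $qb\,t^{1/p}B^{1/p}\le \varepsilon t+C(\varepsilon)B^{q/p}$ by Young's inequality.

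Putting the two regimes together yields an estimate of the form $F(t)\ge \eta(t-t_0(A,B))$ for a universal $\eta>0$ and a ``shift'' $t_0$ that depends only on how $A,B$ split the total mass. The crucial observation is that the set of $t$ for which the concentrated regime $A\approx 1$ occurs is essentially a single interval (because $A=A(t)$ is nondecreasing in $t$), so one can integrate $e^{-F(t)}$ over that interval by a direct change of variable involving $A$. This step is the main obstacle: one must verify that the exponential decay beats the possible growth of the integrand near the transition $A\to 1$, and doing so requires the full strength of the factor $(1+|u|)^{p/(p-1)}$ in the denominator (i.e.\ the ``$-q/(p-1)$'' in the subtracted term).

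Finally, combining both regimes gives $\int_0^\infty e^{-F(t)}\,\mathrm dt\le c_0(p,b)$ uniformly over admissible $\phi$ and $a$, which is the assertion. In our intended application to Theorem~\ref{theo1}, the kernel $a(s,t)$ will be the characteristic function of a rectangle encoding the integration window in \eqref{eqpt7}, and the quantity $b$ will be uniformly bounded thanks to the choice $\gamma=(\alpha_2+(p-1)\eta)/p$ together with the normalization \eqref{eqpt8}; thus the lemma will close the remaining estimate for the integral over $(0,R_2)$.
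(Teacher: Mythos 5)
The paper does not actually prove this statement: it is quoted verbatim as Lemma~1 of D.~R.~Adams \cite{MR0960950} and used as a black box, so there is no in-paper argument to compare against; what matters is whether your sketch would stand on its own. Your opening reductions are the standard (and correct) ones: H\"older gives $\int a(s,t)\phi\,\mathrm ds\le t^{1/q}A^{1/p}+bB^{1/p}$ with $q=p/(p-1)$ and $A+B\le1$, and the two-term H\"older bound $t^{1/q}A^{1/p}+bB^{1/p}\le(t+b^{q})^{1/q}$ already yields the uniform lower bound $F(t)\ge-b^{q}$. Your treatment of the regime $A^{q/p}\le1-\delta_0$ is also fine, since there $F(t)\ge\delta_0t-Ct^{1/p}-C$.

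The gap is the concentrated regime $A\approx1$, which is the entire content of the lemma, and your proposal does not close it. The estimate you reach there, $F(t)\ge t(1-A^{q/p})-\varepsilon t-C$, is vacuous when $A^{q/p}\to1$, and the asserted intermediate inequality $F(t)\ge\eta(t-t_0(A,B))$ is neither proved nor sufficient: taking $b=0$, $a=\chi_{(0,t)}$ and $\phi(s)=q^{-1}s^{-1/p}\chi_{[1,e^{q^{p}}]}(s)$ one finds $F(t)\sim qt^{1/p}$ on $[1,e^{q^{p}}]$, so any such linear bound forces $t_0$ to be of order $e^{q^{p}}$, and the integral over $[0,t_0]$ is then uncontrolled by that bound alone. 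The "change of variable involving $A$" is likewise unsubstantiated (the Jacobian is $\phi^{p}$, which can vanish or be arbitrarily small). The actual mechanism in Adams' proof is different: one shows the sublevel sets $E_\lambda=\{t:F(t)\le\lambda\}$ satisfy $|E_\lambda|\le c(p,b)(1+|\lambda|)$ --- the point being that near-equality in H\"older over $[0,t]$ forces $\phi(s)\approx cs^{-1/p}$, whose $p$-th power has logarithmically divergent integral, so $F(t)\le\lambda$ can only persist on a set of measure $O(1+|\lambda|)$ --- and then concludes via $\int_0^\infty e^{-F(t)}\,\mathrm dt=\int_{-b^{q}}^{\infty}e^{-\lambda}|E_\lambda|\,\mathrm d\lambda$. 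Finally, your remark that this step "requires the full strength of the factor $(1+|u|)^{p/(p-1)}$ in the denominator" is a confusion: no such factor appears in $F$, and this lemma is exactly the one used for the classical Adams inequality without exact growth; the denominator is handled elsewhere, namely by Lemma~\ref{lemmakey} and the splitting at $R_2$ in the proof of Theorem~\ref{theo1}, with \eqref{eqpt8} supplying the normalization $\int\phi^{p}\le1$ required here.
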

From \eqref{eqpt8}, we can apply the above lemma to the function $a(s,t)=\chi_{(0,t)}(s)$ ($\chi$ denotes the characteristic function) to obtain
\begin{equation*}
    \int_0^\infty\exp\left(\int_0^t\phi(s)\mathrm ds\right)^{\frac{p}{p-1}}e^{-t}\mathrm dt\leq c_0.
\end{equation*}
Using the change of variable $r= R_2e^{-\frac{s}{\eta+1}}$, we have
\begin{align*}
    c_0&\geq \int_0^\infty\exp\left(\int_0^t\phi(s)\mathrm ds\right)^{\frac{p}{p-1}}e^{-t}\mathrm dt\\
    &=\int_0^\infty\exp\left(\dfrac{\beta_{0,2}^{\frac{p-1}p}(1+\varepsilon_2)^{\frac{p-1}p}( R_2)^{\frac{\eta+1}{p}}}{(\eta+1)^2}\int_{0}^{t}g^{**}_\eta( R_2e^{-\frac{s}{\eta+1}})e^{-\frac{s}{p}}\mathrm ds\right)^{\frac{p}{p-1}}e^{-t}\mathrm dt\\
    &=\int_0^\infty\exp\left(\dfrac{\beta_{0,2}^{\frac{p-1}p}(1+\varepsilon_2)^{\frac{p-1}p}}{\eta+1}\int_{ R_2 e^{-\frac{t}{\eta+1}}}^{ R_2}g^{**}_\eta(r)r^{\frac{\eta-p+1}p}\mathrm dr\right)^{\frac{p}{p-1}}e^{-t}\mathrm dt.
\end{align*}
Here, $\varepsilon_2=\varepsilon_0^{\frac{1}{1-p}}-1$ implies $C_{\varepsilon_2}=(1-\varepsilon_0)^{\frac{1}{1-p}}$. Now, using \eqref{eqpt7}, we have
\begin{align*}
\int_{0}^{R_2}\dfrac{\exp_p(\beta_{0,2}|u^*_\eta|^{\frac{p}{p-1}})}{(1+|u^*_\eta|)^{\frac{p}{p-1}}}r^\eta\mathrm dr&\leq c_0R_2^{\eta+1}\dfrac{\exp((1-\varepsilon_0)^{\frac{1}{1-p}}\beta_{0,2}|u^*_\eta(R_2)|^{\frac{p}{p-1}})}{(\eta+1)|u^*_\eta(R_2)|^{\frac{p}{p-1}}}\\
&=CR_2^{\eta+1}\dfrac{\exp((1-\varepsilon_0)^{\frac{1}{1-p}}\beta_{0,2}|u^*_\eta(R_2)|^{\frac{p}{p-1}})}{|u^*_\eta(R_2)|^{\frac{p}{p-1}}}.
\end{align*}
Since $\int_{R_2}^\infty|g^{**}_\eta|^pr^{\eta}\mathrm dr\leq(\frac{p}{p-1})^p(1-\varepsilon_0)$, we can apply Lemma \ref{lemmakey} to $u(1-\varepsilon_0)^{-\frac1p}$ and obtain
\begin{equation*}
R_2^{\eta+1}\dfrac{\exp((1-\varepsilon_0)^{\frac{1}{1-p}}\beta_{0,2}|u^*_\eta(R_2)|^{\frac{p}{p-1}})}{|u^*_\eta(R_2)|^{\frac{p}{p-1}}}\leq C(1-\varepsilon_0)^{\frac{2-p}{p-1}}\int_{R_2}^\infty |u^*_\eta(r)|^pr^\eta\mathrm dr.
\end{equation*}
Therefore, we have
\begin{equation*}
\int_0^{R_2}\dfrac{\exp_p(\beta_{0,2}|u^*_\eta|^{\frac{p}{p-1}})}{(1+|u^*_\eta|)^{\frac{p}{p-1}}}r^\eta\mathrm dr\leq C\|u^*_\eta\|_{L^p_\eta}^p.
\end{equation*}
 \end{proof}

The proof of Theorem \ref{theok1} follows a similar approach to the proof of Theorem \ref{theo1}, but with a slight modification. Instead of relying on Proposition \ref{prop22}, we derive a similar result as \eqref{eqpt2} through the following expression:
\begin{equation}\label{eqk1}
u^*_\eta(r_1)-u^*_\eta(r_2)=-\int_{r_1}^{r_2}(u^*_\eta)'(r)\mathrm dr\leq\left(\int_{r_1}^{r_2}\left|(u^*_\eta)'\right|^pr^{p-1}\mathrm dr\right)^{\frac{1}{p}}\left(\log\dfrac{r_2}{r_1}\right)^{\frac{p-1}{p}},
\end{equation}
for all $0<r_1\leq r_2$. Applying the P\'olya-Szeg\"o inequality for weighted Sobolev spaces, as given by \cite[Theorem 3.2]{MR4097244}, to $u\in X^{1,p}_\infty(\eta,\alpha_1)$, we obtain
\begin{equation}
\int_0^ \infty|(u_\eta^*)'|^pr^{p-1}\mathrm dr\leq\int_0^\infty|u'|^pr^{p-1}\mathrm dr\leq1.
\end{equation}
Furthermore, by employing the same argument as in the proof of Lemma \ref{lemmakey}, but using \eqref{eqk1} instead of \eqref{eqpt2}, we can establish the following lemma:

 \begin{lemma}\label{lemmakey1}
Assume $\alpha_1=p-1>1$, $\alpha_0\geq-1$, and $\eta>-1$. Let $u\in X^{1,p}_\infty(\alpha_0,\alpha_1)$ with compact support in $[0,\infty)$. If $u^*_\eta(R)\geq1$ and
\begin{equation*}
\int_{ R}^\infty|u^{*}_\eta|^pr^{p-1}\mathrm dr\leq 1,
\end{equation*}
then
\begin{equation*}
\dfrac{\exp(\beta_{0,1}|u^*_\eta(R)|^{\frac{p}{p-1}})}{|u^*_\eta(R)|^{\frac{p}{p-1}}}R^{\eta+1}\leq C\int_R^\infty |u^*_\eta(r)|^pr^\eta\mathrm dr,
\end{equation*}
where $C=C(p,\eta)>0$ and $u^*_\eta$ is the half $\mu_{\eta,\eta}$-symmetrization.
 \end{lemma}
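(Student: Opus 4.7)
My plan is to mirror the architecture of Lemma \ref{lemmakey}, using the first-order H\"older-type bound \eqref{eqk1} in place of the second-order chain that produced \eqref{eqpt2}. Concretely, I will discretize the symmetrization $u^*_\eta$ on a geometric grid of radii matched to the measure $r^\eta\,dr$, then invoke the abstract exponential inequality of Lemma \ref{lemmakey2}.

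First I would set $c_0 := (\eta+1)^{(p-1)/p}$, calibrated so that $c_0^{p/(p-1)} = \eta+1 = \beta_{0,1}$. Define $h_k := c_0\, u^*_\eta(Re^{k/(\eta+1)})$ for $k \geq 0$ and forward differences $a_k := h_k - h_{k+1} \geq 0$; telescoping together with $u^*_\eta(\infty) = 0$ (thanks to the compact support hypothesis) gives $\|a\|_1 = h_0 = c_0\, u^*_\eta(R)$. Applying \eqref{eqk1} with $r_1 = Re^{k/(\eta+1)}$, $r_2 = Re^{(k+1)/(\eta+1)}$ so that $\log(r_2/r_1) = 1/(\eta+1)$, and reading the integral hypothesis as $\int_R^\infty |(u^*_\eta)'|^p r^{p-1}\,dr \leq 1$ (in accordance with the P\'olya--Szeg\"o step recalled immediately before the lemma), produces
\begin{equation*}
a_k \leq \left(\int_{r_1}^{r_2} |(u^*_\eta)'|^p r^{p-1}\,dr\right)^{1/p},
\end{equation*}
and summing over $k$ yields $\|a\|_p \leq 1$ exactly, because the factor $c_0(\eta+1)^{-(p-1)/p}$ equals $1$ by the choice of $c_0$.

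Next I would control the exponentially weighted norm $\|a\|_{(e)}^p = a_0^p + \sum_{k\geq 1} a_k^p e^k$. For the tail, monotonicity of $u^*_\eta$ gives $u^*_\eta(r) \geq h_{k+1}/c_0$ on the annulus $(Re^{k/(\eta+1)}, Re^{(k+1)/(\eta+1)})$; integrating against $r^\eta\,dr$ produces $\sum_{k\geq 1} h_k^p e^k \leq C R^{-\eta-1} \int_R^\infty |u^*_\eta|^p r^\eta\,dr$, and the pointwise bound $a_k \leq h_k$ transfers the estimate to the $a_k$. For the zeroth term, I would apply \eqref{eqk1} once more on a short interval $(R, Re^b)$ and choose $b$ so small that the additive error is at most $h_0/2$ (permissible because $u^*_\eta(R) \geq 1$), giving $u^*_\eta(r) \geq u^*_\eta(R)/2$ on that interval and hence $h_0^p \leq C R^{-\eta-1}\int_R^\infty |u^*_\eta|^p r^\eta\,dr$ as well. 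Combining, $\|a\|_{(e)}^p \leq C R^{-\eta-1} \int_R^\infty |u^*_\eta|^p r^\eta\,dr$.

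With $\|a\|_1 = h_0$, $\|a\|_p \leq 1$, and the weighted norm controlled, Lemma \ref{lemmakey2} delivers
\begin{equation*}
\frac{\exp\bigl(h_0^{p/(p-1)}/p\bigr)}{h_0^{1/(p-1)}} \leq C\,\|a\|_{(e)};
\end{equation*}
raising to the $p$-th power, substituting $h_0^{p/(p-1)} = \beta_{0,1}|u^*_\eta(R)|^{p/(p-1)}$, and multiplying by $R^{\eta+1}$ produces the claimed bound. The one genuinely delicate point I anticipate is the calibration of $c_0$: it must be chosen so that the $\ell^p$ norm of the increments is $\leq 1$ and simultaneously $c_0^{p/(p-1)}$ lands exactly on the critical $\beta_{0,1}$; the range $h_0 \leq 1$ where Lemma \ref{lemmakey2} does not apply directly is harmless because the exponential factor is then bounded and the asserted inequality is trivial. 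Beyond that, the argument is an honest simplification of the second-order proof, with \eqref{eqk1} short-circuiting the lengthy chain that led to \eqref{eqpt2} via Proposition \ref{prop22}.
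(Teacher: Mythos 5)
Your proposal is correct and is essentially the proof the paper intends: the authors explicitly state that Lemma~\ref{lemmakey1} follows by repeating the argument of Lemma~\ref{lemmakey} with \eqref{eqk1} in place of \eqref{eqpt2}, which is exactly what you carry out, with the right calibration $c_0=(\eta+1)^{(p-1)/p}$ so that $\|a\|_p\leq1$ and $c_0^{p/(p-1)}=\beta_{0,1}$. Your reading of the integral hypothesis as $\int_R^\infty|(u^*_\eta)'|^pr^{p-1}\,\mathrm dr\leq1$ (the statement appears to omit the prime) is the correct interpretation, consistent with the P\'olya--Szeg\"o step preceding the lemma.
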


\section{Higher Order Derivative Case}\label{sectionk}

In this section, we demonstrate Adams' inequality with exact growth for higher order derivatives (Theorem \ref{theoexactk}) by utilizing the second order derivative case (Theorem \ref{theo1}). Prior to proving the main theorem, we first establish three lemmas. The first lemma (Lemma \ref{lemmahardycons}) provides a bound on the norm $\|u\|_{L^p_{\alpha-p}}$ in terms of the norm $\|u'\|_{L^p_\alpha}$, while the second lemma (Lemma \ref{lemmahardyL}) estimates in terms of $\|L_{\theta,\gamma}u\|_{L^p_{\nu}}$. Finally, the third lemma (Lemma \ref{lemmaLjnorm}) employs the estimate from the second lemma to establish a bound on $\|L_{\theta,\gamma}u\|_{L^p_\alpha}$ based on $\|L^j_{\theta,\gamma}u\|_{L^p_\nu}$. Towards the end of the section, we prove the Corollaries \ref{cor1} and \ref{cor2}.

By employing these lemmas, we prove the main theorem, which establishes Adams' inequality with exact growth for higher order derivatives. We now proceed to present the three lemmas, followed by the proof of the main theorem.

\begin{lemma}\label{lemmahardycons}
Let $u\in AC_{\mathrm{loc}}(0,R)$ with $\lim_{r\to R}u(r)=0$. If $\alpha-p+1>0$, then
\begin{equation*}
\left(\int_0^R|u|^pr^{\alpha-p}\mathrm dr\right)^{\frac1p}\leq\dfrac{p}{\alpha-p+1}\left(\int_0^R|u'|^pr^\alpha\mathrm dr\right)^{\frac1p}.
\end{equation*}
\end{lemma}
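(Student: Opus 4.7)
The plan is to reduce to the standard integration-by-parts argument exploiting the identity
$(r^{\alpha-p+1})' = (\alpha-p+1)\,r^{\alpha-p}$,
followed by H\"older's inequality with conjugate exponents $p/(p-1)$ and $p$. First, I would work on the truncated interval $(\varepsilon, R)$, where $u$ is absolutely continuous and all integrals are finite. Using that $(|u|^p)' = p|u|^{p-2}u\,u'$ a.e., integration by parts gives
\begin{equation*}
(\alpha-p+1)\int_\varepsilon^R |u|^p r^{\alpha-p}\,\mathrm dr = |u(r)|^p r^{\alpha-p+1}\Big|_\varepsilon^R - p\int_\varepsilon^R |u|^{p-2}u\,u'\,r^{\alpha-p+1}\,\mathrm dr.
\end{equation*}
The boundary term at $R$ vanishes since $u(R)=0$, and the term at $\varepsilon$ equals $-|u(\varepsilon)|^p\varepsilon^{\alpha-p+1}\leq 0$, so it may simply be dropped (this is the crucial point where the sign hypothesis $\alpha-p+1>0$ enters, making the boundary contribution favorable rather than an obstacle).

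Next, I would split the weight as $r^{\alpha-p+1} = r^{(\alpha-p)(p-1)/p}\cdot r^{\alpha/p}$ (one checks the exponents sum correctly) and apply H\"older to the remaining integral:
\begin{equation*}
\int_\varepsilon^R |u|^{p-1}|u'|\,r^{\alpha-p+1}\,\mathrm dr \leq \left(\int_\varepsilon^R |u|^p r^{\alpha-p}\,\mathrm dr\right)^{(p-1)/p}\left(\int_\varepsilon^R |u'|^p r^{\alpha}\,\mathrm dr\right)^{1/p}.
\end{equation*}
Dividing through by $\bigl(\int_\varepsilon^R |u|^p r^{\alpha-p}\,\mathrm dr\bigr)^{(p-1)/p}$ yields the desired inequality on $(\varepsilon,R)$, and monotone convergence as $\varepsilon\to 0$ delivers the full statement.

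The only subtlety is justifying that the quantity by which we divide is finite and nonzero. If the right-hand side is infinite, the inequality is trivial; if $u\equiv 0$ a.e., it is also trivial. Otherwise, to avoid circularity, I would first apply the argument to the truncations $u_M := \mathrm{sgn}(u)\min(|u|,M)$, which satisfy $u_M(R)=0$, $|u_M'|\leq|u'|$ a.e., and $\int_\varepsilon^R |u_M|^p r^{\alpha-p}\,\mathrm dr \leq M^p\int_\varepsilon^R r^{\alpha-p}\,\mathrm dr < \infty$ since $\alpha-p+1>0$. The inequality for $u_M$ on $(\varepsilon,R)$ then transfers to $u$ on $(0,R)$ by two successive monotone convergences (first $\varepsilon\to 0$, then $M\to\infty$). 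The main (mild) obstacle is precisely this bookkeeping — the core calculation itself is routine, and the sign of the boundary contribution at $\varepsilon$ works in our favor thanks to the hypothesis $\alpha-p+1>0$.
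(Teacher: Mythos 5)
Your argument is correct and essentially complete for $R<\infty$, and it takes a genuinely different route from the paper. The paper does not integrate by parts at all: it invokes the general weighted Hardy inequality from Opic and Kufner's book (Theorem 6.2 there), which expresses the best constant as $\frac{p}{(p-1)^{(p-1)/p}}\sup_{0<r<R}\|s^{(\alpha-p)/p}\|_{L^p(0,r)}\,\|s^{-\alpha/p}\|_{L^{p/(p-1)}(r,R)}$, and then simply evaluates that supremum to be $p/(\alpha-p+1)$. Your proof reproduces the classical sharp-constant derivation (integrate against $(r^{\alpha-p+1})'$, drop the favorable boundary term at $\varepsilon$, H\"older, divide); the exponent bookkeeping in the H\"older step checks out, and the truncation $u_M$ correctly handles the division issue on a bounded interval. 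What the citation buys the paper is brevity and automatic coverage of all $0<R\leq\infty$; what your route buys is a self-contained argument that makes visible exactly where $\alpha-p+1>0$ is used.

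The one genuine gap is the case $R=\infty$, which is precisely the case in which the lemma is applied in Section 6 (the norms there are over $(0,\infty)$). Two of your steps break there. First, ``the boundary term at $R$ vanishes since $u(R)=0$'': for $R=\infty$ that term is $\lim_{R'\to\infty}|u(R')|^p(R')^{\alpha-p+1}$, and since $\alpha-p+1>0$ the weight blows up, so $u(r)\to0$ alone does not kill it --- and it sits on the wrong side of the identity (nonnegative, adding to the quantity you are bounding), so it cannot simply be discarded. The standard fix: from $u(r)=-\int_r^\infty u'$ and H\"older, $|u(r)|\leq\left(\frac{p-1}{\alpha-p+1}\right)^{(p-1)/p}r^{-(\alpha-p+1)/p}\|u'\|_{L^p_\alpha(r,\infty)}$, so $|u(r)|^pr^{\alpha-p+1}\to0$ whenever the right-hand side of the lemma is finite (and if it is infinite there is nothing to prove). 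Second, your finiteness bound $\int_\varepsilon^R|u_M|^pr^{\alpha-p}\,\mathrm dr\leq M^p\int_\varepsilon^Rr^{\alpha-p}\,\mathrm dr$ is useless when $R=\infty$, because $\alpha-p>-1$ makes $r^{\alpha-p}$ non-integrable at infinity; instead one works on $(\varepsilon,R')$ with $R'<\infty$, where the integral is finite by continuity, and lets $R'\to\infty$ using the decay estimate above to dispose of the right-hand boundary term. With these adjustments your proof goes through in the generality the paper needs.
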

\begin{proof}
According to \cite[Theorem 6.2]{MR1069756}, it suffices to verify that
\begin{equation*}
\dfrac{p}{\alpha-p+1}=\dfrac{p}{(p-1)^{\frac{p-1}{p}}}\sup_{0<r<R}\|r^{\frac{\alpha-p}p}\|_{L^p(0,r)}\|r^{-\frac{\alpha}p}\|_{L^{\frac{p}{p-1}}(r,R)}.
\end{equation*}
The proof follows because
\begin{equation*}
\|r^{\frac{\alpha-p}p}\|_{L^p(0,r)}\|r^{-\frac{\alpha}p}\|_{L^{\frac{p}{p-1}}(r,R)}=\dfrac{(p-1)^{\frac{p-1}p}}{\alpha-p+1}\left(1-R^{-\frac{\alpha-p+1}{p-1}}r^{(\alpha-p+1)\frac{p}{p-1}}\right)^{\frac{p}{p-1}}.
\end{equation*}
\end{proof}

\begin{lemma}\label{lemmahardyL}
Let $p>1$, $R\in(0,\infty)$, $\alpha>-1$, and the elliptic operator $L_{\theta,\gamma}u=-r^{-\theta}(r^\gamma u')'$ with $\gamma>1$ and $\alpha+1<p(\gamma-1)$. For any $u\in AC^1_{\mathrm{loc}}(0,R)$ such that $\lim_{r\to R}u(r)=\lim_{r\to0}r^\gamma u'(r)=0$, we have
\begin{equation*}
\left(\int_0^R|u|^pr^\alpha\mathrm dr\right)^{\frac1p}\leq C_{\gamma,\alpha,p}\left(\int_0^R|L_{\theta,\gamma}u|^pr^{p(\theta+2-\gamma)+\alpha}\mathrm dr\right)^{\frac1p},
\end{equation*}
where
\begin{equation*}
C_{\gamma,\alpha,p}=\dfrac{p^2}{(\alpha+1)[p(\gamma-1)-\alpha-1]}.
\end{equation*}
\end{lemma}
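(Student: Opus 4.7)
The plan is to carry out two successive Hardy-type estimates: first bound $\|u\|_{L^p_\alpha}$ by $\|u'\|_{L^p_{\alpha+p}}$ using Lemma~\ref{lemmahardycons}, and then bound $\|u'\|_{L^p_{\alpha+p}}$ by $\|L_{\theta,\gamma}u\|_{L^p_{p(\theta+2-\gamma)+\alpha}}$ by inverting $L_{\theta,\gamma}$ and applying a sharp weighted Hardy inequality for functions vanishing at the origin. The hypothesis $\alpha>-1$ powers the first step (since $(\alpha+p)-p+1=\alpha+1>0$), while $\alpha+1<p(\gamma-1)$ is exactly what is required for the second Hardy inequality to hold.

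First, I would apply Lemma~\ref{lemmahardycons} with the weight exponent $\alpha+p$ in place of $\alpha$, which is legitimate because $(\alpha+p)-p+1=\alpha+1>0$ and $\lim_{r\to R}u(r)=0$. This yields
\begin{equation*}
\left(\int_0^R|u|^pr^\alpha\,\mathrm dr\right)^{1/p}\leq\frac{p}{\alpha+1}\left(\int_0^R|u'|^pr^{\alpha+p}\,\mathrm dr\right)^{1/p}.
\end{equation*}
Next, using the identity $(r^\gamma u')'=-r^\theta L_{\theta,\gamma}u$ together with $\lim_{r\to0}r^\gamma u'(r)=0$, I would integrate from $0$ to $r$ to obtain
\begin{equation*}
u'(r)=-r^{-\gamma}\int_0^r s^\theta L_{\theta,\gamma}u(s)\,\mathrm ds,\qquad\mbox{so}\qquad |u'(r)|\leq r^{-\gamma}F(r),
\end{equation*}
where $F(r):=\int_0^r s^\theta|L_{\theta,\gamma}u(s)|\,\mathrm ds$ and $F(0)=0$.

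The heart of the proof is the sharp weighted Hardy inequality for functions vanishing at zero,
\begin{equation*}
\int_0^R F(r)^p r^{-s-1}\,\mathrm dr\leq\left(\frac{p}{s}\right)^p\int_0^R F'(r)^p r^{p-s-1}\,\mathrm dr,\qquad s>0,
\end{equation*}
applied with the choice $s=p(\gamma-1)-\alpha-1>0$. The left-hand exponent becomes $-s-1=\alpha+p-\gamma p$, matching the factor $r^{-\gamma p+\alpha+p}$ that appears after raising $|u'|\leq r^{-\gamma}F$ to the $p$-th power and multiplying by $r^{\alpha+p}$; the right-hand exponent becomes $p-s-1=p(2-\gamma)+\alpha$, and since $F'(r)^p=r^{p\theta}|L_{\theta,\gamma}u|^p$, the final weight reads $r^{p(\theta+2-\gamma)+\alpha}$, exactly as needed. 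Multiplying the two resulting Hardy constants gives
\begin{equation*}
\frac{p}{\alpha+1}\cdot\frac{p}{p(\gamma-1)-\alpha-1}=\frac{p^2}{(\alpha+1)[p(\gamma-1)-\alpha-1]}=C_{\gamma,\alpha,p}.
\end{equation*}

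The main obstacle is verifying the sharp Hardy inequality with the exact constant $(p/s)^p$, since Proposition~\ref{prop21JMBO} only guarantees the qualitative inequality without tracking the constant. I would dispatch this by a direct computation: writing $F(r)=\int_0^r f(s)s^b s^{-b}\,\mathrm ds$ and applying Hölder's inequality, then Fubini's theorem, optimizing over $b\in((p-1)/p-s/p,(p-1)/p)$ leads to $b_{\rm opt}=(p-1)(p-s)/p^2$ and produces the constant $(p/s)^p$ cleanly. All integrability conditions encountered along the way (convergence at $0$ and at $R$) are equivalent to $s>0$, i.e., the hypothesis $\alpha+1<p(\gamma-1)$, so the argument closes.
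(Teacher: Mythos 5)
Your proof is correct, and it takes a genuinely different route from the paper's. The paper substitutes $w(t)=u(Rt^{-1/(\gamma-1)})$, which converts both integrals into weighted integrals of $w$ and $w''$ over $(1,\infty)$, represents $w$ as the double integral $\int_1^t\int_z^\infty(-w'')\,\mathrm ds\,\mathrm dz$ (encoding both boundary conditions simultaneously), and then runs a single H\"older-plus-Fubini computation with an auxiliary exponent $a$ chosen so that the two resulting constants multiply to $C_{\gamma,\alpha,p}^p$. You instead factor the second-order estimate into two first-order Hardy inequalities: the boundary condition at $R$ feeds Lemma~\ref{lemmahardycons} (applied with weight $\alpha+p$, legitimate since $\alpha+1>0$), and the boundary condition at $0$ gives the representation $r^\gamma u'(r)=-\int_0^r s^\theta L_{\theta,\gamma}u\,\mathrm ds$, to which you apply the classical sharp Hardy inequality with constant $(p/s)^p$, $s=p(\gamma-1)-\alpha-1>0$. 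I checked the exponent bookkeeping ($-s-1=\alpha+p-\gamma p$ and $p-s-1=p(2-\gamma)+\alpha$, so $F'(r)^pr^{p-s-1}=|L_{\theta,\gamma}u|^pr^{p(\theta+2-\gamma)+\alpha}$) and the product of constants $\frac{p}{\alpha+1}\cdot\frac{p}{s}=C_{\gamma,\alpha,p}$; both are right, and your H\"older--Fubini derivation of the constant $(p/s)^p$ with $b=(p-1)(p-s)/p^2$ does give $p-1-bp=(p-1)s/p$ and $bp+s-p+1=s/p$, hence $(p/s)^{p-1}\cdot(p/s)=(p/s)^p$ as claimed. Your version is more modular — it reuses Lemma~\ref{lemmahardycons} and a textbook inequality, and makes transparent which hypothesis powers which step — while the paper's substitution has the minor advantage of handling the two boundary conditions in one stroke and normalizing the interval to $(1,\infty)$ independently of $R$ and $\gamma$. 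Both arguments yield the identical constant.
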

\begin{proof}
Let $w(t)=u(Rt^{-\frac{1}{\gamma-1}})$. Then
\begin{equation}\label{eqita3}
\int_0^R|u|^pr^\alpha\mathrm dr=\dfrac{R^{\alpha+1}}{\gamma-1}\int_1^\infty|w(t)|^pt^{-\frac{\alpha+\gamma}{\gamma-1}}\mathrm dt
\end{equation}
and
\begin{equation}\label{eqita4}
\int_0^R|L_{\theta,\gamma}u|^pr^{p(\theta+2-\gamma)+\alpha}\mathrm dr=(\gamma-1)^{2p-1}R^{\alpha+1}\int_1^\infty|w''(t)|^pt^{\frac{2p\gamma-\gamma-2p-\alpha}{\gamma-1}}\mathrm dt.
\end{equation}
Since $\lim_{r\to R}u(r)=\lim_{r\to0}r^\gamma u'(r)=0$, we have
\begin{equation*}
w(t)=\int_1^t\int_z^\infty-w''(s)\mathrm ds\mathrm dz.
\end{equation*}
Set
\begin{equation*}
a=\frac{p-1}{p^2}\left(2p-\frac{\alpha+1}{\gamma-1}\right).
\end{equation*}
From the conditions $\alpha>-1$, $\gamma>1$, $p>1$, and $\alpha+1<p(\gamma-1)$, we obtain that
\begin{equation}\label{eqita}
1<\dfrac{ap}{p-1}<2
\end{equation}
and
\begin{equation}\label{eqita2}
2p-1-\dfrac{\alpha+\gamma}{\gamma-1}<ap<2p-\dfrac{\alpha+\gamma}{\gamma-1}.
\end{equation}
Thus, by \eqref{eqita},
\begin{align*}
|w(t)|^p&=\left(\int_1^t\int_z^\infty-w''(s)\frac{s^a}{s^a}\mathrm ds\mathrm dz\right)^p\\
&\leq\left[\left(\int_1^t\int_z^\infty|w''(s)|^ps^{ap}\mathrm ds\mathrm dz\right)^{\frac1p}\left(\int_1^t\int_z^\infty s^{-\frac{ap}{p-1}}\mathrm ds\mathrm dz\right)^{\frac{p-1}p}\right]^p\\
&=\int_1^t\int_z^\infty|w''(s)|^ps^{ap}\mathrm ds\mathrm dz\left(\frac{p(\gamma-1)}{p(\gamma-1)-\alpha-1}\int_1^tz^{1-\frac{ap}{p-1}}\mathrm dz\right)^{p-1}\\
&\leq\left(\dfrac{p^2(\gamma-1)^2}{(\alpha+1)[p(\gamma-1)-\alpha-1]}\right)^{p-1}\int_1^t\int_z^\infty|w''(s)|^ps^{ap}\mathrm ds\mathrm dzt^{2p-2-ap},
\end{align*}
for all $t>0$. Using \eqref{eqita2}, we have
\begin{align*}
\int_1^\infty&|w(t)|^pt^{-\frac{\alpha+\gamma}{\gamma-1}}\mathrm dt\\
&\leq\left(\dfrac{p^2(\gamma-1)^2}{(\alpha+1)[p(\gamma-1)-\alpha-1]}\right)^{p-1}\int_1^\infty\int_1^t\int_z^\infty|w''(s)|^ps^{ap}t^{2p-2-\frac{\alpha+\gamma}{\gamma-1}-ap}\mathrm ds\mathrm dz\mathrm dt\\
&=\left(\dfrac{p^2(\gamma-1)^2}{(\alpha+1)[p(\gamma-1)-\alpha-1]}\right)^{p-1}\int_1^\infty|w''(s)|^ps^{ap}\int_1^s\int_z^\infty t^{2p-2-\frac{\alpha+\gamma}{\gamma-1}-ap}\mathrm dt\mathrm dz\mathrm ds\\
&\leq\left(\dfrac{p^2(\gamma-1)^2}{(\alpha+1)[p(\gamma-1)-\alpha-1]}\right)^{p}\int_1^\infty|w''(s)|^ps^{2p-\frac{\alpha+\gamma}{\gamma-1}}\mathrm ds.
\end{align*}
Therefore, using \eqref{eqita3} and \eqref{eqita4}, we obtain
\begin{equation*}
\int_0^R|u|^pr^\alpha\mathrm dr\leq\left(\dfrac{p^2}{(\alpha+1)[p(\gamma-1)-\alpha-1]}\right)^{p} \int_0^R|L_{\theta,\gamma}u|^pr^{p(\theta+2-\gamma)+\alpha}\mathrm dr.
\end{equation*}
\end{proof}

The following lemma was also obtained in \cite[Lemma 4.8]{arXiv:2302.02262} for the particular case $\gamma=\theta$.

\begin{lemma}\label{lemmaLjnorm}
    Let $p,\gamma>1,\alpha,\theta\in\mathbb R$, and $j\geq2$ be an integer such that
    \begin{equation}\label{eqLjnorm}
    \left\{\begin{array}{ll}
         -1<\alpha<p(\gamma-1)+(2-j)(\theta+2-\gamma)p-1,&\mbox{if }\theta+2\geq\gamma,  \\
         (2-j)(\theta+2-\gamma)p-1<\alpha<p(\gamma-1)-1,&\mbox{if }\theta+2<\gamma.
    \end{array}\right.
    \end{equation}
    Suppose $u\in AC_{\mathrm{loc}}^{2j-1}(0,R)$ with $\lim_{r\to R}L_{\theta,\gamma}^iu(r)=\lim_{r\to0}r^\gamma(L_{\theta,\gamma}^iu)'(r)=0$ for all $i=1,\ldots,j-1$. Then
    \begin{equation*}
    \|L_{\theta,\gamma}u\|_{L^p_{\alpha}}\leq\left(\prod_{i=1}^{j-1}C_i\right)\|L^j_{\theta,\gamma}u\|_{L^p_{\alpha+(j-1)(\theta+2-\gamma)p}},
    \end{equation*}
    where, for each $i=1,\ldots,j-1$,
    \begin{equation*}
    C_i=\dfrac{p^2}{\left[\alpha+(i-1)(\theta+2-\gamma)p+1\right]\left[p(\gamma-1)-\alpha-1-(i-1)(\theta+2-\gamma)p\right]}.
    \end{equation*}
\end{lemma}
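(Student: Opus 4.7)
The plan is to iterate Lemma \ref{lemmahardyL} exactly $j-1$ times, peeling off one power of $L_{\theta,\gamma}$ at each step. Set $\alpha_i := \alpha + (i-1)(\theta+2-\gamma)p$ for $i=1,\ldots,j$, so that $\alpha_1 = \alpha$ and $\alpha_j$ matches the exponent appearing on the right-hand side of the claim. The goal at step $i$ is to apply Lemma \ref{lemmahardyL} to the function $v := L^i_{\theta,\gamma}u$ with weight parameter $\alpha_i$ in place of $\alpha$, obtaining
\[
\|L^i_{\theta,\gamma}u\|_{L^p_{\alpha_i}} \leq C_i\,\|L^{i+1}_{\theta,\gamma}u\|_{L^p_{\alpha_{i+1}}},\qquad i=1,\ldots,j-1,
\]
where the constant from Lemma \ref{lemmahardyL} evaluated at $(\gamma,\alpha_i,p)$ is precisely the $C_i$ given in the statement. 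Multiplying these $j-1$ inequalities then telescopes directly to the claimed bound with constant $\prod_{i=1}^{j-1} C_i$.

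Two hypotheses of Lemma \ref{lemmahardyL} must be checked at each step. First, since $u \in AC^{2j-1}_{\mathrm{loc}}(0,R)$ and each application of $L_{\theta,\gamma}$ consumes two derivatives, one has $L^i_{\theta,\gamma}u \in AC^{2j-1-2i}_{\mathrm{loc}}(0,R) \subseteq AC^1_{\mathrm{loc}}(0,R)$ for all $i \leq j-1$; moreover the assumed boundary conditions $\lim_{r\to R}L^i_{\theta,\gamma}u(r) = 0$ and $\lim_{r\to 0} r^\gamma (L^i_{\theta,\gamma}u)'(r) = 0$ are supplied precisely for $i=1,\ldots,j-1$. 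Second, the weight constraints $\alpha_i > -1$ and $\alpha_i + 1 < p(\gamma-1)$ required by Lemma \ref{lemmahardyL} must hold for every $i$ in the chain. Verifying this last point is where hypothesis \eqref{eqLjnorm} is used, and is the only real content of the argument.

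The verification splits according to the sign of $\theta+2-\gamma$. If $\theta+2 \geq \gamma$ then $(\theta+2-\gamma)p \geq 0$, so the sequence $\alpha_i$ is non-decreasing; the binding constraints reduce to $\alpha_1 > -1$ together with $\alpha_{j-1}+1 < p(\gamma-1)$, which rearrange precisely to the first branch of \eqref{eqLjnorm}. If $\theta+2 < \gamma$ then $\alpha_i$ is strictly decreasing, and the binding constraints flip to $\alpha_{j-1} > -1$ together with $\alpha_1 + 1 < p(\gamma-1)$, which rearrange precisely to the second branch of \eqref{eqLjnorm}. In either case \eqref{eqLjnorm} is exactly the condition under which every application of Lemma \ref{lemmahardyL} in the chain is legitimate, and once this matching is established the conclusion is immediate from the telescoping product.
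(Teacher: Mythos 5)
Your proposal is correct and follows exactly the paper's own argument: the paper likewise observes that \eqref{eqLjnorm} is equivalent to $0<\alpha+(i-1)(\theta+2-\gamma)p+1<p(\gamma-1)$ for all $i=1,\ldots,j-1$ and then applies Lemma \ref{lemmahardyL} a total of $j-1$ times. Your case analysis on the sign of $\theta+2-\gamma$ and the check of the regularity and boundary hypotheses simply make explicit what the paper leaves implicit.
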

\begin{proof}
Note that \eqref{eqLjnorm} is equivalent to
\begin{equation*}
0<\alpha+(i-1)(\theta+2-\gamma)p+1<p(\gamma-1),\quad\forall i=1,\ldots,j-1.
\end{equation*}
Therefore, we can apply Lemma \ref{lemmahardyL} $j-1$ times to conclude the result.
\end{proof}

\begin{proof}[Proof of Theorem \ref{theoexactk}]
From Lemma \ref{lemmacompsuppdense} and Lemma \ref{lemmajaosn} (using $\alpha_i\geq\alpha_k-(k-i)p$ for all $i=1,\ldots,k$), we can assume $\mathrm{supp}(u)\subset[0,\infty)$ is compact and $u\in C^\infty(0,\infty)$. Let $j$ be such that $k=2j$ or $k=2j+1$ ($j=\lfloor\frac{k}2\rfloor$). Since $\theta+2>\gamma$ and $\theta>j(\theta+2-\gamma)-1$, we can apply Lemma \ref{lemmaLjnorm} to obtain
\begin{equation}\label{eqbessa}
\|L_{\theta,\gamma} u\|_{L^p_{2p-1+(\theta-\gamma)p}}\leq\left(\prod_{i=1}^{j-1}C_i\right)\|L_{\theta,\gamma}^{j}u\|_{L^p_{2jp-1+j(\theta-\gamma)p}},
\end{equation}
where
\begin{equation*}
C_i=\dfrac{1}{i(\theta+2-\gamma)[\gamma-1-i(\theta+2-\gamma)]},\quad\forall i=1,\ldots,j-1.
\end{equation*}
Now we slit the proof into two cases: $k=2j$ and $k=2j+1$.

\vspace{0.3cm}

\noindent \underline{Case $k=2j$:} We define $v:=\prod_{i=1}^{j-1}C_i^{-1}u$. From \eqref{eqbessa} and $\|\nabla^k_L u\|_{L^p_\nu}\leq1$, we have $\|L_{\theta,\gamma}v\|_{L^p_{2p-1}}\leq1$. Applying Theorem \ref{theo1} to $v$, we obtain
\begin{equation*}
\int_0^\infty\dfrac{\exp_p(\beta_{0,2}|v|^{\frac{p}{p-1}})}{(1+|v|)^{\frac{p}{p-1}}}r^\eta\mathrm dr\leq C\|v\|^p_{L^p_{\eta}}.
\end{equation*}
Thus,
\begin{equation*}
\int_0^\infty\dfrac{\exp_p(\beta_{0,2}\prod_{i=1}^{j-1}C_i^{-\frac{p}{p-1}}|u|^{\frac{p}{p-1}})}{(1+|u|)^{\frac{p}{p-1}}}r^\eta\mathrm dr\leq C\|u\|^p_{L^p_{\eta}}.
\end{equation*}
We are left with the task of proving
\begin{equation}\label{eqkg1}
(\eta+1)\left[(\gamma-1)(\theta+2-\gamma)^{k-2}\dfrac{\Gamma\left(\frac{k}2\right)\Gamma\left(\frac{\gamma-1}{\theta+2-\gamma}\right)}{\Gamma\left(\frac{\gamma-1}{\theta+2-\gamma}-\frac{k-2}{2}\right)}\right]^{\frac{p}{p-1}}=\beta_{0,2}\prod_{i=1}^{j-1}C_i^{-\frac{p}{p-1}}.
\end{equation}
Note that
\begin{align}
\prod_{i=1}^{j-1}C_i^{-1}&=(j-1)!(\theta+2-\gamma)^{j-1}\prod_{i=1}^{j-1}[\gamma-1-i(\theta+2-\gamma)]\nonumber\\
&=\Gamma(j)(\theta+2-\gamma)^{2j-2}\prod_{i=1}^{j-1}\left(\frac{\gamma-1}{\theta+2-\gamma}-i\right)\nonumber\\
&=(\theta+2-\gamma)^{2j-2}\dfrac{\Gamma(j)\Gamma\left(\frac{\gamma-1}{\theta+2-\gamma}\right)}{\Gamma\left(\frac{\gamma-1}{\theta+2-\gamma}-j+1\right)}.\label{eqkg2}
\end{align}
Using the expression of $\beta_{0,2}$ together with $j=\frac{k}{2}$ and \eqref{eqkg2}, we conclude \eqref{eqkg1} as desired.

\vspace{0.3cm}

\noindent \underline{Case $k=2j+1$:} We define $v:=j(\theta+2-\gamma)\prod_{i=1}^{j-1}C_i^{-1}u$. From \eqref{eqbessa}, Lemma \ref{lemmahardycons}, and $\|\nabla^k_L u\|_{L^p_\nu}\leq1$, we have
\begin{equation*}
\|L_{\theta,\gamma}v\|_{L^p_{2p-1}}\leq j(\theta+2-\gamma)\|L^j_{\theta,\gamma}u\|_{L^p_{2jp-1+j(\theta-\gamma)p}}\leq \|\left(L^j_{\theta,\gamma}u\right)'\|_{L^p_{\alpha_k+\lfloor\frac{k}{2}\rfloor(\theta-\gamma)p}}\leq1.
\end{equation*}
Applying Theorem \ref{theo1} to $v$, we obtain
\begin{equation*}
\int_0^\infty\dfrac{\exp_p(\beta_{0,2}|v|^{\frac{p}{p-1}})}{(1+|v|)^{\frac{p}{p-1}}}r^\eta\mathrm dr\leq C\|v\|^p_{L^p_{\eta}}.
\end{equation*}
Thus,
\begin{equation*}
\int_0^\infty\dfrac{\exp_p(\beta_{0,2}j^{\frac{p}{p-1}}(\theta+2-\gamma)^{\frac{p}{p-1}}\prod_{i=1}^{j-1}C_i^{-\frac{p}{p-1}}|u|^{\frac{p}{p-1}})}{(1+|u|)^{\frac{p}{p-1}}}r^\eta\mathrm dr\leq C\|u\|^p_{L^p_{\eta}}.
\end{equation*}
Using the expression of $\beta_{0,2}$, we are left with the task of proving
\begin{equation}\label{eqkg3}
(\theta+2-\gamma)^{k-2}\dfrac{\Gamma\left(\frac{k+1}2\right)\Gamma\left(\frac{\gamma-1}{\theta+2-\gamma}\right)}{\Gamma\left(\frac{\gamma-1}{\theta+2-\gamma}-\frac{k-3}{2}\right)}=j(\theta+2-\gamma)\prod_{i=1}^{j-1}C_i^{-1}.
\end{equation}
By \eqref{eqkg2} and $j=\frac{k-1}2$, we obtain
\begin{align*}
j(\theta+2-\gamma)\prod_{i=1}^{j-1}C_i^{-1}&=(\theta+2-\gamma)^{2j-1}\dfrac{\Gamma(j+1)\Gamma\left(\frac{\gamma-1}{\theta+2-\gamma}\right)}{\Gamma\left(\frac{\gamma-1}{\theta+2-\gamma}-j+1\right)}\\
&=(\theta+2-\gamma)^{k-2}\dfrac{\Gamma\left(\frac{k+1}2\right)\Gamma\left(\frac{\gamma-1}{\theta+2-\gamma}\right)}{\Gamma\left(\frac{\gamma-1}{\theta+2-\gamma}-\frac{k-3}{2}\right)},
\end{align*}
which concludes \eqref{eqkg3} and the proof of Theorem \ref{theoexactk}.
\end{proof}

\begin{proof}[Proof of Corollary \ref{cor1}]
The proof directly follows from the inequality:
\begin{equation}\label{erqgnal}
\exp(\beta t^{\frac{p}{p-1}})\leq C_\beta\dfrac{\exp_p(\beta_{0,k}t^{\frac{p}{p-1}})}{(1+t)^{\frac{p}{p-1}}}\quad\forall \beta\in[0,\beta_{0,k}), t\geq0.
\end{equation}
To prove the inequality \eqref{erqgnal}, we define $\phi(t)=(1+t)^{\frac{p}{p-1}}\exp(\beta t^{\frac{p}{p-1}})/\exp_p(\beta_{0,k}t^{\frac{p}{p-1}})$ for $t>0$ and use the fact that $\phi(t)\overset{t\to0}\longrightarrow\beta/\beta_{0,k}$ and $\phi(t)\overset{t\to\infty}{\longrightarrow}0$ (see \cite[Lemma 3.1]{arXiv:2306.00194}).
\end{proof}

\begin{proof}[Proof of Corollary \ref{cor2}]
Let $u\in X^{k,p}_\infty\backslash\{0\}$ be such that $\|\nabla^k_Lu\|^p_{L^p_\nu}+\tau\|u\|^p_{L^p_\eta}\leq1$. Choose $\theta\in(0,1)$ with $\|u\|^p_{L^p_\eta}=\theta$ and $\|\nabla^k_Lu\|^p_{L^p_\nu}\leq1-\tau\theta$. In particular, $\theta\leq \frac{1}{\tau}$.

\vspace{0.3cm}

\noindent \underline{Case $\theta\geq\frac{p-1}{\tau p}$:} Define $v=p^{\frac1p}u$. Then $\|\nabla^k_Lv\|^p_{L^p_\nu}=p\|\nabla^k_Lu\|^p_{L^p_\nu}\leq p(1-\tau \theta)\leq1$. By Corollary \ref{cor1}, we have
\begin{equation*}
\int_0^\infty\exp_p\left(p^{\frac1{p-1}}\beta|u|^{\frac{p}{p-1}}\right)r^\eta\mathrm dr\leq C_\beta,\quad\forall \beta\in(0,\beta_{0,k}).
\end{equation*}
Since $0<\beta_{0,k}p^{-\frac{1}{p-1}}<\beta_{0,k}$, we can choose $\beta=\beta_{0,k}p^{-\frac{1}{p-1}}$ in the above inequality to conclude this case.

\vspace{0.3cm}

\noindent \underline{Case $\theta<\frac{p-1}{\tau p}$:} Let $A:=\{r>0\colon|u(r)|\geq1\}$. Since $|u(r)|<1$ for $r\in(0,\infty)\backslash A$ and $\exp_p(t)\leq Ct^{p-1}$ for all $t\in[0,\beta_{0,k}]$, we have
\begin{equation*}
\int_{(0,\infty)\backslash A}\exp_p\left(\beta_{0,k}|u|^{\frac{p}{p-1}}\right)r^\eta\mathrm dr\leq C\beta_{0,k}^{p-1}\int_{(0,\infty)\backslash A}|u|^pr^\eta\mathrm dr\leq C\beta_{0,k}^{p-1}.
\end{equation*}
We now are left with the task to prove that
\begin{equation}\label{eqc2}
\int_A\exp_p\left(\beta_{0,k}|u|^{\frac{p}{p-1}}\right)r^\eta\mathrm dr\leq C.
\end{equation}
Note that $\left[\exp_p(t)\right]^q\leq\exp_p(qt)$ for all $t\geq0$ and $q\geq1$. This follows from the fact that $t\mapsto\left[\exp_p(t)\right]^q/\exp_p(qt)$ is a nonincreasing function that converges to 1 as $t$ goes to infinity (see \cite[Lemma 3.1]{arXiv:2306.00194}). Using H\"older's inequality with $q=\frac{p-1}{p-1-\tau\theta}>1$, we obtain
\begin{align}
\int_A\exp_p\left(\beta_{0,k}|u|^{\frac{p}{p-1}}\right)r^\eta\mathrm dr&\leq\!\left(\!\int_A\dfrac{\exp_p\left(q\beta_{0,k}|u|^{\frac{p}{p-1}}\right)}{(1+|u|)^{\frac{p}{p-1}}}r^\eta\mathrm dr\!\right)^{\frac1q}\!\left(\!\int_A\!(1+|u|)^{\frac{p}{(p-1)(q-1)}}r^\eta\mathrm dr\!\right)^{\frac{q-1}q}\nonumber\\
&\leq2^{\frac{p}{(p-1)q}}\left(q\int_A\dfrac{\exp_p\left(\beta_{0,k}|v|^{\frac{p}{p-1}}\right)}{(1+|v|)^{\frac{p}{p-1}}}r^\eta\mathrm dr\right)^{\frac1q}\left\||u|^{\frac{p}{p-1}}\right\|^{\frac{1}q}_{L^{\frac{1}{q-1}}_\eta}\label{eqc4},
\end{align}
where $v=q^{\frac{p-1}p}u$. From Corollary \ref{cor1}, we have
\begin{equation}\label{eqc3}
\left\||u|^{\frac{p}{p-1}}\right\|_{L^s_\eta}\leq Cs\|u\|^{\frac{p}s}_{L^p_\eta},\quad\forall u\in X^{k,p}_\infty\mbox{ with }\|\nabla^k_Lu\|_{L^p_\nu}\leq1,
\end{equation}
for all $s$ in the form of $p-1+n$ with $n\in\mathbb N\cup\{0\}$. Using interpolation of weighted Lebesgue spaces, we can extend this inequality for all $s\geq p-1$. Then, by \eqref{eqc3},
\begin{equation}\label{eqc5}
\left\||u|^{\frac{p}{p-1}}\right\|^{\frac1q}_{L^{\frac{1}{q-1}}_\eta}\leq C\left(\dfrac{1}{q-1}\right)^{\frac1q}\|u\|_{L^p_\eta}^{\frac{p(q-1)}q}.
\end{equation}
From the definition of $q$, $\theta\leq\frac{1}\tau$, $p\geq2$, and $(1-x)^s\leq1-sx$ for all $s,x\in[0,1]$, we have
\begin{equation*}
\|\nabla^k_Lv\|^p_{L^p_\eta}\leq q^{p-1}(1-\tau\theta)=\left[\dfrac{(1-\tau\theta)^{\frac{1}{p-1}}}{1-\frac{\tau\theta}{p-1}}\right]^{p-1}\leq1.
\end{equation*}
Thus we can apply Theorem \ref{theoexactk} to $v\in X^{k,p}_\infty$ to obtain
\begin{equation*}
\int_0^\infty\dfrac{\exp_p\left(q\beta_{0,k}|v|^{\frac{p}{p-1}}\right)}{(1+|v|)^{\frac{p}{p-1}}}r^\eta\mathrm dr\leq Cq^{p-1}\|u\|^p_{L^p_\eta}.
\end{equation*}
Using this along with \eqref{eqc4} and \eqref{eqc5}, we obtain
\begin{align*}
\int_A\exp_p\left(\beta_{0,k}|u|^{\frac{p}{p-1}}\right)r^\eta\mathrm dr&\leq Cq^{\frac{p}q}\|u\|^{\frac{p}{q}}_{L^p_\eta}\left(\dfrac{1}{q-1}\right)^{\frac1q}\|u\|_{L^p_\eta}^{\frac{p(q-1)}q}=Cq^{\frac{p-1}q}\left(\frac{q}{q-1}\right)^{\frac1q}\theta^p\\
&=C\tau^{\frac{\tau\theta}{p-1}-1}(p-1)^{\frac{p-1-\tau\theta}{p-1}}\dfrac{\theta^{p-1+\frac{\tau\theta}{p-1}}}{\left(1-\frac{\tau\theta}{p-1}\right)^{p-1-\tau\theta}}.
\end{align*}
This concludes \eqref{eqc2} and the proof because the right term is bounded on $0<\theta<\frac{p-1}{\tau p}$.
\end{proof}

\section{Proof of the Supercritical Case}\label{sectionsuperc}

In this final section, we focus on the behavior of the supremum in two scenarios: when $\beta>\beta_{0,k}$ and when $\beta=\beta_{0,k}$ while the power $q$ in the denominator satisfies $q<\frac{p}{p-1}$. More specifically, we provide the proof of Theorem \ref{theosuperc}, establishing the sharpness of the constant $\beta_{0,k}$ and the exponent $\frac{p}{p-1}$.

\begin{proof}[Proof of Theorem \ref{theosuperc}]
The construction of the sequence follows a similar argument to that used by D. R. Adams in \cite{MR0960950}. Let $\phi\in C^\infty[0,1]$ be such that
\begin{equation*}
\left\{\begin{array}{ll}
\phi(0)=\phi'(0)=\cdots=\phi^{(k+1)}(0)=0,\ \phi'\geq0,\\
\phi(1)=\phi'(1)=1,\ \phi''(1)=\cdots\phi^{(k-1)}(1)=0.
\end{array}\right.
\end{equation*}
Consider $0<\varepsilon<1/2$ and define
\begin{equation*}
H(t)=\left\{\begin{array}{llll}
     \varepsilon\phi\left(\dfrac{t}{\varepsilon}\right),&\mbox{if }0<t\leq\varepsilon  \\
     t,&\mbox{if }\varepsilon<t\leq1-\varepsilon,\\
     1-\varepsilon\phi\left(\dfrac{1-t}{\varepsilon}\right),&\mbox{if }1-\varepsilon<t\leq1,\\
     1,&\mbox{if }t>1.
\end{array}\right.
\end{equation*}
Let $n\in\mathbb N$, $R\in(0,\infty)$ fixed, and define the sequence $(\psi_{n,\varepsilon})_n$ as follows
\begin{equation*}
\psi_{n,\varepsilon}(r)=H\left(\dfrac{\log\frac{R}{r}}{\log n}\right),\quad 0<r<R.
\end{equation*}
It is not hard to see that $\psi_{n,\varepsilon}\in X^{k,p}_{0,R}\subset X^{k,p}_{\infty}$. By induction on $m\in\mathbb N$ we have
\begin{equation}\label{eqsc0}
L^m_{\theta,\gamma}\psi_{n,\varepsilon}(r)=r^{m(\gamma-2-\theta)}\sum_{i=1}^{2m}\dfrac{c_{im}}{(\log n)^i}H^{(i)}\left(\dfrac{\log\frac{R}{r}}{\log n}\right),
\end{equation}
where
\begin{equation*}
\left\{\begin{array}{l}
c_{11}=\gamma-1,\ c_{21}=-1;\\
c_{1m+1}=-m(\gamma-2-\theta)(\gamma-1+m(\gamma-2-\theta))c_{1m};  \\
     c_{2m+1}=-m(\gamma-2-\theta)(\gamma-1+m(\gamma-2-\theta))c_{2m}+(\gamma-1+2m(\gamma-2-\theta))c_{1m};\\
     c_{2m+1m+1}=(\gamma-1+2m(\gamma-2-\theta))c_{2mm}-c_{2m-1m}\\
     c_{2m+2m+1}=-c_{2mm}.
\end{array}\right.
\end{equation*}
and, for each $i=3,\ldots,2m$,
\begin{equation*}
c_{im+1}=-m(\gamma-2-\theta)(\gamma-1+m(\gamma-2-\theta))c_{im}+(\gamma-1+2m(\gamma-2-\theta))c_{i-1m}-c_{i-2m}.
\end{equation*}
From $c_{11}=\gamma-1$ and $c_{1m+1}=-m(\gamma-2-\theta)(\gamma-1+m(\gamma-2-\theta))c_{1m}$ we obtain
\begin{align}
c_{1m}&=(\gamma-1)\prod_{i=1}^{m-1}\left[-i(\gamma-2-\theta)(\gamma-1+i(\gamma-2-\theta))\right]\nonumber\\
&=(\gamma-1)(\theta+2-\gamma)^{m-1}(m-1)!\prod_{i=1}^{m-1}(\theta+2-\gamma)\left(\dfrac{\gamma-1}{\theta+2-\gamma}-i\right)\nonumber\\
&=(\gamma-1)(\theta+2-\gamma)^{2m-2}\Gamma(m)\dfrac{\Gamma\left(\frac{\gamma-1}{\theta+2-\gamma}\right)}{\Gamma\left(\frac{\gamma-1}{\theta+2-\gamma}-m+1\right)},\quad\forall m\in\mathbb N.\label{eqsc1}
\end{align}

We make the following claim:
\begin{equation}\label{eq44}
\|\psi_{n,\varepsilon}\|_{L^p_{\eta}}^p=O\left((\log n)^{-p}\right).
\end{equation}
To prove this, we begin by introducing a change of variables $t=\log\frac{R}{r}/\log n$, which gives us:
\begin{align}
\|\psi_{n,\varepsilon}&\|_{L^p_{\eta}}^p=\int_0^R\left|H\left(\dfrac{\log\frac{R}{r}}{\log n}\right)\right|^pr^{\eta}\mathrm dr\nonumber\\
&=R^{\eta+1}\log n\Bigg[\int_0^\varepsilon\left|\varepsilon\phi\left(\dfrac{t}\varepsilon\right)\right|^pn^{-(\eta+1)t}\mathrm dt+\int_\varepsilon^{1-\varepsilon}t^pn^{-(\eta+1)t}\mathrm dt\nonumber\\
&\quad+\int_{1-\varepsilon}^1\left|1-\varepsilon\phi\left(\dfrac{1-t}{\varepsilon}\right)\right|^pn^{-(\eta+1)t}\mathrm dt+\int_1^\infty n^{-(\eta+1)t}\mathrm dt\Bigg]\nonumber\\
&\leq R^{\eta+1}\log n\left[\int_0^\varepsilon\left|\varepsilon\phi\left(\dfrac{t}\varepsilon\right)\right|^pn^{-(\eta+1)t}\mathrm dt+\int_\varepsilon^\infty n^{-(\eta+1)t}\mathrm dt\right]\nonumber\\
&=R^{\eta+1}\log n\int_0^\varepsilon\left|\varepsilon\phi\left(\dfrac{t}\varepsilon\right)\right|^pn^{-(\eta+1)t}\mathrm dt+O\left((\log n)^{-p}\right).\label{eq451}
\end{align}
Let $\nu\in (\frac{1}{k+1},1)$ fixed. Since $\phi\left((\log n)^{-\nu}\right)\leq (\log n)^{-\nu (k+1)}$ for large $n$, we can estimate the integral (for large $n$) as follows
\begin{align}
\int_0^\varepsilon\left|\varepsilon\phi\left(\dfrac{t}\varepsilon\right)\right|^pn^{-(\eta+1)t}\mathrm dt&\leq\varepsilon^p\int_0^{\varepsilon(\log n)^{-\nu}}\left|\phi\left((\log n)^{-\nu}\right)\right|^pn^{-(\eta+1)t}\mathrm dt\nonumber\\
&\qquad+\varepsilon^p\int_{\varepsilon(\log n)^{-\nu}}^\varepsilon n^{-(\eta+1)t}\mathrm dt\nonumber\\
&\leq \varepsilon^p(\log n)^{-\nu p(k+1)}\dfrac1{(\eta+1)\log n}\nonumber\\
&\qquad+\varepsilon^p\dfrac{1}{(\eta+1)e^{\varepsilon(\eta+1)(\log n)^{1-\nu}}\log n}\nonumber\\
&=O\left((\log n)^{-p-1}\right).\label{eq46}
\end{align}
Using \eqref{eq451} along with \eqref{eq46}, we conclude \eqref{eq44}.

\vspace{0.3cm}

\noindent \underline{Case $k=2j$:} By equation \eqref{eqsc0}, we have
\begin{equation*}
    L^j_{\theta,\gamma}\psi_{n,\varepsilon}(r)=r^{j(\gamma-2-\theta)}\frac{c_{1j}}{\log n}H'\left(\dfrac{\log\frac{R}{r}}{\log n}\right)+r^{j(\gamma-2-\theta)}O((\log n)^{-2}).
\end{equation*}
Then,
\begin{align}
\|\nabla_L^k\psi_{n,\varepsilon}\|^p_{L^p_{\alpha_k+j(\theta-\gamma)p}}&=\left|\dfrac{c_{1j}}{\log n}\right|^p\int_{\frac{R}n}^R\left|H'\left(\dfrac{\log\frac{R}{r}}{\log n}\right)+O\left((\log n)^{-1}\right)\right|^pr^{\alpha_k+j(\theta-\gamma)p+j(\gamma-2-\theta)p}\mathrm dr\nonumber\\
&\leq\left|\dfrac{c_{1j}}{\log n}\right|^p\Bigg[\int_{\frac{R}{n}}^{Rn^{\varepsilon-1}}\left|\|\phi'\|_\infty+O\left((\log n)^{-1}\right)\right|^pr^{-1}\mathrm dr\nonumber\\
&\quad+\int_{Rn^{\varepsilon-1}}^{Rn^{-\varepsilon}}r^{-1}\mathrm dr+\int_{Rn^{-\varepsilon}}^R\left|\|\phi'\|_{\infty}+O\left((\log n)^{-1}\right)\right|^pr^{-1}\Bigg]\nonumber\\
&\leq|c_{1j}|^p(\log n)^{1-p}\left[1+2\varepsilon\left|\|\phi'\|_\infty+O\left((\log n)^{-1}\right)\right|^p\right]\nonumber\\
&=|c_{1j}|^p(\log n)^{1-p}A_{\varepsilon,n},\label{eqsc2}
\end{align}
where $A_{\varepsilon,n}:=1+2\varepsilon\left|\|\phi'\|_\infty+O\left((\log n)^{-1}\right)\right|^p$. Set
\begin{equation*}
u_{n,\varepsilon}(r)=\dfrac{\psi_{n,\varepsilon}(r)}{|c_{1j}|(\log n)^{\frac{1-p}{p}}A_{\varepsilon,n}^{\frac{1}{p}}}.
\end{equation*}
Using \eqref{eqsc2} and \eqref{eq44}, we have $\|\nabla^k_Lu_{n,\varepsilon}\|_{L^p_{\alpha_k+j(\theta-\gamma)p}}\leq1$ and
\begin{equation}\label{gja}
\|u_{n,\varepsilon}\|^p_{L^p_\eta}=O((\log n)^{-1}).
\end{equation}
Given $q\geq0$ and $\beta\geq\beta_{0,k}$, equation \eqref{eqsc2} guarantees
\begin{align}
\int_0^\infty&\dfrac{\exp_p(\beta|u_{n,\varepsilon}|^{\frac{p}{p-1}})}{(1+|u_{n,\varepsilon}|)^{q}}r^\eta\mathrm dr\geq\int_0^{\frac{R}{n}}\dfrac{\exp_p(\beta|u_{n,\varepsilon}|^{\frac{p}{p-1}})}{(1+|u_{n,\varepsilon}|)^{q}}r^\eta\mathrm dr\nonumber\\
&\geq C\exp\left(\beta|c_{1j}|^{-\frac{p}{p-1}}A_{\varepsilon,n}^{\frac{1}{1-p}}\log n\right)\left(\log n\right)^{-q\frac{p-1}p}\frac{R^{\eta+1}}{n^{\eta+1}}\nonumber\\
&=C\exp\left[(\eta+1)\log n\left(\frac{\beta|c_{1j}|^{-\frac{p}{p-1}}A_{\varepsilon,n}^{\frac{1}{1-p}}}{\eta+1}-1\right)\right]\left(\log n\right)^{-q\frac{p-1}p}.\label{eq444}
\end{align}
From \eqref{eq444}, we can conclude \eqref{eqsuc2} because if assuming $\beta>(\eta+1)|c_{1j}|^{\frac{p}{p-1}}=\beta_{0,k}$, we can choose a small $\varepsilon>0$ such that $\beta|c_{1j}|^{-\frac{p}{p-1}}A_{\varepsilon,n}^{\frac{1}{1-p}}>\eta+1$. Now, let's assume $\beta=\beta_{0,k}$ and $q<p/(p-1)$ to prove \eqref{eqsuc1}. By using \eqref{gja} and \eqref{eq444}, we have
\begin{align*}
\dfrac{1}{\|u_{n,\varepsilon}\|_{L^p_{\eta}}^p}\int_0^\infty\dfrac{\exp_p(\beta_{0,k}|u_{n,\varepsilon}|^{\frac{p}{p-1}})}{(1+|u_{n,\varepsilon}|)^{q}}r^\eta\mathrm dr&\geq C(\log n)^{1-q\frac{p-1}p}\overset{n\to\infty}\longrightarrow\infty.
\end{align*}

\vspace{0.3cm}

\noindent \underline{Case $k=2j+1$:} This case is analogous to the previous case; however, by referring to equation \eqref{eqsc0}, we obtain
\begin{equation*}
    (L^j_{\theta,\gamma}\psi_{n,\varepsilon})'(r)=r^{j(\gamma-2-\theta)-1}\frac{j(\gamma-2-\theta)c_{1j}}{\log n}H'\left(\dfrac{\log\frac{R}{r}}{\log n}\right)+r^{j(\gamma-2-\theta)-1}O((\log n)^{-2}),
\end{equation*}
where we are denoting $j(\gamma-2-\theta)c_{1j}=1$ when $j=0$ to include the first derivative case in this theorem. Then,
\begin{align}
\|\nabla_L^k\psi_{n,\varepsilon}\|^p_{L^p_{\alpha_k+j(\theta-\gamma)p}}&=\left|\dfrac{j(\gamma-2-\theta)c_{1j}}{\log n}\right|^p\int_{\frac{R}n}^R\left|H'\left(\dfrac{\log\frac{R}{r}}{\log n}\right)+O\left((\log n)^{-1}\right)\right|^pr^{\alpha_k-2jp-p}\mathrm dr\nonumber\\
&\leq\left|\dfrac{j(\gamma-2-\theta)c_{1j}}{\log n}\right|^p\Bigg[\int_{\frac{R}{n}}^{Rn^{\varepsilon-1}}\left|\|\phi'\|_\infty+O\left((\log n)^{-1}\right)\right|^pr^{-1}\mathrm dr\nonumber\\
&\quad+\int_{Rn^{\varepsilon-1}}^{Rn^{-\varepsilon}}r^{-1}\mathrm dr+\int_{Rn^{-\varepsilon}}^R\left|\|\phi'\|_{\infty}+O\left((\log n)^{-1}\right)\right|^pr^{-1}\Bigg]\nonumber\\
&\leq|j(\gamma-2-\theta)c_{1j}|^p(\log n)^{1-p}\left[1+2\varepsilon\left|\|\phi'\|_\infty+O\left((\log n)^{-1}\right)\right|^p\right]\nonumber\\
&=|j(\gamma-2-\theta)c_{1j}|^p(\log n)^{1-p}A_{\varepsilon,n},\label{eqsc22}
\end{align}
where $A_{\varepsilon,n}:=1+2\varepsilon\left|\|\phi'\|_\infty+O\left((\log n)^{-1}\right)\right|^p$. Set
\begin{equation*}
u_{n,\varepsilon}(r)=\dfrac{\psi_{n,\varepsilon}(r)}{|j(\gamma-2-\theta)c_{1j}|(\log n)^{\frac{1-p}{p}}A_{\varepsilon,n}^{\frac{1}{p}}}.
\end{equation*}
Using \eqref{eqsc22} and \eqref{eq44}, we have $\|\nabla^k_Lu_{n,\varepsilon}\|_{L^p_{\alpha_k+j(\theta-\gamma)p}}\leq1$ and
\begin{equation}\label{gja2}
\|u_{n,\varepsilon}\|^p_{L^p_\eta}=O((\log n)^{-1}).
\end{equation}
Given $q\geq0$ and $\beta\geq\beta_{0,k}$, \eqref{eqsc22} guarantees
\begin{align}
\int_0^\infty&\dfrac{\exp_p(\beta|u_{n,\varepsilon}|^{\frac{p}{p-1}})}{(1+|u_{n,\varepsilon}|)^{q}}r^\eta\mathrm dr\geq\int_0^{\frac{R}{n}}\dfrac{\exp_p(\beta|u_{n,\varepsilon}|^{\frac{p}{p-1}})}{(1+|u_{n,\varepsilon}|)^{q}}r^\eta\mathrm dr\nonumber\\
&\geq C\exp\left(\beta|j(\gamma-2-\theta)c_{1j}|^{-\frac{p}{p-1}}A_{\varepsilon,n}^{\frac{1}{1-p}}\log n\right)\left(\log n\right)^{-q\frac{p-1}p}\frac{R^{\eta+1}}{n^{\eta+1}}\nonumber\\
&=C\exp\left[(\eta+1)\log n\left(\frac{\beta|j(\gamma-2-\theta)c_{1j}|^{-\frac{p}{p-1}}A_{\varepsilon,n}^{\frac{1}{1-p}}}{\eta+1}-1\right)\right]\left(\log n\right)^{-q\frac{p-1}p}.\label{eq4442}
\end{align}
From \eqref{eq4442}, we conclude \eqref{eqsuc2} because assuming $\beta>(\eta+1)|j(\gamma-2-\theta)c_{1j}|^{\frac{p}{p-1}}=\beta_{0,k}$, we can take $\varepsilon>0$ small with $\beta|j(\gamma-2-\theta)c_{1j}|^{-\frac{p}{p-1}}A_{\varepsilon,n}^{\frac{1}{1-p}}>\eta+1$. To establish \eqref{eqsuc1}, we will assume $\beta=\beta_{0,k}$ and $q<p/(p-1)$. Utilizing \eqref{gja2} and \eqref{eq4442}, we obtain
\begin{align*}
\dfrac{1}{\|u_{n,\varepsilon}\|_{L^p_{\eta}}^p}\int_0^\infty\dfrac{\exp_p(\beta_{0,k}|u_{n,\varepsilon}|^{\frac{p}{p-1}})}{(1+|u_{n,\varepsilon}|)^{q}}r^\eta\mathrm dr&\geq C(\log n)^{1-q\frac{p-1}p}\overset{n\to\infty}\longrightarrow\infty,
\end{align*}
which completes the proof of \eqref{eqsuc1} and therefore the proof of Theorem \ref{theosuperc}.
\end{proof}

\section{Application on an ODE}\label{secapp}

In this section, we work on the following problem
\begin{equation}\label{eqproblem}
    \left\{\begin{array}{l}
\Delta^2_{\theta}u=f(r,u)r^{\eta-\theta}\mbox{ in }(0,\infty),\\
u'(0)=(\Delta_\theta u)'(0)=0,
    \end{array}\right.
\end{equation}
where $\eta>-1$, $\theta=\gamma=(\eta+3)/2$, $\Delta_\theta:=L_{\theta,\theta}$, and $f\colon[0,\infty)\times\mathbb R\to\mathbb R$ continuous satisfying the growth condition
\begin{equation}\label{hipf}
|f(r,t)|\leq C\left(e^{\beta t^2}-1\right),\quad\mbox{for some }\beta>0.
\end{equation}
Specifically, we prove the existence of a weak solution for the problem \eqref{eqprobleml} and develop the regularity theory for the more general problem \eqref{eqproblem}.

Throughout this section, we consider the weighted Sobolev space $X^{2,2}_\infty(\alpha_0,\alpha_1,\alpha_2)$, where $\alpha_0\geq-1$, $\alpha_1\geq1$, and $\alpha_2=3$. Based on the following identity:
\begin{equation*}
\int_0^\infty \Delta_{\theta}uvr^\theta\mathrm dr=\int_0^\infty u'v'r^{\theta}\mathrm dr=\int_0^\infty u\Delta_{\theta}vr^\theta\mathrm dr,
\end{equation*}
for all $v\in X^{2,2}_\infty$ with compact support on $[0,\infty)$, we define $u\in X^{2,2}_\infty(\alpha_0,\alpha_1,\alpha_2)$ to be a weak solution of \eqref{eqproblem} if it satisfies
\begin{equation*}
\int_0^\infty \Delta_{\theta}u\Delta_{\theta}vr^{\theta}\mathrm dr=\int_0^\infty f(r,u)vr^\eta\mathrm dr,\quad\forall v\in X^{2,2}_\infty.
\end{equation*}
By $\theta=\gamma=(\eta+3)/2$ and Lemma \ref{lemmajaosn}, we can demonstrate that the left term is well-defined. On the other hand, the right term is well-defined by applying the inequalities $[\exp_2(t)]^p\leq\exp_2(pt)$, along with \eqref{hipf} and the following lemma:
\begin{lemma}\label{proptobeproved}
Consider $X^{k,p}_\infty(\alpha_0,\ldots,\alpha_k)$ with $p>1$ and $\alpha_i\geq\alpha_k-(k-i)p$ for all $i=0,\ldots,k-1$. Let $\theta\in[-1,\alpha_0]$. If $\alpha_k-kp+1=0$, then, for each $u\in X^{k,p}_\infty$,
\begin{equation*}
\int_0^\infty\exp_p\left(\mu|u|^{\frac{p}{p-1}}\right)r^\theta\mathrm dr<\infty,\quad\forall \mu\geq0.
\end{equation*}
\end{lemma}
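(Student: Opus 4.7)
The plan is to combine the density statement in Lemma~\ref{lemmacompsuppdense} with the subcritical Adams inequality in Corollary~\ref{cor1}. For a prescribed $\varepsilon>0$, decompose $u=u_0+v$ with $u_0\in\Upsilon$ smooth and compactly supported in some $[0,R_0]$, hence $|u_0|\leq M$, and $\|v\|_{X^{k,p}_\infty}<\varepsilon$. Apply \eqref{eqineqepsilon} in the form $(a+b)^{p/(p-1)}\leq(1+\delta)a^{p/(p-1)}+C_\delta b^{p/(p-1)}$ together with the pointwise bound $\exp_p(x+y)\leq \exp_p(2x)+\exp_p(2y)$ for $x,y\geq 0$ (valid because $\exp_p$ is nondecreasing and nonnegative on $[0,\infty)$ while $x+y\leq 2\max\{x,y\}$) to split
\[
\exp_p\bigl(\mu|u|^{\frac{p}{p-1}}\bigr)\leq \exp_p\bigl(2\mu(1+\delta)|v|^{\frac{p}{p-1}}\bigr)+\exp_p\bigl(2\mu C_\delta|u_0|^{\frac{p}{p-1}}\bigr),
\]
so that it suffices to bound each of the two integrals separately.

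The $u_0$-integral is immediate: because $|u_0|\leq M$ and the integrand vanishes outside $[0,R_0]$,
\[
\int_0^\infty\exp_p\bigl(2\mu C_\delta|u_0|^{\frac{p}{p-1}}\bigr)r^\theta\,dr\leq \exp_p\bigl(2\mu C_\delta M^{\frac{p}{p-1}}\bigr)\int_0^{R_0}r^\theta\,dr<\infty
\]
for $\theta>-1$. For the $v$-integral, set $A:=\|\nabla^k_L v\|_{L^p_\nu}$; by Lemma~\ref{lemmajaosn}, $A\leq C\varepsilon$. Choosing $\varepsilon$ small enough that $\beta:=2\mu(1+\delta)A^{p/(p-1)}<\beta_{0,k}$, normalize $w:=v/A$, so that $\|\nabla^k_L w\|_{L^p_\nu}\leq 1$. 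Apply Corollary~\ref{cor1} to $w$ with the weight parameter $\eta:=\theta$ (and with the elliptic-operator parameters $\gamma=(2p-1+(p-1)\theta)/p$ and $\theta_L$ chosen sufficiently large to meet the hypotheses of Theorem~\ref{theoexactk}) to obtain
\[
\int_0^\infty\exp_p\bigl(2\mu(1+\delta)|v|^{\frac{p}{p-1}}\bigr)r^\theta\,dr=\int_0^\infty\exp_p\bigl(\beta|w|^{\frac{p}{p-1}}\bigr)r^\theta\,dr\leq C_\beta\|w\|^p_{L^p_\theta}=C_\beta A^{-p}\|v\|^p_{L^p_\theta}<\infty.
\]

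The main obstacle is to verify that, given the assumptions of the lemma, auxiliary parameters $\gamma$ and $\theta_L$ can indeed be selected so as to fulfil the structural inequalities $\theta_L+2>\gamma$ and $\theta_L>\lfloor k/2\rfloor(\theta_L+2-\gamma)-1$ of Theorem~\ref{theoexactk}; a direct calculation shows this is always possible when $\theta>-1$ by taking $\theta_L$ sufficiently large (the upper and lower bounds on $\theta_L$ are compatible precisely because $\gamma>1$, which follows from $\theta>-1$ and $p>1$). The endpoint case $\theta=-1$ calls for an independent treatment: the compact embedding $X^{k,p}_\infty\hookrightarrow L^q_{-1}$ from Theorem~\ref{theoimersaoinfinito}(b) forces any $u_0\in\Upsilon\cap X^{k,p}_\infty$ to vanish at the origin so the $u_0$-integral remains finite, while the $v$-integral is handled by expanding $\exp_p$ as a power series, bounding each term $\mu^{p-1+j}\|v\|^{q_j}_{L^{q_j}_{-1}}/\Gamma(p+j)$ (with $q_j=(p-1+j)p/(p-1)$) via a Moser-Trudinger-type growth estimate of the form $\|v\|_{L^q_{-1}}\leq Cq^{(p-1)/p}\|v\|_{X^{k,p}_\infty}$, and using Stirling's approximation to conclude convergence once $\varepsilon$ is chosen small relative to $\mu$.
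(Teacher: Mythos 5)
Your argument is correct in substance but takes a genuinely different route from the paper's. The paper proves the lemma directly: it expands $\exp_p$ as its power series, applies Fubini's theorem, bounds each term $\int_0^\infty|u|^{\frac{p}{p-1}(p-1+j)}r^\theta\,\mathrm dr$ by the limiting Sobolev embedding of Theorem \ref{theoimersaoinfinito}(b) with constant $C_j$, and concludes convergence of the resulting numerical series by the ratio test using the explicit dependence of $C_j$ on $j$. You instead use the density of $\Upsilon$ (Lemma \ref{lemmacompsuppdense}) to write $u=u_0+v$ with $u_0$ bounded and compactly supported and $\|v\|_{X^{k,p}_\infty}$ small, dispose of $u_0$ trivially, and control the $v$-piece by the uniform subcritical inequality of Corollary \ref{cor1} after rescaling so that the effective exponent $2\mu(1+\delta)\|\nabla^k_Lv\|_{L^p_\nu}^{p/(p-1)}$ falls below $\beta_{0,k}$. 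Your route makes the validity for \emph{every} $\mu\geq0$ transparent, since the smallness is pushed entirely into $v$, at the cost of importing the much heavier Corollary \ref{cor1}; there is no circularity, as the lemma is only invoked in Section \ref{secapp}, after that corollary is established. Two caveats. First, for $\lfloor k/2\rfloor\geq2$ the condition $\theta_L>\lfloor\frac{k}{2}\rfloor(\theta_L+2-\gamma)-1$ is an \emph{upper} bound on $\theta_L$, so ``taking $\theta_L$ sufficiently large'' is not the right prescription; nevertheless the admissible window for $\theta_L$ is nonempty precisely because $\gamma>1$, so your compatibility claim survives. Second, the endpoint $\theta=-1$ (which the paper does need, e.g.\ for \eqref{cl1}) is only sketched in your proposal and leans on a quantitative growth estimate $\|v\|_{L^q_{-1}}\leq Cq^{(p-1)/p}\|v\|_{X^{k,p}_\infty}$ that the paper nowhere states --- although, in fairness, the paper's own ratio-test argument depends just as much on the unwritten asymptotics of the embedding constants $C_j$.
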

\begin{proof}
Using Fubini's theorem, we obtain that
\begin{align}
\int_0^\infty\exp_p\left(\mu|u|^{\frac{p}{p-1}}\right)r^\theta\mathrm dr&=\sum_{j=0}^\infty\dfrac{\mu^{p-1+j}}{\Gamma(p-1+j)}\int_0^\infty|u|^{\frac{p}{p-1}(p-1+j)}r^\theta\mathrm dr\nonumber\\
&\leq\sum_{j=0}^\infty\dfrac{\left(\mu C_j^{\frac{p}{p-1}}\|u\|_{X^{k,p}_\infty}^{\frac{p}{p-1}}\right)^{p-1+j}}{\Gamma(p-1+j)}\label{eqasfkjna},
\end{align}
where $C_j$ is the constant given by Theorem \ref{theoimersaoinfinito} such that
\begin{equation*}
\left(\int_0^\infty|u|^{\frac{p}{p-1}(p-1+j)}r^\theta\mathrm dr\right)^{\frac{p-1}{p(p-1+j)}}\leq C_j\|u\|_{X^{k,p}_\infty},\quad\forall j\in\mathbb N\cup\{0\}.
\end{equation*}
Finally, we conclude our lemma by applying the d'Alembert's ratio test on \eqref{eqasfkjna} together with the expression of $C_j$.
\end{proof}

Our approach involves using a minimization argument to solve this problem. Therefore, we need to find a minimizer under a constraint for the associate functional $\Phi\colon X^{2,2}_\infty\to\mathbb R$, which is given by
\begin{equation*}
\Phi(u)=\dfrac12\int_0^\infty\left|\Delta_{\theta}u\right|^2r^{\theta}\mathrm dr-\int_0^\infty F(r,u)r^\eta\mathrm dr,
\end{equation*}
where $F(r,s)=\int_0^sf(r,t)\mathrm dt$. However, before proving the existence of a weak solution, we need the subsequent compactness theorem.

\begin{theo}\label{theocomp}
Suppose the assumptions of Theorem \ref{theoexactk}, $\alpha_0>-1$, and $F\colon[0,\infty)\times[0,\infty)\to[0,\infty)$ is a function such that $F(X)$ is bounded for $X\subset[0,\infty)\times[0,\infty)$ bounded. Furthermore, assume that
\begin{equation}\label{h1}
\lim_{t\to0}F(r,t)t^{-p}=0,\mbox{ uniformly on }r,
\end{equation}
and
\begin{equation}\label{h2}
\lim_{t\to\infty}F(r,t)t^{\frac{p}{p-1}}\exp\left(-\beta_{0,k}t^{\frac{p}{p-1}}\right)=0,\mbox{ uniformly on }r,
\end{equation}
where $\beta_{0,k}$ is given in Theorem \ref{theoexactk}. For any sequence $(u_n)$ weakly converging to some $u_0$ in $X^{k,p}_\infty$ satisfying $\|\nabla^k_Lu_n\|_{L^p_{\alpha_k+\lfloor\frac{k}2\rfloor(\theta-\gamma)p}}\leq1$, we have
\begin{equation*}
\int_0^\infty F(r,|u_n|)r^\eta\mathrm dr\overset{n\to\infty}\longrightarrow\int_0^\infty F(r,|u|)r^\eta\mathrm dr.
\end{equation*}
\end{theo}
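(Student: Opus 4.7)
Combining three ingredients: a compact embedding theorem to upgrade weak convergence to pointwise almost-everywhere convergence together with strong $L^p_\eta$ convergence; the Adams-type inequality of Theorem \ref{theoexactk} to control the exponential tail of $F(r,|u_n|)$; and a split of the integral along level sets of $|u_n|$ with a generalized dominated convergence argument. First I would invoke the Sobolev-limit case of Theorem \ref{theoimersaoinfinito}(b): since $\alpha_k - kp + 1 = 0$ and $\alpha_0 > -1$, weak convergence $u_n \rightharpoonup u_0$ in $X^{k,p}_\infty$ yields, along a subsequence, strong convergence in $L^p_\eta$ and pointwise almost everywhere on $(0,\infty)$, so that $F(r,|u_n|) \to F(r,|u_0|)$ almost everywhere by continuity of $F$.

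Next I would establish the global pointwise bound: for every $\varepsilon > 0$ there exist $M_\varepsilon, C_\varepsilon > 0$ such that
\begin{equation*}
F(r,t) \leq C_\varepsilon\, t^p + \varepsilon\, \chi_{\{t \geq M_\varepsilon\}}\, \frac{\exp_p(\beta_{0,k}\, t^{p/(p-1)})}{(1+t)^{p/(p-1)}}, \qquad r \geq 0,\ t \geq 0.
\end{equation*}
Here \eqref{h1} gives $F(r,t) \leq \varepsilon t^p$ on $[0, \delta_\varepsilon]$; the boundedness of $F$ on $[0,\infty) \times [\delta_\varepsilon, M_\varepsilon]$ is absorbed into $C_\varepsilon t^p$ using $t \geq \delta_\varepsilon$; and \eqref{h2} handles the regime $t \geq M_\varepsilon$ after verifying the asymptotic $\exp(\beta_{0,k}\, t^{p/(p-1)}) / t^{p/(p-1)} \lesssim \exp_p(\beta_{0,k}\, t^{p/(p-1)}) / (1+t)^{p/(p-1)}$ for $t$ large, which follows from $\exp_p(s) \sim e^s$ as $s \to \infty$ (via the Mittag--Leffler-type asymptotic of $\sum_j s^j/\Gamma(p+j)$). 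Integrating this estimate against $r^\eta \mathrm dr$ and applying Theorem \ref{theoexactk} to $u_n$ (whose Adams-norm is $\leq 1$), together with the uniform bound on $\|u_n\|_{L^p_\eta}$, gives the uniform tail control
\begin{equation*}
\int_{\{|u_n| \geq M_\varepsilon\}} F(r,|u_n|)\, r^\eta\, \mathrm dr \leq C\varepsilon,
\end{equation*}
and analogously for $u_0$.

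Finally I would split the total integral at the level $M = M_\varepsilon$, chosen so that $\{r : |u_0(r)| = M\}$ has Lebesgue measure zero (true for almost every $M$). On the low-value set $\{|u_n| \leq M\}$ the previous estimate collapses to $F(r,|u_n|)\chi_{\{|u_n|\leq M\}} \leq C_M |u_n|^p$; since $|u_n|^p \to |u_0|^p$ strongly in $L^1_\eta$ by Step~1, the generalized dominated convergence theorem with converging dominant yields
\begin{equation*}
\int_{\{|u_n| \leq M\}} F(r,|u_n|)\, r^\eta\, \mathrm dr \longrightarrow \int_{\{|u_0| \leq M\}} F(r,|u_0|)\, r^\eta\, \mathrm dr.
\end{equation*}
Combining with the tail bound and letting $\varepsilon \to 0$ concludes the proof. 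The most delicate point is the translation in Step~2 between the standard exponential appearing in \eqref{h2} and the $\exp_p$-form required by Theorem \ref{theoexactk}, including the correct comparison of the denominators $t^{p/(p-1)}$ versus $(1+t)^{p/(p-1)}$; it is precisely here that the \emph{exact growth} form of the Adams inequality (rather than a subcritical version) is essential, since any loss in the exponent $\beta_{0,k}$ would destroy the matching with the critical growth permitted by \eqref{h2}.
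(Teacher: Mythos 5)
Your overall architecture (small-$t$ regime via \eqref{h1}, large-$t$ regime via \eqref{h2} plus the exact-growth inequality of Theorem \ref{theoexactk}, dominated convergence for what remains) matches the paper's, and the tail estimate on $\{|u_n|\geq M_\varepsilon\}$ together with the $\exp$ versus $\exp_p$ comparison is handled correctly. However, there is a genuine gap in your Step~2. You claim the middle regime $t\in[\delta_\varepsilon,M_\varepsilon]$ is absorbed into $C_\varepsilon t^p$ by ``the boundedness of $F$ on $[0,\infty)\times[\delta_\varepsilon,M_\varepsilon]$.'' But the hypothesis only asserts that $F(X)$ is bounded for \emph{bounded} $X\subset[0,\infty)\times[0,\infty)$, and $[0,\infty)\times[\delta_\varepsilon,M_\varepsilon]$ is unbounded in the $r$-direction. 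Nothing in \eqref{h1} or \eqref{h2} constrains $F(r,t)$ for $t$ in a fixed middle range as $r\to\infty$ (take $F(r,t)=g(r)h(t)$ with $h$ supported in $[\delta,M]$ and $g$ unbounded: all hypotheses hold, yet your global pointwise bound fails). Since the set $\{|u_n|\geq\delta_\varepsilon\}$ has finite $\mu_\eta$-measure but need not be contained in a bounded interval, your argument cannot be closed as written.

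The missing ingredient — and the reason the theorem assumes $\alpha_0>-1$ — is the radial decay: by the radial lemma of \cite{arXiv:2306.00194} and the boundedness of $(u_n)$ in $X^{k,p}_\infty$, one has $u_n(r)\to0$ as $r\to\infty$ \emph{uniformly in $n$}. Hence there is an $R>0$ such that for $r\geq R$ all values $|u_n(r)|$ lie in the small-$t$ regime where \eqref{h1} gives $F(r,|u_n|)\leq\varepsilon|u_n|^p$; the problematic middle regime therefore only occurs for $r\in[0,R]$ and $|u_n|\leq A$, i.e.\ on the bounded rectangle $[0,R]\times[0,A]$, where the hypothesis does give a uniform bound on $F$ and dominated convergence (on the finite-measure set $[0,R]$) finishes the proof. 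Your Step~1 (compact embedding, a.e.\ convergence, strong $L^p_\eta$ convergence) and your tail control survive unchanged; you only need to replace the global pointwise bound by this spatial cutoff at $r=R$ before splitting in the $t$-variable.
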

\begin{proof}
From a radial lemma \cite[Lemma 2.2]{arXiv:2306.00194} and $\alpha_0>-1$, we obtain $u_n(r)\to0$ as $r\to\infty$ uniformly on $n$. This, together with \eqref{h1}, imply that for any $\varepsilon>0$, there exists $R>0$ such that
\begin{equation}\label{lu33}
\int_R^\infty F(r,|u_n|)r^\eta\mathrm dr\leq \varepsilon\int_0^\infty|u_n|^pr^\eta\mathrm dr\leq C\varepsilon\mbox{ and }\int_R^\infty F(r,|u_0|)r^\eta\mathrm dr\leq\varepsilon.
\end{equation}
On the other hand, for any $\varepsilon>0$, using \eqref{h2} and \cite[Lemma 3.1]{arXiv:2306.00194}, we obtain $A>0$ such that
\begin{equation*}
F(r,t)\leq \varepsilon\dfrac{\exp_p\left(\beta_{0,k}t^{\frac{p}{p-1}}\right)}{(1+t)^{\frac{p}{p-1}}},\quad\forall t>A,r\in[0,\infty).
\end{equation*}
Then, by Theorem \ref{theoexactk},
\begin{equation}\label{lu34}
\int_{|u_n|>A}F(r,|u_n|)r^\eta\mathrm dr\leq C\varepsilon\int_0^\infty|u_n|^pr^\eta\mathrm dr\leq C\varepsilon\mbox{ and }\int_{|u_n|>A}F(r,|u_0|)r^\eta\mathrm dr\leq C\varepsilon.
\end{equation}
Combining \eqref{lu33} with \eqref{lu34}, we get
\begin{equation*}
\left|\int_0^\infty F(r,|u_n|)r^\eta\mathrm dr-\int_0^\infty F(r,|u_0|)r^\eta\mathrm dr\right|\leq C\varepsilon+\left|\int_{|u_n|\leq A,r\leq R}\left[F(r,|u_n|)-F(r,|u_0|)\right]r^\eta\mathrm dr\right|,
\end{equation*}
for all $n\in\mathbb N$. Finally, applying the Lebesgue Dominated Convergence Theorem, we conclude
\begin{equation*}
\int_0^\infty F(r,|u_n|)r^\eta\mathrm dr\overset{n\to\infty}\longrightarrow\int_0^\infty F(r,|u_0|)r^\eta\mathrm dr
\end{equation*}
\end{proof}

We have obtained the weak solution for \eqref{eqprobleml}, which serves as a specific instance of the case \eqref{eqproblem}. This outcome is presented in Proposition \ref{prop81}. Additionally, we establish the regularity theory for the general problem \eqref{eqproblem}, achieving regularity throughout the interval $(0,\infty)$ as demonstrated by Proposition \ref{propcs}, and completing the regularity at the origin as described by Proposition \ref{propclsol1}.

\begin{prop}\label{prop81}
Assume that $t\mapsto f(r,t)$ is an odd function with $f(r,t)\geq0$, and $F$ satisfies the conditions of Theorem \ref{theocomp}. Set
\begin{equation*}
m_\infty:=\sup_{\underset{\|\Delta_\theta u\|_{L^2_{\theta}}=1}{u\in X^{2,2}_\infty}}\int_0^\infty F(r,u)r^\eta\mathrm dr.
\end{equation*}
Then, $m_\infty$ is attained by a function $u_0\in X^{2,2}_\infty$ with $\|\Delta_{\theta}u_0\|_{L^2_{\theta}}=1$. Moreover, $u_0$ is a weak solution of
\begin{equation*}
\left\{\begin{array}{l}
\Delta^2_{\theta}u=\lambda^{-1}f(r,u)r^{\eta-\theta}\mbox{ in }(0,\infty),\\
u(0)=u'(0)=0,
    \end{array}\right.
\end{equation*}
where $\lambda=\int_0^Rf(r,u_0)u_0r^\eta\mathrm dr$.
\end{prop}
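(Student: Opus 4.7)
The plan is to apply the direct method of the calculus of variations to the constrained maximization problem defining $m_\infty$, relying on Theorem \ref{theocomp} for the compactness step and on a Lagrange multiplier argument to extract the Euler--Lagrange equation. First I would check that $0 < m_\infty < \infty$. For the upper bound, \eqref{h1} and \eqref{h2} combine to give $F(r,t) \leq C t^2 + C \exp_2(\beta_{0,2} t^2)/(1+t)^2$, and hence by Theorem \ref{theoexactk} applied to any $u$ with $\|\Delta_\theta u\|_{L^2_\theta} = 1$, together with the embedding $X^{2,2}_\infty \hookrightarrow L^2_\eta$ from Theorem \ref{theoimersaoinfinito} and the norm equivalence of Theorem \ref{propequivnormkgrad}, one obtains $\int_0^\infty F(r,u) r^\eta\, dr \leq C$; positivity $m_\infty > 0$ follows by rescaling any nonzero test function on which $F$ does not vanish.

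Next, I would take a maximizing sequence $(u_n) \subset X^{2,2}_\infty$ with $\|\Delta_\theta u_n\|_{L^2_\theta} = 1$ and $\int_0^\infty F(r,u_n) r^\eta\, dr \to m_\infty$. By the norm equivalence above, $(u_n)$ is bounded in the reflexive space $X^{2,2}_\infty$, so up to a subsequence $u_n \rightharpoonup u_0$ weakly, and Theorem \ref{theocomp} then yields $\int_0^\infty F(r,u_n) r^\eta\, dr \to \int_0^\infty F(r,u_0) r^\eta\, dr$, hence $\int_0^\infty F(r,u_0) r^\eta\, dr = m_\infty$. Weak lower semi-continuity gives $\|\Delta_\theta u_0\|_{L^2_\theta} \leq 1$; if the inequality were strict, $\tilde u_0 := u_0/\|\Delta_\theta u_0\|_{L^2_\theta}$ would still be admissible with $|\tilde u_0| > |u_0|$ on the set $\{u_0 \neq 0\}$, which is nonempty since $m_\infty > 0$. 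Because $f(r,\cdot)$ is odd and nonnegative on $[0,\infty)$, $F(r,\cdot)$ is even and strictly increasing in $|t|$ (at least on the range reached by $u_0$), so $F(r,\tilde u_0) > F(r,u_0)$ on a set of positive measure, contradicting the maximality of $u_0$. Therefore $\|\Delta_\theta u_0\|_{L^2_\theta} = 1$.

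Finally, the Euler--Lagrange equation is obtained by a Lagrange multiplier computation: for $\varphi \in X^{2,2}_\infty$ and small $\varepsilon$, I would set $A(\varepsilon) := \|\Delta_\theta(u_0 + \varepsilon \varphi)\|_{L^2_\theta}$ and $v_\varepsilon := (u_0 + \varepsilon \varphi)/A(\varepsilon)$, which is admissible and equals $u_0$ at $\varepsilon = 0$. Then $\varepsilon \mapsto \int_0^\infty F(r,v_\varepsilon) r^\eta\, dr$ attains its maximum at $\varepsilon = 0$, so its derivative there vanishes. A direct computation gives $A'(0) = \int_0^\infty \Delta_\theta u_0 \Delta_\theta \varphi\, r^\theta\, dr$ and $\partial_\varepsilon v_\varepsilon|_{\varepsilon = 0} = \varphi - u_0 A'(0)$, and differentiating under the integral (justified by the exponential growth of $f$ together with Theorem \ref{theoexactk}) yields
\[
\int_0^\infty f(r,u_0)\varphi\, r^\eta\, dr = \lambda \int_0^\infty \Delta_\theta u_0\, \Delta_\theta \varphi\, r^\theta\, dr, \qquad \lambda = \int_0^\infty f(r,u_0) u_0 r^\eta\, dr,
\]
which is exactly the weak formulation of $\Delta_\theta^2 u_0 = \lambda^{-1} f(r,u_0) r^{\eta - \theta}$. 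The hardest part is the weak convergence of the functional along the maximizing sequence, since $u \mapsto \int_0^\infty F(r,u) r^\eta\, dr$ is not weakly continuous on the unit sphere $\{\|\Delta_\theta u\|_{L^2_\theta} = 1\}$ for general critical Adams-type growth; this is precisely the obstruction that Theorem \ref{theocomp} removes, and whose proof relies decisively on the exact-growth inequality of Theorem \ref{theoexactk}.
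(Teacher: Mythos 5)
Your proposal is correct and follows essentially the same route as the paper: a maximizing sequence bounded via the norm equivalence of Theorem \ref{propequivnormkgrad}, passage to a weak limit, Theorem \ref{theocomp} to get convergence of the functional, normalization to recover the constraint, and Lagrange multipliers for the Euler--Lagrange equation. One small overstatement: from $f(r,t)\geq 0$ for $t\geq 0$ you only get that $F(r,\cdot)$ is nondecreasing in $|t|$, not strictly increasing, so the contradiction ``$F(r,\tilde u_0)>F(r,u_0)$ on a set of positive measure'' is not guaranteed; the paper sidesteps this by simply replacing $u_0$ with $\tilde u_0=u_0/\|\Delta_\theta u_0\|_{L^2_\theta}$, which is admissible and satisfies $\int F(r,\tilde u_0)r^\eta\,\mathrm dr\geq\int F(r,u_0)r^\eta\,\mathrm dr=m_\infty$, hence is itself a maximizer with unit norm --- the same fix applies verbatim to your argument.
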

\begin{proof}
Let $(u_n)$ be a maximizing sequence for $m_\infty$ with $\|\Delta_\theta u_n\|_{L^2_{\theta}}=1$. From Theorem \ref{propequivnormkgrad}, we have that $(u_n)$ is bounded on $X^{2,2}_\infty$. Using Theorem \ref{theocomp}, we obtain $u_0\in X^{2,2}_\infty$ with $\|\Delta_\theta u_0\|_{L^2_{\theta}}\leq1$ such that
\begin{equation}\label{eq816}
m_\infty=\int_0^\infty F(r,|u_0|)r^\eta\mathrm dr.
\end{equation}

Since $f(r,t)=-f(r,-t)\geq0$ for all $t\geq0$, we obtain $F(r,u)\geq0$ and $F(r,\tau u_0)\geq F(r,u_0)$ for all $\tau\geq1$. We can assume that $F(r,u)$ is not always zero. Then, $u_0\in X_{\infty}^{2,2}\backslash\{0\}$ because $u_0$ is a maximum of $m_\infty$. We can assume $\|\Delta_\theta u_0\|_{L^2_{\theta}}=1$. Otherwise, defining $\widetilde u_0=u_0/\|\Delta_\theta u_0\|_{L^2_{\theta}}$, we have $\|\Delta_\theta \widetilde u_0\|_{L^2_{\theta}}=1$ and
\begin{equation*}
\int_0^\infty F(r,\widetilde u_0)r^\eta\mathrm dr\geq\int_0^\infty F(r,u_0)r^\eta\mathrm dr=\sup_{\underset{\|\Delta_\theta u\|_{L^2_{\theta}}=1}{u\in X^{2,2}_\infty}}\int_0^\infty F(r,u)r^\eta\mathrm dr.
\end{equation*}
Therefore, we obtain $u_0\in X^{2,2}_{\infty}$ that attains the supremum with $\|\Delta_\theta u_0\|_{L^2_{\theta}}=1$. Moreover, applying Lagrange Multipliers in \eqref{eq816}, we conclude
\begin{equation*}
\int_0^\infty \Delta_\theta u_0\Delta_\theta vr^{\theta}\mathrm dr=\lambda^{-1}\int_0^Rf(r,u_0)vr^\eta\mathrm dr\quad\forall v\in X_{\infty}^{2,2},
\end{equation*}
where $\lambda=\int_0^Rf(r,u_0)u_0r^\eta\mathrm dr$.
\end{proof}

\begin{prop}\label{propcs}
Suppose $u_0$ is a weak solution of \eqref{eqproblem}. Then $u_0\in C^{4}(0,\infty)$, $\Delta_\theta u_0\in C^{2}(0,\infty)$, and $\Delta^2_{\theta}u_0=f(r,u_0)r^{\eta-\theta}$ $\forall r>0$. Moreover, $u_0(r)\overset{r\to\infty}\longrightarrow0$ if $\alpha_0\geq\alpha_1-2\geq-1$, and $\Delta_\theta u_0(r)\overset{r\to\infty}\longrightarrow0$ if $\alpha_0\in(-1,2\theta-3)$ and $\alpha_1\in[1,2\theta-1)$.
\end{prop}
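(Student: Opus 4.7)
The plan is to split the argument into three parts: interior classical regularity, decay of $u_0$ at infinity, and decay of $\Delta_\theta u_0$ at infinity.

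\textbf{Interior regularity.} Set $w := \Delta_\theta u_0$, which lies in $L^2_{\alpha_2}$ by Lemma~\ref{lemmajaosn}. For any $v \in C_c^\infty((0,\infty)) \subset X^{2,2}_\infty$, the identity $r^\theta \Delta_\theta v = -(r^\theta v')'$ turns the weak formulation into
\[
-\int_0^\infty w\,(r^\theta v')'\,dr = \int_0^\infty f(r,u_0)\,v\,r^\eta\,dr,
\]
i.e.\ $\Delta_\theta w = f(r,u_0)\,r^{\eta-\theta}$ in $\mathcal{D}'(0,\infty)$. Because $r^\theta$ is bounded above and below on compact subsets of $(0,\infty)$, $X^{2,2}_\infty \hookrightarrow W^{2,2}_{\mathrm{loc}}(0,\infty) \hookrightarrow C^1(0,\infty)$, so by continuity of $f$ the source $g(r) := f(r, u_0(r))\,r^{\eta-\theta}$ is continuous on $(0,\infty)$. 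The explicit formula $W(r) := -\int_1^r s^{-\theta} \int_1^s g(t)\,t^\theta\,dt\,ds$ provides a $C^2$ classical solution of $\Delta_\theta W = g$. The difference $w - W$ solves $\Delta_\theta\cdot = 0$ distributionally, i.e.\ $(r^\theta(\cdot)')' = 0$, whose distributional solutions are the classical combinations of $1$ and $\int_1^r s^{-\theta}\,ds$. Hence $w \in C^2(0,\infty)$. Feeding back, $(r^\theta u_0')' = -w\,r^\theta \in C^2$, so $u_0 \in C^4(0,\infty)$ and $\Delta_\theta^2 u_0 = f(r,u_0)\,r^{\eta-\theta}$ holds classically.

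\textbf{Decay of $u_0$.} Under the hypothesis $\alpha_0 \geq \alpha_1 - 2 \geq -1$, apply the Hardy-type identity
\[
\frac{d}{dr}\bigl[r^{\alpha_1-1}u_0^2\bigr] = (\alpha_1-1)\,r^{\alpha_1-2}u_0^2 + 2\,r^{\alpha_1-1}u_0\,u_0'.
\]
The first summand is integrable on $(1,\infty)$ because $r^{\alpha_1-2} \leq r^{\alpha_0}$ there and $u_0 \in L^2_{\alpha_0}$; the second is integrable via Cauchy--Schwarz with the weight split $r^{\alpha_1-1} = r^{(\alpha_1-2)/2} \cdot r^{\alpha_1/2}$, combined with $u_0' \in L^2_{\alpha_1}$. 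Therefore $r^{\alpha_1-1} u_0^2(r)$ has a finite limit $L$ as $r \to \infty$. If $L > 0$, then $u_0^2 r^{\alpha_0} \sim L\,r^{\alpha_0 - \alpha_1 + 1}$ at infinity, and the condition $\alpha_0 - \alpha_1 + 1 \geq -1$ forces non-integrability, contradicting $u_0 \in L^2_{\alpha_0}$. Hence $L = 0$, and because $\alpha_1 \geq 1$ we conclude $u_0(r) \to 0$.

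\textbf{Decay of $\Delta_\theta u_0$.} Integrate the classical equation $(r^\theta w')' = -f(r,u_0)\,r^\eta$ from $1$ to $r$ to obtain
\[
r^\theta w'(r) = w'(1) - \int_1^r f(t,u_0(t))\,t^\eta\,dt.
\]
The conditions $\alpha_0 \in (-1, 2\theta-3)$ and $\alpha_1 \in [1, 2\theta-1)$ allow running a Hardy identity for $u_0$ with the exponent $s = (\alpha_0+\alpha_1)/2$, giving $u_0(r) \to 0$ together with a quantitative pointwise decay. Combined with the quadratic smallness $|f(r,t)| \leq C t^2$ near $t = 0$ from \eqref{hipf}, this forces $\int_1^\infty |f(t,u_0)|\,t^\eta\,dt < \infty$. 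Hence $r^\theta w'(r) \to A$ for some constant, so $w'(r) = O(r^{-\theta})$. Since $\theta = (\eta+3)/2 > 1$, $w' \in L^1((1,\infty))$, so $w$ has a finite limit at infinity, which must vanish by $w \in L^2_{\alpha_2}$.

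The main obstacle lies in the last part: extracting integrability of $f(r,u_0)\,r^\eta$ at infinity from weighted $L^2$ information on $u_0$ and $u_0'$, since the target weight $r^\eta$ is heavier than the natural weight $r^{\alpha_0}$ (indeed $\alpha_0 < 2\theta - 3 = \eta$). One must carefully balance the pointwise decay extracted from the Hardy identity against the growth of $f$, and the parameter conditions on $(\alpha_0,\alpha_1)$ are precisely tuned so that the bookkeeping closes.
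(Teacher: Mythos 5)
Your first two parts are sound. The distributional bootstrap for interior regularity is a legitimate reformulation of what the paper does (the paper instead exhibits the weak derivative $(\Delta_\theta u_0)'(r)=-r^{-\theta}\int_0^r f(s,u_0(s))s^\eta\,\mathrm ds$ directly, by testing against $v(r)=-\int_r^\infty\varphi s^{-\theta}\,\mathrm ds$ and applying Fubini), and your Hardy-identity argument for $u_0(r)\to0$ under $\alpha_0\geq\alpha_1-2\geq-1$ is essentially a re-derivation of the radial lemma the paper cites.

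The genuine gap is in the decay of $\Delta_\theta u_0$. Your route requires $\int_1^\infty|f(t,u_0(t))|t^\eta\,\mathrm dt<\infty$, and this does not follow from the ingredients you invoke. The quadratic bound $|f(r,u_0)|\leq Cu_0^2$ for large $r$ reduces the question to $u_0\in L^2_\eta$ near infinity, but the pointwise decay your Hardy identity can deliver is $u_0^2=o(r^{-s})$ with $s\leq\alpha_0+1<\eta+1$ (recall $\alpha_0<2\theta-3=\eta$), and this together with $u_0\in L^2_{\alpha_0}$ is not enough: a profile $u_0\sim t^{-a}$ with $\alpha_0+1<2a\leq\eta+1$ is compatible with all the $X^{2,2}_\infty$ memberships yet has $\int_1^\infty u_0^2t^\eta\,\mathrm dt=\infty$. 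Nothing in your argument uses the equation to upgrade the decay, so ``the bookkeeping closes'' is precisely the unproved step. (A secondary issue: in this regime you no longer assume $\alpha_0\geq\alpha_1-2$, so closing the Hardy identity with exponent $s=(\alpha_0+\alpha_1)/2$ needs $\alpha_1\leq\alpha_0+2$, which is not among the hypotheses; the paper does not need $u_0\to0$ at all for this part.) The paper sidesteps the whole problem: it tests the weak formulation against a fixed $v\in X^{2,2}_\infty$ with $v\equiv0$ on $[0,1]$ and $v(r)=r^{1-\theta}/(\theta-1)$ for $r\geq2$, so that $r^\theta v'\equiv-1$ there. Integrating by parts twice, the boundary term at $R$ is exactly $\Delta_\theta u_0(R)$, and the remaining volume integral cancels against $\int_0^\infty f(r,u_0)vr^\eta\,\mathrm dr$ because $\Delta_\theta^2u_0=f(r,u_0)r^{\eta-\theta}$, giving $\lim_{R\to\infty}\Delta_\theta u_0(R)=0$. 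The hypotheses $\alpha_0\in(-1,2\theta-3)$ and $\alpha_1\in[1,2\theta-1)$ are exactly what puts this $v$ into $X^{2,2}_\infty$, and the finiteness of $\int_0^\infty f(r,u_0)vr^\eta\,\mathrm dr$ is already built into the definition of weak solution, so no new integrability has to be established. You should replace the last third of your argument by such a test-function computation, or else supply an actual proof of the integrability claim.
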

\begin{proof}
We claim that $\Delta_\theta u_0$ has a weak derivative given by
\begin{equation}\label{wdlu}
(\Delta_\theta u_0)'(r)=-r^{-\theta}\int_0^rf(s,u_0(s))s^\eta\mathrm ds.
\end{equation}
Note that the right term is well-defined by Lemma \ref{proptobeproved} and Theorem \ref{theo1}. Using \eqref{wdlu} and that the right term of \eqref{wdlu} is $C^1(0,\infty)$, we conclude our proposition except by 
$u_0(r)\overset{r\to\infty}\longrightarrow0$ and $\Delta_\theta u_0(r)\overset{r\to\infty}\longrightarrow0$. Firstly, let us prove \eqref{wdlu}.

Let $\varphi\in C^\infty_0(0,\infty)$. Writing $v(r)=-\int_r^\infty\varphi s^{-\theta}\mathrm ds$, we have $\varphi=r^\theta v'$ and $v\in X_\infty^{2,2}$. Using that $u_0$ is a weak solution and Fubini's Theorem, we get
\begin{align*}
\int_0^\infty \Delta_\theta u_0\varphi'\mathrm dr&=-\int_0^\infty \Delta_\theta u_0\Delta_\theta vr^\theta\mathrm dr\\
&=\int_0^\infty\int_r^\infty f(r,u_0(r))\varphi(s)s^{-\theta}r^{\eta}\mathrm ds\mathrm dr\\
&=\int_0^\infty s^{-\theta}\int_0^sf(r,u_0(r))r^\eta\mathrm dr\varphi(s)\mathrm ds.
\end{align*}
This concludes \eqref{wdlu}.

The fact that $u_0(r)\overset{r\to\infty}\longrightarrow0$ follows by applying \cite[Lemma 2.2]{arXiv:2306.00194} with $\alpha_0\geq\alpha_1-2\geq-1$. Finally, let us check that $\Delta_\theta u_0(r)\overset{r\to\infty}\longrightarrow 0$. Fix $v\colon[0,\infty)\to\mathbb R$ smooth such that $v\equiv0$ in $[0,1]$ and $v(r)=r^{1-\theta}/(\theta-1)$ in $[2,\infty)$. Note that $v\in X^{2,2}_\infty$ by $-1<\alpha_0<2\theta-3$ and $-1\leq\alpha_1<2\theta-1$. Using that $u_0$ is a weak solution of \eqref{eqproblem}, we have
\begin{align*}
\int_0^\infty f(r,u_0)vr^\eta\mathrm dr&=\int_0^\infty \Delta_\theta u_0\Delta_\theta vr^\theta\mathrm dr=-\int_0^\infty \Delta_\theta u_0(r^\theta v')'\mathrm dr\\
&=\lim_{R\to\infty}\Delta_\theta u_0(R)+\int_0^\infty r^\theta(\Delta_\theta u_0)'v'\mathrm dr\\
&=\lim_{R\to\infty}\Delta_\theta u_0(R)-\int_0^\infty\left(r^\theta(\Delta_\theta u_0)'\right)'v\mathrm dr\\
&=\lim_{R\to\infty}\Delta_\theta u_0(R)+\int_0^\infty \Delta_\theta ^2u_0vr^\theta\mathrm dr.
\end{align*}
Since $\Delta_\theta^2u_0=r^{\eta-\theta}f(r,u_0)$, we conclude that $\lim_{R\to\infty}\Delta_\theta u_0(R)=0$ and the proof of the proposition.
\end{proof}

\begin{prop}\label{propclsol1}
Suppose $u_0$ is a weak solution of \eqref{eqproblem} with $\eta>1$. Then $u_0\in C^4(0,\infty)\cap C^3([0,\infty))$ is a classical solution of \eqref{eqproblem}. Moreover, $\Delta_\theta u_0\in C^2(0,\infty)\cap C^1([0,\infty))$, and the following initial conditions hold: $u'_0(0)=(\Delta_\theta u_0)'(0)=0$. Additionally, we find that $u''_0(0)=-\Delta_\theta u_0(0)/(\alpha+1)$ and $u'''_0(0)=0$.
\end{prop}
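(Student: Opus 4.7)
The plan is to promote the interior regularity from Proposition~\ref{propcs} down to $r=0$ in four nested steps, relying throughout on the identity
\[
(\Delta_\theta u_0)'(r)=-r^{-\theta}\int_0^r f(s,u_0(s))\,s^\eta\,\mathrm ds,\qquad r>0,
\]
delivered by that proposition.

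First I would establish $\Delta_\theta u_0\in C^1([0,\infty))$ with $(\Delta_\theta u_0)'(0)=0$. The growth hypothesis $|f(r,t)|\le C(\mathrm e^{\beta t^2}-1)$, the elementary inequality $[\exp_2(\beta u_0^2)]^q\le\exp_2(q\beta u_0^2)$, and Lemma~\ref{proptobeproved} applied to $u_0\in X^{2,2}_\infty$ together give $f(\cdot,u_0)\in L^q_\eta(0,\infty)$ for every $q>1$. H\"older's inequality in the defining integral then yields
\[
|(\Delta_\theta u_0)'(r)|\le C\,r^{-\theta+(\eta+1)/q'}\,\|f(\cdot,u_0)\|_{L^q_\eta(0,r)},
\]
with $q'=q/(q-1)$. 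Substituting $\theta=(\eta+3)/2$, the exponent equals $(\eta-1)/2-(\eta+1)/q$, which becomes strictly positive once $q>2(\eta+1)/(\eta-1)$; this is precisely where the hypothesis $\eta>1$ is essential. Hence $(\Delta_\theta u_0)'(r)\to 0$ as $r\to 0^+$ with an integrable rate, so $\Delta_\theta u_0$ extends continuously to $[0,\infty)$ and extending its derivative by $0$ at the origin produces the $C^1$ regularity.

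Next I would pass to $u_0$. From $(r^\theta u_0')'=-r^\theta\Delta_\theta u_0$ on $(0,\infty)$ and the continuity of $r^\theta\Delta_\theta u_0$ on $[0,\infty)$, integration shows that the limit $C:=\lim_{r\to 0^+}r^\theta u_0'(r)$ exists. It must vanish: otherwise $u_0''(r)\sim -\theta Cr^{-\theta-1}$ near $0$, which is incompatible with $u_0''\in L^2_{\alpha_2}=L^2_3$ because $\theta>1$ makes the exponent $1-2\theta$ strictly less than $-1$. Hence
\[
r^\theta u_0'(r)=-\int_0^r s^\theta\Delta_\theta u_0(s)\,\mathrm ds,
\]
and the continuity of $\Delta_\theta u_0$ at $0$ gives $u_0'(r)=-\frac{\Delta_\theta u_0(0)}{\theta+1}\,r+o(r)$, so $u_0'(0)=0$ and $u_0''(0)=-\Delta_\theta u_0(0)/(\theta+1)$.

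Finally, I would use the refined expansion $\Delta_\theta u_0(s)=\Delta_\theta u_0(0)+o(s)$ (from $(\Delta_\theta u_0)'(0)=0$) to sharpen this to $u_0'(r)=-\frac{\Delta_\theta u_0(0)}{\theta+1}\,r+o(r^2)$ and $u_0''(r)=-\frac{\Delta_\theta u_0(0)}{\theta+1}+o(r)$. Differentiating $u_0''=-\Delta_\theta u_0-\theta r^{-1}u_0'$ gives
\[
u_0'''(r)=\theta r^{-2}u_0'(r)-\theta r^{-1}u_0''(r)-(\Delta_\theta u_0)'(r),
\]
and the $r^{-1}$-singular contributions of the first two terms cancel by the expansions, leaving $u_0'''(r)\to 0$; together with the difference-quotient identity $u_0'''(0)=\lim_{r\to 0^+}(u_0''(r)-u_0''(0))/r=0$, this yields $u_0\in C^3([0,\infty))$ with $u_0'''(0)=0$. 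The $C^4(0,\infty)$ and classical-solution statements are immediate from Proposition~\ref{propcs}. The main obstacle is the very first step: the only available control on $f(\cdot,u_0)$ is the Adams-type integrability of Lemma~\ref{proptobeproved}, and it must be paired with a fine-enough H\"older exponent to beat the $r^{-\theta}$ factor, a balance that succeeds exactly when $\eta>1$.
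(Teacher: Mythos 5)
Your argument is correct and reaches all the stated conclusions, but it takes a genuinely different route to the two key inputs. The paper first runs an $L^p$ bootstrap: from $\Delta_\theta^2u_0=r^{\eta-\theta}f(r,u_0)\in L^p_{3p-1-\theta p}$ for every $p>1$ (via Lemma \ref{proptobeproved} with weight $r^{-1}$), two applications of Lemma \ref{lemmafjs} and the Morrey case of Theorem \ref{theo32} give $u_0\in C^1([0,\infty))$; only then does it obtain $(\Delta_\theta u_0)'(0)=0$ by L'H\^opital in \eqref{wdlu}, using boundedness of $f(r,u_0(r))$ near the origin and $\eta-\theta+1=(\eta-1)/2>0$, and it gets $u_0'(0)=0$ by two further applications of L'H\^opital. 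You bypass the bootstrap entirely: your H\"older estimate on \eqref{wdlu} yields $(\Delta_\theta u_0)'(0)=0$ from local $L^q_\eta$-integrability of $f(\cdot,u_0)$ alone (a small caveat: the global claim $f(\cdot,u_0)\in L^q_\eta(0,\infty)$ would require $\eta\le\alpha_0$ in Lemma \ref{proptobeproved}, which is not assumed; but near $0$ one has $r^\eta\le r^{-1}$, and your estimate only invokes $\|f\|_{L^q_\eta(0,r)}$, so this is harmless), and you recover the representation $r^\theta u_0'(r)=-\int_0^rs^\theta\Delta_\theta u_0(s)\,\mathrm ds$ by excluding the singular homogeneous solution through $u_0''\in L^2_{3}$, whereas the paper gets it for free from the Morrey-type conclusion $u_0'\in C([0,\infty))$. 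The endgame for $u_0''(0)$ and $u_0'''(0)$ is the same computation in both treatments, yours phrased through Taylor expansions of $\Delta_\theta u_0$ at $0$ and the paper's through an explicit $\varepsilon$--$\delta$ estimate of $\theta(\theta+1)r^{-\theta-2}\int_0^r\left(\Delta_\theta u_0(r)-\Delta_\theta u_0(s)\right)s^\theta\,\mathrm ds$. What your route buys is independence from the embedding machinery (Lemma \ref{lemmafjs} and Theorem \ref{theo32}(c)) at the price of slightly more delicate exponent bookkeeping; both arguments hinge identically on $\eta>1$.
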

\begin{proof}
We claim that
\begin{equation}\label{cl1}
\Delta_\theta^2u_0=r^{\eta-\theta}f(r,u_0)\in L^p_{3p-1-\theta p},\quad\forall p>1.
\end{equation}
Indeed, through the application of $[\exp_2(t)]^p\leq\exp_2(pt)$, we deduce
\begin{equation*}
\int_0^\infty|r^{\eta-\theta}f(r,u_0)|^pr^{3p-1-\theta p}\mathrm dr\leq C\int_0^\infty \exp_2\left(\beta p u_0^2\right)r^{-1}\mathrm dr.
\end{equation*}
Hence, Lemma \ref{proptobeproved} guarantees the validity of \eqref{cl1}.

Using Lemma \ref{lemmafjs} in \eqref{cl1}, we obtain $\Delta_{\theta}u_0\in X^{2,p}_\infty(p-1-\theta p,2p-1-\theta p,3p-1-\theta p)$. Similarly, from Lemma \ref{lemmafjs}, we deduce that $u_0\in X^{2,p}_\infty(-p-1-\theta p,-1-\theta p,p-1-\theta p)$. As a result, the Morrey case of Theorem \ref{theo32} guarantees $u_0\in C^1([0,\infty))$. Proposition \ref{propcs} implies that $u_0\in C^4(0,\infty)\cap C^1([0,\infty))$ with $\Delta_\theta u_0\in C^2(0,\infty)$ and $\Delta_\theta^2u_0=r^{\eta-\theta}f(r,u_0)$ for all $r\in(0,\infty)$. We proceed to verify the initial condition $u_0'(0)=(\Delta_\theta u_0)'(0)=0$. Applying L'Hopital's rule in \eqref{wdlu} with $\eta>1$, we have
\begin{equation}\label{ganojdfna}
\lim_{r\to0}(\Delta_\theta u_0)'(r)=-\lim_{r\to0}\dfrac{f(r,u_0(r))r^\eta}{\theta r^{\theta-1}}=0.
\end{equation}
Using $u_0'(r)=-r^{-\theta}\int_0^r\Delta_\theta u_0(s)s^\theta\mathrm ds$ and L'Hopital's rule twice, we obtain
\begin{equation*}
\lim_{r\to0}u'_0(r)=-\theta^{-1}\lim_{r\to0}\Delta_\theta u_0(r)r=\theta^{-1}\lim_{r\to0}(\Delta_\theta u_0)'(r)r^2=0.
\end{equation*}
By \eqref{wdlu} and \eqref{ganojdfna}, we conclude that $\Delta_\theta u_0\in C^1([0,\infty))$.

Now we are left to show that $u_0\in C^3([0,\infty))$ with $u_0''(0)=-\Delta_\theta u_0(0)/(\theta+1)$ and $u_0'''(0)=0$. We notice that
\begin{equation*}
u_0''(r)=-\theta\dfrac{u_0'(r)}r-\Delta_\theta u_0(r)=\theta r^{-\theta-1}\int_0^r\Delta_\theta u_0(s)s^\theta\mathrm ds-\Delta_\theta u_0(r)
\end{equation*}
and hence $\lim_{r\to0}u''_0(r)=-\Delta_\theta u_0(0)/(\theta+1)$. Furthermore,
\begin{align*}
u_0'''(r)&=\theta r^{-1}\Delta_\theta u_0(r)-\theta(\theta+1)r^{-\theta-2}\int_0^r\Delta_\theta u_0(s)s^\theta\mathrm ds-(\Delta_\theta u_0)'(r)\\
&=\dfrac{\theta(\theta+1)}{r^{\theta+2}}\int_0^r\left(\Delta_\theta u_0(r)-\Delta_\theta u_0(s)\right)s^\theta\mathrm ds-(\Delta_\theta u_0)'(r).
\end{align*}
Given $\varepsilon>0$, $(\Delta_\theta u_0)'(0)=0$ implies that there exists $\delta>0$ such that $|(\Delta_\theta u_0)'(r)|\leq\varepsilon$ and $|\Delta_\theta u_0(r)-\Delta_\theta u_0(s)|\leq\varepsilon(r-s)$ for all $0<s\leq r<\delta$. Thus,
\begin{equation*}
|u_0'''(r)|\leq\varepsilon\dfrac{\theta(\theta+1)}{r^{\theta+1}}\int_0^rs^\theta\mathrm ds+\varepsilon=(\theta+1)\varepsilon,\quad\forall r\in(0,\delta).
\end{equation*}
Consequently, $u_0\in C^3([0,\infty))$ with $u_0'''(0)=0$. Therefore, we completed the proof.
\end{proof}

\begin{proof}[Proof of Theorem \ref{theoapp}]
The theorem's proof directly follows from Propositions \ref{prop81}, \ref{propcs}, and \ref{propclsol1}.
\end{proof}

\thebibliography{99}

\bibitem{MR4097244} E. Abreu and L. G. Fernandez Jr, \textit{On a weighted Trudinger-Moser inequality in $\mathbb R^p$}, J. Differ. Equ. \textbf{269} (2020), 3089-3118.

\bibitem{MR1646323} S. Adachi and K. Tanaka, \textit{Trudinger type inequalities in $\mathbb R^N$ and their best exponents}, Proc. of the Amer. Math. Soc. \textbf{128} (1999), 2051-2057.

\bibitem{MR0960950} D. R. Adams, \textit{A sharp inequality of J. Moser for higher order derivatives}, Ann. Math. (2) \textbf{128} (1988), 385-398.

\bibitem{MR3619238} A. Alvino, F. Brock, F. Chiacchio, A. Mercaldo, and M. R. Posteraro, \textit{Some isoperimetric inequalities on $\mathbb R^p$ with respect to weights $|x|^\alpha$}, J. Math. Anal. Appl. \textbf{451} (2017), 280-318.

\bibitem{MR0928802} C. Bennett and R. Sharpley, \textit{Interpolation of operators}, Pure and Applied Mathematics, \textbf{129}. Academic Press, Boston, 1988.

\bibitem{MR1163431} D. M. Cao, \textit{Nontrivial solution of semilinear elliptic equation with critical exponent
in $\mathbb R^2$}, Comm. Partial Differential Equations \textbf{17} (1992), 407–435.

\bibitem{CLYZ} L. Chen, G. Lu, Q. Yang and M. Zhu, Sharp critical and subcritical trace Trudinger-Moser and Adams inequalities on the upper half-spaces. J. Geom. Anal. 32 (2022), no. 7, Paper No. 198, 37 pp.
    
\bibitem{MR3269875} D. Cassani, F. Sani, and C. Tarsi, \textit{Equivalent Moser type inequalities in R2 and the zero mass case}, J. Funct. Anal. \textbf{267} (2014), 4236–4263.

\bibitem{MR1422009} P. Cl\'ement, D. G. de Figueiredo and E. Mitidieri, \textit{Quasilinear elliptic equations with critical exponents}, Topol. Methods Nonlinear Anal. \textbf{7} (1996), 133-170.

\bibitem{MR1865413} D. G. de Figueiredo, J. M. do Ó, and B. Ruf, \textit{On an inequality by N. Trudinger and J. Moser and related elliptic equations}, Comm. Pure Appl. Math. \textbf{55} (2002), 135–152.

\bibitem{MR2838041} D. G. de Figueiredo, E. M. dos Santos and O. H. Miyagaki, \textit{Sobolev spaces of symmetric functions and applications}, J. Funct. Anal. \textbf{261} (2011), 3735--3770.

\bibitem{MR3670473} J. F. de Oliveira, \textit{On a class of quasilinear elliptic problems with critical exponential growth on the whole space}, Topol. Methods Nonlinear Anal., \textbf{49} (2017), 529-550.

\bibitem{MR1704875} J. M. do \'O, \textit{N-Laplacian equations in $\mathbb R^N$ with critical growth}, Abstr. Appl. Anal. \textbf{2} (1997), 301–315.

\bibitem{MR4112674} J. M. do \'O, A. C. Macedo, and J. F. de Oliveira, \textit{A Sharp Adams-type inequality for weighted Sobolev spaces}, Q. J. Math. \textbf{71} (2020), 517-538.

\bibitem{MR3209335} J. M. do \'O and J. F. de Oliveira, \textit{Trudinger-Moser type inequalities for weighted Sobolev spaces involving fractional dimensions}, Proc. Amer. Math. Soc. \textbf{142} (2014), 2813-2828.

\bibitem{MR3575914} J. M. do \'O and J. F. de Oliveira, \textit{Concentration-compactness and extremal problems for a weighted Trudinger-Moser inequality}, Commun. Contemp. Math. \textbf{19} (2017), 19:1650003.

\bibitem{MR3957979} J. M. do \'O, J. F. de Oliveira, and P. Ubilla, \textit{Existence for a k-Hessian equation involving supercritical growth}, J. Differential Equations \textbf{267} (2019), 1001–1024.

\bibitem{arXiv:2108.04977} J. M. do \'O and J. F. de Oliveira, \textit{Equivalence of critical and subcritical sharp Trudinger-Moser inequalities and existence of extremal function}, arXiv:2108.04977, 2021.

\bibitem{arXiv:2203.14181} J. M. do \'O and J. F. de Oliveira, \textit{On a sharp inequality of Adimurthi-Druet type and extremal functions}, arXiv:2203.14181, 2022.

\bibitem{arXiv:2302.02262} J. M. do \'O, G. Lu, and R. Ponciano, \textit{Sharp Sobolev and Adams-Trudinger-Moser embeddings on weighted Sobolev spaces and their applications}, arXiv:2302.02262, 2023.

\bibitem{arXiv:2306.00194} J. M. do \'O, G. Lu, and R. Ponciano, \textit{Trudinger-Moser embeddings on weighted Sobolev spaces on unbounded domains}, arXiv:2306.00194, 2023.

\bibitem{MR1982932} A. Kufner and L. E. Persson \textit{Weighted Inequalities of Hardy Type}, World Scientific Publishing Co., Singapore, 2003.

\bibitem{MR3336837} S. Ibrahim, N. Masmoudi, and K. Nakanishi, \textit{Moser-Trudinger inequality on the whole plane with the exact growth condition}, J. Eur. Math. Soc. \textbf{17} (2015), 819-835.

\bibitem{MR1929156} J. Jacobsen and K. Schmitt, \textit{The Liouville-Bratu-Gelfand problem for radial operators}, J. Differential Equations \textbf{184} (2002), 283–298.

\bibitem{MR2166492} J. Jacobsen and K. Schmitt, \textit{Radial solutions of quasilinear elliptic differential equations}. Amsterdam: Elsevier/North-Holland; 2004. p. 359–435.

\bibitem{MR0140822} V. I. Judovi\v{c}, \textit{Some estimates connected with integral operators and with solutions of elliptic equations}, (Russian) Dokl. Akad. Nauk SSSR. \textbf{138} (1961), 805-808.

\bibitem{MR3053467} N. Lam and G. Lu, \textit{A new approach to sharp Moser-Trudinger and Adams type inequalities: a rearrangement-free argument}, J. Differential Equations \textbf{255} (2013), 298-325.

    \bibitem{MR3587065} N. Lam and G.  Lu, Sharp singular Trudinger-Moser-Adams type inequalities with exact growth. Geometric methods in PDE's, 43-80, Springer INdAM Ser., 13, Springer, Cham, 2015.

    \bibitem{MR3130507} N. Lam, G. Lu and H. Tang, \textit{Sharp subcritical Moser-Trudinger inequalities on Heisenberg groups and subelliptic PDEs}. Nonlinear Anal. 95 (2014), 77-92.

    \bibitem{MR2980499} N. Lam, G. Lu, \textit{ Sharp Moser-Trudinger inequality on the Heisenberg group at the critical case and applications}. Adv. Math. 231 (2012), no. 6, 3259-3287.

\bibitem{MR3729597} N. Lam, G. Lu, and L. Zhang, \textit{Equivalence of critical and subcritical sharp Trudinger-Moser-Adams inequalities}, Rev. Mat. Iberoam. \textbf{33} (2017), 1219–1246.

\bibitem{MR3943303} N. Lam, G. Lu and L. Zhang, Sharp singular Trudinger-Moser inequalities under different norms. Adv. Nonlinear Stud. 19 (2019), no. 2, 239-261.

\bibitem{MR2400264} Y. X. Li and B. Ruf, \textit{A sharp Moser-Trudinger type inequality for unbounded domains in $\mathbb R^n$}, Indiana Univ. Math. J. \textbf{57} (2008), 451-480.

\bibitem{MR3472818} G. Lu and H. Tang, \textit{Sharp Moser-Trudinger inequalities on hyperbolic spaces with exact growth condition}, J. Geom. Anal. \textbf{26} (2016), 837–857.

\bibitem{MR3405815} G. Lu, H. Tang, and M. Zhu, \textit{Best Constants for Adams' Inequalities with Exact Growth Condition in $\mathbb R^n$}, Adv. Nonlinear Stud. \textbf{15} (2015), 763-788.

\bibitem{MR3225631} N. Masmoudi and F. Sani, \textit{Adams' inequality with the exact growth condition in $\mathbb R^4$}, Comm. Pure Appl. Math. \textbf{67} (2014), 1307-1335.

\bibitem{MR3355498} N. Masmoudi and F. Sani, \textit{Trudinger-Moser Inequalities with Exact Growth Condition in $\mathbb R^N$ and Applications}, Comm. Partial Differ. Equ. \textbf{40} (2015), 1408-1440.

\bibitem{MR3848068} N. Masmoudi and F. Sani, \textit{Higher order Adams' inequality with the exact growth condition}, Commun. Contemp. Math. \textbf{20} (2018), 1750072, 33 pp.

\bibitem{MQ} C. Morpurgo and L. Qin, Sharp Adams inequalities with exact growth conditions
on metric measure spaces and applications, arXiv:2211.02991. 

\bibitem{MR0301504} J. Moser, \textit{A sharp form of an inequality by N. Trudinger}, Indiana Univ. Math. J. \textbf{20} (1970/1971), 1077-1092.

\bibitem{MR4117991} H. Tang, Equivalence of sharp Trudinger-Moser inequalities in Lorentz-Sobolev spaces. Potential Anal. 53 (2020), no. 1, 297-314.

\bibitem{MR1069756} B. Opic and A. Kufner, \textit{Hardy-type Inequalities}, Pitman Research Notes in Mathematics Series 219, Lonngmman Scientific \& Technical, Harlow, 1990.

\bibitem{MR0192184} S. I. Poho\v{z}aev, \textit{On the Sobolev embedding theorem for $pl=n$}, in: Doklady Conference, Section Math., Moscow Power Inst., 1965, pp. 158-170.

\bibitem{Qin} L. Qin,  Adams inequalities with exact growth condition for Riesz-like potentials on $\mathbb{R}^n$. Adv. Math. 397 (2022), Paper No. 108195, 47 pp.

\bibitem{MR0216286} N. S. Trudinger, \textit{On imbeddings into Orlicz spaces and some applications}, J. Math. Mech. \textbf{17} (1967), 473--483.

\end{document}